 \definecolor{darkgreen}{HTML}{336633}
 \definecolor{darkred}{HTML}{993333}
\newcommand{\arxiv}[1]{\href{http://arxiv.org/abs/#1}{\tt
    arXiv:\nolinkurl{#1}}}
\theoremstyle{plain}
\newtheorem{thm}{Theorem}
\newtheorem*{thm*}{Theorem}
\newtheorem*{thmA}{Theorem A}
\newtheorem*{thmB}{Theorem B}
\newtheorem*{thmC}{Theorem C}
\newtheorem{lem}[thm]{Lemma}
\newtheorem{prop}[thm]{Proposition}
\newtheorem{cor}[thm]{Corollary}
\newtheorem{df-prop}[thm]{Definition-Proposition}
\theoremstyle{definition}
\theoremstyle{remark}
\newtheorem{rem}[thm]{Remark}
\newtheorem*{exs}{Example}
\def\onto{\twoheadrightarrow}
\def\Hom{\operatorname{Hom}\nolimits}
\def\Res{\operatorname{Res}\nolimits}
\def\Ind{\operatorname{Ind}\nolimits}
\def\Ext{\operatorname{Ext}\limits}
\def\gl{\mathfrak{gl}}
\def\la{\lambda}
\def\pn{\mf{pe} (n)}
\def\ov{\overline}
\newcommand{\mc}{\mathcal}
\newcommand{\mf}{\mathfrak}
\newcommand{\C}{\mathbb C}
\newcommand{\oa}{{\bar 0}}
\newcommand{\ob}{{\bar 1}}
\newcommand{\vare}{\epsilon} 
\newcommand{\ad}{\mathrm{ad}}
\newcommand{\fg}{\mathfrak{g}}
\newcommand{\fb}{\mathfrak{b}}
\newcommand{\fh}{\mathfrak{h}}
\newcommand{\fn}{\mathfrak{n}}
\newcommand{\n}{\mathfrak{n}}
\newcommand{\mZ}{\mathbb{Z}}
\newcommand{\cO}{\mathcal{O}}
\newcommand{\mC}{\mathbb{C}}
\newcommand{\h}{\mathfrak{h}}
\newcommand{\ch}{\mathrm{ch}}
\newcommand{\rad}{\mathrm{rad}}
\newcommand{\Coind}{{\rm Coind}}
\newcommand{\coker}{{\rm coker}}
\newcommand{\g}{\mathfrak{g}}
\newcommand{\fl}{\mathfrak{l}}
\newcommand{\fp}{\mathfrak{p}}
\newcommand{\fu}{\mathfrak{u}}
\newcommand{\Real}{\mathrm{Re}}
\newcommand{\cL}{\mathcal{L}}
\newcommand{\Z}{{\mathbb Z}}
 \def\tN{{\widetilde{\mc N}}}
  \newcommand{\add}{{\mathrm{add}}}
     \def\Ann{{\text{Ann}}}
     \def\fd{{\mf d}}
     \def\wfu{{\widetilde{\fd}_\Upsilon}}
       \def\ups{{\Upsilon}}
       \def\Id{{\text{Id}}}
              \def\wdL{{\widetilde{L}}}
              \def\wdM{{\widetilde{M}}}
            \def\nbob{{\widetilde{M}^\vee}}
            \def\nbobr{{\widetilde{M}_{\mf b^r}^\vee}}
                \def\vpre{{\nu}\text{-pres}}
            \def\cvpre{{\nu}\text{-copres}}
\begin{document}

\numberwithin{equation}{section}



\title[Annihilator ideals and blocks of Whittaker modules]{Annihilator ideals and blocks of Whittaker modules over quasireductive Lie superalgebras}

\author{Chih-Whi Chen}
\date{}

\begin{abstract}    
	
  We extend  Kostant's result    on   annihilator ideals of non-singular simple Whittaker modules over Lie algebras to   (possibly singular) simple Whittaker modules over    Lie superalgebras. We describe these annihilator ideals in terms of certain primitive ideals coming from the category $\mc O$ for quasireductive Lie superalgebras. 
		
		To determine these annihilator ideals, we develop   annihilator-preserving equivalences between  certain full subcategories of the Whittaker category $\widetilde{\mc N}$ and the categories of certain projectively presentable modules in the category $\mc O$. These equivalences lead to a   classification of simple Whittaker modules that lie in the integral central blocks when restricted to the even subalgebra. 
	We make a connection between the linkage classes of integral blocks of $\mc O$ and of $\widetilde{\mc N}$. In particular, they can be computed via Kazhdan-Lusztig combinatorics for Lie superalgebras of type $\gl$ and $\mf{osp}$. 		We then give a description of the integral blocks of the category  $\widetilde{\mc N}$ of Whittaker modules for  Lie superalgebras $\gl(m|n)$, $\mf{osp}(2|2n)$ and $\pn$. 
		

	
	
	
	
	  
\end{abstract}

\maketitle

\tableofcontents 

\noindent
\textbf{MSC 2010:} 17B10 17B55  

\noindent
\textbf{Keywords:} 
Lie superalgebra; Whittaker module; annihilator ideal; twisting functor; completion functor; category $\mc O$; category $\widetilde{\mc N}$; block decomposition. 
\vspace{5mm}

\section{Introduction}\label{sec1}




\subsection{} In his 1978 seminal paper \cite{Ko78}, Kostant initiated the study of {\em Whittaker modules} over finite-dimensional complex semisimple Lie algebras relative to non-singular characters of the  nil-radicals from triangular decompositions. He developed  various approaches to characterizing simplicity of these modules, including the existence and the  uniqueness  of {\em Whittaker vectors}.  Later on, a systematic construction of (possibly singular) Whittaker modules was studied by    McDowell in \cite{Mc, Mc2}. Building on the work of McDowell and the formulations of the {\em standard Whittaker modules $M(\la,\zeta)$}, a complete classification of simple Whittaker modules was achieved by   Mili{\v{c}}i{\'c} and  Soergel in \cite{MS, MS2}. 
 The   standard Whittaker modules  $M(\la,\zeta)$, parametrized by characters $\zeta$ of nil-radicals and certain coset representatives $\la$ under the action of Weyl group, play the role that Verma modules play  in the classification of  simple highest weight modules. In particular, the definition is set up such that $M(\la,0)$ is the usual Verma module $M(\la)$ of highest weight $\la.$

 Several  achievements have been made in the problem of computing the composition factors of the standard Whittaker modules  by Backelin in \cite{B} and by Mili{\v{c}}i{\'c} and  Soergel in \cite{MS}, where the Kazhdan-Lusztig combinatorics from the BGG category $\mc O$ was introduced to solve the multiplicity problem. 
 Since then, there has been considerable recent progress toward the study of Whittaker modules for Lie algebras, see, e.g., \cite{ALZ, A1, BM, Chri, CoM, R1,Wa2} and references therein.


 \subsection{}	\label{sect::12} Following \cite{Se11}, a finite-dimensional Lie superalgebra $\g=\g_\oa\oplus\g_\ob$ is called {\em quasireductive} if $\mf g_\oa$ is reductive and $\g_\ob$ is a semisimple $\mf g_\oa$-module under the adjoint action, which was also considered in the literature under the name {\em classical}; see, e.g., \cite{Ma}. 
 
  Let $\g=\g_\oa\oplus\g_\ob$ be a   quasireductive Lie superalgebra with a nil-radical $\mf n^+$ coming from a triangular decomposition 
  \begin{align}
  &\mf g=\mf n^-\oplus \mf h \oplus \mf n^+, \label{eq::triag1}
  \end{align} in the sense of \cite[Section 2.4]{Ma}; see also Section \ref{sect::ass}. The {\em simple Whittaker modules} with respect to $\mf n^+$ refer to simple $\mf g$-modules that are locally finite over $\mf n^+$. 

   While the Whittaker modules for Lie algebras have been extensively investigated, the study  of   Whittaker modules over Lie superalgebras is  still at its beginning stage. An earlier result is the construction of   simple Whittaker modules initiated by	 Bagci,  Christodoulopoulou and Wiesner   \cite{BCW}, for Lie superalgebras $\gl(m|n), \mf{sl}(m|n), \mf{psl}(n|n)$ and $ \mf{osp}(2|2n)$.  There are also various other  results done to study Whittaker modules for basic classical Lie superalgebras   and $W$-algebras; see, e.g., \cite{BG2,BGK,Lo09, Lo10, Pr07,Xi,ZS} and references therein.
     
   {
   	In \cite{Ch21}, the author provided a classification of simple Whittaker modules in terms of simple tops of {\em standard Whittaker modules $\wdM(\la,\zeta)$ over Lie superalgebras}. Furthermore, the category $\widetilde{\mc N}$ of Whittaker modules (with respect to $\mf n^+$)   was formulated and studied in \cite{Ch21}, which is a natural generalization of the category $\mc N$. More precisely, the category $\widetilde{\mc N}$ consists of finitely-generated $\g$-modules that are locally finite over the center $Z(\mf g_\oa)$ of $U(\g_\oa)$ and over $\mf n^+$. In particular,    $\widetilde{\mc N}$ contains all simple Whittaker modules over $\g$ (see \cite[Proposition 1]{Ch21}) and every module in $\widetilde{\mc N}$ has finite length (see \cite[Corollary 4]{Ch21}). 

   Let $\ch \mf n^+_\oa:=(\mf n^+_\oa/ [\mf n^+_\oa,\mf n^+_\oa])^\ast$. Let $\Pi_0$ be the simple system for the triangular decomposition 
   	\begin{align}
   	&\g_\oa = \mf n^-_\oa \oplus \h_\oa \oplus \mf n^+_\oa,\label{eq::1111}
   	\end{align} 	the definition of the standard Whittaker modules $\wdM(\la,\zeta)$  from \cite{Ch21} was formulated separately:
   	
   	{\em Case 1}: If $\g$ is a quasireductive Lie superalgebra  of type I, namely, $\g$ admits a compatible $\Z$-gradation $\g =\g_{-1}\oplus \mf g_0 \oplus \g_1$, then  $\wdM(\la,\zeta)$ is defined as $U(\mf g)\otimes_{\mf g_0+\mf g_1}M(\la,\zeta)$, for $\la \in \h^\ast_\oa$ and $\zeta \in \ch \mf n_\oa^+$ by letting $\g_1$ act on $M(\la,\zeta)$ trivially. This type of formulation was first introduced in \cite{BCW} and investigated further in \cite[Section 3.2]{Ch21}. 
   	
   	{\em Case 2}:  Let $\g$ be an arbitrary quasireductive Lie superalgebra. Suppose that the subalgebra $\mf l_\zeta$, generated by $\h_\oa$ and even root vectors $e_\alpha$, $f_\alpha$ of roots $\alpha, -\alpha$ with $\alpha\in \Pi_0$ such that $\zeta(e_\alpha)\neq 0$, is a Levi subalgebra of $\g$ in the sense of \cite[Section 2.4]{Ma}. Then   $\wdM(\la,\zeta)$ is defined as the parabolically induced module from Kostant's simple Whittaker modules in \cite{Ko78}, which we will denote by $Y_\zeta(\la,\zeta)$, associated to the weight $\la\in \h^\ast_\oa$ and the character $\zeta$; see \cite[Section 3.1]{Ch21}.
   	
   	These two formulations of standard Whittaker modules are generalizations of  definitions in \cite{MS}. Also, they coincide in the case when $\g$ is of type I with the Borel subalgebra $\mf b= \mf h +\mf n^+$  chosen such that $\mf b_\ob =\g_1$; see a discussion in \cite[Section 3.2]{Ch21}.  
   	
   	Every standard Whittaker module $\wdM(\la,\zeta)$ has a simple top, denoted by $\wdL(\la,\zeta)$.  For $\g$ a Lie superalgebra satisfying the assumptions in Case 1, it is proved that the following set 
   	\begin{align}
   	&\{\wdL(\la,\zeta)|~\la\in \h_\oa^\ast,~\zeta\in \ch \mf n_\oa^+\},
   	\end{align} is  a complete list of pairwise non-isomorphic simple Whittaker modules; see \cite[Theorem 9]{Ch21}.  For $\g$ a Lie superalgebra with character $\zeta\in\ch \mf n_\oa^+$ satisfying the assumptions in Case 2, the set  
   	\begin{align}
   	&\{\wdL(\la,\zeta)|~\la\in \h^\ast\},
   	\end{align} is  a complete list of pairwise non-isomorphic simple Whittaker modules in the full subcategory  $\widetilde{\mc N}(\zeta)$ of $\widetilde{\mc N}$ consisting of  $M \in \tN$  such that $x-\zeta(x)$  acts locally nilpotently on $M$, for each $x\in \mf n_\oa^+$. The Weyl group $W$, which is defined as the Weyl group of $\g_\oa$ arising from the triangular decomposition \eqref{eq::1111}, has a usual dot-action $\cdot$ on $\h_\oa^\ast$.  In both Case 1 and Case 2, we have 
   	\begin{align}
   	&\wdL(\la,\zeta)\cong \wdL(\mu, \zeta) \Leftrightarrow \mu \in W_\zeta\cdot \la,
   	\end{align} where $W_\zeta$ is the Weyl group of $\mf l_\zeta$. 
   	
   	These constructions provide explicit realizations of  simple Whittaker modules. However, there are some cases left to address; see \cite[Section 3.3]{Ch21} for a detailed discussions.



\subsection{} In \cite{Duflo},  Duflo proved that every primitive ideal of   a reductive Lie algebra is an  annihilator ideal of a simple module in the category $\mc O$. For simple Lie superalgebras, the analogue of Duflo's theorem is established by Musson in \cite{Mu92} using the results of finite ring extensions in \cite{Le89}. Following methods in \cite{Le89,Mu92}, it is shown in \cite[Section 4.1]{CC} that Duflo's theorem remains valid for arbitrary quasireductive Lie superalgebras. Therefore, it is natural to make an identification between
annihilator ideals of simple Whittaker modules and   primitive ideals from the category $\mc O$.

Following \cite{Ko78}, a character $\zeta \in \ch \mf n_\oa^+$ is called {\em non-singular} if $\zeta$ does not vanish on any  root vector associated to a simple root in $\mf n_\oa^+$.  For any $\g$-module $M$,   let $\Ann_{U(\mf g)}M$ denote the annihilator of $M$.  For $\g=\g_\oa$ a reductive Lie algebra, let  $L(\la,\zeta)$ and $L(\la)$ denote the simple quotients of $M(\la,\zeta)$ and $M(\la)$, respectively. Kostant established in \cite[Theorem 3.9]{Ko78} the following result:
\begin{thm*}[Kostant] \label{thm::Kostant}
	Suppose that $\g=\g_\oa$ and $\zeta$ is non-singular. Then, the annihilator ideal of $L(\la,\zeta)$ is given by 
	 \begin{align*} 
	 &{\emph \Ann}_{U(\mf g)}  L(\la, \zeta) = {\emph \Ann}_{U(\mf g)}  L(\la),  
	\end{align*}  for any anti-dominant weight $\la \in \h^\ast$.
\end{thm*}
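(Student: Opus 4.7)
The plan is to leverage the fact that both $L(\la)$ and $L(\la,\zeta)$ are simple and have the same central character $\chi_\la$: for $\la$ anti-dominant the Verma module $M(\la)=L(\la)$ is already simple, while for non-singular $\zeta$ Kostant's original construction gives $M(\la,\zeta)=L(\la,\zeta)$. In particular both annihilators must contain the two-sided ideal $U(\mf g)\cdot\ker\chi_\la$.

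For the inclusion $\Ann_{U(\mf g)} L(\la)\subseteq \Ann_{U(\mf g)} L(\la,\zeta)$ I would reduce to the classical identity
\begin{equation*}
\Ann_{U(\mf g)} M(\la) \;=\; U(\mf g)\cdot\ker\chi_\la \qquad (\la\text{ anti-dominant}),
\end{equation*}
which is a standard consequence of $M(\la)$ being free of rank one over $U(\mf n^-)$ with highest-weight vector $v_\la$: the left annihilator of $v_\la$ is $U(\mf g)\mf n^+ + U(\mf g)\cdot\{h-\la(h):h\in \mf h\}$, and a Harish-Chandra/PBW analysis shows the largest two-sided ideal inside this left ideal is exactly $U(\mf g)\cdot\ker\chi_\la$, with anti-dominance ruling out extra cancellations from Weyl-orbit coincidences. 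Since $L(\la,\zeta)$ has central character $\chi_\la$, the containment $\Ann L(\la)\subseteq \Ann L(\la,\zeta)$ is immediate.

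For the reverse inclusion my plan is either (a) to run the parallel computation for Kostant's model of $L(\la,\zeta)$ — writing $L(\la,\zeta)=U(\mf g)\cdot w_\zeta$ for a Whittaker generator $w_\zeta$, so that its left annihilator is
\begin{equation*}
J_\zeta \;=\; U(\mf g)\cdot\ker\chi_\la + U(\mf g)\cdot\{x-\zeta(x):x\in \mf n^+\},
\end{equation*}
and then identifying the largest two-sided ideal inside $J_\zeta$ with $U(\mf g)\cdot\ker\chi_\la$ — or (b) to invoke Backelin's exact functor $\Gamma_\zeta:\mc O\to \widetilde{\mc N}(\zeta)$, which sends $M(\la)\mapsto M(\la,\zeta)$ and preserves annihilators on $\mc O$, so that applying it to the simple $L(\la)=M(\la)$ yields $\Ann_{U(\mf g)} L(\la,\zeta)=\Ann_{U(\mf g)} L(\la)$ at once.

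The main obstacle is the faithfulness step in approach (a): ruling out any spurious two-sided ideal sitting strictly between $U(\mf g)\cdot\ker\chi_\la$ and $J_\zeta$. This is precisely where the non-singularity of $\zeta$ enters in an essential way — it guarantees that each simple-root generator $e_\alpha$ of $\mf n^+$ acts on $w_\zeta$ by the nonzero scalar $\zeta(e_\alpha)$, so the image of $U(\mf n^+)$ in $\End_{\mC} L(\la,\zeta)$ is generated by invertible operators, and any hypothetical extra element can be transported back into $U(\mf g)\cdot\ker\chi_\la$ via a Harish-Chandra decomposition $U(\mf g)\equiv U(\mf n^-)\otimes Z(\mf g)\otimes U(\mf n^+)\pmod{J_\zeta}$. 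Without non-singularity such an argument manifestly breaks down, which is precisely the difficulty the paper must address when extending Kostant's theorem to (possibly singular) characters and to the super setting.
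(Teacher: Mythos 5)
The paper does not prove this theorem; it is stated as a citation to Kostant \cite[Theorem~3.9]{Ko78}, so there is no in-paper proof to compare against. On its own merits, your proposal is essentially correct in its main thrust, and approach (a) is in fact a reconstruction of Kostant's own argument: one reduces to Duflo's annihilation theorem $\Ann_{U(\g)} M(\la)=U(\g)\ker\chi_\la$ on the category~$\mc O$ side (with anti-dominance used only to identify $L(\la)=M(\la)$), and separately one shows, using the structure of $U(\g)/U(\g)\{x-\zeta(x):x\in\mf n^+\}$ for nonsingular $\zeta$, that the largest two-sided ideal inside the left annihilator $J_\zeta$ of the cyclic Whittaker vector is exactly $U(\g)\ker\chi_\la$. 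Two caveats. First, the phrase ``anti-dominance ruling out extra cancellations from Weyl-orbit coincidences'' is misleading: $\Ann M(\la)=U(\g)\ker\chi_\la$ holds for every $\la$; anti-dominance enters only to ensure $M(\la)$ is simple so that $\Ann L(\la)=\Ann M(\la)$. Second, and more seriously, approach (b) as stated is not sound: Backelin's functor $\Gamma_\zeta$ is exact and sends $M(\la)$ to $M(\la,\zeta)$ and computes composition factors, but there is no automatic reason it should preserve annihilators. Exactness of a functor and the claim that it preserves the two-sided ideal $\Ann_{U(\g)}(-)$ are independent; indeed a significant portion of the present paper (the Harish--Chandra bimodule machinery of Sections~3--5) is devoted precisely to constructing equivalences for which annihilator-preservation can be established. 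So (b) needs a substantial additional argument before it can be invoked, and you should rely on (a).
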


Since $L(\la,\zeta)\cong L(\la',\zeta)$ for any $\la'\in W_\zeta\cdot \la$, Kostant's result indicates that every annihilator $\Ann_{U(\mf g)}  L(\la, \zeta)$ is a  minimal primitive ideal in the case when $\zeta$ is non-singular. There is a variation of  Kostant's result given by Batra and Mazorchuk  \cite[Section 4.3]{BM}.  One of the main motivations of the present paper is to determine the annihilator ideals of  simple Whittaker modules for quasireductive Lie superalgebras.



 \subsection{} \label{Sect::14} We study several aspects of annihilator ideals of Whittaker modules over quasireductive Lie superalgebras. Namely, the paper attempts to construct  annihilator-preserving equivalences between certain categories of Whittaker modules and  corresponding full subcategories of the BGG category $\mc O$ and to determine annihilator ideals of simple Whittaker modules and to describe the sets of simple Whittaker modules of (indecomposable) blocks for $\widetilde{\mc N}$.

 In the present paper, we will mainly focus on basic classical Lie superalgebras in \eqref{Kaclist1} and quasireductive type-I Lie superalgebras. Unless mentioned otherwise,   we will make two additional assumptions in our setup for a Lie superalgebra $\g$ of type I and the choice of its Borel subalgebra. First, if $\g$ is a quasireductive Lie superalgebra of type I, then we will  assume that its type-I gradation $\g =\g_{-1}\oplus \g_0 \oplus \g_{1}$ is induced by a grading operator from a Cartan subalgebra $\mf h_\oa$ of $\g_\oa$. Next, we will choose a triangular decomposition $\g =\mf n^-\oplus  \h\oplus \mf n^+$ as in \eqref{eq::triag1} such that   the odd subalgebras of $\mf n^\pm$ are $\g_{\pm 1}$, respectively; see  Section \ref{sect::ass}. These assumptions and choice will be referred to as \eqref{ass::A1}, \eqref{ass::A2} and \eqref{pre::tridec} in this article; see also  examples in \eqref{ex::ex1}. Lie superalgebras $\g$ satisfying these assumptions are referred to as {\em Lie superalgebras of type I-0} (see \cite{CC}) and fit into the framework of \cite[Section 4]{CC}. In particular, for such Lie superalgebras there is a number of basic properties of the twisting functors  developed in \cite[Section 4.3]{CC} that are to be used in the present paper.

\subsection{} To start the investigation into the annihilator ideals of Whittaker modules over Lie superalgebras in its full generality, we follow the approach of using Harish-Chandra $(\g,\g_\oa)$-bimodules. This idea goes back at least to the work of Mazorchuk \cite{Ma2} for the  classification of simple modules over the queer Lie superalgebra $\mf q(2)$. Subsequently, this method has been further investigated by  Coulembier, Mazorchuk and the author \cite{CCM} to provide a solution to the problem of classification of simple modules over Lie superalgebras with underlying Lie algebras of type A. 


 \subsection{} \label{sect::166} Before formulating our framework and giving the results, we shall explain our precise
 	setup.
 Let $\g$ be an arbitrary quasireductive Lie superalgebra with a triangular decomposition $\mf g=\mf n^-\oplus \mf h \oplus \mf n^+$ as given in \eqref{eq::triag1}.  We always require additionally that $\h$ is purely even in the paper. We refer to  $\mf b:=\mf h\oplus \mf n^+$ as the corresponding {\em Borel subalgebra}. With slight abuse of notation, we again denote by $\mc O$ the corresponding BGG category.   Denote by $\wdL(\la)$ the simple module in $\mc O$ with highest weight $\la\in \h^\ast$. We fix  $\zeta \in \ch \mf n_\oa^+$ and let $\nu\in \h^\ast$ be dominant such that 
 \begin{align}
 &W_\nu=W_\zeta,
 \end{align} where $W_\nu$ is the stabilizer group of $\nu$ under the dot-action of $W$. Let $\mc X\subset \mf h^\ast$  denote the set of all integral weights.   Let $\mc X(\nu)$ denote the set of weights $\mu\in \nu+\mc X$ satisfying  that $\wdL(\mu)$ is $\alpha$-free (i.e. the root space  $\g_\oa^{-\alpha}$ acts on $\wdL(\mu)$ freely), for any simple root $\alpha$ of $\mf l_\zeta$; see  Section \ref{sect::24f} and Section \ref{sect::411}.
   

 Let $S$ be an arbitrary simple Whittaker module.  By \cite[Proposition 1]{Ch21}, there exists a simple  Whittaker $\g_\oa$-submodule $V$ of $S$. We refer to $S$ as a {\em simple Whittaker module of integral type}  provided that $V$ admits a central character of $\g_\oa$ associated with an integral weight. This definition of integrability for a simple Whittaker module $S$ is independent of the choice of $V$; see Proposition \ref{prop::intWhi}.

  We need the following preparatory notations and conventions before giving the first main result.

\begin{itemize}
	\item[$\bullet$]  Let $\mc O^{\vpre}$ be the category of {\em projectively presentable} modules of $\mc O$ in the sense of \cite{KoM,MaSt04}, namely, 
	$\mc O^{\vpre}$ is the full subcategory of $\mc O$, which consists of all modules $M$ that has a presentation $P_1\rightarrow P_2\rightarrow M \rightarrow 0$ by projective modules $P_1, P_2$ such that every simple quotient of $P_1$, $P_2$ has a highest weight lying in $\mc X(\nu)$. The  simple objects $S_\nu(\mu)$ in $\mc O^{\vpre}$ are indexed by $\mu\in \mc X(\nu)$; see also Section \ref{Sect::35}. 
	\item[$\bullet$] Let $\mc B_\nu$ be the category of {\em Harish-Chandra $(\mf g,\mf g_\oa)$-bimodules} $Y$ such that $Y\text{Ker}\chi_\nu^\oa=0$ as considered in \cite[Section 4.3]{Ch21}, where $\text{Ker}\chi_\nu^\oa$ denotes the kernel of the central character $\chi_\nu^{\oa}$ of $\g_\oa$ associated with the weight $\nu$; see also Section \ref{Sect::HCmod}. 
	\item[$\bullet$] Let $\widetilde{\mc N}(\nu+\mc X, \zeta)^1$ be the  category consisting of Whittaker modules of the form $Y\otimes_{U(\mf g_\oa)}M(\nu,\zeta)$ with $Y\in \mc B_\nu$. See also Section \ref{sect::412}  for an intrinsic definition.  
		\item[$\bullet$] For any $\g_\oa$-module $M$ and  $\g$-module $N$, let  $\mc L(M,N)$ denote the maximal $(\g, \g_\oa)$-submodule of $\Hom_\C(M,N)$ that is a  direct sum of finite-dimensional $\g_\oa$-modules under the adjoint action of $\g_\oa$ given by $x\cdot f=xf-fx$ for any $x\in \g_\oa$ and $f\in \Hom_\C(M,N)$. Let $\g$-Mod (resp. $\g_\oa\mbox{-Mod}$) denote the category of $\g$-modules (resp. $\g_\oa$-modules). Then this defines a functor $\mc L(-,-)$ from $\g_\oa\mbox{-Mod}\times {\g} \mbox{-Mod}$ to $\g\otimes_{\mC}  \g_\oa\mbox{-Mod}$ (i.e., the category of $\g\otimes_{\mC}  \g_\oa$-modules).
\end{itemize}
Let $\Lambda^+(\zeta):=\{\nu\in \h^\ast/\mc X|~\nu\text{ is dominant such that }W_\nu=W_\zeta \}$, namely, $\Lambda^+(\zeta)$ is a complete list of representatives in $\mf h^\ast/\mc X$ that has the stabilizer group $W_\nu$. Our first main result is the following classification of  simple Whittaker modules of integral type and their annihilator ideals. 
\begin{thmA} For each $\nu \in \Lambda^+(\zeta)$, 
	we have an annihilator-preserving equivalence of categories 
	\begin{align}
	&\mc L(M(\nu),-)\otimes_{U(\g_\oa)} M(\nu,\zeta): \mc O^{\vpre}\cong \mc B_\nu\cong \widetilde{\mc N}(\nu+\mc X, \zeta)^1. \label{eq::11}
	\end{align}
 We define $S(\mu,\zeta)$ as the image of $S_\nu(\mu)$ under the equivalence \eqref{eq::11}, namely, 
\begin{align}
&S(\mu,\zeta):=\mc L(M(\nu), S_\nu(\mu))\otimes_{U(\g_\oa)} M(\nu,\zeta), \text{ for  $\mu \in \mc X(\nu)$.}
\end{align}  Then 
\begin{itemize}
	\item[(1)] The set 
	\begin{align}
	&\{S(\mu,\zeta)|~\zeta\in \ch \mf n_\oa^+,~\nu\in \Lambda^+(\zeta),~\mu \in \mc X(\nu)\},
	\end{align}
	is a list  of pairwise non-isomorphic simple Whittaker modules over $\g$. 
	In particular, the set 
	\begin{align}
	&\{S(\mu,\zeta)|~\zeta\in \ch \mf n_\oa^+,~\nu\in \Lambda^+(\zeta)\cap \mc X,~\mu \in \mc X(\nu)\},
	\end{align}
   is an  exhaustive list  of (pairwise non-isomorphic)  simple Whittaker modules of integral type over $\g$.
	\item[(2)] Furthermore, assume that $\g$ is either a basic classical Lie superalgebra  \eqref{Kaclist1} or a Lie superalgebra of type I-0 (i.e.,  the triangular decomposition \eqref{eq::triag1} of $\g$ satisfies the assumptions \eqref{ass::A1}-\eqref{pre::tridec}) with $\nu\in \mc X$. Then $S_\nu(\mu)$ can be realized as images of simple highest weight modules under the Arkhipov's twisting functor in the sense of \cite{AS, CMW}, and we have 
	\begin{align}
	&{\emph \Ann}_{U(\mf g)} S(\mu,\zeta) = {\emph \Ann}_{U(\mf g)} \widetilde L(\mu),
	\end{align} 
	for any $\mu \in \mc X(\nu)$.
\end{itemize}	
\end{thmA}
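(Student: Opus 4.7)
The plan is to establish the two equivalences in \eqref{eq::11} in succession, verify annihilator-preservation at each stage, and then extract Parts~(1) and~(2) using the classification of simples in $\mc O^{\vpre}$ together with the behavior of twisting functors. For the first equivalence $\mc O^{\vpre}\cong \mc B_\nu$, I would adapt the Bernstein--Gelfand--Joseph--Soergel theory of Harish-Chandra bimodules to the quasireductive super setting. The candidate quasi-inverses are $M\mapsto \mc L(M(\nu),M)$ and $Y\mapsto Y\otimes_{U(\g_\oa)} M(\nu)$; the essential inputs are that $M(\nu)$ is projective in the $\nu$-presentable subcategory of the even category of $\g_\oa$, that $\mc L(M(\nu),-)$ is exact on $\nu$-presentable projectives of $\mc O$, and that the two functors compose to the identity on each side. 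Since $\g$ is a finite free module over $\g_\oa$ on both sides, a Harish-Chandra bimodule in the super sense can be studied through its underlying $\g_\oa$-bimodule structure, which places the argument squarely in the framework of \cite{CC}.

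Next I would establish $\mc B_\nu\cong \widetilde{\mc N}(\nu+\mc X,\zeta)^1$ via $Y\mapsto Y\otimes_{U(\g_\oa)} M(\nu,\zeta)$, with a quasi-inverse built from a Whittaker-vector construction $N\mapsto \mc L(M(\nu,\zeta),N)$. Essential surjectivity is forced by the defining description of $\widetilde{\mc N}(\nu+\mc X,\zeta)^1$ as the image of this functor, while full faithfulness reduces, via exactness, to the statement that $M(\nu,\zeta)$ is a faithful right $U(\g_\oa)/\text{Ker}\,\chi_\nu^\oa$-module through its Whittaker generator --- the appropriate super generalization of a key step of Mili\v{c}i\'c--Soergel. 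Annihilator-preservation of both equivalences then follows from the chain
\[\Ann_{U(\g)}Y \;=\; \Ann_{U(\g)}(Y\otimes_{U(\g_\oa)} M(\nu,\zeta)) \;=\; \Ann_{U(\g)}(Y\otimes_{U(\g_\oa)} M(\nu)),\]
valid because both $M(\nu)$ and $M(\nu,\zeta)$ are faithful over $U(\g_\oa)/\text{Ker}\,\chi_\nu^\oa$ and the $\g$-action on the tensor product factors through the left $U(\g)$-action on $Y$.

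For Part~(1), the composite equivalence identifies the simples $S_\nu(\mu)$ of $\mc O^{\vpre}$ with the modules $S(\mu,\zeta)$, so pairwise non-isomorphism within a fixed $\nu$ is immediate. To prove exhaustiveness over all $\zeta$ and all $\nu\in\Lambda^+(\zeta)\cap\mc X$ among simple Whittaker modules of integral type, I would take any such simple $S$, pick a simple Whittaker $\g_\oa$-submodule $V\subset S$ guaranteed by \cite[Proposition~1]{Ch21}, read off the central $\g_\oa$-character on $V$ to fix the coset $\nu\in \h^\ast/\mc X$ with $W_\nu=W_\zeta$, and then match $S$ with one of the simple tops $\wdL(\lambda,\zeta)$ from the classification of \cite{Ch21}, which in turn corresponds to some $S(\mu,\zeta)$ under \eqref{eq::11}. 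Pairwise non-isomorphism across distinct $\nu$ is forced by distinct $\g_\oa$-central characters on Whittaker submodules.

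For Part~(2), when $\nu\in\mc X$ I would apply Arkhipov's twisting functor $T_{w_\nu}$ attached to the longest element of $W_\nu=W_\zeta$. Under assumptions \eqref{ass::A1}--\eqref{pre::tridec} or basic classical $\g$, the results of \cite{AS, CMW, CC} show that $T_{w_\nu}$ sends $\wdL(\mu)$ to the simple $\nu$-presentable module $S_\nu(\mu)$ and preserves annihilator ideals of simples; combining with annihilator-preservation of \eqref{eq::11} then yields $\Ann_{U(\g)}S(\mu,\zeta)=\Ann_{U(\g)}\wdL(\mu)$. The main obstacle I anticipate is twofold: first, proving the super faithfulness statement underlying the second equivalence when $\zeta$ is singular, since the classical non-singular Lie-algebra argument of Mili\v{c}i\'c--Soergel does not transfer directly and one must exploit the Levi structure of $\mf l_\zeta$ inside $\g$; and second, controlling $T_{w_\nu}$ on simples in the super setting --- in particular, ruling out vanishing of $T_{w_\nu}\wdL(\mu)$ and certifying equality rather than only an inclusion of annihilators --- where the non-domain nature of $U(\g)$ and the parity grading prevent a direct transcription of the Lie-algebra proofs.
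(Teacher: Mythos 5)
Your proposed decomposition of the proof (two equivalences, then extract Parts (1)–(2)) and the general machinery (Bernstein--Gelfand Harish-Chandra bimodules, faithfulness of $M(\nu)$ and $M(\nu,\zeta)$, twisting functors $T_{w_\nu}$ realizing $S_\nu(\mu)$) match the paper's structure. The paper offloads the second equivalence and its annihilator-preservation to \cite{Ch21}, establishes the first as Lemma \ref{thm::14} by identifying $\mathrm{coker}(\widetilde{\mc F}\otimes \Ind M(\nu))$ with $\mc O^{\vpre}$, and proves the twisting statement in Theorem \ref{thm::realization} and Lemma \ref{lem::15}. So far so good.

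There is, however, a genuine gap in your argument for exhaustiveness in Part (1). You propose to take a simple integral-type Whittaker module $S$, find a simple Whittaker $\g_\oa$-submodule $V$, read off the central character, and then ``match $S$ with one of the simple tops $\wdL(\lambda,\zeta)$ from the classification of \cite{Ch21}, which in turn corresponds to some $S(\mu,\zeta)$ under \eqref{eq::11}.'' This fails on two counts. First, the classification of \cite{Ch21} is only available when $\g$ is of type I or when $\mf l_\zeta$ is a Levi subalgebra of $\g$; the whole point of Theorem A, as the paper emphasizes after its statement, is to provide a classification for arbitrary quasireductive $\g$, precisely in the regime where the explicit $\wdL(\lambda,\zeta)$ construction is \emph{not} available. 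Your matching step is therefore circular or inapplicable in exactly the new cases. Second, even in the covered cases you would still have to show that the chosen simple module lies in $\widetilde{\mc N}(\nu+\mc X,\zeta)^1$ — i.e.\ that it is in the essential image of the equivalence, not merely in $\widetilde{\mc N}(\zeta)$ with the expected central character. What the paper actually does is prove this containment directly: starting from a simple Whittaker $\g_\oa$-submodule $L(\mu,\zeta)\hookrightarrow \Res S$, it invokes the Mili\v{c}i\'c--Soergel results (Lemma \ref{lem::19}) that $\mc N(\nu+\mc X,\zeta)^1$ is closed under tensoring with finite-dimensional $\g_\oa$-modules and contains $M(\nu,\zeta)$, to deduce first $L(\mu,\zeta)\in \mc N(\mc X,\zeta)^1$ and then $\Ind L(\mu,\zeta)\in\widetilde{\mc N}(\mc X,\zeta)^1$, hence $S\in\widetilde{\mc N}(\mc X,\zeta)^1$. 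That is the missing idea: you need this stability-under-tensoring argument, not a match against a previously known list of simples.

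Your Part (2) plan (twist by $w_\nu$, identify $S_\nu(\mu)\cong T_{w_\nu}\wdL(\mu)$, deduce equality of annihilators via $\varphi$-conjugation) is exactly the paper's route; the obstacles you flag — nonvanishing of $T_{w_\nu}\wdL(\mu)$ and upgrading inclusion to equality — are handled in Theorem \ref{thm::realization} (via the realization of $T_{s_\alpha}$ as partial coapproximation, Theorem \ref{thm::11}) and in Lemma \ref{lem::15} (via the automorphism-invariance argument from \cite{CoM1}), so those concerns are addressed in the prerequisites rather than being genuine obstructions.
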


 As mentioned in Section \ref{sect::12}, the problem of  constructing of simple Whittaker modules remains open for the cases of non-type I Lie superalgebras with $\zeta\in \ch \mf n_\oa^+$ such that $\mf l_\zeta$ is not a Levi subalgebra of $\g$. Therefore, the advantage of the construction for $S(\mu,\zeta)$ given in Theorem A is to provide a complete classification   of integral type simple Whittaker modules in its full generality.  However, this approach gives less information about the  module structure, for instance, it is less clear how to compute the basis for  $S(\mu,\zeta)$ explicitly; see also  \cite[Section 5.3.2]{Ch21} for concrete examples of simple Whittaker modules over $\mf{gl}(1|2)$. We will identify $S(\la, \zeta)$ as $\wdL(\la,\zeta)$ for   Lie superalgebras $\g$ of type I-0; see Theorem \ref{thm::15}.




\subsection{} \label{sect::16} 

 A weight is said to be {\em $W_\zeta$-anti-dominant} if it is anti-dominant as a weight of $\mf l_\zeta$. We set $\mc O^\Z$ to be the full subcategory of $\mc O$ consisting of modules with integral weights.   	Our second main result is the following, which  generalizes Kostant's result. 





\begin{thmB} \label{thm::thmA} Suppose that $\g$ is a Lie superalgebra of type I-0 with $\zeta\in \ch \mf n_\oa^+$. Then we have 	\begin{align*} 	&{\emph \Ann}_{U(\mf g)} \widetilde L(\la, \zeta) = {\emph \Ann}_{U(\mf g)} \widetilde L(\la), 	\end{align*}
  for any	 $W_\zeta$-anti-dominant and integral weight $\la \in \h^\ast$. 	
\end{thmB}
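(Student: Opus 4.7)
The plan is to deduce Theorem B as a consequence of Theorem A(2), once we have identified the simple Whittaker module $\wdL(\la,\zeta)$ constructed from the standard Whittaker module with the module $S(\la,\zeta)$ produced by the equivalence \eqref{eq::11}. In other words, the whole content of Theorem B beyond Theorem A is the identification
\begin{align*}
\wdL(\la,\zeta) \;\cong\; S(\la,\zeta)
\end{align*}
under the assumptions that $\g$ is of type I-0, $\la$ is integral, and $\la$ is $W_\zeta$-anti-dominant. This is precisely the statement promised as Theorem \ref{thm::15} in the excerpt.

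The first step is to fix $\nu \in \Lambda^+(\zeta)\cap \mc X$ with $\la\in \nu+\mc X$; this is possible since $\la$ is integral and $W_\nu=W_\zeta$. The second step is to verify $\la\in \mc X(\nu)$, i.e.\ that $\wdL(\la)$ is $\alpha$-free for every simple root $\alpha$ of $\mf l_\zeta$. The $W_\zeta$-anti-dominance of $\la$ guarantees that, for each such simple root $\alpha$, the $\mf{sl}_2$-triple generated by $e_\alpha,f_\alpha,h_\alpha$ acts on a highest weight vector $v_\la$ of $\wdL(\la)$ so that $U(\mC f_\alpha)\cdot v_\la$ is a simple $\mf{sl}_2$-Verma module; a standard PBW argument then shows that $f_\alpha$ acts freely on the whole module $\wdL(\la)$.

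The third, and principal, step is to realize $S(\la,\zeta)$ as the simple top $\wdL(\la,\zeta)$ of the standard Whittaker module $\wdM(\la,\zeta)=U(\g)\otimes_{U(\g_0+\g_1)}M(\la,\zeta)$. The plan is to trace the functor $\mc L(M(\nu),-)\otimes_{U(\g_\oa)}M(\nu,\zeta)$ on standard objects. For $\g_\oa$-modules, the classical result of Mili\v{c}i\'c--Soergel identifies $\mc L(M(\nu),M(\la))\otimes_{U(\g_\oa)} M(\nu,\zeta)$ with the $\g_\oa$-standard Whittaker module $M(\la,\zeta)$; by inducing from $\g_0+\g_1$ to $\g$, and using the compatibility of the type-I gradation with the triangular decomposition \eqref{pre::tridec} together with the projectivity properties of the twisting-functor realization appearing in Theorem A(2), one obtains an isomorphism between the image of the Verma module $M(\la)\in \mc O^{\vpre}$ and $\wdM(\la,\zeta)\in \widetilde{\mc N}(\nu+\mc X,\zeta)^1$. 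Passing to simple tops via the equivalence then yields the desired identification $S(\la,\zeta)\cong \wdL(\la,\zeta)$. Combined with Theorem A(2), this gives
\begin{align*}
\Ann_{U(\g)}\wdL(\la,\zeta) \;=\; \Ann_{U(\g)} S(\la,\zeta) \;=\; \Ann_{U(\g)}\wdL(\la),
\end{align*}
as required.

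The main obstacle is the last step: identifying the two different constructions of the simple Whittaker module. The difficulty lies in reconciling the parabolic-induction picture that defines $\wdM(\la,\zeta)$ with the Harish-Chandra bimodule picture underlying $S(\la,\zeta)$. A natural path is to exploit the compatibility of $\mc L(M(\nu),-)$ with Arkhipov's twisting functor noted in Theorem A(2) and the fact that, for type I-0, induction from $\g_0+\g_1$ commutes up to isomorphism with the application of $\mc L(M(\nu),-)\otimes_{U(\g_\oa)}M(\nu,\zeta)$ on Verma modules; this compatibility, together with the known $\g_\oa$-analogue of Kostant's theorem, should suffice to transfer the annihilator equality across the equivalence.
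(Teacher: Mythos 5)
Your reduction is the right one: once $S(\la,\zeta)\cong\wdL(\la,\zeta)$ is known, Theorem B follows from Theorem A(2), since the equivalence \eqref{eq::11} preserves annihilators. Your Steps 1 and 2 (choosing $\nu$ and verifying $\la\in\mc X(\nu)$ from $W_\zeta$-anti-dominance via $\mf{sl}_2$-theory) are sound, though the paper packages that as $\mc X_0(\nu)\subseteq\mc X(\nu)$ together with Lemmas \ref{lem::222} and \ref{lem::3}. The problem is Step 3, which is the only step with actual content, and you do not close it: the final paragraph flags it as ``the main obstacle,'' proposes a ``natural path,'' and asserts it ``should suffice'' --- that is a statement of intent, not a proof. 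In particular the claim that Kac induction from $\g_{\geq 0}$ commutes with $\mc L(M(\nu),-)\otimes_{U(\g_\oa)}M(\nu,\zeta)$ on Verma modules is never justified, and the phrase ``the image of the Verma module $M(\la)\in\mc O^{\vpre}$'' conflates the $\g_\oa$-Verma module $M(\la)$ with the super Verma $\wdM(\la)$; neither of these need lie in $\mc O^{\vpre}$ (the object that does is $\Delta_\nu(\la)=F(\wdM(\la))$), so ``tracing the functor on standard objects'' requires more care than you give it.

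The paper closes this gap through a different intermediary: the Harish-Chandra bimodule $\mc L(M(\nu),\wdM(\la))\in\mc B_\nu$. On one side, Lemma \ref{thm::14} (using exactness of $\mc L(M(\nu),-)$ and the description of $\mc O^{\vpre}=\coker(\widetilde{\mc F}\otimes\Ind M(\nu))$) shows that the equivalence $\mc B_\nu\cong\mc O^{\vpre}$ sends the top of $\mc L(M(\nu),\wdM(\la))$ to $S_\nu(\la)$. On the other side, Theorem \ref{thm::Ch} --- cited from the author's earlier paper \cite{Ch21} --- shows that the Mili\v{c}i\'c--Soergel-type equivalence $\widetilde T:\mc B_\nu\to\widetilde{\mc N}(\nu+\mc X,\zeta)^1$ sends that same top to $\wdL(\la,\zeta)$, with matching left annihilator. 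Composing the two gives $S(\la,\zeta)=\wdL(\la,\zeta)$, which is exactly Theorem \ref{thm::15}. Note that this second ingredient is not something you can derive from ``compatibility of $\mc L(M(\nu),-)$ with twisting functors'' alone: it is a separate result about the Whittaker-side equivalence, and the paper leans on \cite{Ch21} for it. If you want a self-contained proof you would have to either import Theorem \ref{thm::Ch} or prove the analogue of the Mili\v{c}i\'c--Soergel identification for super standard Whittaker modules, neither of which your sketch does.
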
 
 
 In the case that $\la$ is typical and $M(\la,\zeta)$ is simple (resp. anti-dominant), Theorem B recovers \cite[Corollary 29]{Ch21} (resp. \cite[Corollary 31]{Ch21}) for the Lie superalgebras $\gl(m|n), \mf{osp}(2|2n)$ (resp. $\pn$); see also \cite[Section 5.5]{Ch21} for the idea of using Harish-Chandra $(\g,\g_\oa)$-bimodules. In the case that $\g=\g_\oa$ and $\la$ is anti-dominant, Theorem B recovers \cite[Theorem 39]{BM}; see also Section \ref{Sect::cons} for more consequences of Theorem B. 

For reductive Lie algebras $\g=\g_\oa$ and Lie superalgebras $\g=\gl(m|n)$, $\mf{osp}(2|2n)$, the inclusions between primitive ideals have been studied in literature; see, e.g., \cite{Co16,CoM1,Ja83,Le96,Vo80} and references therein. In particular, one can use Theorem B  to determine all inclusions between the annihilator ideals of simple Whittaker modules in terms of the $\Ext^1$-quiver of the category $\mc O$ in the case of reductive Lie algebras and $\gl(m|n)$ (see  \cite[Theorem 7.4]{Co16}). As a consequence, Theorem B implies the validity of the conjecture of Batra and Mazorchuk given in \cite[Conjecture 40]{BM} for the special case of simple Whittaker modules of integral type; see Section \ref{sec::533}.


  	Let us briefly explain our idea of the proofs of the conclusions concerning Lie superalgebras of type I-0 in Theorems A and B below. We shall show that the equivalence in \eqref{eq::11} sends  $S_\nu(\mu)$  to  $\wdL(\mu,\zeta)$, for any integral $W_\zeta$-anti-dominant weight $\mu$, namely, we have $S(\mu,\zeta)=\wdL(\mu,\zeta)$.
  	 Therefore, the equivalence \eqref{eq::11} allows to reduce the proof of Theorem B to the description of annihilator ideals  $\Ann_{\text{U}(\g)}S_\nu(\mu)$ in Part (2) of Theorem A.
  	  

  	 
  	To determine $\Ann_{U(\g)}S_\nu(\mu)$, we recall that there is a usual approach to the study of primitive ideals using   the Joseph's  completion functors and the Arkhipov's twisting functors; see, e.g., \cite{AS,CoM1, KM}. Following \cite{KM}, we realize twisting functors  on the category $\mc O^\Z$  as partial coapproximation functors. This then enables us to prove that each simple object $S_\nu(\mu)$ in $\mc O^{\vpre}$ is isomorphic to a twisted simple module in $\mc O$ and to identify  $\Ann_{U(\g)} S_\nu(\mu)$ with  $\Ann_{U(\g)} \wdL(\mu)$, for any integral and  $W_\zeta$-anti-dominant weight $\mu$.

\subsection{} As a   subcategory of $\widetilde{\mc N}$, the category $\mc O^\Z\cong \widetilde{\mc N}(\mc X,0)^1$ has a block decomposition determined by the central characters of $\g$; see  \cite[Section 1.13]{Hu08} for reductive Lie algebras and \cite{CMW, Se03} for Lie superalgebras. Denote by  $\widetilde{\mc N}^\Z$ the Serre subcategory of $\widetilde{\mc N}$ generated by  simple Whittaker modules of integral type. We  consider linkage classes for (indecomposable) blocks of $\widetilde{\mc N}^\Z$.  
	
	 Our main examples are  $\g = \gl(m|n),\mf{osp}(2|2n)$ and $\pn$. In each of these cases,  $\widetilde{\mc N}^\Z$ is generated by $\wdL(\la,\zeta)$ with $\la\in \mc X$ and $\zeta \in \ch \mf n_\oa^+$. It follows from Theorem B  that $\wdL(\la,\zeta)$ and $\wdL(\la)$ admit the same central character of $\g$ provided $\la$ is $W_\zeta$-anti-dominant. In the cases that $\g =\gl(m|n), \mf{osp}(2|2n)$, we will show that the  $\widetilde{\mc N}^\Z$ is a direct  summand of $\widetilde{\mc N}$ and its  blocks can be described via the central characters just as for the category $\mc O^\Z$. 

More precisely, we fix   a $W$-invariant bilinear form $\langle,\rangle$ on $\h^\ast$ as in Sections \ref{sect::271}-\ref{sect::272}. Denote the Weyl vector by $\rho$ (see Section \ref{Sect::224}) and  define an equivalence $\sim$ on $\mc X$  by declaring  
$$\la\sim \mu,  \text{ for }\la,\mu \in \mc X,\text{ if } \mu = w \cdot (\la-\sum_{i=1}^kc_i \alpha_i),$$ for some $w\in W$,  integers $$c_1, c_2,\ldots, c_k$$ and mutually orthogonal isotropic roots $$\alpha_1, \alpha_2,\ldots, \alpha_k$$  such that $\langle\la+\rho, \alpha_i\rangle=0,$ for all $1\leq i\leq k.$  The following is our
third main result:

\begin{thmC} Let $\nu \in \mc X$ be dominant such that $W_\nu =W_\zeta$. For each $\la\in \mc X(\nu)$, denote the projective cover of $S(\la,\zeta)$ in $\widetilde{\mc N}(\mc X, \zeta)^1$ by $\widetilde{P}(\la,\zeta)$.
	Then 
	\begin{itemize}
		\item[(1)] {\em(}Cartan matrix{\em)}  Suppose that $\g$ is either a  basic classical Lie superalgebra or a Lie superalgebra of type I-0. For any $\la,\mu\in \mc X(\nu)$, we have the multiplicity formula \begin{align}
		&[\widetilde P(\la,\zeta):S(\mu,\zeta)] = [\widetilde P(\la): \wdL(\mu)], \label{eq::mul}
		\end{align} where $\widetilde P(\la)$ denotes the projective cover of $\wdL(\la)$ in $ {\mc O}$. If particular, if the multiplicity in \eqref{eq::mul} is positive then $S(\la,\zeta)$ and $S(\mu,\zeta)$ lie in the same block of $\widetilde{\mc N}$.
		\item[(2)] 
		Let $\g=\gl(m|n),~\mf{osp}(2|2n)$ with $\zeta \in \ch\mf n_\oa^+$.  For any integral weights $\la, \mu \in \h^\ast$,  the simple modules $\widetilde{L}(\la,\zeta)$ and $\widetilde{L}(\mu,\zeta)$ are in the same  block of $\widetilde{\mc N}$ if and only if $\la \sim \mu$.
		\item[(3)] 		For $\mf g = \pn$,   the    $\widetilde{\mc N}^\Z$ has at most $n+1$ blocks, up to equivalence.
	\end{itemize}
\end{thmC}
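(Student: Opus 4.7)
The plan is to leverage the equivalence of Theorem A to import multiplicity and central-character data from $\mc O$ (equivalently, from $\mc O^{\vpre}$) into the Whittaker category $\widetilde{\mc N}(\mc X,\zeta)^1$, and then to stitch block information together across weights by invoking central characters of $\g$ supplied by Theorem B.

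For Part (1), I would first observe that under the equivalence $\mc O^{\vpre} \simeq \widetilde{\mc N}(\nu+\mc X,\zeta)^1$ of Theorem A, projective covers correspond to projective covers, so $\widetilde P(\la,\zeta)$ matches the projective cover $P_\nu(\la)$ of $S_\nu(\la)$ in $\mc O^{\vpre}$. Exactness of the equivalence then gives $[\widetilde P(\la,\zeta):S(\mu,\zeta)] = [P_\nu(\la):S_\nu(\mu)]$, and the realization of $\mc O^{\vpre}$ as a partial coapproximation of $\mc O$ in the sense of Koenig--Mazorchuk and Mazorchuk--Stroppel yields $[P_\nu(\la):S_\nu(\mu)] = [\widetilde P(\la):\wdL(\mu)]$, establishing \eqref{eq::mul}. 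To conclude that positive multiplicity forces the two simples to lie in the same block of the ambient category $\widetilde{\mc N}$, I would verify that $\widetilde P(\la,\zeta)$ remains indecomposable inside $\widetilde{\mc N}$; this follows from the indecomposability of $P_\nu(\la)$ via Theorem A together with the observation that any summand decomposition in $\widetilde{\mc N}$ respects the local-finiteness conditions over $Z(\g_\oa)$ and $\mf n^+$ that cut out $\widetilde{\mc N}(\nu+\mc X,\zeta)^1$.

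For Part (2), the forward direction uses Theorem B directly: since $\wdL(\la,\zeta)\cong\wdL(\la',\zeta)$ for $\la'\in W_\zeta\cdot\la$, we may assume both $\la$ and $\mu$ are $W_\zeta$-anti-dominant; Theorem B then identifies the central characters of $\g$ on $\wdL(\la,\zeta)$ and $\wdL(\mu,\zeta)$ with those on $\wdL(\la)$ and $\wdL(\mu)$ respectively, so being in the same block of $\widetilde{\mc N}$ forces the latter pair to share a central character, which is equivalent to $\la\sim\mu$ by the known description of blocks of $\mc O^\Z$ for $\gl(m|n)$ and $\mf{osp}(2|2n)$. The reverse direction is handled by Part (1): the linkage principle for $\mc O^\Z$ in these cases shows that $\wdL(\la)$ and $\wdL(\mu)$ with $\la\sim\mu$ lie as composition factors of a single indecomposable projective, and transporting this projective through \eqref{eq::mul} produces an indecomposable object in $\widetilde{\mc N}(\mc X,\zeta)^1$ containing both $S(\la,\zeta)$ and $S(\mu,\zeta)$. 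Here one also relies on the auxiliary fact, to be established separately, that $\widetilde{\mc N}^\Z$ is a direct summand of $\widetilde{\mc N}$ in these cases, so that a block of $\widetilde{\mc N}(\mc X,\zeta)^1\subseteq\widetilde{\mc N}^\Z$ is automatically a union of blocks of $\widetilde{\mc N}$.

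For Part (3), the same translation reduces the problem to counting central-character classes of $\pn$ on integral-weight simples of $\mc O^\Z$, which is known to yield at most $n+1$ values from the structure of $Z(U(\pn))$; Theorem B propagates this upper bound across all $\zeta$ by identifying the central character of $\wdL(\la,\zeta)$ with that of $\wdL(\la)$ after a $W_\zeta$-shift to a $W_\zeta$-anti-dominant representative, while Part (1) ensures that simples with matching central characters can be realized as composition factors of a common indecomposable. The main obstacle I expect is in Part (3): for $\pn$ one must verify that the passage from $\mc O$-blocks to $\widetilde{\mc N}$-blocks does not produce additional classes from modules that cannot be reduced to a $W_\zeta$-anti-dominant integral highest weight, and the subtle typicality/atypicality theory of $\pn$ makes this bookkeeping genuinely delicate.
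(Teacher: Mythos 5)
Your high-level strategy for Part (1) is on the right track, but the key step is mis-attributed. The equality $[\widetilde P(\la):S_\nu(\mu)]_{\mc O^{\vpre}}=[\widetilde P(\la):\wdL(\mu)]_{\mc O}$ does not follow from "partial coapproximation" in the Koenig--Mazorchuk sense (that machinery is used in the paper to realize twisting/completion functors, not to compare multiplicities). The paper's mechanism (Theorem \ref{thm::24}) is the exactness of the Harish--Chandra bimodule functor $F=\mc L(M(\nu),-)\otimes_{U(\g_\oa)}M(\nu):\mc O\rightarrow\mc O^{\vpre}$, together with the computation that $F(\wdL(\mu))\cong S_\nu(\mu)$ for $\mu\in\mc X(\nu)$ and $F(\wdL(\gamma))=0$ otherwise, and $F(\widetilde P(\la))\cong\widetilde P(\la)$. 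Applying $F$ to a composition series of $\widetilde P(\la)$ in $\mc O$ then gives the formula. Your conclusion about same-block is fine (any indecomposable finite-length object has all composition factors in one block), but the exactness argument is the technical core you would need to supply.

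For Part (2), the forward direction has a genuine gap. You claim that two integral weights with the same central character of $\g$ must satisfy $\la\sim\mu$ "by the known description of blocks of $\mc O^\Z$." This is false: the Harish--Chandra-type classification of central characters (e.g.\ \cite[Theorem 2.30]{ChWa12}) allows the atypicality coefficients $c_i$ to be arbitrary complex numbers, whereas $\sim$ requires integers. Already for $\gl(1|1)$ (where every weight is integral), $\la$ and $\la-\tfrac12(\vare_1-\vare_2)$ share a central character when $\la$ is atypical but are not $\sim$-equivalent. The paper fixes this with a separation argument (Lemma \ref{lem::23}) forcing $\mu\in\la+\Z\Phi$, which, combined with orthogonality of the isotropic roots, yields integrality of the $c_i$. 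Your argument skips this entirely. Your reverse direction via Part (1) also has a gap: iterating the $\sim$-relation can pass through weights that are not $W_\zeta$-anti-dominant, hence not in $\mc X(\nu)$, and then there is no $S_\nu(\cdot)$ to transport. The paper instead proves the $W$-orbit linkage via the Kac functor and socle considerations (Lemma \ref{lem::21}) and the atypical-root linkage via the super Jantzen sum formula combined with the composition-factor formula for standard Whittaker modules (Lemmas \ref{lem::16} and \ref{thm::chthmC}). For Part (3), you acknowledge the difficulty, but you do not supply the actual tools the paper uses: the dual Kac functor $K'$, the exact functor $\widetilde\Gamma_\zeta$ applied to $K'(M(\la))$, and the auto-equivalences $\C_{k\omega}\otimes-$ to control block equivalences; "counting central-character classes" is not the mechanism since $\pn$ has no useful center and the count concerns blocks up to equivalence rather than central-character classes.
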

We remark that   Part (1) of Theorem C provides a full   description of blocks of the category $\widetilde{\mc N}(\mc X, \zeta)^1$, generalizing that of  $\widetilde{\mc O}^\Z \cong \widetilde{\mc N}(\mc X,0)^1$. In particular, in the case of ortho-symplectic Lie superalgebras $\mf{osp}(2m|2n+1)$ and $\mf{osp}(2m|2n)$ beyond type I, the multiplicity \eqref{eq::mul} can also be computed via Kazhdan-Lusztig combinatorics by the work of Bao and Wang \cite{BaoWang, BaoWang2}.

\subsection{}   The paper is organised as follows. In Section \ref{Sect::Pre}, we provide some background materials on quasireductive Lie superalgebras. In particular, we review the parabolic decompositions, the BGG category $\mc O$, the dualities and the   Lie superalgebras of type I-0.  

In Section \ref{Sect::Realization}, several results on Harish-Chandra bimodules are introduced. We set up the realizations of Joseph's Enright completion functors and  Arkhipov's twisting functors as partial approximation functors and partial coapproximation functors, respectively.  Section \ref{Sect::35} offers a description of the category $\mc O^{\vpre}$, for any dominant  weight $\nu\in \h^\ast$.  We provide a realization of simple objects $S_\nu(\mu)$ in $\mc O^{\vpre}$ in terms of twisted simple modules in the category $\mc O$. 

Section \ref{Sect::AnnSWhi} of the paper is devoted to the proofs of Theorem A and Theorem B.  The necessary preliminaries for Whittaker modules are gathered in Section \ref{Sect::41}.  We establish in Section \ref{Sect::PfOfThmA} the equivalence \eqref{eq::11} introduced in Theorem A. We then bring all the above together to obtain   complete proofs of Theorems A and  B.  The proof of  Part (1) of Theorem C is given in Section \ref{Sect::cons}. Section \ref{Sect::5Link}  is devoted to some consequences and the proof of Part (2) of Theorem C.  Some helpful  technical tools are developed in Section \ref{Sect::Tetoolblocks}. Finally, in Section \ref{cor::644} we give an example of a block of $\widetilde{\mc N}$ for the non-type I Lie superalgebra $\mf{osp}(3|2)$.




\subsection*{Acknowledgment}  The author is partially supported by a MoST grant and the National Center for Theoretical Sciences, and he would like to deeply thank  Shun-Jen Cheng and  Volodymyr Mazorchuk for helpful discussions and  comments.

\section{Preliminaries} \label{Sect::Pre}
Throughout the paper, the symbols $\C,\mathbb R, \Z, \Z_{\geq 0}$ stand for the sets of all complex numbers, real numbers, integers and non-negative integers. All vectors spaces, algebras, tensor products, et cetera, over the field $\C$. Denote by $\Z_2=\{\oa, \ob\}$ the abelian group of two elements. For a homogeneous element $x$ of a vector superspace $V=V_\oa\oplus V_\ob$, we denote its parity by $\overline x\in \Z_2$, namely, $\ov x=i$ if $x\in V_i$, for $i\in \oa, \ob$. In this section, we collect preliminaries on quasireductive Lie superalgebras which are relevant
for the rest of the paper.

\subsection{Lie superalgebras and functors between representation categories} \label{sect::pre1} Let $\g=\g_\oa\oplus \g_\ob$ be a finite-dimensional Lie superalgebra, see, for example, \cite{Ka1}. Throughout the present paper, we assume that $\g$ is a quasireductive Lie superalgebra, namely,   $\mf g_\oa$ is reductive and $\g_\ob$ is a semisimple $\mf g_\oa$ under the adjoint action; see \cite{Se11}. We will mainly consider  the following series of quasireductive Lie superalgebras
from Kac's list in \cite{Ka1}:
\begin{align} 
&\gl(m|n),~\mf{sl}(m|n),~\mf{psl}(n|n),~\mf{osp}(m|2n),~D(2,1|\alpha),G(3),F(4),\label{Kaclist1} \\ 
&\mf{p}(n),~[\mf{p}(n),\mf{p}(n)],~\mf{q}(n),~\mf{sq}(n),~\mf{pq}(n),~\mf{psq}(n).\label{Kaclist2}
\end{align}  We refer to \cite{ChWa12,Mu12} for more details. A Lie superalgebra   in \eqref{Kaclist1} is called {\em basic classical}; see \cite[Section 1.1]{ChWa12}.  We denote the universal enveloping algebra by  $U(\g)$ and its center by $Z(\g)$. 
We will sometimes use the notations $\widetilde U: = U(\mf g)$ and $U:=U(\mf g_\oa)$.

	We denote by $\mf g$-Mod  the category of all $\mf g$-supermodules, with parity preserving module morphisms. The parity shift functor on $\g$-Mod is denoted by $\mc S$. Similarly, we define the category $\mf g_\oa$-Mod. Observe that $\mf g_\oa$-Mod is the direct sum of two copies of the usual representation category.

Fo any $M\in \g$-Mod, the socle $\text{soc}(M)$ of $M$ is defined as the sum of all simple submodules, in case $M$ has a simple submodule, and as zero otherwise. Dually, the radical $\rad(M)$ is defined as the intersection all maximal submodules, in case $M$ has a maximal submodule. We define $\rad(M)=M$ in case $M$ has no maximal submodule. The top of   $M$ is defined as $\text{top}(M):=M/\rad M$.

For any subalgebra $\mf s$ of $\g$, we denote by $\Res^{\mf g}_{\mf s}(-)$ the restriction functor from $\mf g$-Mod to $\mf s$-Mod. Then $\Res^{\mf g}_{\mf s}(-)$ has left and right adjoint functors
$$\Ind^{\g}_{\mf s}(-)=U(\g)\otimes_{U(\mf s)}-\qquad\mbox{and}\qquad \Coind^{\g}_{\mf s}(-)=\Hom_{U(\mf s)}(U(\g),-).$$
We let $\Res(-) = \Res_{\mf \g_\oa}^{\g}(-): \mf g\text{-Mod} \rightarrow \mf g_\oa\text{-Mod}$ denote the restriction functor from $\mf g$ to $\mf g_\oa$. Also, we have exact induction and coinduction functors $\Ind(-):=\Ind_{\g_\oa}^{\g}(-)$ and $ \Coind(-):=\Coind_{\g_\oa}^{\g}(-)$ from $\mf g_\oa\text{-Mod}$ to $\mf g\text{-Mod}$. 
They are left and right adjoint functors to $\Res(-)$. 
By \cite[Theorem~2.2]{BF} (see also \cite{Go}) we have isomorphism of functors
$\Ind(-)\;\cong\; \Coind(\Lambda^{\text{max}}(\mf g_\ob)\otimes -).$

For any $\g$-module $M$ having a composition series, we will freely use   $[M:L]$ to denote the Jordan-H\"older decomposition multiplicities of a simple module $L$ in a composition series of $M$. 

\subsection{Parabolic and triangular decompositions}
In this section, we follow the convention of, e.g., \cite[Section 2.4]{Ma} and \cite[Section 1.4]{CCC}, to define parabolic and triangular decompositions in the paper. 
\subsubsection{} \label{sect::221}
Throughout the present paper, we fix a triangular decomposition of $\g_\oa$
\begin{align}
&\g_\oa = \mf n_\oa^- \oplus \mf h \oplus \mf n_\oa^+. \label{eq::parad}
\end{align} with Cartan subalgebra $\mf h$ and nil-radials $\mf n^\pm_\oa$. The triangular  decomposition   in \eqref{eq::parad} gives rise to the set $\Phi\subset \h^\ast$ of roots  and the associated {\em root space decomposition}:
\begin{align*}
&\mf g =\bigoplus_{\alpha \in \Phi\cup \{0\}} \mf g^\alpha,
\end{align*} where $\mf g^\alpha :=\{X\in \g|~[h,X] = \alpha(h)X, \text{~for all }h\in \h\}$.

We recall the definitions of triangular decomposition, parabolic decomposition and Borel subalgebra of the quasireductive Lie superalgebra $\g$ from \cite[Section 1.4]{CCC} as follows. For a given $h\in\fh$, the  {\em parabolic decomposition} $\fg=\fu^-\oplus\fl\oplus\fu^+$ (determined by the vector $h$) is given by 
\begin{equation}\label{deflu}\fl:=\bigoplus_{\Real \alpha(h)=0} \fg^\alpha,\quad \fu^+:=\bigoplus_{\Real \alpha(h)>0} \fg^\alpha, \quad \fu^-:=\bigoplus_{\Real \alpha(h)<0} \fg^\alpha,\end{equation} where $\Real(z)$ denotes the real part of $z\in \mathbb C$. 

In the paper, we consider the Borel-Penkov-Serganova subalgebras from  \cite{PS} (see also \cite[Section 3.2]{Mu12} and \cite[Section 1.4]{CCC}) with  one additional assumption that {\em the subalgebra $\g^0$ is purely even}. Namely, the parabolic decomposition in \eqref{deflu} is said to be a {\em triangular decomposition} if it satisfies \begin{align}
&\mf l = \mf g^0 =\mf h.
\end{align}   In this case, we let $\mf n^\pm:=\mf u^\pm$ and we refer to   $\mf b:= \mf h\oplus \mf n^+$ as    the  corresponding {\em Borel subalgebra} of $\g$. We remark that this assumption excludes $\g$ to be in the family of Q-type Lie superalgebras in \eqref{Kaclist2}.  We also  remark that in \cite[Section 1.4]{CCC} the Cartan subalgebra is defined as $\mf g^0$ and not necessarily even in \cite{CC}.  For a given Borel subalgebra $\bf b$ of $\mf g_\oa$, every Borel subalgebra of $\g$ is conjugate to one which has $\bf b$ as underlying even subalgebra by \cite[Section 1.3]{CCC}.

Let $\Phi^+$ be the set of roots of $\mf n^+$.  Denote by $\Phi^+_\oa\subseteq \Phi^+$  the set of  roots of $\mf n^+_\oa$, which forms a positive system of $\g_\oa$.  We set $\Pi_0$ to be    the corresponding simple system for $\Phi^+_\oa$.

\subsubsection{}  \label{Sect::224} The Weyl group $W$ is defined as the Weyl group of $\g_\oa$ with respect to the triangular decomposition \eqref{eq::parad}. We fix a $W$-invariant bilinear form $\langle ,\rangle$ on $\h^\ast$. Define  $$\rho_\oa:= \frac{1}{2}\sum_{\alpha\in \Phi_\oa^+}\alpha,~\rho_\ob:=\frac{1}{2}\sum_{\alpha \in \Phi^+\backslash \Phi_\oa^+}\alpha.$$ The {\em Weyl vector} $\rho \in \h^\ast$ is defined by $\rho:=\rho_\oa -\rho_\ob$. For $\alpha \in \Phi_\oa^+$, we let $s_\alpha$ denote the reflection associated with the   root $\alpha$. 
The corresponding dot-action of $W$ on $\h^\ast$ is defined  as \begin{align}
&w\cdot \la =w(\la+\rho_\oa) -\rho_\oa, \label{eq::dotact}
\end{align} for any $\la \in \h^\ast$. For any $\alpha \in \Phi_\oa^+$, we set $\alpha^\vee:= 2\alpha/\langle\alpha, \alpha\rangle$; see \cite[Section 0.2]{Hu08}. A weight is said to be {\em integral} if $\langle \la, \alpha^\vee\rangle \in \Z$, for any $\alpha\in \Phi_\oa^+$. 
We denote by $\mc X \subset\fh^\ast$ the set of integral weights.  A weight $\la$ is said to be  {\em dominant} (resp. {\em anti-dominant}) if $\langle \la+\rho_\oa, \alpha^\vee \rangle \notin \Z_{<0}$ (resp. $\langle \la+\rho_\oa, \alpha^\vee \rangle \notin \Z_{>0}$), for any $\alpha \in \Phi_\oa^+$. For a given   weight $\la$, we let $W_\la$ denote the stabilizer subgroup of $\la$ under the dot-action of $W$. Finally, we let $w_0\in W$ denote the longest element in $W$. 

\subsubsection{} \label{Sect::222}  As observed in \cite[Section 3.1]{Mu12} and \cite[Section 1.3]{CCC}, for any $x\in \g_\oa$ that is nilpoent in $\g_\oa$, the automorphism $\exp(\ad x)$ of $\g_\oa$ extends to an automorphism of $\g$. For each $w\in W$, we extend the action of $w$ on $\g_\oa$ to an automorphism $\varphi^w$ of $\g$ in this way. In particular, we recall  the automorphism $\varphi^{w_0}$    associated to  $w_0$  from \cite[Section 1.3]{CCC}, which  defines an  anti-involution $\sigma$ on $\g$ by letting $$\sigma :=-\varphi^{w_0}.$$
The $\sigma$ satisfies that 
\begin{align}  
&\sigma(\mf h)= \mf h,~\sigma(\mf n^+_\oa) =\mf n_\oa^-, \label{eq::gdinv} \\
&(\widehat \la)(h) = \la(\sigma(h)),\text{ for } h\in \h \text{ and }\la \in \h^\ast, 
\end{align} where $\widehat{(\cdot)}:\h^\ast \rightarrow \h^\ast$ denotes  the involution given by $\widehat \la =-w_0\la$.  We remark that an anti-involution satisfying \eqref{eq::gdinv} is  also referred to as a {\em good involution} in the sense of \cite[Section 2.2.4]{CC}. 

If $\g$ is a basic classical Lie superalgebra from \eqref{Kaclist1}, there is also a natural antiautomorphism  introduced in \cite[Proposition 8.1.6]{Mu12}, which we will denote by $\sigma'$. In this case, we have $\la(h) = \la(\sigma'(h))$, for any $h\in \h$.

\subsection{Lie superalgebras of type I-0} \label{sect::ass}

In this paper, when the statements or proofs are specific to Lie superalgebras of type I, we will make additional assumptions \eqref{ass::A2} on $\g$ and \eqref{pre::tridec} on its triangular decomposition, which we shall explain in this section.

 \subsubsection{} 
 A (quasireductive) Lie superalgebra $\g$  is referred to as a {\em Lie superalgebra of type I-0} if $\g$ is equipped with a compatible $\Z$-grading $\g =\g_{-1}\oplus \g_0 \oplus \g_{1}$ induced by a grading element $H\in \h$, that is, 
 \begin{align} 
 &\g_0 =\g_\oa \text{ and } \g_\ob =\g_{-1}\oplus\g_1 \text{ with }[\g_1,\g_1] =[\g_{-1},\g_{-1}] =0. \label{ass::A1}\tag{{\bf A1}}\\
 &[H, x]= kx, \text{ for }x \in \g_k \text{ with }k=\pm 1, 0. \label{ass::A2}\tag{{\bf A2}}
 \end{align}  We will use   notations $\g_{\leq 0}:=\g_0\oplus \g_{-1}$ and  $\g_{\geq 0}:=\g_0\oplus \g_{1}$.
 
 \subsubsection{} \label{Sect::2231}  In the case of Lie superalgebras of type I-0, we always assume that the associated triangular decompositions satisfy the assumption \eqref{pre::tridec} as follows.  Fix a triangular decomposition  \eqref{eq::parad}  with $\mf n^{\pm} :=   \mf n^{\pm}_\oa \oplus \g_{\pm1}$. We claim that \eqref{ass::A1} and \eqref{ass::A2} imply the following assertion  
 \begin{align} 
 &\mf g= \mf n^- \oplus \mf h \oplus \mf n^+ \text{ is a triangular decomposition of $\g$}. \label{pre::tridec} \tag{{\bf A3}}
 \end{align} 
   To see this, suppose that the triangular decomposition \eqref{eq::parad} is determined by $h'\in \h$, namely, $\mf h=\bigoplus_{\Real \alpha(h')=0} \fg^\alpha_\oa,\quad \fn^+_\oa=\bigoplus_{\Real \alpha(h')>0} \fg_\oa^\alpha$ and $ \fn_\oa^-=\bigoplus_{\Real \alpha(h')<0} \fg_\oa^\alpha$. Then there is a positive real number $\epsilon$ such that $H+\epsilon h'$ determines the desired triangular decomposition \eqref{pre::tridec}.  We   call $\mf b:= \mf h+\mf n^+$ the {\em standard Borel subalgebra}. By \cite[Section 1.4]{CCC}, the subalgebra $\mf b^r:= \mf b_\oa +\g_{-1}$ is again a Borel-Penkov-Serganova subalgebra, which was denoted by $\widehat{\mf b}$ in \cite{CCC}. We will call $\mf b^r$ the {\em reverse Borel subalgebra}.  
 \begin{exs} \label{ex::ex1} The following quasireductive  Lie superalgebras are of  type I-0:
 	\begin{enumerate}
 		\item[(1)] The reductive Lie algebras $\mf g=\mf g_\oa$. 
 		\item[(2)] The general linear Lie superalgebras $\gl(m|n)$; see Section \ref{sect::271}.
 		\item[(3)] The ortho-symplectic Lie superalgebras $\mf{osp}(2|2n)$; see Section \ref{sect::272}.
 		\item[(4)] The periplectic Lie superalgebras $\pn$; see Section \ref{sect::273}. 
 		\item[(5)]  A semisimple extension
 			\begin{align*}
 			&\g:=(\mf s\otimes \Lambda (\xi))\rtimes \mf d
 			\end{align*} of the Takiff superalgebra determined by a simple Lie algebra $\mf s$ as studied in \cite{CCo}. Here $\mf d=\C\partial_{\xi}\oplus \C\xi\partial_\xi$ denotes the derivation superalgebra of the Grassmann superalgrbra $\Lambda(\xi)$ in odd indeterminate $\xi$. Let $\mf b^{\mf s}$ and $\mf h^{\mf s}$ be a Borel subalgebra and a Cartan subalgebra from a triangular decomposition of $\mf s$, respectively. There is a  type-I gradation  $$\g_{-1}= \mf s\otimes \xi,~\mf g_0=\mf s\oplus \C\xi\partial_{\xi},~\mf g_1= \C\partial_\xi,$$ given by the grading element $-\xi\partial_\xi$ lying in the Cartan subalgebra $\mf h = \mf h^{\mf s}\oplus \C\xi\partial_{\xi}$ with a (distinguished)  Borel subalgebra  given by $\mf b = \mf b^{\mf s}\oplus \C\xi\partial_{\xi}\oplus \g_{1}$; see \cite[Section 2.1]{CCo} for more details.
 	\end{enumerate}
 \end{exs}  
 

 \subsubsection{}  For $\g$ a Lie superalgebra of type I, we recall the Kac functor $K(-): \mf g_\oa$-Mod $\rightarrow \mf g$-Mod defined as $$K(M):=U(\mf g)\otimes_{\mf g_{\geq 0}} M,$$ for any $M\in \mf g_\oa$-Mod by letting $\g_1$ act on $M$ trivially. It is proved in \cite[Theorem A]{CC} that the correspondence $$V\mapsto \wdL(V):=K(V)/\rad K(V)$$  gives rise to a bijection between the sets of isomorphism classes of simple $\g_\oa$- and $\g$-modules. Recall that $\mc S$ denotes the parity shift functor. It turns out that, for any simple $\g_\oa$-module $V$, we have (see also \cite[Corollary 4.5]{CM}): $$\wdL(V) \not \cong \mc S \wdL(V)\cong \wdL(\mc S V).$$  When no confusion is possible, we will not make a distinction between both $\wdL(V)$ and $\mc S \wdL(V)$ and just write $\wdL(V)$.

\subsection{BGG category $\mc O$} Let 
\begin{align}
&\g=\mf n^-\oplus \mf h\oplus \mf n^+. \label{eq::trian}
\end{align} be a triangular decomposition as defined in Section \ref{sect::221}. For type-I Lie superalgebras, we assume that \eqref{eq::trian}  comes from   \eqref{pre::tridec}. The BGG  category $\mc O$ associated to the triangular decomposition  in \eqref{eq::trian} is defined as the  full subcategory of $\g$-Mod consisting of all finitely-generated $\mf g$-modules on which $\mf h$ acts semisimply and $\mf b$ acts locally finitely. Thus $\mc O$ is the category of $\g$-modules restricted  to $\mf g_\oa$-modules by $\Res(-)$ in the classical BGG category $\mc O_\oa$ of $\mf g_\oa$-modules as defined in \cite{BGG}. 
We denote by $\widetilde{\mc F}$ and ${\mc F}$ the category of finite-dimensional $\g$-modules and $\mf g_\oa$-modules, respectively.

\subsubsection{} We recall that the category $\mc O$ has a  structure of  highest weight category. As given in \cite[Section 3.2]{CCC}, we define the partial order $\le$ on $\fh^\ast\times\mZ_2$ as the transitive closure of the relations 
\begin{align}
&(\lambda\pm\alpha,i+j) \le(\lambda,i),~\mbox{for $\alpha\in \Phi(\mf n^\mp_j)$ and $i,j\in\mZ_2$},
\end{align} where $\Phi(\mf n^\mp_j)$ denotes the set of all root in $\mf n^\mp_j$. For any one-dimensional even $\h$-module $\C_\la$ with $\la\in \h^\ast$, the corresponding  Verma module over $\g_\oa$ (resp. over $\g$) is defined as $M(\la):=U(\mf g_\oa)\otimes_{\fb_{\oa}}\C_\lambda$ (resp. $\wdM(\la, \ov i):=\mc S^i(U(\mf g)\otimes_{\fb}\C_\lambda)$ with $i=0,1$) by letting $\mf n_\oa^+$ (resp. $\mf n^+$) act on $\C_\la$ trivially. 

For any $\la\in \h^\ast,~i\in \Z_2$, we denote by $L(\la)$ and $\widetilde{L}(\la, i )$ the simple quotients of   $M(\la)$ and $\widetilde{M}(\la, i)$, respectively. Then the sets $\{L(\la)|~\la\in \h^\ast\}$ and $\{\wdL(\la,i)|~\la\in \h^\ast, i\in \Z_2\}$ form isomorphism classes of simple modules in $\mc O_\oa$ and in  $\mc O$, respectively.  

Suppose that $\mf g$ is of type I-0.  For any $\la \in \h^\ast$ and $i=0,1$, we may observe that   $\widetilde{M}(\la, \ov i)    \cong K(\mc S^i M(\la)).$
Also,  there are canonical epimorphisms $$\widetilde{M}(\la, \ov i)\onto {\mc S}^iK(L(\la)),~{\mc S}^iK(L(\la))\onto \widetilde{L}(\la,\ov i ).$$ 

It will not important for us to make a distinction between simple modules $\wdL(\la,\oa)$ and $\wdL(\la,\ob)$, and so we ignore the parities of their highest weight spaces and just write $\wdL(\la)$. Similarly, we will use $\wdM(\la)$ instead of $\wdM(\la,i)$, for any $(\la,i)\in \h^\ast\times \Z_2$. It turns out that $(\mc O, \leq)$ is a highest weight category with standard objects $\wdM(\la)$ by \cite[Theorem 3.1]{CCC}. Also, we denote by $P(\la)$ and $\widetilde{P}(\la)$ (resp. $I(\la)$ and $\widetilde{I}(\la)$) the projective covers (resp. injective envelopes) of $L(\la)$ and $\widetilde{L}(\la)$ in $\mc O_\oa$ and ${\mc O},$ respectively. 


\subsubsection{} \label{sect::242} The   involution $\sigma$ introduced in Section \ref{Sect::222} leads to a natural duality functor $D$ on the category $\mc O$.  To see this, for a given  $M\in \mc O$  we set  $M^\oast$ to be the restricted dual space of $M$. Then $M^\oast$ is a $\g$-submodule of $\Hom_\C(M,\C)$. Then  we equip $M^\oast$ with a new  structure of $\g$-module by letting 
\begin{align}
&xf(v) = (-1)^{\overline x \overline f} f(\sigma(x)v),
\end{align}  for any homogeneous elements $x\in \g$, $f\in M^\oast$ and any $v\in M$. We denote by $DM$ this resulting $\g$-module. This leads to a contravariant duality  $D$ on $\mc O$; see \cite[Section 2.2.4]{CC} for more details. 

  If $\g$ is a basic classical Lie superalgebra, then the involution $\sigma'$ introduced in  Section \ref{Sect::222} induces the contragredient duality of \cite[Section 13.7]{Mu12}. We will denote by $d$ the  duality induced by $\sigma'$. Observe that $d$ is a simple-preserving duality.

Suppose that the Borel subalgebra $\mf b$  in \eqref{deflu} is determined by  $h\in \mf h$. Let  $\widehat{\mf b}$ denote the Borel subalgebra given by the vector $-w_0(h)$. As observed in \cite[Section 3]{CCC}, the functor $D$ intertwines the standard and costandard objects of $\mc O$ with respect to  the two highest weight category structures given by $\mf b$ and $\widehat{\mf b}$. 

We introduce the standard, costandard objects and the effect of $D$ on them for Lie superalgebra $\g$ of type I.  To see this, let $\g$ be a Lie superalgebra of type I-0 with  $\la\in \h^\ast$. We define $\nbob(\la)$ as the maximal submodule of the coinduced module $\Coind_{\mf n_\oa^-+\g_{-1}}^{\mf g}(\C_{\la})$  on which $\mf h$ acts semisimply and locally finitely. Then $\mc O$ admits  the highest weight category structure given by $\mf b$ with standard objects  $\wdM(\la)$ and  costandard objects $\nbob(\la)$; see \cite[Definition 3.2, Theorem 3.1]{CCC}.

Similarly, we define $\wdM_{\mf b^r}(\la):= U(\g)\otimes_{\mf b^r}\C_\la$, and  we let $\nbobr(\la)$ denote the maximal submodule of the coinduced module $\Coind_{\mf n_\oa^-+\g_1}^{\mf g}(\C_{\la})$ on which $\mf h$ acts semisimply and locally finitely. Then  the  $\mc O$ has the highest weight category structure given by $\mf b^r$ with standard objects $\wdM_{\mf b^r}(\la)$ and costandard objects $\nbobr(\la)$.  

  By \cite[Proposition 3.4]{CCC}, we have isomorphisms 
\begin{align}
&D\wdM(\la)\cong \nbobr(\widehat \la),~D\nbob(\la)\cong \wdM_{\mf b^r}(\widehat \la),\text{ and }D\wdL(\la)\cong \widetilde L_{\mf b^r}(\widehat \la), \label{eq::dualVerma}
\end{align}  where $\widetilde L_{\mf b^r}(\widehat \la)$ denotes the simple highest weight module of highest weight $\widehat \la$ with respect to  $\mf b^r$.

\subsection{Freeness of the action of the root vectors on modules} \label{sect::24f}
For any $\alpha \in \Pi_0$, we choose a (non-zero) root vector $f_\alpha \in \mf g^{-\alpha}_\oa$. A $\g$-module $M$ is said to be {\em $\alpha$-finite} (resp. {\em $\alpha$-free}) if the action of  $f_\alpha$ on $M$ is locally finite (resp. injective).  Similarly, for a given subset $\ups \subset \Pi_0$,  the module  $M$ is called $\ups$-finite (resp. $\ups$-free) if $M$ is $\alpha$-finite (resp. $\alpha$-free), for any $\alpha \in \ups$.

The following lemma   taken from \cite[Lemma 4.12]{CCC} will be frequently used.  
\begin{lem} \label{lem::222}
	Let $\la\in \h^\ast$ and $\alpha\in \Pi_0$. Then either   $\wdL(\la)$ is $\alpha$-finite or else $\wdL(\la)$  is $\alpha$-free. 
\end{lem}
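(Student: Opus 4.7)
The plan is to introduce the $f_\alpha$-torsion subspace
\[
T := \{v \in \wdL(\la) \mid f_\alpha^n v = 0 \text{ for some } n \in \N\} \subset \wdL(\la),
\]
and to show that $T$ is in fact a $\g$-submodule of $\wdL(\la)$. Granting this, the simplicity of $\wdL(\la)$ forces $T=0$ or $T=\wdL(\la)$, which immediately yields the claimed dichotomy: if $T=0$, then $f_\alpha v \ne 0$ for every nonzero $v$, so $\wdL(\la)$ is $\alpha$-free; if $T=\wdL(\la)$, then $f_\alpha$ acts locally nilpotently, in particular locally finitely, so $\wdL(\la)$ is $\alpha$-finite.

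The substantive step is therefore the closure of $T$ under the $\g$-action. I would fix a homogeneous $X\in\g$ and $v\in T$ with $f_\alpha^{N_0}v = 0$, and use the Leibniz expansion inside $U(\g)$,
\[
f_\alpha^N X \;=\; \sum_{k\geq 0}\binom{N}{k}(\ad f_\alpha)^k(X)\cdot f_\alpha^{N-k},
\]
which holds with ordinary binomial coefficients since $f_\alpha$ is even, so no super signs intervene. The key input is that the $\mathfrak{sl}_2$-triple $\langle e_\alpha, h_\alpha, f_\alpha\rangle \subset \g_\oa$ acts on the finite-dimensional superspace $\g$ by the adjoint action, and the resulting decomposition of $\g$ into finite-dimensional $\mathfrak{sl}_2$-modules forces $\ad f_\alpha$ to be globally nilpotent on $\g$; let $M$ be such that $(\ad f_\alpha)^M=0$. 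The Leibniz sum then truncates at $k = M-1$, and taking $N \geq N_0 + M - 1$ makes every exponent $N-k$ with $0 \leq k \leq M-1$ at least $N_0$. This forces $f_\alpha^N(Xv)=0$, so $Xv\in T$, establishing the required closure.

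The argument contains no genuine super-theoretic obstruction: because $f_\alpha$ is even, both the Leibniz manipulation in $U(\g)$ and the $\mathfrak{sl}_2^\alpha$-decomposition of $\g$ proceed identically to the classical Lie-algebra case. The only inputs specific to our setting are that $\g$ is finite-dimensional (so that the adjoint action of $\mathfrak{sl}_2^\alpha$ on $\g$ is locally nilpotent for $f_\alpha$) and that $\mathfrak{sl}_2^\alpha$ sits inside $\g_\oa$, both of which hold by our standing assumptions on the triangular decomposition in Section~\ref{sect::221}.
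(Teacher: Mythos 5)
Your proof is correct. The paper itself does not supply an argument for this lemma --- it simply cites \cite[Lemma 4.12]{CCC} --- so there is no in-text proof to compare against; but your argument is the standard one, and it is complete. The key steps are all sound: the torsion subspace $T$ is a linear subspace; the Leibniz identity $f_\alpha^N X = \sum_{k}\binom{N}{k}(\ad f_\alpha)^k(X)\,f_\alpha^{N-k}$ requires no super signs because $f_\alpha$ is even; $\ad f_\alpha$ is nilpotent on the finite-dimensional superspace $\g$ since $\g$ decomposes under the even $\mathfrak{sl}_2^\alpha$-triple into finite-dimensional pieces on which $f_\alpha$ shifts the (generalized) $h_\alpha$-eigenvalue downward; and in the final dichotomy, local nilpotence of $f_\alpha$ gives $\alpha$-finiteness, while $T=0$ gives injectivity of $f_\alpha$ and hence $\alpha$-freeness. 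One could equivalently phrase the same argument via the Ore localization $\widetilde U_\alpha$ (where $T$ is the kernel of the localization map $\wdL(\la)\to \widetilde U_\alpha\otimes_{\widetilde U}\wdL(\la)$), which is the language \cite{CCC} tends to use, but this is a cosmetic rather than substantive difference.
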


The following useful lemma is a special case of  \cite[Lemma 2.1]{CoM1} (see also \cite[Lemma 4.12]{CCC}):
\begin{lem}  \label{lem::3}
  Suppose that $\g$ is of type I-0. 
	Let $\la\in \h^\ast$ and $\alpha \in \Pi_0$. The following are equivalent:
	\begin{itemize}
		\item[(1)] $\wdL(\la)$ is $\alpha$-finite. 
		\item[(2)]  $D\wdL(\la)$ is $\widehat\alpha$-finite.
		\item[(3)]	$\langle \la+\rho_\oa, \alpha^\vee  \rangle \in \Z_{> 0}$.
	\end{itemize}
\end{lem}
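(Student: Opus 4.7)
The plan is to reduce each of the three conditions to the classical $\mf{sl}(2)_\alpha$-criterion for simple highest weight $\g_\oa$-modules. For $(1)\Leftrightarrow(3)$ the type I-0 grading does the work; for $(2)\Leftrightarrow(3)$ the identification $D\wdL(\la) \cong \widetilde L_{\mf b^r}(\widehat\la)$ from \eqref{eq::dualVerma} transports the argument across the duality.

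The pivotal step is that, under the assumptions \eqref{ass::A1}--\eqref{pre::tridec}, the $\g_{\geq 0}$-linear composition $L(\la) = 1 \otimes L(\la) \hookrightarrow K(L(\la)) \twoheadrightarrow \wdL(\la)$ is injective: its kernel is a $\g_\oa$-submodule of the simple $\g_\oa$-module $L(\la)$, and cannot be all of it since $\wdL(\la) \neq 0$. Combined with PBW and the fact that $U(\g_{\geq 0})$ acts on the image of $L(\la)$ inside $\wdL(\la)$ as the $\g_\oa$-module $L(\la)$ itself, this identifies $\wdL(\la)|_{\g_\oa}$ as a $\g_\oa$-equivariant quotient of the diagonal module $\Lambda(\g_{-1}) \otimes L(\la)$, inside which $L(\la)$ embeds.

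For $(1)\Leftrightarrow(3)$, since $\Lambda(\g_{-1})$ is finite-dimensional under the adjoint $\g_\oa$-action, $f_\alpha$ acts locally nilpotently on $\Lambda(\g_{-1}) \otimes L(\la)$ exactly when it does so on $L(\la)$; together with the embedding $L(\la) \hookrightarrow \wdL(\la)$ this gives that $\wdL(\la)$ is $\alpha$-finite if and only if $L(\la)$ is, and the latter is the classical criterion $\langle \la, \alpha^\vee \rangle \in \Z_{\geq 0}$, equivalently $(3)$. For $(2)\Leftrightarrow(3)$, the assumptions \eqref{ass::A1}--\eqref{pre::tridec} are symmetric under $H \leftrightarrow -H$, so the same argument applied with the reverse Borel $\mf b^r$ in the role of $\mf b$ yields that $\widetilde L_{\mf b^r}(\widehat\la)$ is $\widehat\alpha$-finite iff $\langle \widehat\la, \widehat\alpha^\vee \rangle \in \Z_{\geq 0}$; by $W$-invariance of $\langle\cdot,\cdot\rangle$ and $\widehat\mu = -w_0\mu$ this equals $\langle \la, \alpha^\vee \rangle$, which is again $(3)$.

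The main technical input is the embedding $L(\la) \hookrightarrow \wdL(\la)$ and the compatibility of the type I-0 data with its ``reverse'' obtained by replacing the grading element $H$ by $-H$, which turns $\mf b^r$ into a new standard Borel and allows the $(1)\Leftrightarrow(3)$ argument to be reused verbatim for $(2)\Leftrightarrow(3)$. Once these two ingredients are in place, everything reduces to the well-known $\mf{sl}(2)_\alpha$-theory for simple highest weight modules of reductive Lie algebras.
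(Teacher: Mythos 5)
Your proof is correct and follows essentially the same route as the paper: the paper delegates $(1)\Leftrightarrow(3)$ to \cite[Lemma 2.1]{CM} (whose content is exactly your Kac-module argument via the embedding $L(\la)\hookrightarrow\wdL(\la)$ and the surjection $\Lambda(\g_{-1})\otimes L(\la)\twoheadrightarrow \Res\wdL(\la)$), and for $(2)\Leftrightarrow(3)$ it likewise uses the duality identification $D\wdL(\la)\cong \widetilde L_{\mf b^r}(\widehat\la)$ together with the same reasoning applied to the reverse Borel. You simply unpack the cited lemma rather than invoking it.
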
 
\begin{proof}
	It follows from \cite[Lemma 2.1]{CM} 
	that Parts (1) and (3) are equivalent. It remains to show that Parts (2) and (3) are equivalent. To see this, we note that by
	\eqref{eq::dualVerma} $D\wdL(\la)\cong \widetilde L_{\mf b^r}(\widehat \la)$, which is   the simple top of $U(\g)\otimes_{\mf g_{\leq 0}} L(\widehat \la)$. Consequently, we have 
	\begin{align}
	&D\wdL(\la):~\widehat \alpha\text{-finite}  \Leftrightarrow L(\widehat \la):~\widehat \alpha\text{-finite} \Leftrightarrow \langle \la+\rho_\oa, \alpha^\vee  \rangle \in \Z_{> 0}.
	\end{align} The conclusion follows.
\end{proof}

\section{Realizations of completion and twisting functors}
 \label{Sect::Realization}
  In \cite{KM2}, Khomenko and Mazorchuk developed the  realizations of Joseph's completion  and  Arkhipov's twisting functors for Lie algebras in terms of the partial approximation   and partial coapproximation functors. Such realizations were also obtained in \cite[Proposition 5.9]{Co16} for basic classical Lie superalgebras. In this section, we will generalize these results to   Lie superalgebras of type I-0.

  Unless mentioned otherwise, we will assume  in this section that $\g$ is either a basic classical Lie superalgebra in \eqref{Kaclist1} or a Lie superalgebra of type I-0.     Recall that we denote by $\mc O^{\Z}$ the full subcategory of $\mc O$ consisting of modules of integral weights. Similarly, we define $\mc O^\Z_\oa\subset \mc O_\oa$.  We will focus on $\mc O^\Z$ and $\mc O^\Z_\oa$ in this section. 
\subsection{Harish-Chandra $(\mf g, \g_\oa)$-bimodules} \label{Sect::HCmod} In this subsection, we consider $\g$ an arbitrary quasireductive Lie superalgebra. 
For a given  $(\widetilde{U},U)$-bimodule $Y$, we denote by $Y^\ad$  the restriction of $Y$ to the adjoint action of $\g_\oa$ given by $x.y =xy-yx$ for any $x\in \g_\oa$ and $y\in Y$.  
Let $\mc B$ denote the category of  finitely-generated $(\widetilde{U}, U)$-bimodules $Y$ for which $Y^{\ad}$ is a  direct sum of modules in $\mc F$.  

For any $\la \in \h^\ast$, we denote by $\chi_\la$ (resp. $\chi^\oa_\la$) the  central character of $\mf g$ (resp, $\g_\oa$) associated with $\la$.  Let $I_\la$ denote the kernel of $\chi_\la^\oa$. Then we have  $\Ann_{U(\mf g_\oa)}M(\la)=U(\mf g_\oa)I_\la$; see,  e.g.~\cite[Theorem~10.6]{Hu08}.
Let $\mc B_\la$ denote the full subcategory of~$\mc B$ consisting of bimodules $Y$ such that~$YI_\la=0$.

We have the left exact functor 
$$\mc L(-,-): U\mbox{-Mod}\times \widetilde{U} \mbox{-Mod}\;\to\;\widetilde U\otimes_{\mC}  U\mbox{-Mod},$$ where $\mc L(M,N)$ is the   maximal $(\widetilde U, U)$-submodule of $\Hom_\C(M,N)$ that  is a  direct sum of finite-dimensional modules with respect to the adjoint action of $\g_\oa$ given by $x\cdot f=xf-fx$, for any $x\in \g_\oa$ and $f\in \Hom_\C(M,N)$. The following lemma is a special case of \cite[Corollary 3.2]{CC}, where the cases of arbitrary dominant regular weights were  considered.
\begin{lem}\label{CorEqiv2}  Let $\g$ be an arbitrary quasireductive Lie superalgebra.  For any dominant, regular and integral  weight $\nu$, we have mutually inverse equivalences $-\otimes_{U}M(\nu)$ and $\cL(M(\nu),-)$ between 
	$\mc B_\nu$ and $\cO^{\Z}.$ 
\end{lem}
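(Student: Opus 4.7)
Since the lemma is stated as a special case of \cite[Corollary 3.2]{CC}, the most efficient route is to invoke that result directly: the hypothesis there is that $\nu$ be dominant and regular, which our integral dominant regular $\nu$ certainly satisfies. Independently of that, I would sketch a self-contained argument modelled on the classical Bernstein--Gelfand tensoring equivalence between Harish-Chandra bimodules and the BGG category, adapted to the $(\widetilde U, U)$-bimodule setting that appears in the superalgebra case.

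I would organize the proof in three steps. First, check that both functors land in the claimed target categories: for $Y\in\mc B_\nu$ the $\widetilde U$-module $Y\otimes_U M(\nu)$ is finitely generated, has semisimple $\mf h$-action with integral weights (since both $Y^{\ad}$ and $M(\nu)$ carry integral weights), and a locally finite $\mf b$-action; conversely, for $N\in\mc O^\Z$ the bimodule $\mc L(M(\nu),N)$ is killed on the right by $I_\nu$ because $M(\nu)$ is, and is $\g_\oa$-locally finite under the adjoint action by construction. The adjunction $(-\otimes_U M(\nu),\ \mc L(M(\nu),-))$ is then just the tensor-Hom adjunction restricted to the $\ad\g_\oa$-locally finite part.

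Second, one shows the unit and counit are isomorphisms. The key input is that, for $\nu$ dominant, regular and integral, the Verma module $M(\nu)$ is projective in $\mc O_\oa^\Z$; hence $-\otimes_U M(\nu)$ is exact. It then suffices to verify the isomorphisms on a set of generators. Every projective in $\mc O^\Z$ is a direct summand of some $V\otimes M(\nu)$ with $V\in\widetilde{\mc F}$, by the standard argument that $V\otimes M(\nu)$ admits a Verma filtration realizing all Vermas of integral highest weight appearing in the weight support of $V$. On such objects one checks the tautological identity
\[
V\otimes M(\nu)\;\cong\;\mc L(M(\nu),V\otimes M(\nu))\otimes_U M(\nu),
\]
which reduces to recognizing $V$ as the $\ad\g_\oa$-locally finite part of $\Hom_{\C}(M(\nu),V\otimes M(\nu))$ via $v\mapsto(m\mapsto v\otimes m)$. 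This yields the counit isomorphism on generators, and the unit follows by transporting via the adjunction.

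The main obstacle is the mismatch of algebras in the bimodule category: the left action is by $\widetilde U$, while the right action is only by $U$. To reduce to the classical Bernstein--Gelfand equivalence for $\g_\oa$, I would exploit the PBW decomposition $\widetilde U\cong U\otimes\Lambda^\bullet(\g_\ob)$ as a left $U$-module under the adjoint $\g_\oa$-action, so that every object of $\mc B_\nu$ may be viewed as a $(U,U)$-bimodule lying in the classical Bernstein--Gelfand category, equipped with compatible $\g_\ob$-structure; likewise for $\mc O^\Z$ under restriction to $\g_\oa$. Applying the classical equivalence componentwise and then verifying compatibility with this additional $\g_\ob$-structure would deliver the super equivalence.
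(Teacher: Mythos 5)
Your leading observation — that the lemma follows by directly invoking \cite[Corollary 3.2]{CC}, which already treats arbitrary dominant regular weights — is exactly what the paper does; no further argument is given there, so your primary route coincides with the paper's. Your supplementary Bernstein--Gelfand-style sketch is the right shape, but one step is stated backwards: projectivity of $M(\nu)$ in $\mc O^\Z_\oa$ yields exactness of $\mc L(M(\nu),-)$, not of $-\otimes_U M(\nu)$ (projectivity controls $\Hom$, not $\otimes$). In the classical argument one uses the exactness of $\mc L(M(\nu),-)$ together with the reduction to the generators $V\otimes M(\nu)$ that you describe; the exactness of $-\otimes_U M(\nu)$ on $\mc B_\nu$ is then a consequence of the equivalence rather than an input to it.
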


In Section \ref{Sect::PfOfThmA}, we will develop an equivalence between $\mc B_\nu$ and a  subcategory $\mc O^{\vpre}$ of $\mc O$ consisting of certain projectively presentable modules, for any dominant (possibly singular)  weight $\nu\in \h^\ast$.

\subsection{The   Joseph's Enright completion functor and Arkhipov's twisting  functor}  \label{sect::32}

Let $\alpha\in \Pi_0$ and $s:=s_\alpha$. We consider     the completion functor $G_s(-): \mc O^\Z\rightarrow \mc O^\Z$ from \cite[Section 4.2]{CC}, which is an analogue of Joseph's version of Enright completion functor $G_s^\oa$ on $\mc O_\oa^\Z$ as introduced in \cite[Section 2]{Jo82}.  The functor  $G_s(-)$ is defined as the following composition:
\begin{align}
&G_s(-):=\mc L(M(s\cdot 0), -)\otimes_{U(\g_\oa)} M(0): \mc O^\Z\ \rightarrow \mc B_0 \xrightarrow{\sim} \mc O^\Z.
\end{align} 
By definition, we have that $\Res \circ G_s=G^\oa_s\circ \Res$ and both $G_s$ and $G_s^\oa$ are left exact functors. 

We denote by $T_s$ (resp. $T_s^\oa$) the Arkhipov's twisting functor on $\mc O^\Z$ (resp. $\mc O_\oa^\Z$) introduced in \cite[Section 5]{CoM1}; see also \cite{AS} and \cite{CMW}. By definition, both $T_s$ and $T_s^\oa$ are right exact. The following theorem is taken from \cite{CC}. 
 
	\begin{thm}\emph{(}\cite[Theorem 4.5]{CC}\emph{)} \label{thm::CC45}  For $\g$ a Lie superalgebra of type I-0 with  $\alpha\in\Pi_0$, we have $D\circ G_{s_{\widehat \alpha}}\circ D \cong T_{s_\alpha}$ on $\mc O^\Z$ and $G_{s_\alpha}$ is right adjoint to~$T_{s_\alpha}$.  \end{thm}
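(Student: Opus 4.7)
My plan is to prove both assertions by combining the corresponding classical results on the even category $\mc O^\Z_\oa$ with the Harish-Chandra bimodule realization of Section \ref{Sect::HCmod}, using the duality $D$ to convert between the Borel structures $\mf b$ and $\mf b^r$.

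I would first record the relevant ingredients on $\mc O^\Z_\oa$. For the reductive Lie algebra $\g_\oa$, Joseph's completion functor $G^\oa_{s_\alpha}$ is right adjoint to Arkhipov's twisting functor $T^\oa_{s_\alpha}$, and one has the duality identity $D_\oa\circ G^\oa_{s_{\widehat\alpha}}\circ D_\oa\cong T^\oa_{s_\alpha}$; both facts rest on the Ore-localization description of $T^\oa_{s_\alpha}$ together with the bimodule realization of $G^\oa_{s_\alpha}$. The super versions $G_{s_\alpha}$ and $T_{s_\alpha}$ introduced in Section \ref{sect::32} are designed to be compatible with restriction, in the sense that $\Res\circ G_{s_\alpha}\cong G^\oa_{s_\alpha}\circ\Res$ and $\Res\circ T_{s_\alpha}\cong T^\oa_{s_\alpha}\circ \Res$; this follows from the bimodule definition of $G_{s_\alpha}$ and from the compatibility of Ore localization with restriction for $T_{s_\alpha}$.

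To establish $D\circ G_{s_{\widehat\alpha}}\circ D\cong T_{s_\alpha}$, I would construct the natural transformation functorially, first on the standard objects $\wdM(\la)$ and then extend. Since $D$ interchanges the $\mf b$- and $\mf b^r$-highest weight structures on $\mc O^\Z$ by \eqref{eq::dualVerma}, the composite $D\circ G_{s_{\widehat\alpha}}\circ D$ is naturally a functor on $\mc O^\Z$ with its standard $\mf b$-structure. After applying $\Res$ and using the classical identity above together with the compatibility of $D$ with $D_\oa$, both sides become canonically isomorphic to $T^\oa_{s_\alpha}\circ\Res$. Since the functors are built entirely by tensoring with Harish-Chandra $(\widetilde U, U)$-bimodules as in Section \ref{Sect::HCmod}, the $\g$-equivariance of this restriction-level isomorphism lifts automatically to an isomorphism of $\g$-modules.

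The adjointness statement then follows by combining the isomorphism just proved with the even-case adjunction: for $M,N\in\mc O^\Z$,
\begin{align*}
\Hom_\g(T_{s_\alpha}M,N) &\cong \Hom_\g(DN,DT_{s_\alpha}M) \\
&\cong \Hom_\g(DN,G_{s_{\widehat\alpha}}DM) \\
&\cong \Hom_\g(M,G_{s_\alpha}N),
\end{align*}
where the final isomorphism uses the $\g_\oa$-level adjunction transported through the bimodule framework together with a second application of the duality identity. The main obstacle is the careful bookkeeping of the swap $\alpha\leftrightarrow\widehat\alpha=-w_0\alpha$ and of the two highest weight structures under $D$; once this combinatorial matching is in place, the rest follows formally from Joseph's even-case results and the compatibility of the super-functors with $\Res$.
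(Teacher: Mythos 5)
The paper does not actually prove this theorem: it cites it from \cite[Theorem 4.5]{CC}, so there is no ``paper's own proof'' to compare against and your argument has to stand on its own. Evaluated that way, it has two genuine gaps.

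The central gap is in the step where you pass from the even level back to $\g$. You observe that after applying $\Res$ both $D\circ G_{s_{\widehat\alpha}}\circ D$ and $T_{s_\alpha}$ become isomorphic to $T^{\oa}_{s_\alpha}\circ\Res$, and then assert that ``the $\g$-equivariance of this restriction-level isomorphism lifts automatically.'' But $\Res$ does not reflect isomorphisms of \emph{functors}: having $\Res\circ F\cong \Res\circ G$ produces no comparison map $F\to G$ of endofunctors of $\mc O^{\Z}$. What is true is that $\Res$ is faithful, so once one has a $\g$-equivariant natural transformation $\phi\colon D\circ G_{s_{\widehat\alpha}}\circ D\to T_{s_\alpha}$ in hand, checking that $\Res\phi$ is an isomorphism objectwise does suffice. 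The construction of such a $\phi$ is the actual content of the theorem and is not supplied. Your invocation of the ``bimodule framework'' does not repair this: $T_{s_\alpha}$ is defined via Ore localization of $U(\g)$, not by tensoring with a Harish-Chandra bimodule in $\mc B$, and $G_{s_\alpha}$ is a composite involving $\mc L(-,-)$, so the claim that both sides are ``built entirely by tensoring with Harish-Chandra $(\widetilde U,U)$-bimodules'' is not accurate and cannot be used to conjure the missing map. In the paper the usable handle is Lemma~\ref{lem::4}, which lifts the even natural transformation $\eta^\alpha_{\oa}\colon\Id\to G^\oa_{s}$ to the $\g$-level precisely because $G_{s}$ is a genuine bimodule construction and the embedding $M(s\cdot0)\hookrightarrow M(0)$ supplies the transformation; a parallel explicit construction is what is needed here and is missing from your sketch.

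The adjointness argument also does not close. In the chain
\begin{align*}
\Hom_\g(T_{s_\alpha}M,N)\cong\Hom_\g(DN,G_{s_{\widehat\alpha}}DM)\cong\Hom_\g(M,G_{s_\alpha}N),
\end{align*}
the last isomorphism would require either the adjunction $T_{s_{\widehat\alpha}}\dashv G_{s_{\widehat\alpha}}$ on $\mc O^\Z$ — which is precisely the statement being proved, for $\widehat\alpha$ instead of $\alpha$ — or a separately established transfer of the $\g_\oa$-adjunction to $\Hom_\g$. Applying the duality identity a second time, as you suggest, merely converts the desired adjunction into the equivalent statement $\Hom_\g(T_{s_\alpha}M,N)\cong\Hom_\g(T_{s_{\widehat\alpha}}DN,DM)$ and does not prove it. Transferring $\Hom_{\g_\oa}(T^\oa_{s_\alpha}\Res M,\Res N)\cong\Hom_{\g_\oa}(\Res M,G^\oa_{s_\alpha}\Res N)$ to $\Hom_\g$ requires checking that the even unit and counit are $\g_\ob$-equivariant, which again is the substance that a complete proof must supply.
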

 
For a basic classical Lie superalgebra $\g$  with $\alpha \in \Pi_0$, there  is a version of Joseph's Enright completion functor $G'_{s_\alpha}$ defined in \cite[Section 5]{Co16}.  It is proved in \cite[Theorem 5.5]{Co16} that 
$d\circ G'_{s_\alpha}\circ d \cong T_{s_\alpha}$ and $G'_{s_\alpha}$ is right adjoint to $T_{s_\alpha}$, but we will not use it in the sequel. 

By \cite[Lemma 5.1]{CoM1} and Theorem \ref{thm::CC45}, we have 
\begin{align}
&\Ind \circ T_s^\oa=T_s\circ \Ind,~\Res \circ T_s=T^\oa_s\circ \Res  \text{ and }~\Ind \circ G_s^\oa=G_s\circ \Ind.
\end{align}  
We denote the left derived functor of $T_s$ and the right derived functor of $G_s$ by $\mc LT_s$ and $\mc RG_s$, respectively. By \cite[Proposition 5.11]{CoM1} and arguments therein, we may conclude that the left derived functor $\mc LT_s$ is an auto-equivalence of the bounded derived category $\mc D^b(\mc O^\Z)$ with $\mc RG_s$ as its inverse. 

For a category $\mc C$, we denote by $\Id_{\mc C}$ the identity functor on $\mc C.$ By \cite[Lemma 2.4]{Jo82}, there is a natural transformation $\eta^\alpha_\oa$  from $\Id_{\mc O^{\Z}_\oa}$ to $G_{s}^\oa$ with kernel given by the functor of taking the largest $\alpha$-finite submodule. The following lemma shows that $\eta_\oa^\alpha$ can be lifted to a natural transformation from $\Id_{\mc O^{\Z} }$ to $G_s$.
 
\begin{lem} \label{lem::4} For $\g$ a Lie superalgebra of type I-0, 
 there is a natural transformation $\eta^\alpha: {\emph\Id}_{\mc O^{\Z} } \rightarrow G_{s_\alpha}$ such that $\Res\circ \eta^\alpha=\eta_\oa^\alpha\circ \Res$. In particular, the  $D\eta^\alpha_{D(-)}$ induces a natural transformation from $T_{s_{\widehat \alpha}}$ to ${\emph\Id}_{\mc O^{\Z} }.$
\end{lem}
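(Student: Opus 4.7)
The plan is to transport the canonical embedding of Verma modules through the Bernstein--Gelfand-type equivalence of Lemma~\ref{CorEqiv2}. Observe first that $0\in \h^\ast$ is dominant, regular and integral, since $\langle\rho_\oa,\alpha^\vee\rangle=1$ for every $\alpha\in \Pi_0$; hence Lemma~\ref{CorEqiv2} supplies a natural isomorphism
$$ \Phi\colon \mc L(M(0),-)\otimes_{U} M(0) \simto \Id_{\mc O^{\Z}}. $$
Since $s_\alpha\cdot 0=-\alpha$, we have $G_{s_\alpha}(-)=\mc L(M(-\alpha),-)\otimes_{U}M(0)$ by definition, and the standard embedding $\iota\colon M(-\alpha)\hookrightarrow M(0)$ exists because $\langle 0+\rho_\oa,\alpha^\vee\rangle=1>0$.

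With these ingredients in place, I would define, for each $M\in \mc O^{\Z}$, the morphism $\eta^\alpha_M$ as the composite
$$ M \xrightarrow{\Phi^{-1}_M}\mc L(M(0),M)\otimes_{U}M(0) \xrightarrow{\iota^\ast\otimes \id}\mc L(M(-\alpha),M)\otimes_{U}M(0)=G_{s_\alpha}(M), $$
where $\iota^\ast$ is the obvious restriction-along-$\iota$ map. Naturality in $M$ is automatic from the functoriality of $\Phi^{-1}$ and $\iota^\ast$.

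To verify $\Res\circ \eta^\alpha=\eta_\oa^\alpha\circ \Res$, I would check that each piece of the construction is matched by its $\g_\oa$-analogue after restriction: as underlying vector spaces, $\mc L(M(0),M)$ coincides with its Lie-algebra counterpart $\mc L^\oa(M(0),\Res M)$, since both describe the maximal $\g_\oa$-adjoint locally finite subspace of $\Hom_\C(M(0),M)$; the functor $(-)\otimes_{U}M(0)$ tautologically commutes with $\Res$; the equivalence $\Phi$ restricts to its even counterpart $\Phi^\oa$, obtained by applying Lemma~\ref{CorEqiv2} in the reductive setting (cf.\ \cite[Corollary 3.2]{CC}); and $\iota^\ast$ is literally the same map in both settings. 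Combining these identifications with the analogous even construction (which yields $\eta_\oa^\alpha$) gives the required compatibility.

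For the second assertion, Theorem~\ref{thm::CC45}, applied with $\alpha$ replaced by $\widehat\alpha$ and using $\widehat{\widehat\alpha}=\alpha$, gives $T_{s_{\widehat\alpha}}\cong D\circ G_{s_\alpha}\circ D$. Applying the contravariant duality $D$ to $\eta^\alpha_{DN}\colon DN\to G_{s_\alpha}(DN)$ and invoking $D^2\cong \Id$ yields a natural map $T_{s_{\widehat\alpha}}(N)\cong DG_{s_\alpha}(DN)\to DDN\cong N$, as desired. The main obstacle is the $\Res$-compatibility step: the tensor factor and $\iota^\ast$ are manifestly compatible with restriction, so the crux is to trace through the construction of $\Phi$ in Lemma~\ref{CorEqiv2} and confirm that it restricts to $\Phi^\oa$; once this is granted, the remaining verifications (including the duality step) are formal.
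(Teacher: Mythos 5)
Your proof is correct and takes essentially the same approach as the paper: the paper also constructs $\eta^\alpha$ by transporting the natural transformation $\mc L(M(0),-)\to\mc L(M(s\cdot 0),-)$ (induced by the Verma embedding) through the equivalence of Lemma~\ref{CorEqiv2}, checks $\Res$-compatibility via the identity $\Res\circ G_s = G_s^\oa\circ\Res$ and Joseph's construction of $\eta_\oa^\alpha$, and derives the second assertion from Theorem~\ref{thm::CC45}. The only difference is that you unwind the $\Res$-compatibility step by step rather than citing the already-established intertwining $\Res\circ G_s = G_s^\oa\circ\Res$, which is a slightly more verbose but entirely sound route.
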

\begin{proof}	
 The natural embedding $M(s\cdot 0) \hookrightarrow M(0)$ gives rise to a natural transformation $\mc L(M(0), -)\rightarrow \mc L(M(s\cdot 0), -)$. Consequently, we obtain a natural transformation $\eta^\alpha: \Id_{\mc O^{\Z} } \rightarrow G_{s}$ by Lemma \ref{CorEqiv2}. Since we  have  $\Res \circ G_s=G^\oa_s\circ \Res$,  the first conclusion follows from the construction of $\eta_\oa^\alpha$ in \cite[Section 2.4]{Jo82}. The second conclusion then follows from Theorem \ref{thm::CC45}.
\end{proof}

Since   twisting functors satisfy the braid relations, see, for example \cite{CoM1,KM2}, it follows that for any $w\in W$ with a reduced expression $w=s_{\alpha_1}s_{\alpha_2}\cdots s_{\alpha_k}$ ($\alpha_1,\ldots , \alpha_k\in \Pi_0$) the corresponding twisting functor  $T_{w}:=T_{s_{\alpha_1}}\circ T_{s_{\alpha_2}}\circ \cdots \circ T_{s_{\alpha_k}}$ is well-defined.  We use $T^\oa_w$  to denote the corresponding twisting functor  on $\mc O_\oa^\Z$. Then we have  $\Res \circ T_w=T^\oa_w\circ \Res$. Similarly, we define the completion functors $G_w$ and $G_w^\oa$, for any $w\in W$.

\subsection{Partial (co)approximations} 
In this subsection, we shall recall the partial (co)-approximations introduced in \cite{KM2}. 
We fix a subset $\ups \subseteq \Pi_0$ and let $\widehat \ups:=\{\widehat\alpha|~ \alpha\in \ups\}$. The  parabolic   and   Levi subalgebra determined by $\ups$ are denoted by $\fp_\ups$ and $\mf l_\ups$, respectively. We denote  by $w_\ups$ the longest element in the Weyl group $W_{\mf l_\ups}$ of $\mf l_\ups$. 



\subsubsection{}

For any $M\in \mc O$, we let $\mf z_{\ups}(M)$ denote the maximal non-$\ups$ submodule of $M$, namely, $\mf z_{\ups}(M)$ is the maximal $\g$-submodule of $M$ that has no composition factors isomorphic to $\ups$-free simple module. 
Similarly, we the functor  $\mf z_\ups^\oa$  on $\mc O_\oa.$ 
The following lemma shows that $\Res(-)$ intertwines these functors. 

\begin{lem}\label{lem::max1}  
 We have  $$\Res \mf z_\ups(M) = \mf z^\oa_\ups \Res(M),$$ for any $M\in \mc O$. 
\end{lem}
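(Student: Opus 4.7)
The plan is to reduce the equality to a characterization of $\Upsilon$-finiteness that transfers cleanly between $\mf g$ and $\mf g_\oa$. For a fixed $\alpha \in \Pi_0$, since every $N \in \mc O$ has weights bounded above by a finite union of cones, $e_\alpha$ automatically acts locally nilpotently on $N$; therefore $N$ is $\alpha$-finite if and only if it is locally finite as a module over the subalgebra $\mf{sl}_2^\alpha := \langle e_\alpha, f_\alpha, [e_\alpha,f_\alpha] \rangle \subset \mf g_\oa$. This property depends only on the underlying $\mf g_\oa$-structure, is inherited by arbitrary subquotients, is closed under extensions, and is preserved under tensor products with finite-dimensional modules.

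Combining this with Lemma \ref{lem::222}, I obtain the key dictionary: for any $N \in \mc O$, every simple $\mf g$-composition factor of $N$ is $\Upsilon$-finite if and only if every simple $\mf g_\oa$-composition factor of $\Res N$ is $\Upsilon$-finite. Indeed, a composition series of $N$ as a $\mf g$-module refines via $\Res$ to one as a $\mf g_\oa$-module; each piece $\Res \widetilde{L}(\lambda)$ has finite length in $\mc O_\oa$ with all composition factors $\Upsilon$-finite exactly when $\widetilde{L}(\lambda)$ is $\Upsilon$-finite, by Lemma \ref{lem::222} applied to each $\alpha \in \Upsilon$.

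Granting the dictionary, the inclusion $\Res \mf z_\Upsilon(M) \subseteq \mf z_\Upsilon^\oa \Res M$ is immediate: the $\mf g$-composition factors of $\mf z_\Upsilon(M)$ are $\Upsilon$-finite, so its $\mf g_\oa$-composition factors are $\Upsilon$-finite, and $\mf z_\Upsilon^\oa \Res M$ is by definition the largest $\mf g_\oa$-submodule with this property. For the reverse inclusion, I pick $v \in \mf z_\Upsilon^\oa \Res M$ and set $N := U(\mf g)\, v$, a $\mf g$-submodule of $M$, and aim to show $N \subseteq \mf z_\Upsilon(M)$. Using the PBW isomorphism $U(\mf g) \cong \Lambda(\mf g_\ob) \otimes U(\mf g_\oa)$ of $\mf g_\oa$-modules under the adjoint action, and writing $V := U(\mf g_\oa) v \subseteq \mf z_\Upsilon^\oa \Res M$, I obtain a finite filtration of $\Res N$ by $\Lambda$-degree whose subquotients are $\mf g_\oa$-quotients of $\Lambda^j(\mf g_\ob) \otimes_\C V$. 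Since $\Lambda^j(\mf g_\ob)$ is finite-dimensional and $\Upsilon$-finiteness is preserved by tensoring with finite-dimensional modules, every $\mf g_\oa$-composition factor of $\Res N$ is $\Upsilon$-finite. The dictionary then forces every simple $\mf g$-composition factor of $N$ to be $\Upsilon$-finite, so $N \subseteq \mf z_\Upsilon(M)$, and hence $v \in \Res \mf z_\Upsilon(M)$.

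The main obstacle is the clean statement and verification of the dictionary between $\Upsilon$-finiteness for simple $\mf g$-modules and for the $\mf g_\oa$-composition factors of their restrictions; once this is in place, both inclusions are formal, with the only further ingredient being the PBW-based transfer from a cyclic generator to the full $\mf g$-module it generates.
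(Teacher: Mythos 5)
Your argument conflates two different conditions. For a subset $\Upsilon\subseteq\Pi_0$ with $|\Upsilon|>1$, being $\Upsilon$-\emph{finite} (locally finite under $f_\alpha$ for \emph{every} $\alpha\in\Upsilon$) is strictly stronger than failing to be $\Upsilon$-\emph{free}: by Lemma \ref{lem::222}, a simple module is not $\Upsilon$-free precisely when it is $\alpha$-finite for \emph{some} $\alpha\in\Upsilon$, not for all of them. The functor $\mf z_\Upsilon$ is defined through this weaker property — $\mf z_\Upsilon(M)$ is the largest submodule with no $\Upsilon$-free composition factor — whereas your ``dictionary,'' its proof, and both applications of it all concern the property ``every composition factor $\Upsilon$-finite.'' What you actually establish is the restriction identity for a different functor which in general is a proper subfunctor of $\mf z_\Upsilon$; it coincides with $\mf z_\Upsilon$ only when $|\Upsilon|\le 1$.

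This is not just notational, because after replacing ``$\Upsilon$-finite'' with ``not $\Upsilon$-free'' throughout, the backward half of your dictionary loses its justification. If $\wdL(\la)$ is $\Upsilon$-free, applying Lemma \ref{lem::222} and extension-closure separately for each $\alpha\in\Upsilon$ only yields, for each $\alpha$, \emph{some} $\alpha$-free composition factor of $\Res\wdL(\la)$; the witness may vary with $\alpha$, and nothing in your argument produces a single composition factor that is $\alpha$-free for all $\alpha\in\Upsilon$ simultaneously, which is what a $\Upsilon$-free composition factor must be. The missing ingredient is that injectivity passes to subspaces: since $f_\alpha$ acts injectively on $\wdL(\la)$ for every $\alpha\in\Upsilon$, it acts injectively on every $\g_\oa$-submodule of $\Res\wdL(\la)$, so each simple summand of its socle is $\Upsilon$-free and furnishes the needed composition factor. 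With this repair the remainder of your argument — the forward inclusion via ``$\alpha$-finiteness passes to subquotients,'' and the reverse inclusion via the $\Lambda(\g_\ob)$-filtration showing $\mf z_\Upsilon^\oa\Res M$ is $U(\g_\ob)$-stable — does go through, and is in fact close in spirit to the paper's proof, which likewise reduces to stability of $\mf z_\Upsilon^\oa\Res M$ under $U(\g_\ob)$ via $\Res\Ind L(\la)\cong U(\g_\ob)\otimes L(\la)$.
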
  
\begin{proof}
  Let $N:=\mf z^\oa_\ups \Res M$. We first show that $N$ is a $\g$-submodule of $M$. To see this, we let $\la \in \h^\ast$ be such that $[N:L(\la)]>0.$ Then there exists $\alpha \in \ups$ such that $L(\la)$ is $\alpha$-finite by our assumption. Note that all composition factors of  the $\g_\oa$-module $\Res \Ind L(\la)\cong U(\mf g_\ob)\otimes L(\la)$ are all $\alpha$-finite. Now, it follows from $$[\Res \Ind N: L(\mu)]= \sum_{\la \in \h^\ast} [N: L(\la)]\cdot[\Res \Ind L(\la): L(\mu)]$$ that every composition factor of $\Res \Ind N$ is not $\ups$-free. Since the $\g_\oa$-submodule $U(\g_\ob)\cdot N$ is an epimorphic image of $\Res\Ind N$, it is contained in $N$, namely, $U(\g_\ob)\cdot N=N.$ Therefore, $N$ is a $\g$-submodule of  $\mf z_\ups(M)$ by Lemma \ref{lem::222}.  
  
  If $[\mf z_\ups(M): \wdL(\la)]>0$,  then there is  $\alpha\in \ups$ such that $\la$ is $\alpha$-finite.  Therefore, every composition factor of $\Res \mf z_\ups(M)$ is not $\ups$-free. Consequently, we have $\Res \mf z_\ups(M)\subseteq N.$ The conclusion follows.
\end{proof}



We define the dual version $\widehat{\mf z}_\ups$ (resp. $\widehat{\mf z}_\ups^\oa$) of $\mf z_\ups$ (resp. $\mf z_\ups^\oa$) as the functor of taking the maximal quotient that has no composition factors isomorphic to $\ups$-free simple module.  The following lemma is a direct consequence of Lemma \ref{lem::max1} and the application of the duality functor $D$.
\begin{lem} \label{lem::1}  Let $\g$ be a Lie superalgebra of type I-0.  Take $M\in \mc O$, we have 
	\begin{align}
	&\Res \widehat{\mf z}_\ups(M) = \widehat{\mf z}_\ups^\oa \Res(M). \label{eq::4}
	\end{align}
\end{lem}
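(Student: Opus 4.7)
The plan is to deduce \eqref{eq::4} from Lemma~\ref{lem::max1} by a double application of duality, one on $\mc O$ and one on $\mc O_\oa$. I will use the duality $D$ from Section~\ref{sect::242}, together with its evident analogue $D^\oa$ on $\mc O_\oa$ defined by the same formula but using the restriction $\sigma|_{\g_\oa}$ (which is the standard good involution on $\g_\oa$). Two formal compatibilities are required. First, $\Res\circ D\cong D^\oa\circ \Res$ as functors $\mc O\to \mc O_\oa$, which is immediate from the definitions since the $\g$-action on $DM$ is defined via $\sigma$ and $\sigma|_{\g_\oa}$ controls the $\g_\oa$-action after restriction. Second, $D^2\cong \Id_{\mc O}$ and $(D^\oa)^2\cong \Id_{\mc O_\oa}$, which follow from $\sigma^2=\Id$ (as $\sigma=-\varphi^{w_0}$ and $w_0^2=1$).

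Next, I will establish the exchange relation
\begin{equation*}
\widehat{\mf z}_\ups\;\cong\; D\circ\mf z_{\widehat\ups}\circ D,
\end{equation*}
and likewise $\widehat{\mf z}^\oa_\ups\cong D^\oa\circ\mf z^\oa_{\widehat\ups}\circ D^\oa$. The key input is Lemma~\ref{lem::3}, which says that $\wdL(\la)$ is $\alpha$-finite if and only if $D\wdL(\la)$ is $\widehat\alpha$-finite; consequently $D$ matches the class of $\ups$-free simples bijectively with that of $\widehat\ups$-free simples. Since $D$ is a contravariant equivalence that turns submodules into quotients, it carries the maximal submodule $\mf z_{\widehat\ups}(DM)$ of $DM$ with no $\widehat\ups$-free composition factor to the maximal quotient of $D^2M\cong M$ with no $\ups$-free composition factor, which is $\widehat{\mf z}_\ups(M)$. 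The $\g_\oa$-case is entirely analogous, using that $L(\la)$ is $\alpha$-finite iff $D^\oa L(\la)\cong L(\widehat\la)$ is $\widehat\alpha$-finite.

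Finally, I will apply Lemma~\ref{lem::max1} to $DM$ with $\widehat\ups$ in place of $\ups$, obtaining
\begin{equation*}
\Res\,\mf z_{\widehat\ups}(DM)\;=\;\mf z^\oa_{\widehat\ups}\,\Res(DM).
\end{equation*}
Substituting the two exchange relations on the two sides and invoking the compatibility $\Res\circ D\cong D^\oa\circ \Res$ together with $(D^\oa)^2\cong \Id$ will rewrite this identity precisely as \eqref{eq::4}. No serious obstacle is anticipated; the only care needed is to correctly track the twist $\ups\leftrightarrow\widehat\ups$ introduced by the duality, which is automatic once the exchange relation above is in place.
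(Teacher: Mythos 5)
Your proof is correct and matches the paper's own (one-line) justification, which invokes exactly this conjugation-by-$D$ argument applied to Lemma~\ref{lem::max1}. You have simply spelled out the necessary compatibilities — $\Res\circ D\cong D^\oa\circ\Res$, $D^2\cong\Id$, and the exchange $\widehat{\mf z}_\ups\cong D\circ\mf z_{\widehat\ups}\circ D$ via Lemmas~\ref{lem::222} and~\ref{lem::3} — that the paper leaves implicit.
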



\subsubsection{}
Following \cite{KM2}, we define in this section the partial (co)approximation functors with respect to $\ups$. 

Set $\mf b_\ups: \mc O^\Z \rightarrow \mc O^\Z$ to be the functor of taking the quotient modulo the maximal submodule, which does not contains $\ups$-free composition factors. The {\em $\ups$-partial approximation functor} $\mf d_\ups$ is defined as the composition of the maximal coextension with non-$\ups$ composition factors, followed by $\mf b_\ups$. Namely, for a module $M$ with its injective hull $I_M$, the module $\mf d_\ups(M)$ is defined to be the intersection of the kernels of all homomorphisms  from $I_M$ to injective envelopes $\widetilde I(\mu)$  with $\ups$-free $\wdL(\mu)$ which annihilates $M$, followed by the functor $\mf b_\ups$.  The dual construction gives the {\em $\ups$-partial coapproximation functor} $\widetilde{\mf d}_\ups$. We refer to \cite[Section 2.5]{KM2} for more details of these functors, including their definitions on  morphisms; see also \cite[Section 5.5]{Co16}. 

 By definition we have natural transformations $\mf d_\ups^{nat}: \Id_{\mc O^\Z}\rightarrow \mf d_\ups$ and $\widetilde{\mf d}_\ups^{nat}: \widetilde{\mf d}_\ups \rightarrow \Id_{\mc O^\Z}$ such that   the kernel of ${\mf d}^{nat}_\ups$ and the cokernel of $\widetilde{\mf d}^{nat}_\ups$ are isomorphic to $\mf z_\ups$ and $\widehat{\mf z}_\ups$, respectively; see \cite[Section 2.5]{KM2}.  
 

\subsection{Realizations} We will realize the twisting and completion functors as partial coapproximation and partial approximation functors on $\mc O^{\Z}$, respectively. 

 \subsubsection{Realizations of $T_s$ and $G_s$}

In this subsection, we fix a simple root $\alpha\in \Pi_0$. We denote $s:=s_\alpha$ and $\mf z_\alpha:= \mf z_{\{\alpha\}}$. Similarly, we define $\mf z_\alpha^\oa$, $\widehat{\mf z}_\alpha$ and $\widehat{\mf z}^\oa_\alpha$, which are known as the (dual) Zuckerman functors associated to $\alpha$.   

 \begin{lem} \label{lem::10}
	Suppose that $\g$ is a Lie superalgebra of type I-0. 
 For any injective module $I\in \mc O^\Z$ and any projective module $P \in \mc O^\Z$, we have the following short exact sequences
 \begin{align}
 &0\rightarrow \mf z_{\alpha}(I)  \rightarrow I \xrightarrow{\eta^\alpha_I}    G_{s}(I)\rightarrow 0,\label{eq::1}\\
 &0\rightarrow  T_{s}(P)\xrightarrow{D\eta^{\widehat  \alpha}_{DP}}   P \rightarrow    \widehat{\mf z}_{\alpha}(P) \rightarrow 0. \label{eq::2}
 \end{align}
 \end{lem}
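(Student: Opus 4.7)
My plan is to establish the first short exact sequence by reducing to the classical Lie-algebra analogue via the restriction functor $\Res$, and then deduce the second sequence by applying the duality $D$ to the first.

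For the first sequence, note that $\Ind$ is exact and left adjoint to $\Res$, so $\Res$ preserves injectives; in particular $\Res I$ is injective in $\mc O^\Z_\oa$. The Lie-algebra analogue
\[
0\to \mf z^\oa_\alpha(\Res I)\to \Res I \xrightarrow{(\eta^\alpha_\oa)_{\Res I}} G^\oa_s(\Res I)\to 0
\]
is a classical fact essentially due to Joseph, see \cite[Section 2]{Jo82}; alternatively it can be read off the realization of $G_s^\oa$ on injectives as a partial approximation functor in \cite{KM2}. By Lemma~\ref{lem::4} together with the intertwiner $\Res\circ G_s=G_s^\oa\circ \Res$, the above sequence is precisely $\Res$ applied to the candidate sequence $0\to \mf z_\alpha(I)\to I\xrightarrow{\eta^\alpha_I} G_s(I)\to 0$. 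Faithful exactness of $\Res$ therefore yields surjectivity of $\eta^\alpha_I$; for the kernel, Lemma~\ref{lem::max1} gives
\[
\Res(\ker \eta^\alpha_I)=\ker(\eta^\alpha_\oa)_{\Res I}=\mf z^\oa_\alpha(\Res I)=\Res(\mf z_\alpha(I)),
\]
and since both $\ker \eta^\alpha_I$ and $\mf z_\alpha(I)$ are $\g$-submodules of $I$ with the same underlying subset of $\Res I$, they coincide as $\g$-submodules.

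For the second sequence, the contravariant duality $D$ on $\mc O^\Z$ interchanges the highest weight structures attached to $\mf b$ and $\mf b^r$ (Section~\ref{sect::242}), so it swaps projectives with injectives and hence $DP$ is injective in $\mc O^\Z$. Applying the first sequence to $DP$ with $\widehat\alpha$ in place of $\alpha$ and then applying $D$ produces
\[
0\to DG_{s_{\widehat\alpha}}(DP)\xrightarrow{D\eta^{\widehat\alpha}_{DP}} P\to D\mf z_{\widehat\alpha}(DP)\to 0.
\]
Theorem~\ref{thm::CC45} identifies the leftmost nontrivial term with $T_{s_\alpha}(P)$. A direct comparison of universal properties, using Lemma~\ref{lem::3} (so that $D$ interchanges $\widehat\alpha$-free and $\alpha$-free simples) together with the fact that $D$ swaps submodules and quotients, identifies $D\mf z_{\widehat\alpha}(DP)$ with $\widehat{\mf z}_\alpha(P)$; this is the functorial counterpart of Lemma~\ref{lem::1}. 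Assembling these identifications gives the desired second sequence.

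The only non-formal ingredient in this argument is the classical Lie-algebra analogue of the first sequence on injective modules, which is the main obstacle to watch; everything else is a formal combination of the previously established intertwining identities for $\Res$, $D$, and the completion/twisting functors, together with the adjunctions $\Ind\dashv \Res\dashv \Coind$ and Theorem~\ref{thm::CC45}.
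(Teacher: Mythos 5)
Your argument is correct and is essentially the paper's proof: you establish \eqref{eq::1} by restricting to $\mc O^\Z_\oa$, invoking Joseph's result on injectives together with Lemma~\ref{lem::4} and the intertwining $\Res\circ G_s=G_s^\oa\circ\Res$, identifying the kernel as $\mf z_\alpha(I)$ via Lemma~\ref{lem::max1}, and then obtain \eqref{eq::2} by applying $D$ and using Theorem~\ref{thm::CC45} together with Lemma~\ref{lem::3}. Your justification that $\Res I$ is injective (from $\Ind\dashv\Res$ with $\Ind$ exact) is a touch more transparent than the paper's phrasing, which points instead to the adjunction with $\Coind$, but the two are equivalent in light of the Frobenius isomorphism $\Ind\cong\Coind(\Lambda^{\max}\g_\ob\otimes-)$.
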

\begin{proof} Since $\Res(-)$ is left adjoint to $\Coind(-)$, we know that $\Res I$ is injective in $\mc O_\oa$. By \cite[Lemma 2.4]{Jo82} and Lemma \ref{lem::4} we have the following  short exact sequence 
	$$0\rightarrow    {\mf z}^\oa_{\alpha}\Res I  \rightarrow \Res I \xrightarrow{\Res \eta^\alpha_{I}}    G_{s}^\oa(\Res I)\rightarrow 0.$$ Therefore, the kernel of the map $\eta^\alpha_I$ from $I$ to $G_{s}(I)$ is exactly the maximal $\alpha$-finite $\g_\oa$-submodule of $\Res I$, 
	   which turns out to be $\mf z_\alpha(I)$  by the proof of Lemma \ref{lem::max1}. This proves  the exactness of the sequence in \eqref{eq::1}.
  
  Applying the duality $D$ to the short exact sequence \eqref{eq::1} for $I:=DP$, we have the following short exact sequence 
  \begin{align*}
  &0\rightarrow  DG_{s}D P\xrightarrow{D\eta^{\alpha}_{DP}}   P \rightarrow   D   {\mf z}_{\alpha}DP \rightarrow 0. 
  \end{align*} Replacing  $\alpha$ with $\widehat{\alpha}$, the conclusion follows from Theorem \ref{thm::CC45} and Lemma \ref{lem::3}.
\end{proof}


The following theorem is an analogue  of \cite[Proposition 5.9]{Co16} and \cite[Theorem 8, Theorem 10]{KM2}, where the cases of basic classical Lie superalgebras and Lie algebras were considered. 
\begin{thm}  \label{thm::11}
  Suppose that $\g$ is either a basic classical Lie superalgebra in \eqref{Kaclist1} or a Lie superalgebra of type I-0.
	Then we  have $T_{s} \cong \widetilde{\mf d}_{\{\alpha\}}$ and $G_{s}\cong \mf d_{\{\alpha\}}$ as functors on $\mc O^{\Z}$. 
\end{thm}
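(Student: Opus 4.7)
Since the basic classical case is exactly \cite[Proposition 5.9]{Co16}, the task reduces to the Lie superalgebras of type I-0, and my plan is to adapt the strategy of \cite[Theorems 8 and 10]{KM2} to this setting, using Lemma \ref{lem::10} as the key identity on injectives and projectives.

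First, for the isomorphism $G_s \cong \mf d_{\{\alpha\}}$: the short exact sequence \eqref{eq::1} in Lemma \ref{lem::10} identifies $G_s(I) \cong I/\mf z_\alpha(I)$ for every injective $I \in \mc O^\Z$. On the other hand, unwinding the definition of $\mf d_\ups$ from Section 3.3.2 shows that on injectives $\mf d_{\{\alpha\}}(I) = \mf b_{\{\alpha\}}(I) = I/\mf z_\alpha(I)$, since the injective hull of $I$ is $I$ itself, so every homomorphism $I \to \widetilde I(\mu)$ annihilating $I$ must vanish. Because the kernels of the natural transformations $\eta^\alpha \colon \Id \to G_s$ (Lemma \ref{lem::4}) and $\mf d^{nat}_{\{\alpha\}} \colon \Id \to \mf d_{\{\alpha\}}$ both coincide with the subfunctor $\mf z_\alpha$, these identifications are natural in $I$ and compatible with the two natural transformations out of $\Id_{\mc O^\Z}$.

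To extend the agreement from injectives to all of $\mc O^\Z$, I would invoke left exactness: $G_s$ is left exact by construction (Section 3.2), and $\mf d_{\{\alpha\}}$ is left exact by \cite[Section 2.5]{KM2}. Since $\mc O^\Z$ has enough injectives, any $M$ embeds into a two-step injective coresolution $0 \to M \to I^0 \to I^1$, and both $G_s(M)$ and $\mf d_{\{\alpha\}}(M)$ are realized as $\ker(F(I^0) \to F(I^1))$ for the corresponding left exact functor $F$. Compatibility of the natural isomorphism on injectives with the differential then yields the desired natural isomorphism $G_s \cong \mf d_{\{\alpha\}}$ on all of $\mc O^\Z$.

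The isomorphism $T_s \cong \widetilde{\mf d}_{\{\alpha\}}$ follows by a symmetric argument on projectives, using the exact sequence \eqref{eq::2}, the right exactness of $T_s$ and $\widetilde{\mf d}_{\{\alpha\}}$, and the existence of enough projectives in $\mc O^\Z$; alternatively, it can be deduced from the first isomorphism by applying $D$ and invoking Theorem \ref{thm::CC45} together with Lemma \ref{lem::1}, which matches $\mf d$ with $\widetilde{\mf d}$ under the duality. The main obstacle, relative to the Lie algebra case in \cite{KM2}, is simply ensuring that the short exact sequences \eqref{eq::1} and \eqref{eq::2} genuinely lift from $\mc O_\oa^\Z$ to $\mc O^\Z$; this lifting, already carried out in Lemma \ref{lem::10}, is the place where the type I-0 hypothesis (through Theorem \ref{thm::CC45} and Lemma \ref{lem::3}) is essential, after which the standard functorial argument of \cite{KM2} transfers verbatim.
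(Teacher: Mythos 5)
Your proposal is correct and follows essentially the same route as the paper: establish the isomorphism on projectives (resp.\ injectives) via Lemma \ref{lem::10}, then propagate it to all of $\mc O^\Z$ using right (resp.\ left) exactness. The only real differences are cosmetic: the paper begins with $T_s\cong\widetilde{\mf d}_{\{\alpha\}}$ on projectives and invokes the (dual) Comparison Lemma \cite[Lemma 1]{KM2} as a black box, whereas you begin with $G_s\cong\mf d_{\{\alpha\}}$ on injectives and unpack the Comparison Lemma by hand via a two-step injective coresolution; both then obtain the remaining isomorphism by duality through Theorem \ref{thm::CC45}. One small citation slip: the relation $D\circ\widetilde{\mf d}_{\{\widehat\alpha\}}\circ D\cong \mf d_{\{\alpha\}}$ rests on the interchange of $\alpha$-finiteness and $\widehat\alpha$-finiteness under $D$, which is Lemma \ref{lem::3} (not Lemma \ref{lem::1}, which concerns $\Res$ and $\widehat{\mf z}_\ups$); this does not affect the validity of the argument.
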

\begin{proof}  
	The conclusion for the case of basic classical Lie superalgebra has been established in  \cite[Proposition 5.9]{Co16}. Therefore, we focus on the case of Lie superalgebra $\g$ of type I-0.
	
	To complete the proof of this conclusion, we will use the Comparison Lemma in \cite[Lemma 1]{KM2} and its dual version. 	We first show $T_{s} \cong \widetilde{\mf d}_{\{\alpha\}}$, and we shall proceed with an argument similar to the proof of \cite[Theorem 10]{KM2}. Note that  both functors are right exact and we have the following  natural transformations $$T_{s} \xrightarrow{D\eta^{\widehat \alpha}_{D(-)}} \Id_{\mc O^\Z},~\widetilde{\mf d}_{\{\alpha\}} \xrightarrow{\widetilde{\mf d}_{\{\alpha\}}^{nat}} \Id_{\mc O^\Z}.$$


By Lemma \ref{lem::10}, for any projective module $P \in \mc O$ we have the following commutative diagram with exact rows
$$\xymatrix{
0 \ar[r]  & T_{s}(P)\ar[r]^{D\eta^{\widehat \alpha}_{DP}} &  P \ar[r] \ar[d]^{\Id_P}  &   \widehat{\mf z}_{\alpha}P \ar[r] \ar[d]^{\cong} &0,\\
0\ar[r] &  \widetilde{\mf d}_{\{\alpha\}}(P) \ar[r]^{ \widetilde{\mf d}_{\{\alpha\}}^{nat}}  &  P \ar[r] &    \widehat{\mf z}_{\alpha}P \ar[r] &0.}$$
 which implies that $T_{s}(P)\cong \widetilde{\mf d}_{\{\alpha\}}(P)$ since $ \widehat{\mf z}_{\alpha}(P)$ is the unique maximal $\alpha$-finite $\mf g$-quotient of $P$. Now the statements follows from the dual version of Comparison Lemma \cite[Lemma 1]{KM2}.
 
The isomorphism $G_{s}\cong \mf d_{\{\alpha\}}$ can proved similarly using the Comparison Lemma \cite[Lemma 1]{KM2}. Alternatively, it follows from Theorem \ref{thm::CC45} that $G_{s_{\alpha}}\cong D\circ T_{s_{\widehat \alpha}}\circ D\cong D\circ\widetilde{\mf d}_{\{\widehat \alpha\}} \circ D \cong \widetilde{\mf d}_{\{\alpha\}}$. This completes the proof. 
\end{proof}

 \section{The category of projectively presentable modules} \label{Sect::35}
  Throughout this section, we  fix   a dominant weight   $\nu\in \h^\ast$. Also, we set $$\ups=\ups_\nu:=\{\alpha \in \Pi_0|~\langle \nu +\rho_\oa, \alpha^\vee \rangle =0\}.$$
 Let $\mf l_\nu \equiv \mf l_{\ups}$ be the Levi subalgebra of $\g_\oa$ induced by $\ups$, namely, $\mf l_\nu$ is generated by $\g_\oa^\alpha$ ($\pm \alpha\in \ups$) and $\h$. Denote by $W_\nu\equiv W_{\mf l_{\ups}}$ the Weyl group of $\mf l_\nu$.
 
 As formulated in Theorem A of Section \ref{sec1}, the category $\mc O^{\vpre}$ plays an important role in the annihilator-preserving equivalence, and the simple objects $S_\nu(\mu)$ play the role of simple inputs in the construction of simple Whittaker modules. In this section we study, in detail, the modules $S_\nu(\mu)$ and the structure of $\mc O^{\vpre}$. In particular, we will realize $S_\nu(\mu)$ as images of simple highest weight modules under the twisting functors.
  
  \subsection{The category $\mc O^{\vpre}$}   \label{sect::411}
  We recall the set $\mc  X(\nu)$ from Section \ref{sec1} and define a related set $\mc  X_0(\nu)$  as follows: 
  \begin{align}
  &\mc X(\nu):=\{\mu\in \nu+ \mc X|~\wdL(\mu)  \text{ is $\ups$-free} \},\\
  &\mc X_0(\nu):=\{\mu\in \nu+ \mc X|~L(\mu)  \text{ is $\ups$-free} \}.
  \end{align} 
  Since $\langle \nu+\rho_\oa, \alpha^\vee \rangle=0$ for $\alpha \in \ups$, we know that  $$\mc X_0(\nu)= \{\mu\in \nu+ \mc X|~\mu \text{ is $W_\nu$-anti-dominant}\}.$$
  We have $\mc X(\nu)\supseteq \mc X_0(\nu)$; see, e.g., \cite[Lemma 2.1]{CoM1}. Furthermore, assume that $\mf l = \mf l_\nu$ is a Levi subalgebra of $\g$ from \eqref{deflu}  in the way that the corresponding parabolic subalgebra $\mf l+\mf u^+$ contains $\mf b$. Then, every $\alpha\in \ups$ is still a simple root in $\mf n^+$ and $\g^\alpha_\ob=0$, hence, we have  $\mc X(\nu) =\mc X_0(\nu)$  by \cite[Lemma 2.1]{CoM1}. 
  
  If $\g$ is  of type I and $\nu\in \mc X$  then we also have  $\mc X(\nu) =\mc X_0(\nu)$; see also \cite[Lemma 4.13]{CC}. But, in general we have $\mc X(\nu)\neq \mc X_0(\nu)$, see, e.g., examples in \cite[Section 4.3]{CCC}. Observe that if $\nu$ is regular and integral then $\mc X=\mc X_0(\nu)$.

  Let $\mc P_\nu$ (resp. $\mc I_\nu$) denote the  full subcategory of $\mc O$ consisting of modules isomorphic to direct sums of  projective covers $\widetilde  P(\mu)$ (resp. injective modules $\widetilde{I}(\mu)$)  with $\mu \in \mc X(\nu).$  A module $M\in \mc O$ is said to be  {\em projectively presentable} by $\mc P_\nu$ (or {\em $\mc P_\nu$-presentable}) if $M$ has a presentation $$P_1 \rightarrow P_2\rightarrow M\rightarrow 0,$$ with $P_1, P_2\in \mc P_\nu$.  We define      $\mc O^{\vpre}$ as the full subcategory of $\mc O$ consisting of  $\mc P_\nu$-presentable modules.   Similarly, we define $\mc O^{\cvpre}$  to be the full subcategory of $\mc O$ consisting of {\em injectively co-presentable} (or {\em $\mc I_\nu$-co-presentable)} modules by injective modules in $\mc I_\nu$.
   
     Also,  we define $\mc O^{\vpre}_\oa$ (resp.  $\mc O^{\cvpre}_\oa$) as the full subcategory of modules in $\mc O_\oa$ which are presentable (resp. co-presentable) by projective  modules $P(\mu)$ (resp. injective modules $I(\mu)$) with $\mu\in \mc X_0(\nu)$; see also \cite[Section 2]{MS2}.

   If $\g$ is a basic classical Lie superalgebra, then using the contragredient duality $d$ from Section \ref{sect::242} we obtain an equivalence of categories 
  \begin{align}
  &d(-): \mc O^{\cvpre} \xrightarrow{\cong} \mc O^{ \nu\text{-pres}}.\label{eq::Dpresbasic}
\end{align}    If $\g$ is a Lie superalgebra of type I-0. Then it follows from  Lemma \ref{lem::3} that 
  \begin{align}
  &D(-): \mc O^{\cvpre} \xrightarrow{\cong} \mc O^{\widehat \nu\text{-pres}}.\label{eq::Dpres}
  \end{align}

 The way these categories $\mc O^{\vpre}_\oa$, $\mc O^{\cvpre}_\oa$ come into play in the representation theory for   Lie algebras originates in the realization of (possibly singular) categories of Harish-Chandra bimodules as developed in \cite{BG, MS}. Moreover, they are equivalent to certain module category of Enright-complete modules; see \cite{KoM}. The category $\mc O^{\vpre}_\oa$ has  been further studied in more detail in \cite{MaSt04}. 
 
 We will establish in Lemma \ref{thm::14} an equivalence
 $\mc O^{\vpre}\cong \mc B_\nu$. It turns out that $\mc O^{\vpre}$ is equivalent to a   category of Whittaker modules as mentioned in Section \ref{sect::16}; see Theorem \ref{thm::15}.

 The goal of this section is to investigate the category $\mc O^{\vpre}$ and to realize simple objects in $\mc O^{\vpre}$ as twisted simple modules.
  
 \subsection{Connection with $\mc O^{\vpre}_\oa$} \label{Sect::351} 


 For any module $M\in \mc O$, we let $\text{Tr}_{\ups_\nu}(M)$ denote the  sum of all images of homomorphisms from modules in $\mc P_\nu$ to $M$.  For any $\mu \in \mc X(\nu)$, let $A(\mu)$ denote the kernel of the canonical epimorphism $\widetilde P(\mu) \onto \widetilde M(\mu)$. We define \begin{align}
&S_{\nu}(\mu):= \widetilde P(\mu)/\text{Tr}_{\ups_\nu}(\rad \widetilde P(\mu)),\\
&\Delta_{\nu}(\mu):= \widetilde P(\mu)/\text{Tr}_{\ups_\nu}(A(\mu)).
\end{align} 

 The set $\{S_\nu(\mu)|~\mu \in \mc X(\nu)\}$ is an  exhaustive list  of simple objects  in $\mc O^{\vpre}$. 
 For $\g=\g_\oa$ a Lie algebra, the modules $\Delta_\nu(\mu)$ play the role of  proper standard objects in the principal blocks of $\mc O^{\vpre}_\oa$; see \cite[Section 2.6]{MaSt04}. For each $\mu \in \mc X(\nu)$,  $\widetilde P(\mu)$ is the projective cover of $S_\nu(\mu)$ in $\mc O^{\vpre}$.

 The following theorem gives a description of $\mc O^{\vpre}$. In particular,  $\mc O^{\vpre}$ is a   {\em Frobenius extension} of $\mc O^{\vpre}_\oa$ in the sense of \cite[Definition 2.1]{Co}.

 \begin{thm}\label{thm::thm9}  
 	We have the following facts:
 	\begin{itemize}
 		\item[(1)] The functors $$\Ind(-): \mc O^{\vpre}_\oa\rightarrow \mc O^{\vpre},~\Res(-):\mc O^{\vpre}\rightarrow \mc O^{\vpre}_\oa,$$ are well-defined.  In particular, $\Ind P(\mu)$ is a direct sum of projective modules $\widetilde P(\gamma)$ with $\gamma\in \mc X(\nu)$ provided that $\mu\in \mc X_0(\nu)$, as well as  $\Res \widetilde{P}(\mu)$ is a direct sum of projective modules $P(\gamma)$ with $\gamma\in \mc X_0(\nu)$ provided that $\mu\in \mc X(\nu)$. 
 		\item[(2)] Let $M\in \mc O^\Z$.
 		Then we have \begin{align}
 		&\Res M \in \mc O^{\cvpre}_\oa \Rightarrow  M \in \mc O^{\cvpre}. \label{eq::312}
 		\end{align} 
 	\end{itemize}
 \end{thm}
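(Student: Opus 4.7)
The plan rests on three structural features arising from PBW: $U(\g)$ is free of finite rank over $U(\g_\oa)$, so $\Ind$, $\Res$, $\Coind$ are all exact; $\Ind$ and $\Res$ preserve projectives (as left adjoints to exact functors); and since $\Lambda^{\max}\g_\ob$ is the trivial $\g_\oa$-character (the weight sum $\sum_\alpha\alpha$ over odd roots of $\g_\ob$ cancels pairwise by $\alpha\leftrightarrow-\alpha$), the Bell--Fischman formula yields $\Coind L(\gamma)\cong \Ind L(\gamma)$ on simples up to parity. The technical core, used throughout, is the following easy observation: if $\wdL(\gamma)$ is $\alpha$-finite for some $\alpha\in\Pi_0$, then every simple $\g_\oa$-composition factor of $\Res\wdL(\gamma)$ is $\alpha$-finite (proof: $f_\alpha$ acts locally nilpotently on $\wdL(\gamma)$, hence on every $\g_\oa$-subquotient, and Lemma \ref{lem::222} excludes the $\alpha$-free alternative).

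For Part (1), projectivity of $\Ind P(\mu)$ and $\Res\widetilde{P}(\mu)$ is automatic from preservation of projectives. For the tops: by Frobenius reciprocity for $\Ind\dashv\Res$, $\wdL(\gamma)$ lies in the top of $\Ind P(\mu)$ iff $L(\mu)$ lies in the top of $\Res\wdL(\gamma)$; when $\mu\in\mc{X}_0(\nu)$, $L(\mu)$ is $\ups$-free, and the observation applied contrapositively forces $\wdL(\gamma)$ to be $\ups$-free, so $\gamma\in\mc{X}(\nu)$. Dually, using $\Res\dashv\Coind$ together with $\Coind L(\gamma)\cong \Ind L(\gamma)$, $L(\gamma)$ lies in the top of $\Res\widetilde{P}(\mu)$ iff $L(\gamma)$ embeds into $\operatorname{soc}(\Res\wdL(\mu))$; when $\mu\in\mc{X}(\nu)$, $\wdL(\mu)$ is $\ups$-free and this is inherited by every $\g_\oa$-submodule, so $L(\gamma)$ is $\ups$-free, giving $\gamma\in\mc{X}_0(\nu)$.

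For Part (2), dualizing a minimal injective copresentation gives the characterization $M\in\mc{O}^{\cvpre}$ iff $\Hom_\g(\wdL(\tau),M)=\Ext^1_\g(\wdL(\tau),M)=0$ for every $\tau\notin\mc{X}(\nu)$. The $\Hom$-vanishing is direct: an embedding $\wdL(\tau)\hookrightarrow M$ would insert a composition factor of $\operatorname{soc}(\Res\wdL(\tau))$, lying outside $\mc{X}_0(\nu)$ by the observation, into $\operatorname{soc}(\Res M)\subset\mc{I}_\nu^{\oa}$, a contradiction. For the $\Ext^1$-vanishing, I plan to use Eckmann--Shapiro for $\Res\dashv\Coind$ (exactness of $\Res$ gives $\Ext^i_\g(L,\Coind N)\cong \Ext^i_{\g_\oa}(\Res L,N)$ for all $i$) applied to $N=\Res M$, $L=\wdL(\tau)$: the observation together with induction on composition length of $\Res\wdL(\tau)$ and the $\Hom/\Ext^1$-vanishing in $\mc{O}^{\cvpre}_\oa$ yields $\Ext^i_\g(\wdL(\tau),\Coind\Res M)=0$ for $i=0,1$ and $\tau\notin\mc{X}(\nu)$. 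Splicing with the unit sequence $0\to M\to\Coind\Res M\to Q\to 0$ then reduces the task to showing $\operatorname{soc}(Q)\subset\{\wdL(\tau):\tau\in\mc{X}(\nu)\}$. To establish this, I embed $M\hookrightarrow \Coind I_1^{\oa}$ via a copresentation $0\to\Res M\to I_1^{\oa}\to I_2^{\oa}$ of $\Res M$ with $I_i^{\oa}\in\mc{I}^{\oa}_\nu$, noting that $\Coind I(\gamma)\in\mc{I}_\nu$ for $\gamma\in\mc{X}_0(\nu)$ by the socle identity $\dim\Hom_\g(\wdL(\tau),\Coind I(\gamma))=[\Res\wdL(\tau):L(\gamma)]$ combined with the observation, and then carry out a two-step filtration analysis of the cokernel.

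The main obstacle will be this final socle control for $Q=\Coind\Res M/M$, where the chain of reductions risks a circular dependence with the $\Ext^1$-vanishing itself; breaking the circularity will require careful splicing with the auxiliary ambient injective $\Coind I_1^{\oa}\in\mc{I}_\nu$ produced above and tracking how simple submodules of the cokernel split between $Q$ and its quotient $\Coind(I_1^{\oa}/\Res M)\hookrightarrow \Coind I_2^{\oa}\in\mc{I}_\nu$.
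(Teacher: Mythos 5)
Your Part (1) is essentially the argument in the paper: Frobenius reciprocity for $\Ind\dashv\Res$ and $\Res\dashv\Coind$, combined with the observation that $\alpha$-finiteness of $\wdL(\gamma)$ propagates to all composition factors of $\Res\wdL(\gamma)$ (and that $\Ind$, $\Coind$ preserve local $\alpha$-finiteness). Two small slips: the identity $\Lambda^{\max}\g_\ob\cong\C$ as a $\g_\oa$-module is false for general quasireductive $\g$ (e.g.\ for $\mathfrak{p}(n)$ the odd roots $2\vare_i$ have no partners), but this does not matter because the paper never needs $\Coind L(\gamma)\cong\Ind L(\gamma)$ on the nose — only that $\Coind L(\gamma)$ is locally $\alpha$-finite when $L(\gamma)$ is. Relatedly, the equivalence you assert between ``$L(\gamma)$ in the top of $\Res\widetilde P(\mu)$'' and ``$L(\gamma)\hookrightarrow\operatorname{soc}(\Res\wdL(\mu))$'' is not correct as stated: the adjunction $\Hom_{\g_\oa}(\Res\widetilde P(\mu),L(\gamma))\cong\Hom_\g(\widetilde P(\mu),\Coind L(\gamma))$ computes the multiplicity $[\Coind L(\gamma):\wdL(\mu)]$, not a socle. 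The paper argues from this multiplicity directly.

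Part (2) is where you diverge genuinely, and where there is a real gap, which you already anticipate. Your homological characterization ($M\in\mc O^{\cvpre}$ iff $\Hom_\g(\wdL(\tau),M)=\Ext^1_\g(\wdL(\tau),M)=0$ for $\tau\notin\mc X(\nu)$) is sound, the $\Hom$-vanishing is direct, and Eckmann--Shapiro for $\Res\dashv\Coind$ does give $\Ext^i_\g(\wdL(\tau),\Coind\Res M)\cong\Ext^i_{\g_\oa}(\Res\wdL(\tau),\Res M)=0$ for $i=0,1$. But at that point the long exact sequence for $0\to M\to\Coind\Res M\to Q\to 0$ produces an \emph{isomorphism} $\Hom_\g(\wdL(\tau),Q)\cong\Ext^1_\g(\wdL(\tau),M)$, so controlling $\operatorname{soc}(Q)$ is literally the same statement you are trying to prove. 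The auxiliary embedding $M\hookrightarrow\Coind I_1^\oa\in\mc I_\nu$ does not break the circle: the cokernel $\Coind I_1^\oa/M$ contains $Q$ as a submodule, so any information you extract about $\operatorname{soc}(\Coind I_1^\oa/M)$ from the quotient $\Coind(I_1^\oa/\Res M)$ alone is insufficient, since $\operatorname{soc}(Q)\subseteq\operatorname{soc}(\Coind I_1^\oa/M)$ is not excluded. The implication you are proving is a genuine descent statement that cannot be obtained by purely formal $\Hom/\Ext$ bookkeeping from the even subalgebra: it needs an input that actually distinguishes $\mc O^{\cvpre}_\oa$-modules beyond their $\Hom/\Ext$-profile.

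The paper supplies precisely that input through the Deodhar--Mathieu/Enright completion functor $r_\alpha$ and the K\"onig--Mazorchuk characterization of $\mc O^{\cvpre}_\oa$ as the $\ups$-complete modules. Concretely: $\Res M\in\mc O^{\cvpre}_\oa$ gives $r_\alpha(\Res M)=\Res M$ for $\alpha\in\ups$, hence $r_\alpha(M)=M$ by faithfulness of $\Res$ and compatibility of $r_\alpha$ with $\Res$; injectives are $\alpha$-complete so $r_\alpha(I_M)=I_M$; right-exactness of $r_\alpha$ then yields $r_\alpha(I_M/M)\cong I_M/M$; and $\operatorname{soc}(r_\alpha(-))$ is always $\alpha$-free because $r_\alpha(-)$ lives inside an $f_\alpha$-localization. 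This is the step your framework does not reproduce. To salvage your approach you would need a substitute for $r_\alpha$-saturation, e.g.\ a direct argument that $\operatorname{soc}(\Coind\Res M/M)$ is $\ups$-free, and I do not see one that avoids invoking exactly this completion theory.
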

\begin{proof}  Let $\mu,\gamma\in \h^\ast$ be   such that $\mu \in \mc  X_0(\nu)$. Suppose that $\widetilde P(\gamma)$ is a direct summand of $\Ind P(\mu)$. Then it follows from 
	\begin{align} 
	&\Hom_\mf g(\Ind P(\mu), \wdL(\gamma)) = \Hom_{\g_\oa}(P(\mu), \Res \wdL(\gamma)) = [\Res \widetilde  L(\gamma): L(\mu)], \label{eq::310}
	\end{align}  that $[\Res \widetilde{L}(\gamma): L(\mu)]\neq 0$.  We may conclude that $\gamma\in \mc X(\nu)$.   
	
	Let $N\in \mc O^{\vpre}_\oa$. Then there is an exact sequence 
	$P_1\rightarrow P_2\rightarrow N\rightarrow 0$ for some projective modules $P_1,P_2\in \mc O_\oa$ such that every simple summand of $\text{top}P_1$ and $ \text{top}P_2$ has highest weight lying in $\mc X_0(\nu)$. Applying the functor $\Ind(-)$ then we obtain the exact sequence $\Ind P_1\rightarrow \Ind P_2\rightarrow \Ind N\rightarrow 0$.    Since $\Ind P_1, \Ind P_2 \in \mc P_\nu,$ it follows that  $\Ind N \in \mc O^{\vpre}$. 
	
	Conversely, let $M\in \mc O^{\vpre}$. Then we have a short exact sequence
	\begin{align}
	&Q_1\rightarrow Q_2\rightarrow M \rightarrow 0, \label{eq::38}
	\end{align} for some 
	 $Q_1, Q_2\in \mc P_\nu$. Let $\mu \in \mc X(\nu)$, we claim that $\Res \widetilde P(\mu)$ is a  direct sum of projective modules $P(\gamma)$ with $\gamma \in \mc X_0(\nu).$ To see this,  let $P(\gamma)$  be a direct summand of $\Res \widetilde P(\mu)$. Then it follows that
		\begin{align} 
	&0\neq \Hom_{\mf g_\oa}(\Res \widetilde P(\mu), L(\gamma)) = \Hom_{\g}(\widetilde P(\mu), \Coind L(\gamma)) = [  \Coind L(\gamma): \wdL(\mu)]. 
	\end{align} Suppose on the contrary that $\gamma$ is $\alpha$-finite, for some $\alpha\in \ups$. Then $\Coind L(\gamma)$ is $\alpha$-finite as well. But it follows that $\wdL(\mu)$ is $\alpha$-finite, a contradiction.  
	
	Now, applying the restriction functor $\Res(-)$ to \eqref{eq::38} we get a short exact sequence 
	\begin{align}
	&\Res Q_1 \rightarrow \Res Q_2 \rightarrow \Res M \rightarrow 0,
	\end{align} such that any of $\Res Q_1, \Res Q_2$ is a direct sum of modules $P(\gamma)$ with $\gamma\in \mc  X_0(\nu)$. This proves part $(1)$.

Next, we shall prove Part $(2)$. Observe that $\nu \in \mc X$ since $M\in \mc O^\Z$. 
 We will use the results about Enright-complete modules (associated to $\nu$) as developed in \cite{KoM}. 

For $\alpha \in \Pi_0$, recall that we have picked a non-zero root vector  $f_\alpha\in \g_\oa^{-\alpha}$. Let $\text{Loc}_\alpha(-):\g\text{-Mod}\rightarrow \g\text{-Mod}$ denote the  functor of taking locally $\alpha$-finite vectors of modules. We  recall   the Deodhar-Mathieu's version of Enright completion functor $r_\alpha(-):\mc O^\Z\rightarrow \mc O^\Z$   from \cite[Section 2,3]{KoM} as follows:
\begin{align}
&r_\alpha(-):=\text{Loc}_\alpha(\Res^{\widetilde U_\alpha}_{\widetilde U}(\widetilde U_\alpha\otimes_{\widetilde U} -)),
\end{align} where $\widetilde U_\alpha$ denotes the Ore localization with respect to the set $\{f^m_\alpha|~m\geq 0\}$; see, e.g., \cite[Section 3]{KoM} and \cite[Section 2.2]{KM2} for more details. With slight abuse of notation, we denote the corresponding endo-functor on $\mc O_\oa$ by $r_\alpha$ as well. A module $M \in \mc O$ (resp. $M \in \mc O_\oa$) is said to be {\em $\ups$-complete} if $r_\alpha(M) = M,$ for any $\alpha \in \ups.$ 
 
  Let us continue to prove \eqref{eq::312}. By \cite[Theorem 2]{KoM}, we know that $\Res M$ is $\ups$-complete.     By \cite[Lemma 3]{KoM}, it follows that $\text{soc}\Res M$ is $\ups$-free. Therefore, if  $\wdL(\mu)$ is a summand of $\text{soc} M$, then $\mu \in \mc X(\nu)$. This implies that $\text{soc}M$ is $\ups$-free. Now, let $I_M$ be the injective hull of $M$ in $\mc O$. Then $I_M = \bigoplus_{i=1}^\ell I(\la_i)$, for some $\la_i\in \mc X(\nu)$. For any $\alpha \in \ups$, we have  
  \begin{align*}
  &r_\alpha(I/M) \supseteq r_\alpha(I)/r_\alpha(M) \cong I/M, 
  \end{align*} since $r_\alpha$ is right exact. By definition, the socle of $r_\alpha(I/M)$ is $\alpha$-free and so is $\text{soc}(I/M)$. Consequently, there is $I'\in \mc I_\nu$ such that $I/M \hookrightarrow I'$. This proves $M\in \mc O^{\cvpre}$, as desired. This completes the proof. 
\end{proof}

 The following is a generalization of \cite[Corllary 2.11]{MaSt04}, where the cases of Lie algebras were considered. 
\begin{cor} Suppose that $\g$ is a Lie superalgebra of type I-0. Then,  for any $\nu\in \mc X$ and $\la \in \mc X(\nu)$ we have  $\widetilde I(\la)\in \mc O^{\vpre}.$ 
	\end{cor}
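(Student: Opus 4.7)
The plan is to reduce to the reductive Lie-algebra case \cite[Corollary 2.11]{MaSt04}, which gives $I(\mu)\in \mc O^{\vpre}_\oa$ for every $\mu\in \mc X_0(\nu)$, and to transport the conclusion across $\Res$ using Theorem \ref{thm::thm9}(1). Two ingredients will drive the argument: first, $\Res\widetilde I(\la)$ is itself injective in $\mc O^\Z_\oa$ and its indecomposable summands can be controlled by Frobenius reciprocity; second, for type I-0 Lie superalgebras with $\nu\in\mc X$ we have $\mc X(\nu)=\mc X_0(\nu)$, so weight data on the even subalgebra can be translated back to weight data for $\g$.

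\emph{Step 1.} First I would analyse $\Res\widetilde I(\la)$. Since $\Ind$ is exact, its right adjoint $\Res$ preserves injectives, so $\Res\widetilde I(\la)$ is injective in $\mc O^\Z_\oa$ and decomposes as $\bigoplus_\gamma I(\gamma)^{n_\gamma}$. Frobenius reciprocity together with the injectivity of $\widetilde I(\la)$ yields
\[
n_\gamma = \dim\Hom_{\g_\oa}(L(\gamma),\Res\widetilde I(\la)) = \dim\Hom_{\g}(\Ind L(\gamma),\widetilde I(\la)) = [\,\Ind L(\gamma):\wdL(\la)\,].
\]
By PBW, $\Ind L(\gamma)\cong \Lambda(\g_\ob)\otimes L(\gamma)$ as $\g_\oa$-modules. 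For any $\alpha\in\ups$, the nilpotent element $f_\alpha\in\g_\oa$ acts locally nilpotently on the finite-dimensional $\g_\oa$-module $\Lambda(\g_\ob)$; if $L(\gamma)$ were $\alpha$-finite as well, the sum of two commuting locally nilpotent operators would make the whole tensor product, and hence every composition factor, $\alpha$-finite, contradicting $\la\in \mc X(\nu)$ via Lemma \ref{lem::3}. Thus each $\gamma$ with $n_\gamma\neq 0$ is $\ups$-free, i.e., $\gamma\in \mc X_0(\nu)$, and invoking \cite[Corollary 2.11]{MaSt04} on each summand gives $\Res\widetilde I(\la)\in \mc O^{\vpre}_\oa$.

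\emph{Step 2.} Next I would establish the lifting principle: for any $M\in\mc O^\Z$ with $\Res M\in \mc O^{\vpre}_\oa$ we have $M\in \mc O^{\vpre}$. Any simple quotient $M\onto \wdL(\mu)$ descends to a surjection $\Res M\onto L(\mu)$ onto the $\g_\oa$-top of $\Res\wdL(\mu)$, forcing $\mu\in \mc X_0(\nu)=\mc X(\nu)$; hence the projective cover $Q\onto M$ in $\mc O$ lies in $\mc P_\nu$. By Theorem \ref{thm::thm9}(1), $\Res Q$ is a direct sum of $P(\gamma)$'s with $\gamma\in\mc X_0(\nu)$. Writing $Q'$ for the projective cover of $\Res M$ in $\mc O_\oa$ and splitting $\Res Q\cong Q'\oplus Q''$, we obtain $\Res K\cong Q''\oplus \ker(Q'\onto \Res M)$ for $K:=\ker(Q\onto M)$; both summands have $\g_\oa$-top supported in $\mc X_0(\nu)$ (the first as a summand of $\Res Q\in \mc P^\oa_\nu$, the second by the presentability of $\Res M$). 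Applying the top-argument above to $K$ yields $\text{top}_\g(K)\subseteq \mc X(\nu)$, so the projective cover of $K$ also lies in $\mc P_\nu$, producing the desired $\mc P_\nu$-presentation of $M$. Taking $M=\widetilde I(\la)$ closes the proof.

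\emph{The main obstacle} is Step 1, where one must rule out $\gamma\notin\mc X_0(\nu)$ from contributing to $\Res\widetilde I(\la)$: the argument hinges on the PBW identification $\Ind L(\gamma)\cong \Lambda(\g_\ob)\otimes L(\gamma)$ and on the fact that $f_\alpha\in\g_\oa$ so that its locally nilpotent action on both tensor factors adds. This reduction to the even subalgebra is precisely where the structure of type I-0 (through Lemma \ref{lem::3}) is exploited, and \cite[Corollary 2.11]{MaSt04} is then treated as a black box.
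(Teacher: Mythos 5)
Your argument is correct, but it follows a genuinely different route than the paper's.

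The paper proves the corollary by \emph{dualizing}: it invokes the equivalence $D(\cdot):\mc O^{\cvpre}\to\mc O^{\widehat\nu\text{-pres}}$ from \eqref{eq::Dpres} to reduce the claim to $\widetilde P(\la')\in\mc O^{\cvpre}$, then applies Theorem~\ref{thm::thm9}(2) (the co-presentable lifting statement $\Res M\in\mc O^{\cvpre}_\oa\Rightarrow M\in\mc O^{\cvpre}$, proved via Enright completion), using Theorem~\ref{thm::thm9}(1) plus the classical fact $P(\mu)\in\mc O^{\cvpre}_\oa$ for $\mu\in\mc X_0(\nu)$ from \cite{KoM}. You instead stay entirely on the presentable side. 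Your Step~1 is essentially a hand-proved dual of Theorem~\ref{thm::thm9}(1): the Frobenius-reciprocity computation $n_\gamma=[\Ind L(\gamma):\wdL(\la)]$ mirrors \eqref{eq::310}, and the PBW/local-nilpotence argument ruling out $\gamma\notin\mc X_0(\nu)$ is sound. The real novelty is your Step~2, which is a presentable analogue of Theorem~\ref{thm::thm9}(2) that the paper does not state. It works for a clean reason specific to type I-0: the $\Z$-grading by the grading element $H$ is $\g_\oa$-equivariant, so $\Res\wdL(\mu)$ decomposes as a \emph{direct sum} of its graded pieces with $L(\mu)$ the degree-zero summand; this is what makes $L(\mu)$ a $\g_\oa$-quotient of $\Res\wdL(\mu)$ and drives your ``top-argument.'' (It is worth emphasizing this graded-decomposition justification explicitly, since for a general filtration the top piece would only be a submodule.) The trade-off: the paper's proof is shorter because it leans on machinery already established (duality, Theorem~\ref{thm::thm9}(2), \cite{KoM}); your proof is self-contained on the presentable side, avoids invoking $D$ and the Enright-completion apparatus, but requires establishing a lifting lemma from scratch. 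Both routes yield the same result, and your lifting lemma would, after applying $D$, give an alternative (type-I-0-specific) proof of Theorem~\ref{thm::thm9}(2) itself.
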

\begin{proof} 
	By duality in \eqref{eq::Dpres}, it suffices to show that $\widetilde  P(\la)\in \mc O^{\cvpre}$. By Part (2) in Theorem \ref{thm::thm9}, we only need to show that $\Res \widetilde  P(\la) \in \mc O_\oa^{\cvpre}$. By Part (1) of Theorem \ref{thm::thm9}, we know that $\Res \widetilde P(\la) =\bigoplus_{i=1}^{\ell} P(\la_i),$ for some $\la_i\in \mc X_0(\nu).$  For any $\mu\in \mc X_0(\nu)$, it follows from \cite[Lemma 4, Theorem 1, Theorem 2]{KoM} (or \cite[Section 2.11]{MaSt04}) that $P(\mu)\in \mc O^{\cvpre}_\oa$. In particular, we have $\Res P(\la)\in \mc O^{\cvpre}_\oa$. The conclusion follows. 
\end{proof}

\subsection{Annihilator ideals of $S_\nu(\mu)$} \label{sect::421}
  

 We continue to assume that $\nu\in \h^\ast$ is dominant.  Recall that we denote by  $w_\ups$ the longest element in $W_\nu$. The following theorem is the main result in this subsection that are to be used in the sequel. 
 \begin{thm} \label{thm::realization}
 	 Suppose that $\g$ is either a basic classical Lie superalgebra in \eqref{Kaclist1} or a Lie superalgebra of type I-0.
 	For any $\la\in \mc X$, we have $T_{w_\ups} \wdL(\la)\cong \wfu \wdL(\la).$ 	In particular, if $\nu\in \mc X$ and  $\la \in \mc X(\nu)$ then we have \begin{align}
 	&T_{w_\ups} \wdL(\la)\cong S_\nu(\la), \label{eq::91}\\
 	&T_{w_\ups} \wdM(\la)\cong  \Delta_\nu(\la). \label{eq::92}
 	\end{align}  
 \end{thm}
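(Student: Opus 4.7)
The strategy is to adapt the arguments of \cite[Theorem 10]{KM2} for reductive Lie algebras and \cite[Proposition 5.9]{Co16} for basic classical Lie superalgebras, treating the type I-0 case using Theorem \ref{thm::11} and Lemma \ref{lem::4} as the single-reflection ingredients. Fix a reduced expression $w_\ups = s_{\alpha_1}\cdots s_{\alpha_k}$ with $\alpha_i \in \ups$, so that $T_{w_\ups} = T_{s_{\alpha_1}}\circ \cdots \circ T_{s_{\alpha_k}}$ by the braid relations recalled at the end of Section \ref{sect::32}. Composing the natural transformations $T_{s_{\alpha_i}} \to \Id_{\mc O^\Z}$ supplied by the second assertion of Lemma \ref{lem::4}, one obtains a natural transformation $\tau : T_{w_\ups} \to \Id_{\mc O^\Z}$ of right exact functors. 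On the other hand, the natural transformation $\widetilde{\mf d}_\ups^{\,nat} : \wfu \to \Id_{\mc O^\Z}$ recalled in Section \ref{Sect::35} plays the dual role for $\wfu$.

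The first step is to identify $\tau$ on projectives. By Lemma \ref{lem::10}, each $T_{s_{\alpha_i}}(P) \hookrightarrow P$ is injective with cokernel $\widehat{\mf z}_{\alpha_i}(P)$. Iterating this along the reduced expression, and tracking how successive $\widehat{\mf z}_{\alpha_i}$-quotients assemble (using that $w_\ups$ is the longest element of $W_\nu$, so the procedure exhausts all of $\ups$), one verifies that for any projective $P \in \mc O^\Z$ the map $\tau_P$ is injective and its image coincides with $\text{Tr}_{\ups}(P) \subseteq P$, yielding an exact sequence $0 \to T_{w_\ups}(P) \to P \to \widehat{\mf z}_\ups(P) \to 0$. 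By the definition of the partial coapproximation, the identical short exact sequence holds for $\wfu(P)$ with the canonical map $\widetilde{\mf d}_\ups^{\,nat}$. Hence $\tau$ and $\widetilde{\mf d}_\ups^{\,nat}$ have isomorphic images on every projective, and the dual Comparison Lemma \cite[Lemma 1]{KM2} upgrades this to a natural isomorphism $T_{w_\ups}\cong \wfu$ of right exact functors on $\mc O^\Z$. Specializing to $\wdL(\la)$ for $\la\in \mc X$ yields the first isomorphism of the theorem.

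For the specialized formulas \eqref{eq::91} and \eqref{eq::92} when $\nu \in \mc X$ and $\la\in \mc X(\nu)$, we unwind the definition of $\wfu$. Since $\la\in \mc X(\nu)$, $\wdL(\la)$ is $\ups$-free and its projective cover $\widetilde P(\la)$ lies in $\mc P_\nu$; the coapproximation then identifies its output with $\widetilde P(\la)/\text{Tr}_{\ups_\nu}(\rad \widetilde P(\la))$, which is $S_\nu(\la)$ by definition. Applying $\wfu$ to the presentation $\widetilde P(\la) \twoheadrightarrow \wdM(\la)$ with kernel $A(\la)$ and arguing analogously gives $\wfu \wdM(\la) \cong \widetilde P(\la)/\text{Tr}_{\ups_\nu}(A(\la)) = \Delta_\nu(\la)$.

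The main obstacle is the iterative analysis of $\tau$ on projectives: since $T_{s_{\alpha_i}}$ is only right exact, applying it to the embedding $T_{s_{\alpha_{i+1}}\cdots s_{\alpha_k}}(P) \hookrightarrow P$ does not automatically preserve injectivity. To push the induction through, one must exploit finer structural facts, such as the existence of a standard filtration on $T_s(P)$ for $P$ projective, or equivalently the derived interpretation provided by the fact that $\mc LT_s$ is an auto-equivalence with inverse $\mc RG_s$ from \cite[Proposition 5.11]{CoM1}, to conclude that successive higher twistings vanish and the image/cokernel calculation proceeds as in the Lie algebra case.
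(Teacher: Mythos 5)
Your strategy is to first establish a natural isomorphism of functors $T_{w_\ups}\cong\wfu$ on all of $\mc O^\Z$ via the dual Comparison Lemma, and then evaluate on $\wdL(\la)$ and $\wdM(\la)$. This is a genuinely different route from the paper's, which never asserts a functor-level identification: the paper works object-by-object, first disposing of the case $\la\notin\mc X(\nu)$ by choosing a reduced expression $w_\ups=s_{\alpha_1}\cdots s_{\alpha_\ell}$ with $\alpha_\ell$ chosen so that $\wdL(\la)$ is $\alpha_\ell$-finite (so both sides vanish), and then for $\la\in\mc X(\nu)$ iterating the single-reflection realization $T_{s_{\alpha_i}}=\widetilde{\mf d}_{\{\alpha_i\}}$ along a reduced expression to compute $T_{w_\ups}\wdL(\la)\cong\widetilde P(\la)/M$ with $M=\text{Tr}_{\{\alpha_1\}}(\cdots\text{Tr}_{\{\alpha_\ell\}}(\rad\widetilde P(\la))\cdots)$, after which the whole remaining work is to show $M=\text{Tr}_{\ups}(\rad\widetilde P(\la))$ by a careful analysis of $\operatorname{top}M$ exploiting the freedom in choosing the last letter of the reduced expression. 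Your approach, if carried out, would prove a stronger statement and is appealing for that reason, but the paper's gives a cleaner argument on simples.

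However, there is a genuine gap, and you point at it yourself: the claim that for every projective $P$ the iterated map $\tau_P:T_{w_\ups}(P)\to P$ is injective with cokernel $\widehat{\mf z}_\ups(P)$ is precisely where the entire difficulty lives, and you do not carry out the argument. Right exactness of $T_{s_{\alpha_i}}$ alone does not give preservation of the embedding supplied by Lemma \ref{lem::10} in the inductive step. Your suggested remedies (existence of a Verma flag on $T_s(P)$, or acyclicity and the derived equivalence $\mc LT_s\dashv\mc RG_s$) are reasonable candidate ingredients but are stated as hopes rather than proved; in particular even granting that $T_{w_\ups}(P)\hookrightarrow P$, identifying the cokernel of that embedding with $\widehat{\mf z}_\ups(P)$ (equivalently, identifying the image with the relevant trace submodule) still requires an argument analogous to the paper's $\operatorname{top}M$ analysis, not merely vanishing of higher twists. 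Because the whole reduction via the Comparison Lemma rests on that identification on projectives, the proposal as written does not constitute a proof. The paper avoids the issue by not attempting the functor-level claim: after reducing to simple modules, the needed containment $M\subseteq\text{Tr}_\ups(\rad\widetilde P(\la))$ is obtained by showing $\operatorname{top}M$ is $\ups$-free using the flexibility in choosing the terminal simple reflection of a reduced expression, together with the superfluousness of $\rad M$; there is no analogue of this step in your write-up, and filling the gap would essentially force you to reproduce it.
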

 \begin{proof}
 	We first assume that $\wdL(\la)$ is $\alpha$-finite for some $\alpha \in \ups$. Then $\widetilde{\mf d}_\ups \wdL(\la) =0$ by definition. Let $w_\ups=s_{\alpha_1}\cdots s_{\alpha_\ell}$ with  $\alpha_\ell = \alpha$ being a reduced expression of $w_\ups$. Then $T_{w_\ups} \wdL(\la) \cong T_{s_{\alpha_1}}\circ \cdots \circ T_{s_{\alpha_\ell}}\wdL(\la)$, which is zero since $T_{s_{\alpha_\ell}}\wdL(\la)=0$  by Theorem \ref{thm::11}, or alternatively by $\Res T_{s_{\alpha_\ell}}\wdL(\la) \cong   T^\oa_{s_{\alpha_\ell}}\Res \wdL(\la)=0.$
 	
 	Next, we assume that $\nu \in \mc X$ and $\la \in \mc X(\nu)$. Observe that we have    $\widetilde{\mf d}_\ups \wdL(\la) = \widetilde P(\la)/\text{Tr}_{\ups}(\rad \widetilde P(\la)) =S_\nu(\la)$ by definition.  Let 
 	\begin{align}
 	&w_\ups=s_{\alpha_1}\cdots s_{\alpha_\ell} \label{eq::rex}
 	\end{align} be a reduced expression of $w_\ups$. Then 
 	\begin{align}
 	&T_{w_\ups}\wdL(\la) \cong T_{s_{\alpha_1}}\circ \cdots \circ T_{s_{\alpha_\ell}}\wdL(\la).
 	\end{align}  
 	Set $T_i = T_{s_{\alpha_i}}$, for $i=1,\ldots,\ell$. We are going to investigate the structure of $T_{w_\ups}\wdL(\la)$ explicitly using Theorem \ref{thm::11}. To see this, we first note that $\widetilde P(\la)$ is the projective cover of $\widetilde P(\la)/\text{Tr}_{\{\alpha_{\ell}\}}(\rad \widetilde P(\la)),$ the latter is $T_{\ell}\wdL(\la)$ since $\wdL(\la)$ is $\ups$-free.   Therefore, we have  
 	$$T_{\ell-1}T_{\ell}\wdL(\la)\cong \widetilde P(\la)/\text{Tr}_{\{\alpha_{\ell-1}\}}(\text{Tr}_{\{\alpha_{\ell}\}}(\rad \widetilde P(\la))).$$ 
 	In general, if we set $M:=\text{Tr}_{\{\alpha_1\}}(\cdots(\text{Tr}_{\{\alpha_{\ell-1}\}}(\text{Tr}_{\{\alpha_{\ell}\}}(\rad \widetilde P(\la))))\cdots)$ then it follows that  
 	\begin{align}
 	&T_{w_\ups}\wdL(\la) = T_1 \cdots T_{\ell-1}T_{\ell}\wdL(\la)\cong\widetilde P(\la)/M
 	\end{align} using the same argument. 
 	
 	To prove that $M= \text{Tr}_\ups(\rad \widetilde P(\la))$, it remains to show the inclusion $M\subseteq \text{Tr}_\ups(\rad \widetilde P(\la)).$ We shall show that the   $\text{top} M = M/\rad M$ is a (direct) sum of $\ups$-free simple modules. Observe that $M$ is an image of a   sum of projective covers  with $\alpha_\ell$-free tops, and so $\text{top} M$ is a direct sum of  $\alpha_\ell$-free simple modules.  Let $\alpha\in \ups$ be arbitrary, there exists a reduced expression \eqref{eq::rex} such that $\alpha_\ell = \alpha$. Consequently, the $\text{top}M$ is $\ups$-free.  Let $\text{top}M$ be an epimorphic image of a projective module $Q$ that has a $\ups$-free top.  Then $M$ is an epimorphic image of $Q$ since $\rad M$ is a  superfluous submodule of $M$ (see, e.g., \cite[Section 2.2]{CCM}), which implies that  $M\subseteq \text{Tr}_\ups(\rad \widetilde P(\la))$. This proves \eqref{eq::91}. 
 	 
  The isomorphism in \eqref{eq::92} can be proved in similar fashion. This completes the proof.

 \end{proof}

 
 \begin{lem} \label{lem::15}    Suppose that $\g$ is either a basic classical Lie superalgebra in \eqref{Kaclist1} or a Lie superalgebra of type I-0. We have $${\emph \Ann}_{U(\g)} S_\nu(\mu) ={\emph \Ann}_{U(\g)} \widetilde{L}(\mu),$$ for any $\nu$ integral and $\mu \in \mc X(\nu).$
 \end{lem}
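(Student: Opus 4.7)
The plan is to identify $S_\nu(\mu)$ with an application of the twisting functor via Theorem~\ref{thm::realization} and then to analyze how twisting affects annihilator ideals. By equation~\eqref{eq::91}, $S_\nu(\mu) \cong T_{w_\ups}\wdL(\mu)$, where $w_\ups$ denotes the longest element of $W_\nu$. The lemma thus reduces to establishing
\[
\Ann_{U(\g)} T_{w_\ups}\wdL(\mu) = \Ann_{U(\g)}\wdL(\mu), \qquad \mu \in \mc X(\nu).
\]

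I would prove this by induction on the length of a reduced expression $w_\ups = s_{\alpha_1}\cdots s_{\alpha_k}$, reducing to the following single-reflection statement: for every $\alpha \in \ups$ and every $\alpha$-free simple highest weight module $L = \wdL(\la) \in \mc O^\Z$, one has $\Ann_{U(\g)} T_{s_\alpha} L = \Ann_{U(\g)} L$. The inductive step is legitimate because, from the constructive proof of Theorem~\ref{thm::realization}, each intermediate module $N_j := T_{s_{\alpha_j}}\cdots T_{s_{\alpha_k}}\wdL(\mu)$ is realized as a quotient of $\widetilde P(\mu)$ and thus has simple top $\wdL(\mu)$, which is $\alpha_{j-1}$-free by the hypothesis $\mu\in \mc X(\nu)$.

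For the single-reflection claim, the inclusion $\Ann_{U(\g)} T_{s_\alpha} L \subseteq \Ann_{U(\g)} L$ follows from the fact that $T_{s_\alpha} L$ admits $L$ as a quotient. To see this, let $P := \widetilde P(\la)$ and apply Lemma~\ref{lem::10} to obtain the embedding $T_{s_\alpha} P \hookrightarrow P$ with cokernel $\widehat{\mf z}_\alpha P$; since $L$ is $\alpha$-free while $\widehat{\mf z}_\alpha P$ has no $\alpha$-free composition factor, the canonical epimorphism $P \twoheadrightarrow L$ does not factor through $\widehat{\mf z}_\alpha P$, so the composition $T_{s_\alpha} P \hookrightarrow P \twoheadrightarrow L$ is nonzero. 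By naturality of the transformation $T_{s_\alpha} \to \Id$ coming from Lemma~\ref{lem::4} and the right-exactness of $T_{s_\alpha}$, this forces the map $T_{s_\alpha} L \to L$ to be surjective. The reverse inclusion $\Ann_{U(\g)} L \subseteq \Ann_{U(\g)} T_{s_\alpha} L$ is the main technical point, which I would establish via the bimodule realization of Arkhipov's twisting functor: $T_{s_\alpha}$ is isomorphic (up to a twist by $s_\alpha$) to tensor product with the quotient $S_\alpha/U(\g)$ of the Ore localization $S_\alpha$ of $U(\g)$ at $f_\alpha$. Since two-sided ideals are preserved under Ore localization at an ad-nilpotent element and the twist associated with $s_\alpha$ again preserves two-sided ideals of $U(\g)$, the ideal $\Ann_{U(\g)} L$ continues to annihilate the resulting tensor product.

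The main obstacle of the plan is precisely this last step: verifying that the Ore localization at the even root vector $f_\alpha$ interacts compatibly with the odd nil-radical $\g_\ob$, so that the bimodule realization of $T_{s_\alpha}$ yields the desired annihilator identity in the super setting. A cleaner alternative would be to exploit the compatibility $\Res \circ T_{s_\alpha} = T^\oa_{s_\alpha}\circ \Res$ from Section~\ref{sect::32} to pass to the known Lie algebra statement for $\g_\oa$, and then promote the result back to $U(\g)$ using the induction-restriction adjunction together with the observation that composition factors of $T_{s_\alpha} L$ remain within the block of $L$; this approach largely bypasses the subtleties of Ore localization in the super setting.
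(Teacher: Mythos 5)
Your overall strategy matches the paper's: invoke Theorem~\ref{thm::realization} to identify $S_\nu(\mu)\cong T_{w_\ups}\wdL(\mu)$, and then argue that the twisting functor preserves annihilators along a reduced expression $w_\ups=s_{\alpha_1}\cdots s_{\alpha_k}$. The easy inclusion $\Ann_{U(\g)} T_{w_\ups}\wdL(\mu)\subseteq \Ann_{U(\g)}\wdL(\mu)$ is also treated similarly in spirit: the paper cites \cite[Theorem 5.12(ii)]{CoM1} to produce the epimorphism $T_{w_\ups}\wdL(\mu)\twoheadrightarrow\wdL(\mu)$, while you reprove the surjectivity from Lemmas~\ref{lem::4} and \ref{lem::10} and right-exactness; your argument is correct and is a serviceable alternative within the paper's machinery.

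There are, however, two genuine gaps. First, your inductive reduction does not actually reduce to the stated ``single-reflection statement'' for \emph{simple} highest weight modules. The intermediate modules $N_j = T_{s_{\alpha_j}}\cdots T_{s_{\alpha_k}}\wdL(\mu)$ are quotients of $\widetilde P(\mu)$, hence generally non-simple, and observing that $\operatorname{top}(N_j)\cong\wdL(\mu)$ is $\alpha_{j-1}$-free only gives $\Ann_{U(\g)}N_j\subseteq\Ann_{U(\g)}\wdL(\mu)$, which is the easy direction. To carry out the induction one must show $\Ann_{U(\g)}T_{s_{\alpha_{j-1}}}N_j\supseteq \Ann_{U(\g)} N_j$ for these non-simple $N_j$, which is exactly what the paper does: at each stage it reruns the argument of \cite[Lemma 5.15]{CoM1} (using the bimodule/Ore-localization description of $T_{s_\alpha}$ and the automorphism $\varphi_\alpha$ satisfying $\varphi_\alpha(\Ann_{U(\g)}\wdL(\mu))=\Ann_{U(\g)}\wdL(\mu)$) on the intermediate module, and combines this with the already-established reverse containment to conclude equality.

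Second, and more seriously, your fallback ``cleaner alternative'' does not work. Passing to $\g_\oa$ via $\Res\circ T_{s_\alpha}=T^\oa_{s_\alpha}\circ\Res$ and the known Lie algebra statement gives information about the two-sided ideal $\Ann_{U(\g_\oa)}\Res T_{s_\alpha}\wdL(\mu)$ in $U(\g_\oa)$, but a two-sided ideal of $U(\g)$ is not determined by its image under restriction to $U(\g_\oa)$, and neither the induction--restriction adjunction nor the observation that composition factors stay in the same block recovers $\Ann_{U(\g)}T_{s_\alpha}\wdL(\mu)$ from this data; block membership is strictly weaker than equality of annihilators. The Ore-localization route that you flag as ``the main obstacle'' is in fact where the substance of the proof lies, and the paper handles it by citing \cite[Lemma 5.15]{CoM1}, where the construction of $T_{s_\alpha}$ via the localization $U_\alpha$ and the automorphism $\varphi_\alpha$ is already set up in the super context.
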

 \begin{proof}  By Theorem \ref{thm::realization}, it suffices to show that $\Ann_{U(\g)}  T_{w_\ups}\wdL(\mu) =\Ann_{U(\g)} \wdL(\mu)$.  Let $w_\ups = s_{\alpha_k}\cdots s_{\alpha_1}$ with $\alpha_1, \ldots,\alpha_k \in \Pi_0$ be a reduced expression of $w_\ups$ in $W_\nu$.
 	
 	By the right exactness of twisting functors  and  \cite[Theorem 5.12(ii)]{CoM1}, the module $\wdL(\mu)$ is an epimorphic image of  $T_{w_\ups} \wdL(\mu)$. Therefore, we have $\Ann_{U(\g)} T_{w_\ups} \wdL(\mu)\subseteq \Ann_{U(\g)} \wdL(\mu).$

 	For a given $\alpha = \alpha_i$, let $\varphi_\alpha$ be the associated automorphism of $\g$ used in the construction of $T_{s_\alpha}$, mapping  $\g_\ell^\beta$ to $\g_\ell^{s_\alpha(\beta)}$, for any root $\beta$ of $\g$ and $\ell\in \Z_2$, as in  \cite[Section 5]{CoM1} and \cite[Section 2]{CMW}.  Also, it is shown in  \cite[Lemma 5.15]{CoM1} that 	 $\varphi(\Ann_{U(\g)} \wdL(\mu)) = \Ann_{U(\g)} \wdL(\mu)$ and $\Ann_{U(\g)} T_{s_{\alpha_{1}}} \wdL(\mu)= \Ann_{U(\g)} \wdL(\mu)$.

 	Suppose that $\Ann_{U(\g)} T_{s_{\alpha_{i}}\cdots s_{\alpha_1}}\wdL(\mu) = \Ann_{U(\g)}\wdL(\mu)$, for some  $1\leq i \leq k-1$. Using the same argument as given in the proof of  \cite[Lemma 5.15]{CoM1}, we have $$\Ann_{U(\g)} T_{s_{\alpha_{i+1}}\cdots s_{\alpha_1}} \wdL(\mu)\supseteq \varphi(\Ann_{U(\g)} T_{s_{\alpha_{i}}\cdots s_{\alpha_1}}\wdL(\mu)) = \Ann_{U(\g)}\wdL(\mu),$$ and so $\Ann_{U(\g)} T_{s_{\alpha_{i+1}}\cdots s_{\alpha_1}} \wdL(\mu) =  \Ann_{U(\g)}\wdL(\mu)$. Consequently, 
 	we have $\Ann_{U(\g)} T_{w_\ups} \wdL(\mu)= \Ann_{U(\g)} \wdL(\mu).$
 \end{proof}
 
  We will give in Corollary \ref{cor::306} an alternative proof of Lemma \ref{lem::15} using the Harish-Chandra bimodules in $\mc B_\nu$. To emphasize more utilities of the realization of $S_\nu(\mu)$ in terms of twisted simple modules, the author provides in \cite{Ch213} an application to the study of the abstract Kazhdan-Lusztig theory for the periplectic Lie superalgebras in the sense of Cline, Parshall and Scott.

\section{Annihilator ideals of simple Whittaker modules} \label{Sect::AnnSWhi}
In this section, unless specified otherwise, we let $\g$ be an arbitrary   quasireductive Lie superalgebra with a triangular decomposition $\g =\mf n^-\oplus \mf h \oplus \mf n^+.$ Recall that we refer to the simple $\g$-modules that are locally finite over $\mf n^+$ as the simple Whittaker modules.
\subsection{The categories of Whittaker modules} \label{Sect::41} 
 In this subsection, we shall recall the classification of simple Whittaker modules from \cite{Ch21}. For each $\zeta \in \ch \mf n_\oa^+:=(\mf n_\oa^+/ [\mf n_\oa^+,\mf n_\oa^+])^\ast$, we let $\mf l_\zeta$ denote the Levi subalgebra  of $\g_\oa$ determined by the subset  $\Pi_\zeta:=\{\alpha\in\Pi_0|~\zeta(\mf g_\oa^\alpha)\neq 0\}$, where $\g_\oa^\alpha$ denotes the root space of $\g_\oa$ associated to the root $\alpha$.  Denote by $W_\zeta$ the Weyl  group of $\mf l_\zeta$, namely, $W_\zeta$ is generated  $s_\alpha$ with  $\alpha \in \Pi_\zeta$ when regarded as a subgroup of $W$.  
 Set $\mf l_\zeta = \mf n_\zeta^-\oplus \mf h\oplus \mf n^+_\zeta$ to be  the  triangular decomposition of $\mf l_\zeta$. For any $\la \in \mf h^\ast$, the Kostant's simple Whittaker modules are defined as follows (see  \cite{Ko78}):
 \begin{align}
 &Y_\zeta(\la, \zeta):=U(\mf l_\zeta)/\text{Ker}(\chi^{\mf l_\zeta}_\la) U(\mf l_\zeta) \otimes_{U(\mf n^+_\zeta)}\mathbb C_\zeta,
 \end{align} where  $\text{Ker}(\chi^{\mf l_\zeta}_\la)$ is the kernel of the central character $\chi_\la^{\mf l_\zeta}$ of $\mf l_\zeta$ and $\C_\zeta$ is the one-dimensional $\mf n^+_\zeta$-module associated with $\zeta$. 
 Let $\mf q_\zeta$ be the parabolic subalgebra of $\mf g_\oa$ having $\mf l_\zeta$ as its Levi subalgebra  associated to the parabolic decomposition of $\g_\oa$ induced by $\Pi_\zeta$. 
 Following \cite{Mc,MS} (see also \cite{B}), the  standard Whittaker modules over $\g_\oa$ is defined to be $M(\la, \zeta):= U(\g_\oa)\otimes_{\mf q_\zeta} Y_\zeta(\la, \zeta).$  
 
 \subsubsection{} \label{sect::511} We recall the category  $\widetilde{\mc N}$ from \cite[Section 1.5]{Ch21}, namely, $\widetilde{\mc N}$  consists  of finitely-generated $\g$-modules that are locally finite over $\mf n^+$ and over the center $Z(\mf g_\oa)$. 	For any $\zeta \in \ch \mf n_\oa^+$, let $\widetilde{\mc N}(\zeta)$ be the full subcategory of $\widetilde{\mc N}$ consisting of modules $M \in \tN$  such that $x-\zeta(x)$  acts locally nilpotently on $M$, for each $x\in \mf n_\oa^+$.

 To formulate and construct the standard and simple Whittaker modules, we treat the cases of basic classical Lie superalgebras and type-I Lie superalgebras separately; see \cite[Section 3.1, Section 3.2]{Ch21}.
   First, suppose that $\g$ is a Lie superalgebra of type I, then  
 we regard $M(\la, \zeta)$ as a $\g_\oa$-supermodule by assigning a parity and define the corresponding {\em standard Whittaker module} over  $\g$:
\begin{align} 
&\widetilde{M}(\la, \zeta):=   K(M(\la, \zeta)),\label{eq::52}
\end{align}   for any $\la \in \h^\ast.$
 As mentioned in Section \ref{sect::pre1}, we will ignore the parity of $M(\la, \zeta)$ and just write $\wdM(\la,\zeta)$.  
 
 Next,   assume that $\g$ is a basic classical Lie superalgebra in \eqref{Kaclist1}. We suppose that  $\mf l_\zeta$ is the Levi subalgebra in a parabolic decomposition $\mf g=\mf u_\zeta^- \oplus \mf l_\zeta\oplus \mf u_\zeta^+$ of $\g$. For any $\la \in \h^\ast$, the corresponding {\em standard Whittaker module} over  $\g$ is  defined as
 \begin{align}  
 &\widetilde{M}(\la, \zeta):= {U}(\g)\otimes_{\mf p} Y_\zeta(\la, \zeta),\label{eq::53}
 \end{align} where $\mf p:=\mf p_\zeta=\mf l_\zeta \oplus \mf u_\zeta^+$ is the corresponding parabolic subalgebra. 
 
 We may observe that the standard Whittaker modules defined in \eqref{eq::52} and \eqref{eq::53} are isomorphic in the case when $\g$ is a Lie superalgebra of type I and the $\mf l_\zeta$ is a Levi subalgebra such that $\mf p_\zeta$ contains the Borel subalgebra $\mf b$ from \eqref{pre::tridec}. We remark that this is true for arbitrary $\zeta\in \ch \mf n_\oa^+$ provided that  $\mf g$ is one of $\gl(m|n),~\mf{osp}(2|2n)$ and $\pn$.

  It is proved in \cite[Theorem 6 and Theorem 9]{Ch21}  (see also \cite[Theorem 4.1]{CM}) that  each $\widetilde{M}(\la,\zeta)$ has a simple top in both cases above, which we denote by $\widetilde{L}(\la, \zeta)$. The following theorem  gives an explicit construction and classification of simple Whittaker modules:
\begin{thm} \emph{(}\cite[Theorem 6 and Theorem 9]{Ch21}\emph{)} \label{mainthm1typeI} 
	For any $\la,\mu \in \h^\ast$, $\zeta,\zeta'\in \ch\mf n_\oa^+$,  $\widetilde{L}(\la, \zeta) $ and  $\widetilde{L}(\mu, \zeta')$ are isomorphic (up to parity) if and only if $\zeta=\zeta'$ and $\la \in W_\zeta\cdot \mu$. Moreover, we have 
	
	(1).  Suppose that $\mf l_\zeta$ is a Levi subalgebra of $\g$. Then the set $$\{\widetilde{L}(\la, \zeta)|~\la \in \h^\ast\},$$ is a complete list of simple Whittaker modules in $\widetilde{\mc N}(\zeta)$.  
 
 (2). Suppose  that $\g$ is of type I.  Then the set $$\{\widetilde{L}(\la, \zeta)|~\la \in \h^\ast,\zeta \in \ch \mf n_\oa^+\},$$ is a complete list of simple Whittaker modules over $\g$.
\end{thm}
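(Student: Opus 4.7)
The strategy splits according to the two cases. For Part~(2), with $\g$ of type I, the standard Whittaker module is $\widetilde M(\la,\zeta)=K(M(\la,\zeta))$, so I would leverage what is known for the reductive standard Whittaker module $M(\la,\zeta)$ of Mili\v ci\'c--Soergel and Backelin, together with the general structural properties of the Kac functor from \cite[Theorem A]{CC} quoted in the excerpt. For Part~(1), with $\mf l_\zeta$ a Levi of $\g$, the module $\widetilde M(\la,\zeta)=\mathrm{Ind}^\g_{\mf p_\zeta}Y_\zeta(\la,\zeta)$ is parabolically induced from Kostant's simple Whittaker module, and the analysis reduces to a PBW argument for $U(\g)$ relative to $\mf p_\zeta$.

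The first step is the existence of a simple top. In the type~I case, since $K(-)$ is right exact and since, by \cite[Theorem A]{CC}, the correspondence $V\mapsto\widetilde L(V)=K(V)/\mathrm{rad}\,K(V)$ is a bijection on simples, one checks that the image in $K(M(\la,\zeta))$ of the unique simple $\g_\oa$-quotient $L(\la,\zeta)$ of $M(\la,\zeta)$ generates the module, so any proper submodule meets the $\g_{\geq 0}$-submodule $M(\la,\zeta)\subset K(M(\la,\zeta))$ in a proper $\g_\oa$-submodule; this forces the radical to be $U(\g_{-1})\cdot\mathrm{rad}\,M(\la,\zeta)$ and the top to be $\widetilde L(L(\la,\zeta))$. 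For Part~(1) the same argument works with $\g_{\geq 0}$ replaced by $\mf p_\zeta$ and $M(\la,\zeta)$ replaced by $Y_\zeta(\la,\zeta)$, using that $Y_\zeta(\la,\zeta)$ has a unique simple Whittaker quotient by Kostant's theorem.

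Next, for the isomorphism statement, the ``if'' direction follows because $Y_\zeta(\la,\zeta)$ depends only on $\ker\chi^{\mf l_\zeta}_\la$, which in turn depends only on the $W_\zeta$-dot-orbit of $\la$ by Harish-Chandra for $\mf l_\zeta$; hence $\widetilde M(\la,\zeta)\cong\widetilde M(\mu,\zeta)$ whenever $\la\in W_\zeta\cdot\mu$. For the ``only if'' direction, $\zeta$ is recovered intrinsically from $\widetilde L(\la,\zeta)$ as the character by which $\mf n_\oa^+$ acts on any Whittaker vector, forcing $\zeta=\zeta'$; then the central character of the $\mf l_\zeta$-Whittaker module appearing inside the restriction to $\g_\oa$ recovers the $W_\zeta$-orbit of $\la$, again via Harish-Chandra applied to $\mf l_\zeta$.

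The main obstacle is exhaustiveness. Given a simple Whittaker module $S\in\widetilde{\mc N}(\zeta)$, one restricts to $\g_\oa$ and uses the Mili\v ci\'c--Soergel classification to locate a simple $\g_\oa$-Whittaker submodule of the form $L(\la,\zeta)$; this requires first extracting a finitely generated $Z(\g_\oa)$-isotypic component and exploiting that $\widetilde{\mc N}$-modules have finite length (\cite[Corollary~4]{Ch21}). In the type~I case this submodule extends trivially to a $\g_{\geq 0}$-module after allowing a parity shift, yielding by Frobenius reciprocity a nonzero, hence surjective, map $K(M(\la,\zeta))\twoheadrightarrow S$, which must factor through $\widetilde L(\la,\zeta)$ by the simple-top property established above. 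For Part~(1) the same reciprocity is applied with $\mf p_\zeta$ in place of $\g_{\geq 0}$, noting that $Y_\zeta(\la,\zeta)$ has a natural $\mf p_\zeta$-structure because $\mf l_\zeta$ is a Levi of $\g$. The subtlety to watch is parity: simple $\g_\oa$-summands of $\mathrm{Res}\,S$ come in pairs related by $\mc S$, so one has to fix a parity on $L(\la,\zeta)$ and check that the adjunction produces a nonzero map on the correct parity component, which is where the assumption that $\mf l_\zeta$ be a genuine Levi of $\g$ (respectively that $\g$ be of type~I) is essential.
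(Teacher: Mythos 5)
Your overall strategy — classify simple $\g_\oa$-Whittaker modules via Mili\v{c}i\'c–Soergel and lift via the Kac functor (type I) or parabolic induction (Levi case), using the bijection $V\mapsto \widetilde L(V)$ from \cite[Theorem~A]{CC} — is the right one, and matches the line of argument signalled by the references the paper gives. But two steps contain genuine gaps.

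First, the simple-top argument is wrong as written. You claim that since every proper $\g$-submodule $N\subset K(M(\la,\zeta))$ meets $1\otimes M(\la,\zeta)$ in a proper $\g_\oa$-submodule, the radical must equal $U(\g_{-1})\cdot\rad\,M(\la,\zeta)$. This is false in general: already for $\g=\gl(1|1)$, $\zeta=0$ and $\la$ atypical, $M(\la)=L(\la)$ is simple (so $\rad\,M(\la)=0$) while $K(M(\la))=\widetilde M(\la)$ has a nonzero radical. Moreover, your argument is internally inconsistent — if the radical really were $U(\g_{-1})\cdot\rad\,M(\la,\zeta)$, then the top would be $K(L(\la,\zeta))$, which in general is \emph{not} simple and is strictly larger than the $\widetilde L(L(\la,\zeta))$ you claim. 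The clean replacement is a Frobenius-reciprocity count: for a simple $\g$-module $\widetilde L(V)$ one has $\Hom_\g\bigl(K(M(\la,\zeta)),\widetilde L(V)\bigr)\cong\Hom_{\g_{\geq 0}}\bigl(M(\la,\zeta),\widetilde L(V)\bigr)\cong\Hom_{\g_\oa}\bigl(M(\la,\zeta),\widetilde L(V)^{\g_1}\bigr)$, and since $\widetilde L(V)^{\g_1}\cong V$ and $M(\la,\zeta)$ has simple top $L(\la,\zeta)$, this space is one-dimensional for $V\cong L(\la,\zeta)$ and zero otherwise. That directly yields a unique maximal submodule and identifies the top with $\widetilde L(L(\la,\zeta))$, with no claim about what the radical is.

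Second, in the exhaustiveness step you extract $L(\la,\zeta)\hookrightarrow\Res S$ and then assert that "this submodule extends trivially to a $\g_{\geq 0}$-module after allowing a parity shift." There is no reason the copy of $L(\la,\zeta)$ you found is annihilated by $\g_1$, so you cannot simply inflate it. The missing step is to pass first to the $\g_1$-invariants $S^{\g_1}$, which are nonzero because $\g_1\subset\mf n^+$ acts locally nilpotently on any Whittaker module, and which form a $\g_\oa$-stable Whittaker submodule of $\Res S$; one then chooses $L(\la,\zeta)$ inside $S^{\g_1}$, which genuinely carries a $\g_{\geq 0}$-module structure with $\g_1$ acting trivially. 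With that inserted, the Frobenius reciprocity step and the parity bookkeeping you describe go through. For Part~(1) the analogous subtlety is ensuring the simple $\mf l_\zeta$-Whittaker module you pick out is annihilated by $\mf u_\zeta^+$, i.e., sits in $S^{\mf u_\zeta^+}$, before applying adjunction over $\mf p_\zeta$.
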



	Let $S$ be a simple Whittaker module over $\g$. By \cite[Lemma 3 and Corollary 4]{Ch21} there is a simple (Whittaker) $\g_\oa$-submodule $V$ of $\Res S$. We recall from Section \ref{sect::166} that $S$ is said to be of integral type if $V$ admits a central character of $\g_\oa$ associated to an integral weight. The following proposition shows that the   integrability of simple Whittaker modules is well-defined. 
	
	\begin{prop} \label{prop::intWhi}
		Let $S$ be a simple Whittaker module. Let  $L(\mu,\zeta)$ and $L(\gamma,\zeta)$ be two composition factors of $\Res S$. Then $\mu\in \mc X$ if and only if $\gamma\in \mc X$.
	\end{prop}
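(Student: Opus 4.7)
The plan is to exploit a PBW-type filtration of $\Res S$ with respect to $\g=\g_\oa\oplus\g_\ob$ and reduce the question to analyzing composition factors of $\Lambda^k(\g_\ob)\otimes V$ for $V$ a simple $\g_\oa$-submodule of $\Res S$.

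First I would invoke \cite[Proposition 1]{Ch21} to pick a simple Whittaker $\g_\oa$-submodule $V\subseteq \Res S$ and denote its central character by $\chi^\oa_{\mu_0}$. Since $S$ is simple, $V$ generates $S$ as a $\g$-module, hence $S=U(\g)V$. Using the PBW isomorphism $U(\g)\simeq U(\g_\oa)\otimes\Lambda(\g_\ob)$, one obtains a finite $\g_\oa$-stable filtration
\[V=W_0\subseteq W_1\subseteq\cdots\subseteq W_m=\Res S,\]
where $W_k:=U(\g_\oa)\cdot\Lambda^{\leq k}(\g_\ob)\cdot V$ and $m=\dim\g_\ob$. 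A direct check using $[\g_\oa,\g_\ob]\subseteq\g_\ob$ shows that the natural map
\[\Lambda^k(\g_\ob)\otimes_\C V\twoheadrightarrow W_k/W_{k-1},\qquad (y_1\wedge\cdots\wedge y_k)\otimes v\mapsto y_1\cdots y_k v\,\bmod\, W_{k-1},\]
is a well-defined surjective $\g_\oa$-module homomorphism, where $\g_\oa$ acts diagonally on the tensor product and via the adjoint action on $\Lambda^k(\g_\ob)$. Consequently every simple subquotient of $\Res S$ occurs as a composition factor of $\Lambda^k(\g_\ob)\otimes_\C V$ for some $0\leq k\leq m$.

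Next I would observe that, by quasireductivity, $\g_\ob$ is a finite-dimensional semisimple $\g_\oa$-module, so all weights of $\Lambda^k(\g_\ob)$ are integral on every even coroot, i.e.~they lie in $\mc X$. The classical translation principle (compare \cite[Theorem 7.5]{Hu08}) then yields that every composition factor of $\Lambda^k(\g_\ob)\otimes_\C V$ carries a central character of the form $\chi^\oa_{\mu_0+\tau}$ with $\tau\in\mc X$. Since $\langle\rho_\oa,\alpha^\vee\rangle=1$ for every simple even root $\alpha$, one has $\rho_\oa\in\mc X$, so $\mc X$ is stable under the $W$-dot action; hence the condition ``$\mu_0+\tau\in\mc X$ modulo $W$-dot action'' is equivalent to ``$\mu_0\in\mc X$''. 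Applying this to $L(\mu,\zeta)$ and $L(\gamma,\zeta)$ gives $\mu\in\mc X\iff\mu_0\in\mc X\iff\gamma\in\mc X$, proving the proposition.

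The main delicate point I anticipate is the invocation of the translation principle for a module $V$ that is not a priori in category $\mc O_\oa$; the statement is standard but requires some care because $V$ is only assumed to be a simple Whittaker module with a central character. I would handle this by using the coproduct $\Delta:Z(\g_\oa)\to U(\g_\oa)\otimes U(\g_\oa)$ together with $\g_\oa$-semisimplicity of $E=\Lambda^k(\g_\ob)$ to reduce to the case where $E=L(\kappa)$ is finite-dimensional and simple, where the classical Kostant-style computation of generalized $Z(\g_\oa)$-eigenvalues on $E\otimes V$ applies to any module $V$ with a central character, not just those in $\mc O_\oa$.
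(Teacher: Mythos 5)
Your proof is correct and takes essentially the same approach as the paper: both single out a simple Whittaker $\g_\oa$-submodule $V\hookrightarrow\Res S$, observe that every composition factor of $\Res S$ already appears in $U(\g_\ob)\otimes V\cong\Lambda(\g_\ob)\otimes V$, and then invoke the integrality of the weights of $\Lambda(\g_\ob)$ together with Kostant's description of central characters of $E\otimes V$ to conclude. You merely make explicit the PBW filtration and the justification that the translation-principle computation applies to a Whittaker module with a central character, points the paper leaves implicit.
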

\begin{proof}
 By \cite[Lemma 3]{Ch21}, we know that $\Res S\in {\mc N}$. Therefore, there exists a simple Whittaker $\g_\oa$-module $L(\la,\zeta)$ such that $L(\la,\zeta)\hookrightarrow \Res S$ since every module in $\mc N$ has finite length. Therefore, $S$ is a quotient of $\Ind L(\la,\zeta)$. It follows that both  $L(\mu,\zeta)$ and $L(\gamma,\zeta)$ are composition factors of $\Res \Ind L(\la,\zeta)\cong U(\mf g_\ob)\otimes L(\la,\zeta)$. By \cite[Proposition 2.1]{MS}, $L(\la,\zeta)$ admits the  central character of $\g_\oa$ associated with the weight $\la$. Consequently, \begin{align}
 &\mu \in \mc X \Leftrightarrow \la \in \mc X \Leftrightarrow\gamma \in \mc X. \label{eq::1616} 
 \end{align} The conclusion follows. 
 
 We also provide an alternative proof as follows. Observe that  $S$ is a quotient of $\Ind M(\la,\zeta)$, the later has a filtration with subquotients $M(\la+\eta)$ with certain $\eta \in \mc X$. Again, by \cite[Proposition 2.1]{MS} $M(\la+\eta)$ admits the central character of $\g_\oa$ associated with the weight $\la+\eta$. Consequently, we have  \eqref{eq::1616}.  
\end{proof}

The following corollary is a consequence of Proposition \ref{prop::intWhi}. 
\begin{cor} 
	The simple Whittaker module $\wdL(\la,\zeta)$ defined in Section \ref{sect::511} is of integral type if and only if $\la\in \mc X.$  
\end{cor}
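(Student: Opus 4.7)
The plan is to apply Proposition \ref{prop::intWhi} to reduce the corollary to identifying the $\g_\oa$-central character of a single composition factor of $\Res\wdL(\la,\zeta)$, and then to show that every such central character is of the form $\chi^\oa_\mu$ with $\mu-\la\in \mc X$.

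The first step is to exhibit a $\g$-module surjection $\Ind M(\la,\zeta)\twoheadrightarrow \wdL(\la,\zeta)$. For this, I would verify that $\wdM(\la,\zeta)$ is generated over $\g$ by the natural $\g_\oa$-submodule corresponding to $M(\la,\zeta)$, so that Frobenius reciprocity yields a surjection $\Ind M(\la,\zeta)\twoheadrightarrow \wdM(\la,\zeta)$. In Case 1 of Section \ref{sect::511}, this is immediate from the definition $\wdM(\la,\zeta)=K(M(\la,\zeta))=U(\g_{-1})\cdot M(\la,\zeta)$. In Case 2, one uses the PBW decomposition $\wdM(\la,\zeta)\cong U(\mf u_{\zeta,\oa}^-)\otimes \Lambda(\mf u_{\zeta,\ob}^-)\otimes Y_\zeta(\la,\zeta)$ and observes that $M(\la,\zeta)\cong U(\mf u_{\zeta,\oa}^-)\otimes Y_\zeta(\la,\zeta)$ sits inside as a $\g_\oa$-submodule whose $U(\g)$-translates span the whole module.

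Next, I would analyze the $\g_\oa$-module $\Res\Ind M(\la,\zeta)$. By PBW it is isomorphic to $\Lambda(\g_\ob)\otimes M(\la,\zeta)$ with diagonal $\g_\oa$-action (adjoint on $\Lambda(\g_\ob)$). Since $\Lambda(\g_\ob)$ is a finite-dimensional $\g_\oa$-module whose weights are sums of odd roots of $\g$, and since those odd roots lie in $\mc X$ for basic classical and type I-0 Lie superalgebras, the Mili\v{c}i\'c--Soergel translation principle for standard Whittaker modules (cf.~\cite[Proposition 2.4]{MS}) endows $\Lambda(\g_\ob)\otimes M(\la,\zeta)$ with a finite filtration whose successive quotients are the standard Whittaker $\g_\oa$-modules $M(\la+\nu,\zeta)$, with $\nu$ running through the weights of $\Lambda(\g_\ob)$ counted with multiplicity. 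Consequently every $\g_\oa$-composition factor of $\Res\Ind M(\la,\zeta)$ is of the form $L(\mu,\zeta)$ with $\mu\in\la+\mc X$.

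Finally, $\Res\wdL(\la,\zeta)$ is a non-zero quotient of $\Res\Ind M(\la,\zeta)$ and is of finite length by \cite[Corollary 4]{Ch21}, so it admits a composition factor $L(\mu,\zeta)$ with $\mu\in\la+\mc X$; by \cite[Proposition 2.1]{MS} this factor carries the $\g_\oa$-central character $\chi^\oa_\mu$. Since $\mc X$ is an additive subgroup of $\h^\ast$, the integrality of $\mu$ is equivalent to the integrality of $\la$. Combining with Proposition \ref{prop::intWhi}, which ensures that this property is independent of the choice of composition factor, yields the equivalence $\wdL(\la,\zeta)$ is of integral type $\Leftrightarrow \la\in\mc X$. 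The only mildly technical ingredient is the translation-principle filtration, which is classical in the theory of Whittaker modules; the other steps are formal consequences of PBW and Frobenius reciprocity.
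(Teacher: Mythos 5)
Your proof is correct and fills in the details behind the paper's terse assertion that the corollary is a direct consequence of Proposition \ref{prop::intWhi}; in particular, the observation that every composition factor of $\Res\wdL(\la,\zeta)$ has the form $L(\mu,\zeta)$ with $\mu\in\la+\mc X$ is exactly the content of the paper's alternative proof of Proposition \ref{prop::intWhi}, applied here with $\Ind M(\la,\zeta)\tto\wdL(\la,\zeta)$ to tie the integrality of the central characters to the label $\la$. (Two minor points: the translation filtration you invoke is what the paper records as Lemma \ref{lem::19}(3), citing \cite[Lemma 5.12]{MS}, and the fact that all odd roots lie in $\mc X$ actually holds for every quasireductive $\g$, since $\g_\ob$ is a finite-dimensional $\g_\oa$-module.)
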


 \subsubsection{Equivalence of categories} \label{sect::412}
 	As observed in \cite[Section 4.1.2]{Ch21}, every module $M\in \widetilde{\mc N}(\zeta)$ decomposes into generalized eigenspaces $M=\bigoplus_{\la\in \h^\ast}M_{\chi_\la^{\mf l_\zeta}}$ according to the action of elements in $ Z(\mf l_\zeta)$. 
	For any $\la\in \h^\ast$, we put 
	\begin{align*}
	&\widetilde{\mc N}(\la+\mc X, \zeta) := \{M\in \widetilde{\mc N}(\zeta)|~M_{\chi^{\mf l_\zeta}} = 0 \text{ unless }\chi^{\mf l_\zeta} = \chi^{\mf l_\zeta}_\mu, \text{ for some }\mu \in \la+\mc X\}.
	\end{align*}    
 

 For $\la \in \h^\ast$, we define \[M^n(\la, \zeta):= U(\g_\oa)\otimes_{\mf q_\zeta} Y_\zeta^n(\la, \zeta),\]  where $Y_\zeta^n(\la, \zeta):=U(\mf l_\zeta)/(\text{Ker}\chi_\la^{\mf l_\zeta})^n U(\mf l_\zeta)\otimes_{U(\mf n^+_\zeta)}\mathbb C_\zeta$ so that $Y_\zeta^1(\la, \zeta) = Y_\zeta(\la, \zeta)$; see  \cite[Section 5]{MS}.

Suppose that $\nu \in \h^\ast$ is dominant such that $W_\zeta =W_\nu$.  Recall the category $\mc B$ of $(\g,\mf g_\oa)$-bimodules from Section \ref{Sect::HCmod}. Also,  recall that $I_\nu$ denotes the ideal $U(\g_\oa)\Ann_{Z(\fg_{\oa})}M(\nu)$.  Set $\mc B_\nu^\infty$ to be the full subcategory of $\mc B$ consisting of objects $$\{Y \in \mc B|~YI_\nu^n = 0, \text{ for }n>>0\}.$$ 

The following theorem is taken from \cite[Theorem 16 and Propisition 33]{Ch21}. 
\begin{thm} \label{thm::Ch}  The functor $A \mapsto \lim\limits_{\leftarrow}A\otimes_{U(\g_\oa)}  M^n(\nu,\zeta)$ gives an equivalence of categories 
	\begin{align}
	&\widetilde T: \mc B_\nu^\infty \rightarrow \widetilde{\mc N}(\nu +\mc X, \zeta). \label{eq::555}
	\end{align} 
	
	Furthermore, assume that $\g$  is of type I-0. Then the   equivalence $\widetilde T$ maps the top of $\mc L(M(\nu), \widetilde{M}(\mu))$ to   $\widetilde{L}(\mu, \zeta)$ with the same (left) annihilator ideal, for any $\mu \in \nu +\mc X$. 
\end{thm}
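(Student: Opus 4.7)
The plan is to follow the strategy of Mili\v{c}i\'c--Soergel for reductive Lie algebras, adapted to the super setting via Harish-Chandra $(\g,\g_\oa)$-bimodules. First I would verify that $\widetilde T$ is well-defined: for $A \in \mc B_\nu^\infty$ with $A I_\nu^n = 0$, the $\g_\oa$-bimodule structure on $M^n(\nu,\zeta)$ (induced by the fact that $M^n(\nu,\zeta)$ is annihilated on the right by $(\mathrm{Ker}\,\chi_\nu^\oa)^n$) makes the tensor product $A \otimes_{U(\g_\oa)} M^n(\nu,\zeta)$ a left $\g$-module; the local finiteness over $\mf n^+$ and $Z(\g_\oa)$ is inherited from $M^n(\nu,\zeta)$, and the inverse limit stabilizes on each generalized central character block because $A$ itself is annihilated by a fixed power of $I_\nu$, placing the image in $\widetilde{\mc N}(\nu+\mc X,\zeta)$.

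Next I would construct the quasi-inverse. Natural candidate: $M \mapsto \mc L(M(\nu,\zeta), M)$, viewed as a $(\g,\g_\oa)$-bimodule via the adjoint action of $\g_\oa$. One shows this lands in $\mc B_\nu^\infty$ by analyzing, for $M \in \widetilde{\mc N}(\nu+\mc X,\zeta)$, the length of a filtration compatible with the decomposition into generalized $Z(\mf l_\zeta)$-eigenspaces. The key reduction is to the regular case, where Lemma \ref{CorEqiv2} already provides the equivalence $\mc B_\nu \simeq \mc O^\Z$; tensoring with $M(\nu,\zeta)$ on the regular side and using Backelin-type compatibility between Verma modules and their Whittaker analogues $M(\nu,\zeta)$ (the defining property $M(\nu,\zeta) \cong M(\nu) \otimes_{U(\g_\oa)} U(\g_\oa)/I_\nu \otimes \C_\zeta$ morally speaking, made precise via $M^n(\nu,\zeta)$) transports the equivalence to the Whittaker side. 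The inverse-limit on the $n$-index handles the possibly singular $\nu$ and the general ${}^\infty$-decoration exactly as in \cite[Section 5]{MS}.

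For Part (2), assuming $\g$ is of type I-0 and fixing $\mu \in \nu+\mc X$, I would compute $\widetilde T(\mc L(M(\nu), \wdM(\mu)))$ explicitly. By Lemma \ref{CorEqiv2} and the regular-case tensoring, $\mc L(M(\nu), \wdM(\mu)) \otimes_{U(\g_\oa)} M(\nu)$ recovers $\wdM(\mu)$; running the same computation with $M(\nu,\zeta)$ in place of $M(\nu)$ produces $\wdM(\mu,\zeta)$ as defined in \eqref{eq::52}, using that for type I-0 the Kac-induction construction commutes with $-\otimes_{U(\g_\oa)} M(\nu,\zeta)$ on the level of bimodules in $\mc B_\nu^\infty$. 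Since $\widetilde T$ is an equivalence it preserves tops, so the top of $\mc L(M(\nu), \wdM(\mu))$ maps to the top $\wdL(\mu,\zeta)$ of $\wdM(\mu,\zeta)$, which exists by Theorem \ref{mainthm1typeI}. The equality of left annihilators is automatic from the equivalence: $\widetilde T$ is a functor of $(U(\g),U(\g_\oa))$-bimodules on the source and sends the left action on $A$ to the left $\g$-action on $\widetilde T(A)$, so $\mathrm{Ann}_{U(\g)}$ is preserved; alternatively one invokes the bimodule analogue already used in \cite[Section 5.5]{Ch21}.

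The main obstacle I anticipate is not Part (2), which follows cleanly once the equivalence is in hand, but rather establishing Part (1) at possibly singular $\nu$: the inverse-limit construction $\lim\limits_{\leftarrow} A \otimes_{U(\g_\oa)} M^n(\nu,\zeta)$ must be shown to be exact in $A \in \mc B_\nu^\infty$ and compatible with the dual construction $\mc L(M(\nu,\zeta),-)$. The delicate point is that, for $\nu$ singular, the functor $\mc L(M(\nu),-)$ is no longer an equivalence on all of $\mc O^\Z$ but only on the presentable subcategory, so one must be careful about which modules appear in the image; the argument of Mili\v{c}i\'c--Soergel circumvents this by working at the level of $\mc B_\nu^\infty$ directly (using progenerators and $\mathrm{Hom}$-functors to detect fullness and faithfulness), and the super-version requires verifying that these arguments pass through the restriction/induction adjunction with $\g_\oa$ unchanged, which in turn uses that $\Res$ and $\Ind$ intertwine the corresponding bimodule categories on both sides.
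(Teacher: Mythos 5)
The paper's own ``proof'' of this theorem is nothing more than a citation: immediately before the statement it says the theorem is taken from \cite[Theorem~16 and Proposition~33]{Ch21}, so there is no in-paper argument to compare against in detail. Your outline for Part~(1) is the right Mili\v{c}i\'c--Soergel strategy (inverse limit over $M^n(\nu,\zeta)$, quasi-inverse $\mc L(M(\nu,\zeta),-)$, reduction to the regular case), and the obstacle you flag there — exactness and compatibility of the limit construction at singular $\nu$ — is indeed the genuine technical burden that \cite{Ch21} absorbs.

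Where your sketch actually breaks is Part~(2), not Part~(1). You assert that ``$\mc L(M(\nu),\wdM(\mu))\otimes_{U(\g_\oa)}M(\nu)$ recovers $\wdM(\mu)$'' by Lemma~\ref{CorEqiv2} and then ``run the same computation with $M(\nu,\zeta)$.'' But Lemma~\ref{CorEqiv2} requires $\nu$ to be regular. For singular $\nu$ the composite $\mc L(M(\nu),-)\otimes_U M(\nu)$ is not the identity on $\mc O^\Z$; on a Verma module it produces the proper standard object $\Delta_\nu(\mu)=\widetilde P(\mu)/\mathrm{Tr}_{\ups_\nu}(A(\mu))$, a strict quotient of $\wdM(\mu)$ in the presentable subcategory (this is precisely the content of Theorem~\ref{thm::realization} and Lemma~\ref{thm::14}). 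Consequently ``running the same computation with $M(\nu,\zeta)$'' does not land on $\wdM(\mu,\zeta)$ either, and the chain ``equivalence preserves tops, hence top $\mapsto \wdL(\mu,\zeta)$'' has no supporting first step. What is actually needed is an independent identification of the top of $\mc L(M(\nu),\wdM(\mu))\otimes_U M^n(\nu,\zeta)$ with $\wdL(\mu,\zeta)$ — for instance by exhibiting a surjection onto $\wdM(\mu,\zeta)$ and ruling out other $\wdL(\mu',\zeta)$ in the cosocle, or by the bimodule-level argument in \cite[Proposition~33]{Ch21}.

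The annihilator claim is also not ``automatic from the equivalence.'' An equivalence of abelian categories does not by itself transport $U(\g)$-annihilators; what one uses is that $\widetilde T(A)=\lim_\leftarrow A\otimes_{U(\g_\oa)}M^n(\nu,\zeta)$ manifestly satisfies $\mathrm{LAnn}_{U(\g)}A\subseteq\Ann_{U(\g)}\widetilde T(A)$, and the reverse inclusion is obtained from the quasi-inverse $\mc L(M(\nu,\zeta),-)$ applied to $\widetilde T(A)$. That step needs to be written out; it is the exact same mechanism that underlies the equality in \eqref{eq::equA} and Corollary~\ref{cor::306}, not a formal consequence of $\widetilde T$ being a functor of bimodules.
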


	Such an equivalence  \eqref{eq::555} is sometimes referred to a Mili{\v{c}}i{\'c}-Soergel type equivalence. The formulation and idea originated in the work of Mili{\v{c}}i{\'c} and Soergel \cite{MS}.   There are some analogues and variations of Mili{\v{c}}i{\'c}-Soergel type 	equivalences studied further, see, e.g., \cite{ CC, KM, MaMe12, Ma2}.

We recall from  \cite[Section 4.1]{Ch21}  an intrinsic definition of $\widetilde{\mc N}(\nu+\mc X, \zeta)^1$ that is introduced in Section \ref{Sect::14}.  Let $\hat S$ denote the completion of the symmetry algebra $S$ over $\mf h$ at the maximal ideal generated by $\mf h$. Recall the ring  $\hat S^W$ from  \cite[Section 4, Section 5]{MS} consisting of  invariants of $\hat S$ under the action of  $W_\zeta$.  We have already observed that modules in $\widetilde{\mc N}(\zeta)$ restrict to $\mf l_\zeta$-modules that are locally finite over $Z(\mf l_\zeta)$. Therefore for any    $M\in \widetilde{\mc N}(\zeta)$ there is a ring homomorphism  
\begin{align} 
&\theta_M: ~\hat S^W\rightarrow \text{End}_{\mf l_\zeta}M, \label{eq55}
\end{align} as constructed in \cite[Section 4]{MS}. 

For any $s\in \hat S^W$ and any central character $\chi^{\mf l_\zeta}$ of $\mf l_\zeta$, the action of $\theta_{M}(s)$ on $M_{\chi^{\mf l_\zeta}}$ is given by the action of an element $\theta_{(-)}(s)$,  depending only on the choices of $s$ and $\chi^{\mf l_\zeta}$, in the completion of $Z(\mf l_\zeta)$ at the kernel of $\chi^{\mf l_\zeta}$; see also \cite[Section 4.1.1]{Ch21}. Then $\widetilde{\mc N}(\nu+\mc X, \zeta)^1$ is defined as the category of modules 
$M\in \widetilde{\mc N}(\nu+\mc X, \zeta)$ such that  $\theta(\mf m)M=0,$ where $\mf m$ is the maximal ideal of $\hat S^{W}$.  The equivalence $\widetilde T$ leads to an equivalence between $\mc B_\nu$ and $\widetilde{\mc N}(\nu+\mc X, \zeta)^1$, namely, it is shown in \cite[Theorem 14]{Ch21} (for the proof of Theorem \ref{thm::Ch}) that we have the following equivalence   \begin{align}
&-\otimes_{U(\g_\oa)} M(\nu,\zeta):\mc B_\nu \cong \widetilde{\mc N}(\nu+\mc X, \zeta)^1, \label{eq::equA}
\end{align} preserving the   (left) annihilator ideals,  with inverse $\mc L(M(\nu,\zeta),-)$. 


\subsection{Proofs of  Theorem A and Theorem B} \label{Sect::PfOfThmA} 
In this subsection, we shall give complete proofs of the Theorems A and B introduced in Section \ref{sec1}. We need some preparatory notations before giving the proof.

For a given full subcategory $\mc C$ of either $\g$-Mod or $\g_\oa$-Mod, we denote by $\add(\mc C)$ the additive closure of $\mc C$, that is, $\add(\mc C)$ consists of all modules isomorphic to direct sums of  summands 
of objects in $\mc C$. Also, we define  $ \widetilde{\mc F}\otimes \mc C$, (resp. $\mc F\otimes \mc C$, $\Ind \mc C$) to be the full subcategory consisting of objects $E\otimes X$ (resp. $E'\otimes X$,  $\Ind X$), with $X\in \mc C$ and  $E\in \widetilde{\mc F}$ (resp. $E'\in \mc F$).    Also, we let~$\mathrm{coker}(\mc C)$ denote the {\em coker-category} consisting of all modules $N$ that have a presentation 
$$N_1\to N_2 \to N\to 0,$$ where 
$N_1, N_2$ are  modules in $\add(\mc C)$; see  \cite{MaSt08}. For instance, we may note that $\mc O^{\vpre}=\coker(\mc P_\nu)$. The following lemma is a   consequence of \cite[Theorem 3.1]{CC}.

\begin{lem} \label{lem::115} Suppose that $\nu\in\mf \h^\ast$ is dominant. Then the functor $(-)\otimes_{U(\g_\oa)} M(\nu)$ gives rise to an equivalence from   $\mc B_\nu$ to $\coker(\widetilde{\mc F}\otimes \Ind M(\nu))$ with inverse $\mc L(M(\nu),-)$.
\end{lem}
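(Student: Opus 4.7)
The plan is to upgrade the fully-faithful embedding supplied by \cite[Theorem 3.1]{CC} by explicitly identifying the essential image of the functor $F := (-)\otimes_{U(\g_\oa)} M(\nu)$ on $\mc B_\nu$. Write $G := \mc L(M(\nu),-)$; by \cite[Theorem 3.1]{CC} the pair $(F,G)$ is adjoint and $F|_{\mc B_\nu}$ is fully faithful with $G$ recovering the inverse on its essential image. It therefore suffices to show that this image coincides with $\coker(\widetilde{\mc F}\otimes \Ind M(\nu))$.

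First, I would establish the key computation: for every $E\in \widetilde{\mc F}$, the bimodule $\mc L(M(\nu), E\otimes \Ind M(\nu))$ lies in $\mc B_\nu$ and satisfies $F(\mc L(M(\nu), E\otimes \Ind M(\nu)))\cong E\otimes \Ind M(\nu)$. Membership in $\mc B_\nu$ follows because $E\otimes \Ind M(\nu)$ has a Verma flag with highest weights in $\nu+\mc X$, which makes the adjoint $\g_\oa$-action decompose into finite-dimensional summands and forces $I_\nu$-annihilation. The $F$-isomorphism follows from the tensor identity $E\otimes \Ind M(\nu)\cong \Ind(\Res(E)\otimes M(\nu))$ together with the counit computation inherent in \cite[Theorem 3.1]{CC} applied to the factor $M(\nu)$.

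Next, I would show that the system $\{\mc L(M(\nu), E\otimes \Ind M(\nu))\}_{E\in \widetilde{\mc F}}$ is a family of projective generators of $\mc B_\nu$. Projectivity is immediate from the adjunction
\begin{equation*}
\Hom_{\mc B_\nu}(\mc L(M(\nu), E\otimes \Ind M(\nu)),Y)\cong \Hom_\g(E\otimes \Ind M(\nu),F(Y)),
\end{equation*}
whose right side is exact in $Y$; generation is obtained by picking, for a given $Y\in \mc B_\nu$, a finite-dimensional $\g_\oa$-submodule $V\subseteq Y^\ad$ generating $Y$ as a bimodule, embedding $V$ into a suitable $E\in \widetilde{\mc F}$, and constructing a surjection by adjunction. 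Combining with the first step, each $Y\in \mc B_\nu$ has a two-term projective presentation $P_1\to P_0\to Y\to 0$, and applying the right exact functor $F$ using the first step produces
\begin{equation*}
E_1\otimes \Ind M(\nu)\to E_0\otimes \Ind M(\nu)\to F(Y)\to 0,
\end{equation*}
so $F(Y)\in \coker(\widetilde{\mc F}\otimes \Ind M(\nu))$. Conversely, any $N$ in the coker-category lifts: applying $G$ to a presentation $N_1\to N_0\to N\to 0$ with $N_i\in \add(\widetilde{\mc F}\otimes \Ind M(\nu))$ and forming the cokernel yields $Y\in \mc B_\nu$, and $F(Y)\cong N$ follows from the right exactness of $F$ and the identification $FG(N_i)\cong N_i$ from the first step.

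The main technical hurdle will be the generation claim, since $\mc B_\nu$ is cut out by annihilation against the ideal $I_\nu$ rather than by a finite-codimension central character condition when $\nu$ is singular or non-integral. One must use the finite-generation hypothesis built into $\mc B$ together with the $\g_\oa$-integrability of $Y^\ad$ to ensure that a single bimodule of the prescribed form $\mc L(M(\nu), E\otimes \Ind M(\nu))$ already surjects onto $Y$, by choosing $E$ so as to contain a generating finite-dimensional $\g_\oa$-subspace of $Y^\ad$ together with enough translates under the $\widetilde U$-action.
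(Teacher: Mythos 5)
The paper's proof of this lemma is a single citation: it is stated as ``a consequence of [Theorem~3.1]{CC},'' i.e.\ the super Bernstein--Gelfand theorem established for quasireductive Lie superalgebras in the cited reference already gives the equivalence of $\mc B_\nu$ with $\coker(\widetilde{\mc F}\otimes \Ind M(\nu))$ directly, with nothing left to prove here. You instead attempt to rebuild the identification of the essential image from only the fully-faithful part of that theorem, which is a genuinely different (and much longer) route.

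The main gap lies in your projectivity argument. The abstract adjunction that comes with these functors runs the other way: $F=(-)\otimes_{U(\g_\oa)}M(\nu)$ is \emph{left} adjoint to $G=\mc L(M(\nu),-)$, i.e.\ $\Hom_\g(F(Y),N)\cong\Hom_{\mc B}(Y,G(N))$. Your displayed formula
$\Hom_{\mc B_\nu}(\mc L(M(\nu), E\otimes \Ind M(\nu)),Y)\cong \Hom_\g(E\otimes \Ind M(\nu),F(Y))$
would express $G$ as a left adjoint of $F$, which is not an abstract fact; it only becomes valid once one already knows $F$ and $G$ restrict to mutually inverse equivalences and that $E\otimes\Ind M(\nu)$ lies in the essential image --- precisely what one is trying to establish. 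Even granting this, deducing exactness of the right-hand side in $Y$ requires $F|_{\mc B_\nu}$ to preserve monomorphisms computed in $\g$-Mod, and this is not automatic: $F$ is only right exact, and $G$ is not faithful on $\mc O$ (for instance $G(\wdL(\mu))=0$ for $\mu\notin\mc X(\nu)$, as used later in the paper), so the usual ``apply $G$ and conclude the kernel vanishes'' trick does not close the loop. In the Bernstein--Gelfand template the projective generators of $\mc B_\nu$ are taken to be the free bimodules $V\otimes \widetilde U_\nu$, whose projectivity in $\mc B_\nu$ is visible without invoking the equivalence, and one then computes $F$ on those; that is the step your sketch inverts. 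Finally, as you yourself flag, the generation argument (surjecting a $\mc L(M(\nu),E\otimes\Ind M(\nu))$ onto a given $Y$) is left incomplete, and the adjunction you would need for it has the same directionality problem. So the approach has the right overall shape, but as written it is circular at the crucial step and would need to be reorganized around the free bimodules rather than the $\mc L$-objects.
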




The following theorem is the main result in this section. 
 
\begin{thm}\label{thm::15} Let $\zeta\in \ch \mf n_\oa^+$. Suppose that  $\nu\in \h^\ast$ is dominant  such that $W_\nu =W_\zeta$. 	Then we have the following  annihilator-preserving equivalences of categories
	\begin{align}
	&\mc L(M(\nu),-)\otimes_{U(\g_\oa)} M(\nu,\zeta): \mc O^{\vpre}\xrightarrow{\cong }\mc B_\nu \xrightarrow{\cong} \widetilde{\mc N}(\nu+\mc X, \zeta)^1. \label{eq::16eq}
	\end{align} 
	
	Furthermore, assume that $\g$ is of type I-0 with $\mu\in \mc X(\nu)$. Then the equivalence in \eqref{eq::16eq} sends the $S_\nu(\mu)$ to $\wdL(\mu,\zeta)$, namely, we have $S(\mu,\zeta) =\wdL(\mu,\zeta)$.
\end{thm}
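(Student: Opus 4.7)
The plan is to realize the composite equivalence as a concatenation of two separate equivalences, one already in the excerpt and one to be constructed. The second arrow $(-)\otimes_{U(\g_\oa)}M(\nu,\zeta):\mc B_\nu \xrightarrow{\cong}\widetilde{\mc N}(\nu+\mc X,\zeta)^1$, together with its annihilator-preserving property, is exactly \eqref{eq::equA}, so the work reduces to constructing the first arrow $\mc L(M(\nu),-):\mc O^{\vpre}\xrightarrow{\cong}\mc B_\nu$, verifying that it preserves annihilators, and (for type I-0) identifying the image of $S_\nu(\mu)$.

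To build the first equivalence I would feed Lemma \ref{lem::115} into the definition $\mc O^{\vpre}=\coker(\mc P_\nu)$: since $(-)\otimes_{U(\g_\oa)}M(\nu):\mc B_\nu\cong \coker(\widetilde{\mc F}\otimes \Ind M(\nu))$ with inverse $\mc L(M(\nu),-)$, it suffices to establish the equality
$$\add(\widetilde{\mc F}\otimes \Ind M(\nu))=\add(\mc P_\nu)$$
as additive subcategories of $\mc O$. Using the projection formula $E\otimes\Ind M(\nu)\cong \Ind(\Res E\otimes M(\nu))$, for $E\in\widetilde{\mc F}$ the object $E\otimes\Ind M(\nu)$ is induced from an object of $\mc O_\oa$ carrying a Verma flag supported in $\nu+\mc X$; combined with Part (1) of Theorem \ref{thm::thm9}, this identifies $E\otimes\Ind M(\nu)$ as a direct sum of $\widetilde P(\gamma)$ with $\gamma\in\mc X(\nu)$, giving one inclusion. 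The opposite inclusion follows by the classical translation argument, realizing each $\widetilde P(\mu)$ with $\mu\in\mc X(\nu)$ as a direct summand of $E\otimes \Ind M(\nu)$ for $E$ chosen so that $\mu-\nu$ is an extremal weight; here one again invokes Theorem \ref{thm::thm9} to transfer the summand decomposition between $\mc O^{\vpre}_\oa$ and $\mc O^{\vpre}$. Annihilator preservation of $\mc L(M(\nu),-)$ is then immediate from the inverse identity $\mc L(M(\nu),X)\otimes_{U(\g_\oa)}M(\nu)\cong X$, since the left $\widetilde U$-annihilator of a bimodule in $\mc B_\nu$ coincides with the annihilator of the tensor product with the faithful right $U$-module $M(\nu)$.

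For the last assertion, assume $\g$ is of type I-0 and $\mu\in\mc X(\nu)$. The composite equivalence restricts to a bijection between simple objects. On the left, the simples of $\mc O^{\vpre}$ are precisely the $S_\nu(\mu)$ with $\mu\in\mc X(\nu)$ (Section \ref{Sect::351}); on the right, Theorem \ref{mainthm1typeI} together with the constraint on generalized $Z(\mf l_\zeta)$-eigenvalues identifies the simples of $\widetilde{\mc N}(\nu+\mc X,\zeta)^1$ with $\wdL(\mu',\zeta)$ for $\mu'$ ranging over $W_\zeta$-anti-dominant representatives, i.e.\ over $\mc X_0(\nu)=\mc X(\nu)$ (using the equality valid in the type I-0 integral case, Section \ref{sect::411}). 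To pin down the bijection, I would trace projective covers: $\widetilde P(\mu)$ is the projective cover of $S_\nu(\mu)$ in $\mc O^{\vpre}$, and its image under the composite has a simple top whose $\mf l_\zeta$-central character and $\zeta$-data can be computed from the action of $Z(\mf l_\zeta)$ on $\mc L(M(\nu),\widetilde P(\mu))\otimes_{U(\g_\oa)}M(\nu,\zeta)$. Comparing with Theorem \ref{thm::Ch}, which gives the analogous identification on the level of $\mc L(M(\nu),\widetilde M(\mu))$, forces the matching $S_\nu(\mu)\leftrightarrow \wdL(\mu,\zeta)$.

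The principal obstacle I expect is the clean identification $\add(\widetilde{\mc F}\otimes \Ind M(\nu))=\add(\mc P_\nu)$ at a singular dominant weight $\nu$, since $M(\nu)$ is typically not projective and one must carefully track which summands of $\widetilde P(\gamma)$ actually appear in the inductions of translates of $M(\nu)$; this is precisely where the hypothesis $W_\nu=W_\zeta$ and the type I-0 identity $\mc X(\nu)=\mc X_0(\nu)$ are doing the decisive work, via Theorem \ref{thm::thm9} and Lemma \ref{lem::3}. A secondary nuisance is matching parity conventions between the two sides, which by the discussion following Theorem \ref{mainthm1typeI} can be absorbed by ignoring parity shifts.
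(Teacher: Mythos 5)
Your proposal follows the paper's argument very closely: reduce to the first arrow via \eqref{eq::equA} and Theorem \ref{thm::Ch}, then use Lemma \ref{lem::115} together with the identification $\coker(\widetilde{\mc F}\otimes\Ind M(\nu))=\coker(\mc P_\nu)$, with Theorem \ref{thm::thm9} doing the back-and-forth between $\mc O^{\vpre}$ and $\mc O^{\vpre}_\oa$, and finally trace projective covers through Theorem \ref{thm::Ch} to match $S_\nu(\mu)$ with $\wdL(\mu,\zeta)$. This is exactly the structure of the paper's Lemma \ref{thm::14} and its proof.

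One step as written does not quite go through: the statement that $E\otimes\Ind M(\nu)$ is ``induced from an object of $\mc O_\oa$ carrying a Verma flag supported in $\nu+\mc X$'' is not by itself enough to conclude it decomposes into $\widetilde P(\gamma)$ with $\gamma\in\mc X(\nu)$ --- having a Verma flag does not imply projectivity. What is actually needed is that $\nu$ dominant makes $M(\nu)$ projective in $\mc O_\oa$, and then the Bernstein--Gelfand result (Theorem 3.3 of \cite{BG}, which the paper cites explicitly) identifies $\add(\mc F\otimes M(\nu))$ with the additive closure of $\{P(\gamma):\gamma\in\mc X_0(\nu)\}$; only then does Part (1) of Theorem \ref{thm::thm9} yield the claimed decomposition of $E\otimes\Ind M(\nu)$ into $\widetilde P(\gamma)$'s. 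Similarly, for the reverse inclusion your ``translation to an extremal weight'' detour is riskier at singular $\nu$ than what the paper does, which is simply to restrict: $\Res\widetilde P(\mu)\cong\bigoplus_i P(\gamma_i)$ with $\gamma_i\in\mc X_0(\nu)$ by Theorem \ref{thm::thm9}(1), so $\widetilde P(\mu)$ is a summand of $\Ind(\bigoplus_i P(\gamma_i))\in\add(\Ind(\mc F\otimes M(\nu)))$. Both fixes are short, and with them your outline matches the paper's proof.
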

 
 The proof of Theorem \ref{thm::15} follows from \eqref{eq::equA}, Theorem \ref{thm::Ch} and the following lemma. 
\begin{lem} \label{thm::14}
 The functor $(-)\otimes_{U(\g_\oa)} M(\nu)$ leads to an annihilator-preserving equivalence $$\mc B_\nu \cong \mc O^{\vpre},$$
 with inverse $\mc L(M(\nu), -).$    
 
 	Furthermore, assume that $\g$ is of type I-0. Then, for any $\mu \in \mc X(\nu)$, the equivalence $(-)\otimes_{U(\g_\oa)} M(\nu)$ sends the top of $\mc L(M(\nu), \widetilde{M}(\mu))$ to $S_\nu(\mu)$, with the same (left) annihilator ideal.
\end{lem}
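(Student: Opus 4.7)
The plan is to derive the first assertion by combining Lemma \ref{lem::115} with a careful identification of cokernel categories. Lemma \ref{lem::115} gives an equivalence $(-)\otimes_{U(\g_\oa)} M(\nu): \mc B_\nu \xrightarrow{\cong} \coker(\widetilde{\mc F}\otimes \Ind M(\nu))$ with inverse $\mc L(M(\nu),-)$, so it will suffice to establish the identity $\coker(\widetilde{\mc F}\otimes \Ind M(\nu)) = \mc O^{\vpre}$, and for this, the equality $\add(\widetilde{\mc F}\otimes \Ind M(\nu)) = \add(\mc P_\nu)$ of the additive closures of their projective generators.

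To obtain this equality I would first use the projection formula $E\otimes \Ind M(\nu)\cong \Ind(\Res E\otimes M(\nu))$ for $E\in \widetilde{\mc F}$, together with the observation that $\Ind F \in \widetilde{\mc F}$ for any $F\in \mc F$ (since $\g_\ob$ is finite-dimensional, so $\Ind F \cong \Lambda(\g_\ob)\otimes F$ is finite-dimensional), to conclude $\add(\widetilde{\mc F}\otimes \Ind M(\nu)) = \add\bigl(\Ind(\mc F\otimes M(\nu))\bigr)$. The corresponding Lie algebra result (see \cite{MaSt04}) gives $\add(\mc F\otimes M(\nu)) = \add\{P(\mu'):~\mu'\in \mc X_0(\nu)\}$, i.e., the projectives of $\mc O_\oa^{\vpre}$. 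Combined with Theorem \ref{thm::thm9}(1), which identifies $\Ind P(\mu')$ for $\mu'\in \mc X_0(\nu)$ as a direct sum of $\widetilde{P}(\gamma)$ with $\gamma\in \mc X(\nu)$, and which shows via the adjunction $\widetilde P(\mu)\hookrightarrow \Ind\Res\widetilde P(\mu)$ that every $\widetilde P(\mu)$ with $\mu\in \mc X(\nu)$ appears as a summand of such an induction, this yields the desired equality. The annihilator-preserving property is then inherited as in the proof of Theorem \ref{thm::Ch}: the inclusion $\Ann_{\widetilde U}(Y) \subseteq \Ann_{\widetilde U}(Y\otimes_U M(\nu))$ is immediate, while the converse uses the reconstruction $Y\cong \mc L(M(\nu), Y\otimes_U M(\nu))$ via the inverse functor, with the same left $\widetilde U$-action.

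For the type I-0 assertion, I would examine the canonical evaluation morphism
\[
\mathrm{ev}: \mc L(M(\nu), \widetilde{M}(\mu))\otimes_{U(\g_\oa)} M(\nu)\to \widetilde{M}(\mu),
\]
which is a nonzero morphism in $\mc O^{\vpre}$ whose target has simple top $\widetilde L(\mu)$ with $\mu\in \mc X(\nu)$. I would identify the image of $\mathrm{ev}$ with the proper standard object $\Delta_\nu(\mu)$ of Section \ref{Sect::351}, by noting that $\mc L(M(\nu), \widetilde M(\mu))\otimes_{U(\g_\oa)} M(\nu)$ is $\mc P_\nu$-presentable with a highest weight quotient of $\widetilde{M}(\mu)$ as an image, and that the unique such $\mc P_\nu$-presentable quotient is $\Delta_\nu(\mu)$. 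Since the simple top of $\Delta_\nu(\mu)$ is $S_\nu(\mu)$, and $(-)\otimes_{U(\g_\oa)}M(\nu)$ is right exact and thus sends the top of $\mc L(M(\nu),\widetilde M(\mu))$ onto the top of $\Delta_\nu(\mu)$, the asserted identification with $S_\nu(\mu)$ follows, with equality of annihilators a consequence of the first part. The main obstacle I anticipate is the precise identification of the image of $\mathrm{ev}$ with $\Delta_\nu(\mu)$ in the singular case, where $\mc X(\nu)$ and $\mc X_0(\nu)$ need not coincide; this will require carefully tracing through the projection formula and the explicit Ind/Res structure of $\mc L(M(\nu),\widetilde M(\mu))$ afforded by Theorem \ref{thm::thm9}(1) in order to rule out $\ups_\nu$-finite composition factors in the top.
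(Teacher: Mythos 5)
For the first assertion your plan is essentially the paper's argument: reduce via Lemma~\ref{lem::115} to the identity $\coker(\widetilde{\mc F}\otimes \Ind M(\nu)) = \mc O^{\vpre}$, show $\add(\widetilde{\mc F}\otimes \Ind M(\nu))=\add(\Ind(\mc F\otimes M(\nu)))$ (the paper does this by a chain of inclusions rather than the projection formula, but this is cosmetic), and then use \cite[Theorem~3.3]{BG} together with Theorem~\ref{thm::thm9}(1) in both directions, plus the ``same left annihilator'' argument inherited from Theorem~\ref{thm::Ch}. No issues there.

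For the type I-0 assertion, however, there is a genuine gap in your plan, and you are right to flag the ``main obstacle''. The claim that the image of the evaluation morphism
$\mathrm{ev}\colon \mc L(M(\nu),\widetilde M(\mu))\otimes_{U(\g_\oa)}M(\nu)\to\widetilde M(\mu)$
can be identified with $\Delta_\nu(\mu)$ cannot be correct as stated: the image of $\mathrm{ev}$ is a \emph{submodule} of $\widetilde M(\mu)$, whereas $\Delta_\nu(\mu)=\widetilde P(\mu)/\operatorname{Tr}_{\ups_\nu}(A(\mu))$ \emph{surjects onto} $\widetilde M(\mu)$ (the trace sits inside $A(\mu)$) and is in general strictly larger. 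So the two objects sit on opposite sides of $\widetilde M(\mu)$ and cannot agree unless $\operatorname{Tr}_{\ups_\nu}(A(\mu))=A(\mu)$. Even if you replace ``image'' by ``source'' — so the intended claim is $F(\widetilde M(\mu))\cong\Delta_\nu(\mu)$ — this is a nontrivial assertion not yet available at this stage and not needed. The paper sidesteps all of this: since $\mc L(M(\nu),-)$ is \emph{exact} (this is \cite[Lemma~6.2(i)]{BG}, reproved in Theorem~\ref{thm::24}(1)), the surjection $\widetilde P(\mu)\onto\widetilde M(\mu)$ gives a surjection $\mc L(M(\nu),\widetilde P(\mu))\onto\mc L(M(\nu),\widetilde M(\mu))$ in $\mc B_\nu$; under the equivalence the source is the projective cover of $S_\nu(\mu)$, hence the top of $\mc L(M(\nu),\widetilde M(\mu))$ is sent to the simple top $S_\nu(\mu)$, and equality of annihilators follows from the first part. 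If you want to keep the flavor of your argument, the clean replacement is: right-exactness of $(-)\otimes_{U(\g_\oa)}M(\nu)$ applied to $\mc L(M(\nu),\widetilde P(\mu))\onto\mc L(M(\nu),\widetilde M(\mu))$ exhibits $F(\widetilde M(\mu))$ as a nonzero quotient of $\widetilde P(\mu)$ inside $\mc O^{\vpre}$, and every nonzero $\mc P_\nu$-presentable quotient of the projective cover $\widetilde P(\mu)$ has top $S_\nu(\mu)$ — no detour through $\Delta_\nu(\mu)$ is required.
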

\begin{proof}   By Lemma \ref{lem::115}, it remains to show that $\coker(\widetilde{\mc F}\otimes \Ind M(\nu)) = \mc O^{\vpre}$. To see this,   we observe  that 
	\begin{align}
	&\add(\widetilde{\mc F}\otimes \Ind M(\nu)) \subseteq \add (\Ind (\mc F\otimes M(\nu))).
	\end{align}
		Also, we note 
		\begin{align}
		&\add(\Ind(\mc F\otimes M(\nu))) \subseteq \add(\Ind(\mc F \otimes \Res \Ind M(\nu))) \subseteq \add(\widetilde{\mc F} \otimes  \Ind M(\nu)).
		\end{align}
		Consequently, we have $\add(\widetilde{\mc F}\otimes \Ind M(\nu)) = \add (\Ind (\mc F\otimes M(\nu))).$
		
	By \cite[Theorem 3.3]{BG}, 
		 $\add(\Ind(\mc F\otimes M(\nu)))$ is the additive closure of the category of projective modules $\Ind P(\mu)$ with $\mu \in \mc X_0(\nu).$  For a given  $\mu\in \mc X_0(\nu)$, the module $\Ind P(\mu)$ is a direct sum of projective modules $\widetilde P(\gamma)$ with $\gamma \in \mc X(\nu)$ by  Part (1) of  Theorem \ref{thm::thm9}. Hence, we have $\coker(\Ind(\mc F\otimes M(\nu)))\subseteq \mc O^{\vpre}$. 
		 
		  On the other hand, if $\mu \in \mc X(\nu)$, then $\Res\widetilde{P}(\mu)$ is isomorphic to $\bigoplus_{i=1}^k P(\gamma_i)$ with $\gamma_1,\ldots,\gamma_k  \in \mc X_0(\nu)$ by  Part (1) of  Theorem \ref{thm::thm9} again. It follows that $\widetilde P(\mu)$ is an epimorphic image of $\Ind( \bigoplus_{i=1}^k P(\gamma_i))$, and so $\widetilde P(\mu)\in\add(\Ind(\mc F\otimes M(\nu)))$.   It follows that $\coker(\Ind(\mc F\otimes M(\nu))) \supseteq \mc O^{\vpre}$.   The first conclusion follows. 		
		 		

	 Finally, assume that $\g$ is of type I-0. Since $\mc L(M(\nu),-):\mc O\rightarrow \mc B_\nu$ is exact by \cite[Lemma 6.2]{BG} (see also Theorem \ref{thm::24} for a proof),  the projective cover
		of the top of  $\mc L(M(\nu), \widetilde{M}(\mu))$ is 
		$\mc L(M(\nu), \widetilde P(\mu))$, for any $\mu \in\mc  X(\nu)$. Therefore, the equivalence $(-)\otimes_{U(\g_\oa)}M(\nu)$ sends the top of $\mc L(M(\nu), \widetilde{M}(\mu))$ to the simple quotient of $ \widetilde P(\mu)$  in $\mc O^{\vpre}$, namely, the $S_\nu(\mu)$. This completes the proof.

\end{proof}

We remark that Lemma \ref{thm::14} is a generalization of results in \cite[Section 5.9]{BG}, where the case of reductive Lie algebra was considered. 

 We are now in a position to prove Theorem A that is introduced in Section 
  \ref{thm::thmA}.  Before giving the proof, we recall some helpful results from \cite{MS}.
  \begin{lem}{\em(}Mili{\v{c}}i{\'c}-Soergel{\em)} \label{lem::19} Let  $\nu\in \h^\ast$ be dominant with $W_\nu=W_\zeta$. Then we have \begin{itemize}
  		\item[(1)] $\mc N(\nu+\mc X, \zeta)^1$ is stable under tensoring with finite-dimensional $\g_\oa$-modules.
  		\item[(2)] $M(\nu,\zeta)\in \mc N(\nu+\mc X, \zeta)^1$.
  		\item[(3)] Let $E\in \mc F$. Then $E\otimes M(\nu,\zeta)$ has a filtration with subquotients $M(\nu+\eta,\zeta)$ with weights $\eta$ of $E$. 
  	\end{itemize}
\end{lem}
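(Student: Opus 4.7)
I would argue the three parts in the order (3), (2), (1), since Part (3) is the structural workhorse and Part (1) reduces to it via standard techniques. For Part (3), the starting point is the tensor identity
\[ E\otimes_{\C} M(\nu,\zeta) \;\cong\; U(\g_\oa)\otimes_{\mf q_\zeta}\bigl(E|_{\mf q_\zeta}\otimes Y_\zeta(\nu,\zeta)\bigr). \]
I would then filter $E|_{\mf q_\zeta}$ by a $\mf b_{\mf l_\zeta}$-stable flag $0=E_0\subset E_1\subset\cdots\subset E_k=E$ with one-dimensional subquotients of weights $\eta_i$ (Lie's theorem on the solvable action of the Borel subalgebra of $\mf l_\zeta$ on the finite-dimensional space $E$). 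Each subquotient $\C_{\eta_i}\otimes Y_\zeta(\nu,\zeta)$ is isomorphic to $Y_\zeta(\nu+\eta_i,\zeta)$ by Kostant's classification of Whittaker $\mf l_\zeta$-modules with a given generalized infinitesimal character. Exactness of parabolic induction $U(\g_\oa)\otimes_{\mf q_\zeta}(-)$ transports this filtration to the desired filtration of $E\otimes M(\nu,\zeta)$ by standard Whittaker modules $M(\nu+\eta_i,\zeta)$.

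For Part (2), I would verify the four defining conditions of $\mc N(\nu+\mc X,\zeta)^1$ in turn: the nilpotency of $x-\zeta(x)$ on $\mf n_\oa^+$ and local $Z(\g_\oa)$-finiteness are immediate from the construction of $M(\nu,\zeta)$. The decomposition of $M(\nu,\zeta)$ according to the $\mf u_\zeta^-$-weight gradation shows that the $Z(\mf l_\zeta)$-generalized characters that occur are precisely $\chi_{\nu+\eta}^{\mf l_\zeta}$ for $\eta\in\mc X$ a weight of $U(\mf u_\zeta^-)$, so $M(\nu,\zeta)\in\widetilde{\mc N}(\nu+\mc X,\zeta)$. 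The core step is $\theta(\mf m)M(\nu,\zeta)=0$: by construction of $\theta_{(-)}(s)$, on the $\chi_{\nu+\eta}^{\mf l_\zeta}$-generalized eigenspace the operator $\theta_{M(\nu,\zeta)}(s)$ is the image of $s\in\hat S^W$ under the completed Harish-Chandra map for $\mf l_\zeta$ followed by evaluation, and this scalar agrees with $s(\nu+\rho_\oa)$ since the weight shift $\eta$ is absorbed by the Harish-Chandra $\rho$-shift adapted to $\mf l_\zeta$; for $s\in\mf m$ this vanishes because $\mf m$ is the maximal ideal of $\hat S^W$ at the image of $\nu$, and the identification $W_\nu=W_\zeta$ guarantees that the evaluation map is well defined on $\hat S^W$.

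For Part (1), let $M\in\mc N(\nu+\mc X,\zeta)^1$ and $E\in\mc F$. Local $\mf n_\oa^+$-finiteness of $E\otimes M$ is immediate from the nilpotence of $\mf n_\oa^+$ on the finite-dimensional $E$ combined with the $\zeta$-nilpotency on $M$; local $Z(\g_\oa)$-finiteness is the standard fact that tensoring a $Z(\g_\oa)$-locally-finite module with a finite-dimensional module preserves this property; integrality of the weight shifts is clear since weights of $E$ lie in $\mc X$. For the delicate $\theta(\mf m)$-condition, I would combine Part (3) with the equivalence of Theorem~\ref{thm::Ch}: every object of $\widetilde{\mc N}(\nu+\mc X,\zeta)^1$ admits, via the equivalence with $\mc B_\nu$, a two-step resolution by direct summands of modules of the form $E'\otimes M(\nu,\zeta)$ with $E'\in\widetilde{\mc F}$, and Part (3) yields $\theta(\mf m)\cdot(E\otimes E'\otimes M(\nu,\zeta))=0$; the condition then descends to $E\otimes M$ by right-exactness of tensor product.

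The main obstacle I expect is the vanishing $\theta(\mf m)M(\nu,\zeta)=0$ in Part (2): this requires carefully tracking how the map $\hat S^W\to\widehat{Z(\mf l_\zeta)}_{\chi_{\nu+\eta}^{\mf l_\zeta}}$ interacts with the Harish-Chandra $\rho$-shifts on both sides and crucially invokes $W_\nu=W_\zeta$ so that $\hat S^W$-evaluation at $\nu$ factors through $\hat S^{W_\zeta}$-evaluation, which is needed to interpret $\mf m$ as the kernel of evaluation at $\nu$.
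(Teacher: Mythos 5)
The paper does not actually prove this lemma; it simply cites Mili\v{c}i\'c--Soergel directly: Part~(1) to \cite[Theorem~4.1, Lemma~4.3]{MS}, Part~(2) to \cite[Proposition~5.5]{MS} (with \cite[Lemma~5.6, Proposition~5.8]{MS}), and Part~(3) to \cite[Lemma~5.12]{MS}. You instead attempt a self-contained argument, which is commendable, but there are two genuine problems.

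The filtration argument in Part~(3) does not work as stated. After applying the tensor identity you get $U(\g_\oa)\otimes_{\mf q_\zeta}\bigl(E\otimes Y_\zeta(\nu,\zeta)\bigr)$, and you now need a filtration of $E\otimes Y_\zeta(\nu,\zeta)$ by \emph{$\mf q_\zeta$-submodules}, so that exactness of parabolic induction carries it over. A $\mf b_{\mf l_\zeta}$-stable flag $E_\bullet$ of $E$ gives only $\mf b_\oa$-stable subspaces $E_i\otimes Y_\zeta(\nu,\zeta)$: for $x\in\mf n_\zeta^-\subset\mf l_\zeta$ one has $x(e\otimes y)=xe\otimes y+e\otimes xy$, and $xe$ need not lie in $E_i$, so $E_i\otimes Y_\zeta(\nu,\zeta)$ is not $\mf l_\zeta$-stable, let alone $\mf q_\zeta$-stable. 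Consequently the subquotients are only $\mf b_\oa$-modules and applying $U(\g_\oa)\otimes_{\mf q_\zeta}(-)$ to them is illegitimate. Relatedly, the claimed isomorphism $\C_{\eta_i}\otimes Y_\zeta(\nu,\zeta)\cong Y_\zeta(\nu+\eta_i,\zeta)$ does not even make sense: the left-hand side is only a $\mf b_{\mf l_\zeta}$-module (since $\C_{\eta_i}$ is not an $\mf l_\zeta$-module unless $\eta_i$ happens to be an $\mf l_\zeta$-character), while the right-hand side is a genuine $\mf l_\zeta$-module. The correct route, as in Mili\v{c}i\'c--Soergel, is to take a $\mf q_\zeta$-stable flag of $E$ with \emph{irreducible $\mf l_\zeta$-module} subquotients $F_1,\dots,F_r$ on which $\mf u_\zeta^+$ acts trivially, and then invoke Kostant's theorem that, over the reductive algebra $\mf l_\zeta$ with the nondegenerate character $\zeta|_{\mf n_\zeta^+}$, the module $F_i\otimes Y_\zeta(\nu,\zeta)$ has a filtration with subquotients $Y_\zeta(\nu+\gamma,\zeta)$ for $\gamma$ the weights of $F_i$; only then does exactness of $U(\g_\oa)\otimes_{\mf q_\zeta}(-)$ yield the asserted $M(\nu+\eta,\zeta)$-flag.

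There is also a circularity concern in your Part~(1). You propose to establish $\theta(\mf m)(E\otimes M)=0$ by resolving $M$ via the equivalence $\mc B_\nu\cong\widetilde{\mc N}(\nu+\mc X,\zeta)^1$ of Theorem~\ref{thm::Ch} and then applying Part~(3). But that equivalence (Mili\v{c}i\'c--Soergel's Theorem~5.3, resp.\ its super analogue \cite[Theorem~16]{Ch21}) is itself proved using the fact that tensoring preserves the $\theta(\mf m)$-condition, i.e.\ precisely Part~(1) of the lemma. In Mili\v{c}i\'c--Soergel the $\theta(\mf m)$-stability under $E\otimes(-)$ is established directly by a careful analysis of how the $\hat S^W$-action, defined via the completed Harish-Chandra map for $\mf l_\zeta$, interacts with the translation functors; your sketch in Part~(2) gestures at the right ideas (the $\rho$-shift bookkeeping and the use of $W_\nu=W_\zeta$) but these need to be carried out before, not after, the equivalence is available.
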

\begin{proof}
Part (1) is a consequence of \cite[Theorem 4.1, Lemma 4.3]{MS}; see also \cite[Theorem 5.3]{MS}. Part (2) is taken from \cite[Proposition 5.5]{MS}; see also \cite[Lamma 5.6, Proposition 5.8]{MS} for its proof. 	
	Part (3) is taken from \cite[Lemma 5.12]{MS}. 
\end{proof}

 \begin{proof}[Proof of the Theorem A]
 Let $\g$ be an arbitrary quasireductive Lie superalgebra. The  conclusion concerning the equivalence \eqref{eq::11} of Theorem A follows from Theorem \ref{thm::15}.  We are going to show the  Part (1) of Theorem A. 
 Recall that we have defined 
 \begin{align*}
 &S(\mu,\zeta)=\mc L(M(\nu),S_\nu(\mu))\otimes_{U(\g_\oa)} M(\nu,\zeta),
 \end{align*} for any $\zeta\in \ch \mf n_\oa^+$, $\nu \in \Lambda^+(\zeta)$ and $\mu\in \mc X(\nu).$ 
 By Theorem \ref{thm::15}, it follows that the set 
 \begin{align}
 &\{S(\mu,\zeta)|~\zeta\in \ch \mf n_\oa^+,~\nu\in \Lambda^+(\zeta),~\mu\in \mc X(\nu)\}, \label{eq::list511}
 \end{align}
 is a   list  of mutually non-isomorphic simple Whittaker modules  in $\widetilde{\mc N}(\zeta)$. It remains to show that every simple Whittaker module of integral-type lies in  the following set   
 \begin{align}
 &\{S(\mu,\zeta)|~\zeta\in \ch \mf n_\oa^+,~\nu\in \Lambda^+(\zeta)\cap \mc X,~\mu\in \mc X(\nu)\}. \label{eq::list5112}
 \end{align}
	To see this, we let $S$ be an integral type simple Whittaker module over $\g$ with a $\g_\oa$-submodule $L(\mu,\zeta)\hookrightarrow \Res S$, for some $\mu \in \mc X$ and $\zeta\in \ch \mf n_\oa^+$. Then  $S$ is a quotient of $\Ind L(\mu,\zeta)$.   Let  $\nu\in \mc X$ be dominant such that $W_\nu=W_\zeta$.  By  Parts (1) and (2) in Lemma \ref{lem::19}, it follows that $E\otimes M(\nu,\zeta)\in \mc N(\mc X, \zeta)^1$, for any $E\in \mc F$. In particular, $M(\mu, \zeta)\in \mc N(\mc X,\zeta)^1$ by Part (3) of Lemma \ref{lem::19}.  Therefore, we have $L(\mu,\zeta) \in \mc N(\mc X,\zeta)^1$, which implies that $\Res \Ind L(\mu,\zeta) \cong U(\g_\ob)\otimes L(\mu,\zeta)\in \mc N(\mc X,\zeta)^1$ by Part (1) of Lemma \ref{lem::19}. It follows that $\Ind L(\mu,\zeta) \in \widetilde{\mc N}(\mc X,\zeta)^1$. Consequently, we have  $S\in \widetilde{\mc N}(\mc X,\zeta)^1$.  The  conclusion in Part (1)  of Theorem A follows.

  Next,  assume that $\g$ is either a basic classical Lie superalgebra   or a Lie superalgebra of type I-0 with $\nu$  integral.  The   conclusion in Part (2) of Theorem A  follows from Theorem \ref{thm::realization}  and Lemma \ref{lem::15}.   This completes the proof. 
 \end{proof}

We  now turn to the proof of Theorem B in Section  \ref{thm::thmA}. Assume that $\g$ is of type I-0 with $W_\zeta$-anti-dominant $\la \in \mc X$. We continue to assume  that $\nu\in \mc X$ is dominant and  $W_\nu =W_\zeta$. 
\begin{proof}[Proof of the Theorem B]  
	
	By 
	 Lemma \ref{lem::15} we have  ${\Ann}_{U(\g)} S_\nu(\la) ={\Ann}_{U(\g)} \widetilde L(\la).$ It follows from     Theorem \ref{thm::15} that 
	 \begin{align}
	 &\text{Ann}_{U(\g)} S_\nu(\la) =  \text{Ann}_{U(\g)} \widetilde L(\la,\zeta).
	 \end{align}  	This completes the proof of Theorem B. 
\end{proof}

\begin{rem}  Let $\widetilde{T}_1$ (resp.  $T_1$) be the equivalence $\mc L(M(\nu), -)\otimes_{U(\g_\oa)}M(\nu,\zeta)$ from $\mc O^{\vpre}$ (resp. $\mc O_\oa^{\vpre}$) to $\widetilde{\mc N}(\nu+\mc X,\zeta)^1$ (resp. $\mc N(\nu+\mc X,\zeta)^1$). By Theorem \ref{thm::thm9} and Theorem \ref{thm::15}, we obtain the following commutative diagram
$$\xymatrixcolsep{3pc} \xymatrix{
	  \mc O^{\vpre} \ar[r]^-{\widetilde T_1} \ar@<-2pt>[d]_{\Res}  &   \widetilde{\mc N}(\nu+\mc X, \zeta)^1 \ar@<-2pt>[d]_{\Res} ,\\
  \mc O^{\vpre}_\oa \ar[r]^-{T_1} \ar@<-2pt>[u]_{\Ind} &    \mc N(\nu+\mc X, \zeta)^1\ar@<-2pt>[u]_{\Ind}.}$$
Namely, the equivalences $\widetilde{T}_1$ and $T_1$ intertwine the Frobenius extensions of the categories above. 
\end{rem}

\subsection{Some consequences} \label{Sect::cons}
We will provide in this section some consequence and complete the proof of Part (1) of Theorem C. Fix $\zeta \in \ch \mf n_\oa^+$.  

 \subsubsection{Simplicity of Kac-Whittaker modules}The following corollary gives a characterization of the simplicity of Kac modules in $\widetilde{\mc N}$ for  $\g= \gl(m|n),\mf{osp}(2|2n).$
\begin{cor} Consider $\mf g=\gl(m|n),\mf{osp}(2|2n)$. 
Let $\la\in \mc X$ and $\zeta \in \ch \n_\oa^+$. Then the
 Kac module	$K(\widetilde{L}(\la,\zeta))$ is simple if and only if $\la$ is typical {\em(}see Section \ref{sect::614} for the definitions of typicality of weights{\em)}.
\end{cor}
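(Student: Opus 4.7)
The plan is to reduce the simplicity of the Kac-Whittaker module to the classical simplicity of the Kac module in category $\mc O$ via the equivalence of Theorem \ref{thm::15}, then apply Kac's classical theorem. We interpret $K(\widetilde{L}(\la,\zeta))$ as $K$ applied to the underlying simple Whittaker $\g_\oa$-module $L(\la,\zeta)$ (the simple top of $M(\la,\zeta)$), which matches the Kac-functor construction $\widetilde M(\la,\zeta) = K(M(\la,\zeta))$ of \eqref{eq::52}.

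First, I would observe that the Kac functor $K(-) = U(\g)\otimes_{U(\g_{\geq 0})}(-)$ is exact, since $U(\g)$ is free over $U(\g_{\geq 0})$ by PBW. Applying $K$ to the surjection $M(\la,\zeta)\twoheadrightarrow L(\la,\zeta)$ therefore yields a surjection $\widetilde M(\la,\zeta)\twoheadrightarrow K(L(\la,\zeta))$, so $K(L(\la,\zeta))$ inherits the unique simple top $\widetilde L(\la,\zeta)$. Consequently the corollary reduces to the claim that $K(L(\la,\zeta))\cong \widetilde L(\la,\zeta)$ iff $\la$ is typical.

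Fix $\nu\in\mc X$ dominant with $W_\nu=W_\zeta$, and (replacing $\la$ by a $W_\zeta$-conjugate if necessary, using Theorem \ref{mainthm1typeI}) assume $\la\in\mc X(\nu)$. Under the equivalence
\[
\mc L(M(\nu),-)\otimes_{U(\g_\oa)}M(\nu,\zeta)~:~\mc O^{\nu\text{-pres}}\xrightarrow{\cong} \widetilde{\mc N}(\nu+\mc X,\zeta)^1
\]
of Theorem \ref{thm::15}, we have $S_\nu(\mu)\leftrightarrow \widetilde L(\mu,\zeta)$, and using Theorem \ref{thm::realization} (which gives $\Delta_\nu(\mu)=T_{w_\Upsilon}\widetilde M(\mu)$ and $S_\nu(\mu)=T_{w_\Upsilon}\widetilde L(\mu)$) one verifies that the proper standard object $\Delta_\nu(\mu)$ corresponds to the standard Whittaker module $\widetilde M(\mu,\zeta)$. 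Since the equivalence preserves composition-factor multiplicities, one obtains
\[
[\widetilde M(\la,\zeta):\widetilde L(\mu,\zeta)]=[\Delta_\nu(\la):S_\nu(\mu)]\quad\text{for all }\la,\mu\in\mc X(\nu).
\]

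The key remaining step is to promote this identity from standard Whittaker modules to Kac modules of simples. Combining the exactness of $K$ applied to a $\g_\oa$-composition series of $M(\la,\zeta)$, the Mili{\v{c}}i{\'c}-Soergel identity $[M(\la,\zeta):L(\gamma,\zeta)]=[M(\la):L(\gamma)]$ from \cite{MS}, and an induction on Bruhat order to invert the resulting triangular system of equalities, yields
\[
[K(L(\la,\zeta)):\widetilde L(\mu,\zeta)]=[K(L(\la)):\widetilde L(\mu)]\quad\text{for all }\mu.
\]
Hence $K(L(\la,\zeta))$ has composition length one iff $K(L(\la))$ does, and by Kac's classical theorem the latter is equivalent to $\la$ being typical. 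The main obstacle lies in verifying the identification $\Delta_\nu(\mu)\leftrightarrow\widetilde M(\mu,\zeta)$ under the equivalence of Theorem \ref{thm::15} and then carefully setting up the Bruhat-order induction that lifts the multiplicity identity from Verma-type modules to their Kac-module analogues, both of which hinge on the interplay between the Kac functor, the twisting functor realization of proper standard objects, and the Mili{\v{c}}i{\'c}-Soergel equivalence.
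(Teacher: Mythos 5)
Your proof takes a genuinely different route from the paper's, and it contains real gaps; the paper's own argument is a one-liner.

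The paper cites Theorem B together with \cite[Corollary 6.8]{CM}. The Chen--Mazorchuk result gives a criterion for simplicity of the Kac module $K(V)$ of a simple $\g_\oa$-module $V$ expressed purely in terms of the annihilator ideal $\Ann_{U(\g)}\wdL(V)$ (for $\gl(m|n)$ and $\mf{osp}(2|2n)$). Theorem B says $\Ann_{U(\g)}\wdL(\la,\zeta)=\Ann_{U(\g)}\wdL(\la)$ for $\la$ integral and $W_\zeta$-anti-dominant, so the criterion for $K(L(\la,\zeta))$ becomes the criterion for $K(L(\la))$, which is the classical typicality of $\la$. No composition-factor bookkeeping is needed, and this is presumably why the corollary was placed under the heading ``Some consequences [of Theorem B].'' The annihilator-theoretic route is exactly the kind of statement Theorem B was designed to deliver for free.

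Your multiplicity-comparison route is plausible in outline but is left with two substantial gaps that keep it from being a proof. First, the identification of $\Delta_\nu(\mu)$ with $\wdM(\mu,\zeta)$ under the equivalence of Theorem \ref{thm::15} is asserted rather than established: the paper only proves the correspondences $S_\nu(\mu)\leftrightarrow\wdL(\mu,\zeta)$ and $\widetilde P(\mu)\leftrightarrow\widetilde P(\mu,\zeta)$. You could try to deduce it from the realization $\Delta_\nu(\mu)\cong T_{w_\ups}\wdM(\mu)$ of Theorem \ref{thm::realization} together with Theorem \ref{thm::24}, but that verification is nontrivial and not in the text. Second, and more seriously, the passage from multiplicity identities for standard Whittaker/proper standard modules to the claimed identity
\[
[K(L(\la,\zeta)):\wdL(\mu,\zeta)]=[K(L(\la)):\wdL(\mu)]
\]
is precisely the hard content, and the phrase ``an induction on Bruhat order to invert the resulting triangular system of equalities'' does not yet constitute an argument. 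It would have to be shown that the two decomposition matrices (Whittaker side and $\mc O$ side) are simultaneously unitriangular with respect to a common order in a way that forces the Kac-module rows to match; nothing in the cited lemmas supplies that directly. A cleaner alternative within your strategy would be to prove that the exact functor $F$ of Theorem \ref{thm::24} intertwines the Kac construction on $\mc O_\oa^{\vpre}$ with the Kac construction on the Whittaker side, i.e. $F(K(L(\la)))\cong K(L(\la,\zeta))$; then the multiplicity identity follows at once from exactness and $F(\wdL(\mu))\cong S_\nu(\mu)$. But such a commutation statement is itself not established anywhere in the paper and would require proof. In short, the paper's annihilator argument completely sidesteps these difficulties, and you should at minimum flag that you are taking a longer and as-yet-incomplete path.
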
 
\begin{proof}
	The conclusion follows from \cite[Corollary 6.8]{CM} and Theorem B.
\end{proof}

\subsubsection{Connection with projective-injective modules} Let $$R:=\bigoplus_{\widetilde P(\la),\widetilde P(\mu):\text{ injective}.}\Hom_{\mc O}(\widetilde P(\la), \widetilde{P}(\mu)).$$ For $\g=\gl(m|n)$, $\zeta$ non-singular and $\nu$ integral, it is established by Brundan, Losev and Webster \cite{BLW} that  the category $\text{mof-}R$ consisting of finite dimensional locally unital (right) modules over $R$ is  equivalent to a sum of certain abelian categories $\text{mod-}H_\xi$ ($\xi\in \mc X$) consisting of sequences of finite-dimensional modules over an infinite   {\em tower  cyclotomic quiver Hecke algebras} $H_\xi^1\subset H_\xi^2\subset H_\xi^3\subset \cdots$ subject to some stability conditions; see \cite[Lemma 4.6]{Br} and \cite[Theorem 4.9]{BLW}.  Theorem A and the following corollary allow to   make connections between   the various  abelian categories $\text{mof-}R$, $\mc O^{\vpre}$, $\mc B_\nu$ and $\widetilde{\mc N}(\nu+\mc X, \zeta)^1$.

\begin{cor}	Suppose that $\g$ is either a basic classical Lie superalgebra or a Lie superalgebra of type I-0 with non-singular $\zeta$ (i.e., $W_\zeta =W$). Let $\nu \in \mc X$ be  dominant such that  $W_\nu =W$.  Then $\widetilde{\mc N}(\nu+\mc X,\zeta)^1$ is  equivalent to $\text{mof-}R$.
\end{cor}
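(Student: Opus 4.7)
The plan is to chain the annihilator-preserving equivalence of Theorem \ref{thm::15} with a standard Morita-type equivalence between $\mc O^{\vpre}$ and $\text{mof-}R$. First, observe that the hypotheses $W_\zeta = W$ and $W_\nu = W$ together with dominance force $\nu+\rho_\oa$ to be $W$-fixed, so $\ups_\nu = \Pi_0$; by Lemma \ref{lem::3} this yields
$$\mc X(\nu) = \mc X_0(\nu) = \{\mu \in \nu + \mc X \mid \mu \text{ is } W\text{-anti-dominant}\},$$
using Section \ref{sect::411} in the type I-0 case with $\nu$ integral, or \cite[Lemma 2.1]{CoM1} in the basic classical case. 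Theorem \ref{thm::15} then supplies an equivalence $\mc O^{\vpre} \cong \widetilde{\mc N}(\nu+\mc X,\zeta)^1$, so the corollary reduces to producing an equivalence $\mc O^{\vpre} \cong \text{mof-}R$.

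The crucial step is to identify $\add(\mc P_\nu)$ with the additive closure of the indecomposable projective-injective modules in $\mc O^\Z$, i.e.\ to show that $\widetilde P(\mu)$ is injective in the relevant block if and only if $\mu \in \mc X(\nu)$. For basic classical Lie superalgebras this is contained in the socle analysis of Verma modules underlying \cite{BLW}. For type I-0 Lie superalgebras, one can argue via Part (1) of Theorem \ref{thm::thm9}: $\Res \widetilde P(\mu)$ decomposes as a sum of $P(\gamma)$ with $\gamma \in \mc X_0(\nu)$, and Irving's classical identification of projective-injective modules over $\g_\oa$ lifts to $\g$ using that both $\Ind$ and $\Coind$ are exact and biadjoint to $\Res$, combined with the duality $D$ and \eqref{eq::dualVerma} to control injectivity on the $\g$-side.

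Granted this identification, set $T := \bigoplus_{\mu \in \mc X(\nu)} \widetilde P(\mu)$, so that $R = \End_{\mc O}(T)^{\mathrm{op}}$ by definition. Applying $\Hom_{\mc O}(T,-)$ to any presentation $T_1 \to T_2 \to M \to 0$ with $T_i \in \add(T)$ produces a two-step presentation of finite-dimensional locally unital right $R$-modules, and this implements the standard equivalence
$$\mc O^{\vpre} = \coker(\add T) \xrightarrow{\cong} \text{mof-}R$$
with quasi-inverse $T \otimes_R -$. Composing with the Theorem \ref{thm::15} equivalence gives the corollary. The main obstacle is the projective-injective identification: outside of the cases of $\gl(m|n)$ and $\mf{osp}(2|2n)$ already treated in the literature, verifying that $\mc P_\nu$ exhausts all indecomposable projective-injectives in $\mc O^\Z$ is the only step that is not purely formal, the remainder being classical Morita formalism.
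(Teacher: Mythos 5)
Your high-level strategy matches the paper's exactly: apply Theorem \ref{thm::15} to reduce to showing $\mc O^{\vpre}\cong\text{mof-}R$, and obtain the latter by identifying $\add(\mc P_\nu)$ with the additive closure of the projective-injective modules in $\mc O^\Z$. The paper closes this second step with a single citation to \cite[Theorem 4.11]{CCC}, which is precisely the projective-injective identification you are attempting to reprove, so you should cite it rather than re-derive it.

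The re-derivation as written has two gaps. First, the claim $\mc X(\nu)=\mc X_0(\nu)$ is \emph{false} for basic classical Lie superalgebras that are not of type I: the paper explicitly notes this after the definitions in Section \ref{sect::411} and illustrates it for $\mf{osp}(3|2)$ in Section \ref{cor::644}, where $\mc X(\nu)$ is described by conditions on $\langle\la+\rho,\delta\rangle$ and $\langle\la+\rho,\vare\rangle$ rather than anti-dominance with respect to $\Pi_0=\{2\delta,\vare\}$. The equality does hold in the type I-0 integral case and whenever $\mf l_\nu$ is a Levi subalgebra of $\g$, but your appeal to \cite[Lemma 2.1]{CoM1} only controls $\alpha$-finiteness when $\alpha$ is a simple root of $\g$ with $\g_\ob^\alpha=0$, which fails for $2\delta$ in $\mf{osp}(3|2)$. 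Second — and you flag this yourself — your biadjointness argument (using $\Ind\cong\Coind(\Lambda^{\max}\g_\ob\otimes-)$ and Theorem \ref{thm::thm9}(1)) only yields that $\mu\in\mc X(\nu)$ \emph{implies} $\widetilde P(\mu)$ is projective-injective; the converse, that every indecomposable projective-injective of $\mc O^\Z$ lies in $\mc P_\nu$, is the direction you would actually need to conclude $\add(\mc P_\nu)=\add(T)$ with $R=\End_{\mc O}(T)^{\mathrm{op}}$. Without it, $R$ could a priori be a strictly larger locally unital algebra. Both points are handled uniformly by \cite[Theorem 4.11]{CCC}, and the remaining Morita formalism $\coker(\add T)\cong\text{mof-}R$ is, as you say, standard.
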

\begin{proof}
	The conclusion follows from Theorem \ref{thm::15} and \cite[Theorem 4.11]{CCC}.
\end{proof}

\subsubsection{A conjecture of Batra and Mazorchuk}\label{sec::533}

Suppose that $\g=\g_\oa$.  Batra and Mazorchuk made in  \cite[Conjecture 40]{BM}  a conjecture that for generic $\zeta \in \mf n_\oa^+$ and anti-dominant $\la$, the module $L(\la,\zeta)$  is the unique simple Whittaker module in $\mc N(\zeta)$, whose annihilator coincide with $\Ann_{U(\mf g)}M(\la)$. We prove the validity of this conjecture for the case of integral weights $\la.$

\begin{cor}
Retain the notations and assumptions above. Then the Batra-Mazorchuk conjecture \cite[Conjecture 40]{BM} is true in the case when $\la$ is integral. 
\end{cor}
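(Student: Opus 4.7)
The plan is to derive both existence and uniqueness from Theorem B combined with the classification of simple Whittaker modules (Theorem \ref{mainthm1typeI}) and standard facts about primitive ideals over reductive Lie algebras. Since $\g=\g_\oa$ is reductive, it qualifies as a type I-0 Lie superalgebra (see Example (1) in Section \ref{sect::ass}) with trivial odd part, so Theorem B is directly available.

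For the existence half, observe that $\la$ integral and anti-dominant forces $M(\la)$ to be simple, so $L(\la)=M(\la)$. Moreover, any anti-dominant $\la$ is \emph{a fortiori} $W_\zeta$-anti-dominant for every $\zeta$. Theorem B then yields
\[
\Ann_{U(\g)} L(\la,\zeta) \;=\; \Ann_{U(\g)} L(\la) \;=\; \Ann_{U(\g)} M(\la),
\]
which is the existence part of the Batra-Mazorchuk conjecture.

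For uniqueness, let $L(\mu,\zeta)$ be any simple Whittaker module in $\mc N(\zeta)$ whose annihilator equals $\Ann_{U(\g)} M(\la)$. Since $\zeta$ is generic (non-singular in Kostant's sense), $W_\zeta=W$, and by the classification $L(\mu,\zeta)\cong L(w\cdot\mu,\zeta)$ for every $w\in W$, so we may assume $\mu$ is the unique anti-dominant representative of its $W$-dot orbit. Kostant's original theorem (recalled just before Section \ref{Sect::14}) now gives
\[
\Ann_{U(\g)} L(\mu,\zeta) \;=\; \Ann_{U(\g)} L(\mu),
\]
and hence $\Ann_{U(\g)} L(\mu)=\Ann_{U(\g)} M(\la)$.

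Intersecting with the center $Z(\g)$ shows that $L(\mu)$ and $M(\la)$ share a central character, so $\chi_\mu=\chi_\la$. By Harish-Chandra and the integrality of $\la$, this forces $\mu\in W\cdot\la$ (in particular $\mu$ is integral), and since the anti-dominant representative of a $W$-dot orbit is unique, we conclude $\mu=\la$. Therefore $L(\mu,\zeta)\cong L(\la,\zeta)$, establishing uniqueness. The only subtle point is that Kostant's theorem must be applied to a $\mu$ which is not known to be integral a priori; but this is exactly the generality of the classical statement cited in the introduction, and after that step the remainder of the argument is routine bookkeeping with central characters and $W$-orbits.
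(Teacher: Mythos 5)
Your proof is correct, but the uniqueness argument is routed differently from the paper's. Where you invoke Kostant's original theorem to replace $\Ann_{U(\g)} L(\mu,\zeta)$ by $\Ann_{U(\g)} L(\mu)$ (after normalizing $\mu$ to be anti-dominant), the paper first deduces $\mu\in W\cdot\la$ from the central-character comparison via \cite[Proposition 2.1]{MS}, concludes $\mu$ is integral, and only then applies its own Theorem B to get $\Ann_{U(\g)}L(\mu,\zeta)=\Ann_{U(\g)}L(\mu_0)$ for the $W_\zeta$-anti-dominant representative $\mu_0$, finishing with $\la=\mu_0$. The logical skeleton is the same — compare central characters to land in $W\cdot\la$, then invoke uniqueness of the anti-dominant representative — but the key annihilator identity is sourced differently. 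Your route is slightly more direct and, because Kostant's theorem is stated for arbitrary anti-dominant weights, avoids having to establish integrality of $\mu$ before applying the identity (the subtlety you correctly flag). The trade-off is that the paper's ordering deliberately demonstrates the corollary as a \emph{consequence of Theorem B}, which is the point of including it; your argument shows the integral case of the conjecture already follows from Kostant's theorem together with the classification and Harish-Chandra, without needing the paper's machinery. Both are valid, and the central-character step and the WLOG normalization are each used correctly.
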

\begin{proof} Let $\mu\in \h^\ast$ and set $\mu_0\in W_\zeta\cdot \mu$ to be $W_\zeta$-anti-dominant.  
   Suppose that  $\Ann_{U(\mf g)}L(\mu, \zeta) = \Ann_{U(\mf g)}M(\la)$. Then $\mu\in W\cdot \la$ by \cite[Proposition 2.1]{MS} and so $\mu$ is integral. It then follows from Theorem B that $\Ann_{U(\mf g)}L(\mu, \zeta) = \Ann_{U(\mf g)}L(\mu_0)$. Consequently, we have $\la=\mu_0$ and so $L(\mu,\zeta) = L(\la,\zeta)$. 
\end{proof}

We remark that the simple Whittaker modules studied in \cite[Section 4.2]{BM} are isomorphic to our $L(\la,\zeta)$; see also  \cite[Theorem 20]{Ch21}. 

\subsubsection{The proof of Part (1) of Theorem C}
For a given dominant weight $\nu\in \mc X$ such that $W_\nu =W_\zeta$, we recall the notation $\ups :=\ups_\nu$  from Section \ref{Sect::35}. For any $\la\in \mc X(\nu)$, it follows from Theorem \ref{thm::15} that    $$\widetilde{P}(\la,\zeta):=\mc L(M(\nu), \widetilde P(\la))\otimes_{U(\g_\oa)} M(\nu,\zeta),$$ is the projective cover of $S(\la,\zeta)$ in $\widetilde{\mc N}(\mc X,\zeta)^1$. The conclusion of Part (1) in Theorem C is a consequence of the following result. 
\begin{thm} \label{thm::24}
	Suppose that $\g$ is an arbitrary quasireductive Lie superalgebra. Let $\nu \in \mc X$ be dominant with $W_\nu =W_\zeta$. Define a functor $F:\mc O\rightarrow \mc O^{\vpre}$ by letting 
	$$F(-):=\mc L(M(\nu), -) \otimes_{U(\g_\oa)} M(\nu): \mc O\rightarrow \mc B_\nu \xrightarrow{\cong} \mc O^{\vpre}.$$ Then  
	\begin{itemize}
		\item[(1)] $F(-)$ is an exact functor.
		\item[(2)] 
		We have 
		\begin{align}
		&F(\wdL(\mu))\cong\begin{cases} S_\nu(\mu) &\mbox{ if $\mu \in \mc X(\nu)$,}\\
		0&\mbox{ otherwise.}
		\end{cases} \label{eq::twoeq}
		\end{align}
		\item[(3)] (Multiplicity formula \eqref{eq::mul} in Theorem C) For any $\la,\mu\in \mc X(\nu)$, we have \begin{align}
		&[\widetilde P(\la,\zeta):S(\mu,\zeta)] = [\widetilde P(\la): \wdL(\mu)]. 
		\end{align}   
	\end{itemize}
\end{thm}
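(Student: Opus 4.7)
The plan is to deduce all three parts from the observation that $F$ is the right adjoint of the inclusion $\iota:\mc O^{\vpre}\hookrightarrow\mc O$; this single structural fact, combined with the exactness of $F$, lets Parts~(2) and~(3) fall out cleanly.

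For Part~(1), $\mc L(M(\nu),-):\mc O\to\mc B_\nu$ is exact by \cite[Lemma 6.2]{BG} (already invoked in the proof of Lemma~\ref{thm::14}), and $-\otimes_{U(\g_\oa)}M(\nu):\mc B_\nu\xrightarrow{\sim}\mc O^{\vpre}$ is an equivalence of abelian categories by Lemma~\ref{thm::14}; hence the composition $F$ is exact. The tensor--hom adjunction $-\otimes_{U(\g_\oa)}M(\nu)\dashv\mc L(M(\nu),-)$ restricted through this equivalence furnishes a natural isomorphism $\Hom_{\mc O}(X,N)\cong\Hom_{\mc O^{\vpre}}(X,F(N))$ for $X\in\mc O^{\vpre}$ and $N\in\mc O$, making $F$ right adjoint to $\iota$ with a natural counit $\epsilon_N:F(N)\to N$; since $\iota$ is fully faithful, the unit is an isomorphism and consequently $\epsilon_X$ is an isomorphism for every $X\in\mc O^{\vpre}$.

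For Part~(2), first suppose $\mu\notin\mc X(\nu)$ and assume for contradiction that $F(\wdL(\mu))\neq 0$. The adjunction forces the counit $\epsilon_{\wdL(\mu)}:F(\wdL(\mu))\to\wdL(\mu)$ to be non-zero (otherwise the identity on $F(\wdL(\mu))$ would correspond to the zero map under the bijection), hence surjective by simplicity of $\wdL(\mu)$, so $\wdL(\mu)$ arises as a simple $\mc O$-quotient of the object $F(\wdL(\mu))\in\mc O^{\vpre}$. But any $X\in\mc O^{\vpre}$ admits a $\mc P_\nu$-projective cover $P\twoheadrightarrow X$ whose top is a sum of $\wdL(\gamma)$ with $\gamma\in\mc X(\nu)$, so every simple $\mc O$-quotient of $X$ has highest weight in $\mc X(\nu)$, a contradiction. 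For $\mu\in\mc X(\nu)$, apply the exact functor $F$ to $0\to\rad\widetilde P(\mu)\to\widetilde P(\mu)\to\wdL(\mu)\to 0$. Using $\widetilde P(\mu)\in\mc O^{\vpre}$ and naturality of the counit, $F(\rad\widetilde P(\mu))$ may be identified with a submodule of $\widetilde P(\mu)$ sitting inside $\rad\widetilde P(\mu)$: the counit $\epsilon_{\rad\widetilde P(\mu)}$ is injective because its composition with $\rad\widetilde P(\mu)\hookrightarrow\widetilde P(\mu)$ agrees with the injection $F(\rad\widetilde P(\mu))\hookrightarrow F(\widetilde P(\mu))\cong\widetilde P(\mu)$. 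The right-adjoint property implies that every map $\widetilde P(\gamma)\to\rad\widetilde P(\mu)$ with $\gamma\in\mc X(\nu)$ factors through $F(\rad\widetilde P(\mu))$, giving $\text{Tr}_{\ups_\nu}(\rad\widetilde P(\mu))\subseteq F(\rad\widetilde P(\mu))$; conversely, composing a $\mc P_\nu$-projective cover of $F(\rad\widetilde P(\mu))$ with the counit exhibits the reverse containment. Hence $F(\wdL(\mu))\cong\widetilde P(\mu)/\text{Tr}_{\ups_\nu}(\rad\widetilde P(\mu))=S_\nu(\mu)$.

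For Part~(3), fix $\la\in\mc X(\nu)$; then $F(\widetilde P(\la))\cong\widetilde P(\la)$. By Part~(2), $[F(\wdL(\gamma)):S_\nu(\mu)]_{\mc O^{\vpre}}=\delta_{\gamma,\mu}$ for $\mu,\gamma\in\mc X(\nu)$ (with $F(\wdL(\gamma))=0$ for $\gamma\notin\mc X(\nu)$), so applying exact $F$ to a composition series of $\widetilde P(\la)$ in $\mc O$ yields
\[
[\widetilde P(\la):S_\nu(\mu)]_{\mc O^{\vpre}}=[\widetilde P(\la):\wdL(\mu)]_{\mc O}.
\]
Transporting this identity through the equivalence $\mc O^{\vpre}\cong\widetilde{\mc N}(\mc X,\zeta)^1$ of Theorem~\ref{thm::15}, which sends $\widetilde P(\la)\mapsto\widetilde P(\la,\zeta)$ and $S_\nu(\mu)\mapsto S(\mu,\zeta)$, produces the claimed multiplicity formula. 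The main subtlety throughout lies in the double containment identifying $F(\rad\widetilde P(\mu))$ with $\text{Tr}_{\ups_\nu}(\rad\widetilde P(\mu))$, which requires the injectivity of the counit $\epsilon_{\rad\widetilde P(\mu)}$ to make sense of $F(\rad\widetilde P(\mu))$ as a submodule of $\widetilde P(\mu)$ before the universal property of the right adjoint can be applied on each side.
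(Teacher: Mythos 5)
Your recognition that $F$ is right adjoint to the inclusion $\iota\colon\mc O^{\vpre}\hookrightarrow\mc O$ is a correct repackaging of the $-\otimes_{U(\g_\oa)}M(\nu)\dashv\mc L(M(\nu),-)$ adjunction, and Parts~(1), (3) and the $\mu\notin\mc X(\nu)$ case of Part~(2) go through. The $\mu\in\mc X(\nu)$ case, however, contains a genuine error: the claim that the counit $\epsilon_{\rad\widetilde P(\mu)}$ is injective, equivalently that $F(j)\colon F(\rad\widetilde P(\mu))\to F(\widetilde P(\mu))$ is injective as a map of $\g$-modules, is false. Exactness of $F$ gives that $F(j)$ is a monomorphism in the abelian category $\mc O^{\vpre}$, but the inclusion $\iota$, being a left adjoint, preserves cokernels and not kernels, so the underlying $\g$-module map need not be injective. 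A counterexample already occurs for $\g=\g_\oa=\mf{sl}_2$, $\nu=-\rho$ and $\mu\in\mc X(\nu)$ with $\mu\neq -\rho$: there $\rad\widetilde P(\mu)=M(s_\alpha\cdot\mu)$ and, applying the exact functor $F$ to $0\to L(\mu)\to M(s_\alpha\cdot\mu)\to L(s_\alpha\cdot\mu)\to 0$ together with Part~(2), one gets $F(M(s_\alpha\cdot\mu))\cong S_\nu(\mu)$, which has length $2$ in $\mc O$; yet the counit $S_\nu(\mu)\to M(s_\alpha\cdot\mu)$ has image the simple socle $L(\mu)$ of length $1$, hence a nonzero kernel.

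The conclusion nevertheless survives without injectivity, and your own trace computation almost delivers it: the two-sided containment you prove identifies the \emph{image} of $\epsilon_{\rad\widetilde P(\mu)}$ with $\text{Tr}_{\ups_\nu}(\rad\widetilde P(\mu))$. Since $\iota$ preserves cokernels, $F(\wdL(\mu))$ is the $\mc O$-cokernel of $F(j)$, and under the isomorphism $\epsilon_{\widetilde P(\mu)}$ and the naturality square $\epsilon_{\widetilde P(\mu)}\circ F(j)=j\circ\epsilon_{\rad\widetilde P(\mu)}$ this cokernel is $\widetilde P(\mu)/\text{Tr}_{\ups_\nu}(\rad\widetilde P(\mu))=S_\nu(\mu)$; one should simply drop the injectivity claim, which is both false and unnecessary. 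The paper avoids the issue from the outset by working with the exact functor $\mc L(M(\nu),-)$ directly on the short exact sequence $0\to U_{\ups}(\mu)\to S_\nu(\mu)\to\wdL(\mu)\to 0$ in $\mc O$ and proving $\mc L(M(\nu),U_{\ups}(\mu))=0$ by a weight-theoretic argument (each composition factor of $U_{\ups}(\mu)$ is $\alpha$-finite for some $\alpha\in\ups$, whereas $L(\nu)$ is $\alpha$-free), so that exactness is seen at the level of underlying vector spaces and the distinction between monomorphisms in $\mc O^{\vpre}$ and in $\mc O$ never arises.
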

\begin{proof}
	We first show Part (1).  Since $\mc L(M(\nu),-)$ is exact when restricted to $\mc O_\oa$ due to the projectivity of $M(\nu)$ by \cite[Lemma 6.2(i)]{BG}, the exactness of $\mc L(M(\nu),-): \mc O\rightarrow \mc B_\nu$ follows. 
	However, we give a self-contained proof as follows. For any $Y\in \mc B_\nu$, we denote by $Y^{\text{ad}}$ the (left) $\g_\oa$-module of $Y$ induced by the adjoint action of $\g_\oa$ that is given by $x\cdot y =xy-yx$, for any $x\in \g_\oa$, $y\in Y$. We may observe that the following two statements are equivalent:
	\begin{itemize}
		\item[(a)] The sequence 
		\begin{align*}
		&0\rightarrow \mc L(M(\nu), A)^{\text{ad}} \rightarrow \mc L(M(\nu), B)^{\text{ad}} \rightarrow \mc L(M(\nu), C)^{\text{ad}}  \rightarrow 0, 
		\end{align*} is exact for any short exact sequence $0\rightarrow A \rightarrow B\rightarrow C\rightarrow 0$ in $\mc O$.
		\item[(b)] $\Hom_{\mc O_\oa}(E, \mc L(M(\nu),-)^{\text{ad}})$ is an exact functor on $\mc O$, for any  $E\in \mc F$. 
	\end{itemize} 
On the other hand, as observed in the proof of \cite[Lemma 6.2(i)]{BG} for any $E\in \mc F$, we have  $$\Hom_{\mc O_\oa}(E, \mc L(M(\nu),-)^{\text{ad}})\cong \Hom_{\mc O_\oa}(E\otimes M(\nu),-)$$ by \cite[Lemma 2.2, Lemma 6.1]{BG}. The latter is exact since $E\otimes M(\nu)$ is projective in $\mc O_\oa$. This proves Part (1). 

Next, we prove Part (2).   Let $\mu \in \mc X(\nu)$. 
By the definition of $S_\nu(\mu)$, we have  a short exact sequence \begin{align*} 
&0\rightarrow U_{\ups}(\mu)\rightarrow  S_\nu(\mu) \rightarrow \wdL(\mu) \rightarrow 0, \end{align*}
where  $U_\ups(\mu)$ is the largest quotient of $\rad \widetilde P(\mu)$ such that  every composition factor $\wdL(\gamma)$ of $U_\ups(\mu)$ is not $\ups$-free, namely, $\wdL(\gamma)$ is $\alpha$-finite for some $\alpha \in \ups$.  By Part (1), we obtain a short exact sequence
\begin{align}
&0\rightarrow \mc L(M(\nu), U_\ups(\mu)) \rightarrow \mc L(M(\nu), S_\nu(\mu)) \rightarrow \mc L(M(\nu), \wdL(\mu))  \rightarrow 0.  
\end{align} By Lemma \ref{thm::14}, it remains to show that $\mc L(M(\nu), U_\ups(\mu))=0.$ Equivalently, we need to show that $\Hom_{\mc O_\oa}(M(\nu), E\otimes \Res U_\ups(\mu))=0$, for any $E\in \mc F$. 

Suppose on the contrary that there is $E\in \mc F$ such that $$\Hom_{\mc O_\oa}(M(\nu), E\otimes \Res U_\ups(\mu))\neq 0.$$
Then $[E\otimes \Res U_\ups(\mu): L(\nu)]\neq 0$, which implies that $[E\otimes \Res\wdL(\gamma): L(\nu)]\neq 0$, for some composition factor $\wdL(\gamma)$ of $U_\ups(\mu)$. We have already known that $\wdL(\gamma)$ is $\alpha$-finite for some $\alpha \in \ups$. This means that all composition factors of $E\otimes \Res \wdL(\gamma)$ are $\alpha$-finite, including $L(\nu)$, a contradiction to Lemma  \ref{lem::3}. Consequently, we have $\mc L(M(\nu), S_\nu(\mu)) \cong  \mc L(M(\nu), \wdL(\mu))$ and $S_\nu(\mu)\cong F(S_\nu(\mu))\cong F(\wdL(\mu))$.

 Now, we let $\mu \notin \mc X(\nu)$. If $\mu\notin \nu+\mc X$ then it follows directly  that $\Hom_{\mc O_\oa}(M(\nu), E\otimes \Res\wdL(\mu))=0$ for any $E\in \mc F$ and so $\mc L(M(\nu), \wdL(\mu))=0$. In the case $\nu \in \nu+\mc X$, the fact that $\Hom_{\mc O_\oa}(M(\nu), E\otimes \Res\wdL(\mu))=0$ for every $E\in \mc F$ can be proved by a similar argument. Therefore, we have  $\mc L(M(\nu), \wdL(\mu))=0$ again. This proves Part (2). 

Finally, since  $F(S_\nu(\mu))$ and $S_\nu(\mu)$ are isomorphic  by Lemma \ref{lem::115}, it follows that $F(\wdL(\mu_1))$ and $F(\wdL(\mu_2))$ are not isomorphic  provided that $\mu_1\neq \mu_2$, for any $\mu_1, \mu_2\in \mc X(\nu)$. In addition, we note that $F(\widetilde P(\mu))$ and $\widetilde P(\mu)$ are isomorphic   by Lemma \ref{lem::115} again. Therefore, Part (3) is a   consequence of Parts (1), (2) and Theorem \ref{thm::15}. This completes the proof. 


\end{proof}

  Using the functor $F(-)$ from Theorem \ref{thm::24}, we are able to provide a generalization of Lemma \ref{lem::15} as follows. We will use the notation  $\text{LAnn}_{U(\g)}Y$ to denote the left annihilator ideal of $Y$, for any $Y\in \mc B_\nu.$
 	
 	\begin{cor} \label{cor::306} Consider $\g$ an arbitrary quasireductive Lie superalgebra with $\nu\in \mc X$  dominant. Then we have 
 		\begin{align}
 		&\emph{\Ann}_{U(\g)}S_\nu(\mu,\zeta)=\emph{\Ann}_{U(\g)}S_\nu(\mu) = \emph{\Ann}_{U(\g)} \wdL(\mu),
 		\end{align} for any $\mu \in \mc X(\nu)$.
 	\end{cor}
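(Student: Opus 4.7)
The plan is to deduce the statement from Theorem \ref{thm::15} and Theorem \ref{thm::24}, thereby bypassing the twisting-functor realisation used in Lemma \ref{lem::15} and extending the conclusion to an arbitrary quasireductive $\g$. The first equality $\Ann_{U(\g)}S_\nu(\mu,\zeta)=\Ann_{U(\g)}S_\nu(\mu)$ is immediate from Theorem \ref{thm::15}: the composite equivalence $\mc O^{\vpre}\cong \mc B_\nu\cong \widetilde{\mc N}(\nu+\mc X,\zeta)^1$ preserves annihilators and carries $S_\nu(\mu)$ to $S_\nu(\mu,\zeta)$, so only the equality $\Ann_{U(\g)}S_\nu(\mu)=\Ann_{U(\g)}\wdL(\mu)$ requires work.

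For this, I would invoke Theorem \ref{thm::24}(2), which gives $F(\wdL(\mu))=\mc L(M(\nu),\wdL(\mu))\otimes_{U(\g_\oa)}M(\nu)\cong S_\nu(\mu)$ for $\mu\in \mc X(\nu)$. Applying the inverse $\mc L(M(\nu),-)\colon \mc O^{\vpre}\to \mc B_\nu$ from Lemma \ref{thm::14} (which preserves left annihilators) yields an isomorphism $\mc L(M(\nu),\wdL(\mu))\cong \mc L(M(\nu),S_\nu(\mu))$ in $\mc B_\nu$, whence
\begin{equation*}
\Ann_{U(\g)}S_\nu(\mu)=\text{LAnn}_{U(\g)}\mc L(M(\nu),S_\nu(\mu))=\text{LAnn}_{U(\g)}\mc L(M(\nu),\wdL(\mu)).
\end{equation*}

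The crux is then the identification $\text{LAnn}_{U(\g)}\mc L(M(\nu),\wdL(\mu))=\Ann_{U(\g)}\wdL(\mu)$. The inclusion $\supseteq$ is tautological, since $(xf)(v)=x\cdot f(v)$ for all $x\in U(\g)$, $f\in \mc L(M(\nu),\wdL(\mu))$, and $v\in M(\nu)$. For the reverse inclusion, I would consider the evaluation map
\begin{equation*}
\epsilon\colon \mc L(M(\nu),\wdL(\mu))\otimes_{U(\g_\oa)}M(\nu)\longrightarrow \wdL(\mu),\qquad f\otimes v\longmapsto f(v),
\end{equation*}
whose image is a $\g$-submodule of $\wdL(\mu)$. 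Theorem \ref{thm::24}(2) guarantees $S_\nu(\mu)\neq 0$, hence $\mc L(M(\nu),\wdL(\mu))\neq 0$, so the image of $\epsilon$ is nonzero; simplicity of $\wdL(\mu)$ then forces $\epsilon$ to be surjective. Every vector of $\wdL(\mu)$ can therefore be expressed as $\sum f_i(v_i)$, so if $x\cdot f=0$ in $\mc L(M(\nu),\wdL(\mu))$ for every $f$, then $x\cdot \sum f_i(v_i)=\sum(xf_i)(v_i)=0$, yielding $x\in \Ann_{U(\g)}\wdL(\mu)$. The only genuine subtlety is the non-vanishing of $\mc L(M(\nu),\wdL(\mu))$, which is precisely where the hypothesis $\mu\in \mc X(\nu)$ enters; with that in hand, the remaining steps are just a matter of assembling the annihilator-preserving equivalences already in place.
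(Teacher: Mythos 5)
Your proposal is correct and follows essentially the same route as the paper: both reduce the claim to the annihilator-preserving equivalence of Theorem~\ref{thm::15} together with the isomorphism $\mc L(M(\nu),S_\nu(\mu))\cong\mc L(M(\nu),\wdL(\mu))$ coming from Theorem~\ref{thm::24}(2), and both finish via the tautological inclusion $\Ann_{U(\g)}\wdL(\mu)\subseteq\text{LAnn}_{U(\g)}\mc L(M(\nu),\wdL(\mu))$. The only cosmetic difference is that you obtain the reverse inclusion by proving surjectivity of the evaluation map $F(\wdL(\mu))\to\wdL(\mu)$, while the paper gets the same inclusion more directly from the fact that $\wdL(\mu)$ is a quotient of $S_\nu(\mu)$.
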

 	\begin{proof} Fix $\mu\in \mc X(\nu)$. By definition, we have ${\Ann}_{U(\g)}S_\nu(\mu) \subseteq {\Ann}_{U(\g)} \wdL(\mu)$ since  $\wdL(\mu)$ is a quotient of $S_\nu(\mu)$. 	
 		
 		By Theorem \ref{thm::15}, we have 
 		\begin{align}
 		&{\Ann}_{U(\g)}S_\nu(\mu)=\text{LAnn}_{U(\g)}\mc L(M(\nu), S_\nu(\mu))={\Ann}_{U(\g)}S_\nu(\mu,\zeta).
 		\end{align} 
 		By the proof of Part (2) of Theorem \ref{thm::24}, we have $\mc L(M(\nu), S_\nu(\mu)) \cong \mc L(M(\nu), \wdL(\mu))$. It follows that  
 		\begin{align}
 		&{\Ann}_{U(\g)}S_\nu(\mu) \supseteq {\Ann}_{U(\g)} \wdL(\mu).
 		\end{align} The conclusion follows. 
 	\end{proof}
 	
 	\begin{cor} \label{cor::27}
 		Consider $\g$ a quasireductive Lie superalgebra with   $\nu\in \mc X$ dominant and $\alpha\in \ups_\nu$. Assume that either of the following holds:
 		\begin{itemize}
 			\item[$\bullet$] $\g$ is a Lie superalgebra of type I-0.
 			\item[$\bullet$] $\g$ is a basic classical Lie superalgebra such that $\alpha$ is a simple root with respect to the fixed triangular decomposition \eqref{eq::trian}. 
 		\end{itemize} 
 		
 		Then we have  $$F(\wdM(\mu))\cong F(\wdM(s_\alpha\cdot \mu)),$$
 		provided that $\wdL(\mu)$ is $\alpha$-finite. 
 		
 	\end{cor}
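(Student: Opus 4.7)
The plan is to construct a short exact sequence
\[
0 \to \wdM(s_\alpha\cdot\mu) \to \wdM(\mu) \to Q \to 0
\]
in $\mc O$ whose third term $Q$ is killed by the exact functor $F$ of Theorem \ref{thm::24}; applying $F$ then yields the desired isomorphism. The hypothesis that $\wdL(\mu)$ is $\alpha$-finite, together with Lemma \ref{lem::3} in the type I-0 case (or an $\mathfrak{sl}_2$-theoretic argument via Lemma \ref{lem::222} in the basic classical case), gives $n := \langle \mu+\rho_\oa,\alpha^\vee\rangle\in \Z_{>0}$.

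In the type I-0 case, I would apply the exact Kac functor $K$ to the standard $\g_\oa$-BGG embedding $M(s_\alpha\cdot\mu)\hookrightarrow M(\mu)$ to produce the sequence with $Q = K(M(\mu)/M(s_\alpha\cdot\mu))$. In the basic classical case with $\alpha\in\Pi_0\cap\Pi$, the reflection $s_\alpha$ permutes the odd positive roots, so $\langle \rho_\ob,\alpha^\vee\rangle = 0$ and the $\g_\oa$- and $\g$-dot actions of $s_\alpha$ coincide on $\mu$; the classical $\mathfrak{sl}_2$-theory for the minimal parabolic $\mf p_\alpha = \mf b + \g^{-\alpha}$ (whose even Levi $\mf l_\alpha$ is purely even since $\alpha\in \Pi_0$) then produces the sequence with $Q = U(\g)\otimes_{U(\mf p_\alpha)} V_\mu$, where $V_\mu$ is the $n$-dimensional simple $\mf l_\alpha$-module of highest weight $\mu$, inflated along $\mf p_\alpha$.

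The key claim is that $Q$ is locally $\alpha$-finite, i.e.\ $f_\alpha$ acts locally finitely on $Q$. In both cases this follows from a PBW-style factorisation: $Q \cong \Lambda(\g_{-1})\otimes (M(\mu)/M(s_\alpha\cdot\mu))$ for type I-0, and $Q \cong U(\overline{\mf p}_\alpha)\otimes V_\mu$ for the basic classical case, with the $\mathfrak{sl}_2$-triple spanned by $e_\alpha, f_\alpha, h_\alpha$ acting diagonally because it normalises the nilradical factor. Both tensor factors are locally $\alpha$-finite: the right factor by its defining $\mathfrak{sl}_2$-constraint, and $\Lambda(\g_{-1})$ (respectively $U(\overline{\mf p}_\alpha)$) because $\g_{-1}$ (respectively $\overline{\mf p}_\alpha$) is a direct sum of finitely many root spaces, hence finite-dimensional as $\mathfrak{sl}_2(\alpha)$-module under the adjoint action. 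Consequently every composition factor $\wdL(\gamma)$ of $Q$ is $\alpha$-finite, which forces $\gamma\notin \mc X(\nu)$ since $\alpha\in \ups_\nu$, and Part (2) of Theorem \ref{thm::24} yields $F(\wdL(\gamma))=0$. Exactness of $F$ (Part (1) of Theorem \ref{thm::24}) then forces $F(Q)=0$, and the isomorphism $F(\wdM(s_\alpha\cdot\mu))\cong F(\wdM(\mu))$ follows.

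The main obstacle is the basic classical case: one must reconcile the two dot actions (via the identity $\langle \rho_\ob,\alpha^\vee\rangle = 0$ for $\alpha\in\Pi_0\cap\Pi$) and verify that the diagonal $\mathfrak{sl}_2(\alpha)$-action on the PBW factorisation of $U(\g)\otimes_{U(\mf p_\alpha)} V_\mu$ makes it locally $\alpha$-finite. The type I-0 case, by contrast, is essentially immediate once one applies the Kac functor to the underlying $\g_\oa$-BGG embedding.
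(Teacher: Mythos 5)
Your proof is correct and takes a genuinely different, more elementary route than the paper. The paper invokes the four-term exact sequence
\[
0\to\wdM(s_\alpha\cdot\mu)\xrightarrow{f}\wdM(\mu)\to T_{s_\alpha}\wdM(s_\alpha\cdot\mu)\xrightarrow{g}\wdM(s_\alpha\cdot\mu)\to 0,
\]
imported from Coulembier (basic classical case) and from Andersen--Lauritzen via the Kac functor (type I), and then uses Theorem~\ref{thm::realization} to argue that $\ker g$ has no $\alpha$-free composition factors, so that $F(g)$ and hence $F(f)$ become isomorphisms. Splitting that sequence in the middle, $\ker g$ is exactly your cokernel $Q$, so at bottom both proofs rest on the same structural fact: $Q$ has only $\alpha$-finite composition factors and $F$ kills such modules. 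You establish this directly, by constructing the Verma embedding with the Kac functor (type I-0) or $\mathfrak{sl}_2$-theory for the minimal parabolic (basic classical with $\alpha$ simple in $\g$) and reading $\alpha$-finiteness off a PBW tensor factorisation of $Q$, whereas the paper obtains it in one stroke from the twisting-functor machinery. The trade-off is clear: your argument is self-contained and avoids $T_{s_\alpha}$ altogether, at the cost of more explicit bookkeeping on the structure of the cokernel; the paper's argument is shorter but heavier in its dependencies. Two small points for your write-up. First, the notation $U(\overline{\mf p}_\alpha)$ should read $U(\overline{\mf u}_\alpha)$: the PBW factor of the parabolically induced module is the enveloping algebra of the opposite nilradical, not of the opposite parabolic. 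Second, your worry about reconciling ``two dot actions'' is not actually an issue in this paper's conventions — the dot action \eqref{eq::dotact} is defined once and for all via $\rho_\oa$, and the singular vector $f_\alpha^{n}v_\mu$ with $n=\langle\mu+\rho_\oa,\alpha^\vee\rangle$ has weight $s_\alpha\cdot\mu$ precisely in that sense; the identity $\langle\rho_\ob,\alpha^\vee\rangle=0$ is a nice sanity check but is not needed for the argument.
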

 	\begin{proof} It suffices to show that 
 		\begin{align}
 		&\mc L(M(\nu), \wdM(\mu)) \cong \mc L(M(\nu), \wdM(s_\alpha\cdot \mu)).
 		\end{align} 
 	Set $s:=s_\alpha$. As shown in the proof of \cite[Lemma 6.5 (2)]{Co16}, there is a four term exact sequence 
 	\begin{align}
 	&0\rightarrow \wdM(s\cdot \mu)\xrightarrow{f} \wdM(\mu)\rightarrow T_{s}\wdM(s\cdot\mu)\xrightarrow{g}  \wdM(s\cdot \mu) \rightarrow 0. \label{eq::523}
 	\end{align}
 	By Theorem \ref{thm::realization}, $T_{s}\wdM(s\cdot\mu)$ has a quotient $\wdM(s\cdot\mu)$ with a kernel that has no any $\alpha$-free composition factors. 	Since $\mc L(M(\nu),-)$ is exact, it follows that  
 	\begin{align*}
 	&\mc L(M(\nu), g): \mc L(M(\nu), T_{s}\wdM(s\cdot\mu)) \xrightarrow{\cong} \mc L(M(\nu), \wdM(s\cdot \mu)).
 	\end{align*}
  Consequently, we have \[\mc L(M(\nu), f):\mc L(M(\nu), \wdM(s\cdot \mu))\xrightarrow{\cong } \mc L(M(\nu), \wdM(\mu)).\]

  In the case of Lie superalgebras of type I, we have the four term exact sequence in \eqref{eq::523} by applying the Kac functor $K(-)$ to the corresponding four term exact sequence of $\g_\oa$-modules as given in \cite[Section 6.5]{AL}. Therefore, the conclusion for this case can be proved in the same way. This completes the proof.  
 	\end{proof}


\section{Linkage principle of the category $\widetilde{\mc N}^\Z$ for $\gl(m|n)$, $\mf{osp}(2|2n)$ and $\pn$
} \label{Sect::5Link}
 
 Fix $\zeta \in \ch \mf n^+_\oa$. Suppose that $\g$ is either a basic classical Lie superalgebra or a Lie superalgebra of type I-0 with $\nu\in \mc X$ dominant and $W_\nu=W_\zeta$.  Recall that we denote by $\ups_\nu$ the subset of roots $\alpha\in \Pi_0$ that satisfy $\langle \nu+\rho_\oa,\alpha^\vee \rangle =0$; see Section \ref{Sect::35}. As noted in  Part (1) of Theorem C (and its proof from Theorem \ref{thm::24}), one can construct linkages of integral type simple Whittaker modules in the category $\widetilde{\mc N} $ by computing $\ups_\nu$-free composition factors of projective covers that have $\ups_\nu$-free tops.  In this section, we  focus on the description of (indecomposable) blocks of $\widetilde{\mc N}$ over Lie superalgebras of type I. 
 
  Let $\g$ be reductive Lie algebras, $\gl(m|n),$ $\mf{osp}(2|2n)$, or $\pn$. Recall that  $\widetilde{\mc N}^\Z$ denotes the Serre  subcategory of $\widetilde{\mc N}$ generated by  simple Whittaker modules of integral type,  namely, $\widetilde{\mc N}^\Z$ consists of modules $M\in \widetilde{\mc N}$ such that $[M:\wdL(\la,\zeta)]=0$ unless $\la \in \mc X$. Similarly, we define $\mc{N}^\Z\subset \mc N$. We will describe the blocks of  $\widetilde{\mc N}^\Z$. Our main tools are central characters and linkages from Part (1) of Theorem C.
  

\subsection{Lie superalgebras $\gl(m|n)$, $\mf{osp}(2|2n)$ and $\pn$} \label{sect::26}
In this subsection, we introduce the type-I Lie superalgebras  $\gl(m|n)$, $\mf{osp}(2|2n)$ and $\pn$ given in Example \ref{ex::ex1}; see also \cite{ChWa12} and \cite{Mu12} for more details.  	

\subsubsection{The general linear Lie superalgebras $\gl(m|n)$} \label{sect::271}
For positive integers $m,n$, the general linear Lie superalgebra $\mathfrak{gl}(m|n)$ 
can be realized as the space of $(m+n) \times (m+n)$ complex matrices
\begin{align} \label{gllrealization}
\left( \begin{array}{cc} A & B\\
C & D\\
\end{array} \right),
\end{align}
where $A,B,C$ and $D$ are $m\times m, m\times n, n\times m, n\times n$ matrices,
respectively. The Lie bracket of $\gl(m|n)$ is given by the super commutator. 
Let $E_{ab}$, for $1\leq a,b \leq m+n$ be the elementary matrix in $\mathfrak{gl}(m|n)$. 
Its $(a,b)$-entry  is equal to $1$ and all other entries are $0$.

The type-I gradation of $\gl(m|n)$ is determined by  letting  
\begin{align*}
\mf{gl}(m|n)_1:=
\{\begin{pmatrix}
0 & B \\
0 & 0
\end{pmatrix}\}\quad\mbox{and}\quad \gl(m|n)_{-1}:=
\{\begin{pmatrix}
0 & 0 \\
C & 0
\end{pmatrix}\}.
\end{align*}  

We  fix the Borel subalgebra $\fb_{\oa}$ of $\gl(m|n)_{\oa}=\mathfrak{gl}(m)\oplus \gl(n)$
consisting of matrices in \eqref{gllrealization} with $B=C=0$ and $A,D$ upper triangular. 
The standard Cartan subalgebra $\mf h  $ consists of diagonal matrices. Denote  the dual basis of $\mf h^*$ by $\{\vare_1, \vare_2, \ldots, \vare_{m+n}\}$ with respect to the standard basis of $\mf h$  
\begin{align}
\{H_i:=E_{i,i}|~1\leq i \leq m+n \}\subset \gl(m|n). \label{eq::cartangl}
\end{align}  In particular, we have 
\begin{align} 
&\Phi =\{\epsilon_i-\epsilon_j\,|\, 1\le i\neq j\le m+n\}, \\
&\Phi_\oa^+=\{\epsilon_i-\epsilon_j\,|\, 1\le i<j\le m\}\cup \{\epsilon_i-\epsilon_j\,|\, m+1\le i<j\le m+n\}, \\
&\Pi_0=\{\epsilon_i-\epsilon_{i+1}\,|\, 1\le i\le m-1\}\cup \{\epsilon_i-\epsilon_{i+1}\,|\, m+1\le i\le m+n-1\}.
\end{align}  
The Weyl group $W$ is isomorphic to the symmetric group $\mf S_{m+n}$ on $m+n$ letters. We fix a non-degenerate $W$-invariant bilinear form $\langle\cdot, \cdot\rangle: \mf h^*\times \mf h^* \rightarrow \C$ by letting   $\langle\vare_i, \vare_j\rangle =\delta_{ij}$, for all $1\leq i,j\leq m$, and $\langle\vare_i, \vare_j\rangle =-\delta_{ij}$, for all $m+1\leq i,j\leq m+n$.

\subsubsection{The orthosymplectic Lie superalgebras $\mf{osp}(2|2n)$} \label{sect::272}
For a positive integer $n$, the matrix realization of the orthosymplectic Lie superalgebra $\mf{osp}(2|2n)$ inside $\mf{gl}(2|2n)$ is given by 
\begin{equation}
\mf{osp}(2\vert 2n)=
\left\{ \left( \begin{array}{cccc} c &0 & X &Y\\
0 & -c& V & U\\
-U^t& -Y^t & A &B\\
V^t &X^t & C& -A^t \\
\end{array} \right):
\begin{array}{c}
c\in \C;\,\, X,Y,V,U\in \C^{n};\\
A,B,C\in \C^{n^2};\\
B=B^t,\,\, C=C^t.
\end{array}
\right\}. \label{eq::osp}
\end{equation}
The type-I gradation of $\mf{osp}(2\vert 2n)$ is determined by    
\begin{equation*}
~~\mf{osp}(2\vert 2n)_{-1}=
\left\{ \left( \begin{array}{cccc} 0 &0 & 0 &0\\
0 & 0& V & U\\
-U^t& 0 & 0 &0\\
V^t &0 & 0& 0 \\
\end{array} \right)
\right\}, ~
\mf{osp}(2\vert 2n)_{1}=
\left\{ \left( \begin{array}{cccc} 0 &0 & X &Y\\
0 & 0& 0& 0\\
0& -Y^t & 0 &0\\
0 &X^t & 0& 0 \\
\end{array} \right)
\right\}.
\end{equation*}
We  fix the Borel subalgebra $\fb_{\oa}$ of $\mf{osp}(2|2n)_{\oa}\cong \C\oplus \mf{sp}(2n)$
consisting of matrices in \eqref{eq::osp} with $U=V=C=0$ and $A$ upper triangular. Again, the Cartan subalgebra $\mf h   $ consists of diagonal matrices in the realization \eqref{eq::osp}, and we realized $\mf h^\ast$ as a subspace of $\bigoplus_{i=1}^{2n+2}\C\vare_i$ inside the dual space of the Cartan subalgebra of $\gl(2|2n)$. This induces a bilinear form on $\mf h^\ast$. We note that the sets $\Phi_\oa^+$ and $\Pi_0$ can be identified as the root system and simple system of its subalgebra $\mf{sp}(2n)$. The Weyl group $W$ is isomorphic to the semi-direct product of $\Z_2^n$ and the symmetric group $\mf S_n$ on $n$ letters.

\subsubsection{The periplectic Lie superalgebra  $\pn$} \label{sect::273} 
Let $n$ be a positive integer. The matrix realization of the periplectic Lie superalgebra 
$\pn$ inside  
$\mathfrak{gl}(n|n)$ is given by
\begin{align}\label{plrealization}
\pn:=
\left\{ \left( \begin{array}{cc} A & B\\
C & -A^t\\
\end{array} \right)\| ~ A,B,C\in \C^{n\times n},~\text{$B^t=B$ and $C^t=-C$} \right\},
\end{align} where $B^t, C^t$ denote the transposes of $B,C$, respectively. 
  The type-I gradation of $\pn$ inherit that of $\gl(n|n)$, namely,  
\begin{align*}
\pn_1:=
\{\begin{pmatrix}
0 & B \\
0 & 0
\end{pmatrix}|B^t=B\}\quad\mbox{and}\quad \pn_{-1}:=
\{\begin{pmatrix}
0 & 0 \\
C & 0
\end{pmatrix}|C^t=-C\}.
\end{align*}

The standard Cartan subalgebra $\mf h$ consists of diagonal matrices. 
With slight abuse of notation, we again denote the dual basis of $\mf h^*$ by $\{\vare_1, \vare_2, \ldots, \vare_n\}$ with respect to the following standard basis of $\mf h$
\begin{align}
\{H_i:=E_{i,i}-E_{n+i,n+i}|~1\leq i \leq n \}\subset \pn. \label{eq::cartan}
\end{align}  
Then we have 
\begin{align}\label{eqroots}
&\Phi=\{\epsilon_i-\epsilon_j,~\pm(\vare_i+\vare_j)\,|\, 1\le i\neq j\le n\}\cup \{2\vare_i|~1\leq i\leq n\}, \\
&\Phi_\oa^+=\{\epsilon_i-\epsilon_j\,|\, 1\le i<j\le n\}, \\
&\Pi_0=\{\epsilon_i-\epsilon_{i+1}\,|\, 1\le i\le n-1\}.
\end{align} 
The Weyl group $W$ is isomorphic to the symmetric group $ \mf S_n$ on $n$ letters. Fix a non-degenerate $\mf S_n$-invariant bilinear form $\langle\cdot, \cdot\rangle: \mf h^*\times \mf h^* \rightarrow \C$ by $\langle\vare_i, \vare_j\rangle =\delta_{ij}$, for all $1\leq i,j\leq n$.  We  fix the Borel subalgebra $\fb_{\oa}$ of $\g_{\oa}=\mathfrak{gl}(n)$
consisting of matrices in \eqref{plrealization} with $B=C=0$ and
$A$ upper triangular. We put $\omega=\omega_n:=\vare_1+\vare_2+\cdots+\vare_n$. For any $c\in \C$, we note that  $\C_{c\omega} \otimes -$ provides  an auto-equivalence of $\g$-Mod. 

Since $x  \omega =\omega$ for any $x\in W$,  without loss of generality, we may shift the Weyl vector $\rho_\oa$ of $\g_\oa$ by letting  
$\rho_\oa:= (n-1)\vare_1 +(n-2)\vare_2+\cdots +\vare_{n-1}.$

\subsubsection{Typicality of weights} \label{sect::614}
For $\g:=\gl(m|n),~\mf{osp}(2|2n)$,   recall the notion of atypicality of weights from \cite[Section 2.2.6]{ChWa12}. A (odd) root $\alpha$ is called {\em isotropic} if $\langle \alpha ,\alpha\rangle=0$. Then $\la\in \h^\ast$ is said to be {\em atypical}, in case $\langle \la+\rho ,\alpha\rangle=0$, for some (odd) isotropic root $\alpha$, and {\em typical} otherwise.
  
For $\g = \pn$, the notion of typicality of weights for~$\mf{pe}(n)$ has been defined in~\cite[Section 5]{Se02}, but we will not use it in the present paper.

  \subsection{Technical tools for blocks} \label{Sect::Tetoolblocks}

For any $\la \in \h^\ast$, recall that we use $\chi_\la$ to denote the    central character of $\mf g$ associated to $\la$. The following lemma is useful in describing linkages in $\widetilde{\mc N}$.

\begin{lem} \label{Cor::chofWT}
	For any $\la\in \h^\ast$ and $\zeta \in \ch\mf n_\oa^+$, the   $\widetilde{L}(\la,\zeta)$ admits the central character $\chi_\la$.   
\end{lem}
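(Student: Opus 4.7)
The plan is to reduce to showing that the standard Whittaker module $\wdM(\la,\zeta)$, of which $\wdL(\la,\zeta)$ is a simple quotient, admits the central character $\chi_\la$. Since $\wdM(\la,\zeta)$ is generated over $\g$ by a subspace $V$ (namely $V = M(\la,\zeta)$ in the type-I case or $V = Y_\zeta(\la,\zeta)$ in the basic classical case, both sitting naturally inside $\wdM(\la,\zeta)$) and since $Z(\g)$ commutes with $\g$, it suffices to show $zv = \chi_\la(z)v$ for every $z \in Z(\g)$ and $v \in V$.

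In the type-I case, the generating subspace sits inside $K(M(\la,\zeta))$ as $1 \otimes M(\la,\zeta)$. I would introduce the Harish-Chandra-type projection $p: U(\g) \to U(\g_\oa)$ that extracts the $U(\g_\oa)$-component from the PBW decomposition $U(\g) = U(\g_{-1}) \otimes U(\g_\oa) \otimes U(\g_1)$. Assumption \eqref{ass::A2} on the grading element forces each weight-zero element of $U(\g)$ to have equal $\g_{-1}$- and $\g_1$-content, and a straightforward PBW reordering gives $p(xz) = xp(z)$ and $p(zx) = p(z)x$ for $x \in \g_\oa$, $z \in Z(\g)$; hence $[p(z),x] = p([z,x]) = 0$ yields $p(Z(\g)) \subseteq Z(\g_\oa)$. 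A direct PBW computation then yields $z(1\otimes v) = 1 \otimes p(z)v$ for $v \in V$, because $\g_1$-factors annihilate $V$ while the weight constraint eliminates any residual $\g_{-1}$-components. By \cite[Proposition 2.1]{MS}, $M(\la,\zeta)$ admits the central character $\chi_\la^\oa$ of $\g_\oa$, so $p(z)v = \chi_\la^\oa(p(z))v$. Specializing the same identity to the Verma module $\wdM(\la) = K(M(\la))$, on which $Z(\g)$ is already known to act by $\chi_\la$, identifies $\chi_\la^\oa(p(z)) = \chi_\la(z)$, completing this case.

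For the basic classical case with $\mf l_\zeta$ a Levi subalgebra of $\g$, we have $\wdM(\la,\zeta) = U(\g) \otimes_{U(\mf p_\zeta)} Y_\zeta(\la,\zeta)$, and the same strategy applies with the parabolic decomposition $\g = \mf u_\zeta^- \oplus \mf l_\zeta \oplus \mf u_\zeta^+$ replacing the $\Z$-grading. One uses the parabolic Harish-Chandra projection $p_\zeta: Z(\g) \to Z(\mf l_\zeta)$, Kostant's theorem that $Y_\zeta(\la,\zeta)$ carries the central character $\chi_\la^{\mf l_\zeta}$, and a Verma-module comparison $\wdM(\la) = U(\g) \otimes_{U(\mf p_\zeta)} M^{\mf l_\zeta}(\la)$ to identify the induced scalar with $\chi_\la(z)$.

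The main technical step is establishing the inclusion $p(Z(\g)) \subseteq Z(\g_\oa)$ (respectively $p_\zeta(Z(\g)) \subseteq Z(\mf l_\zeta)$), which requires careful PBW bookkeeping of super-commutator signs; this is standard in flavor but must be set up precisely in the super setting. Once this is in place, the Verma-module comparison pins down the scalar as $\chi_\la(z)$ essentially for free.
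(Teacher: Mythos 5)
Your argument is correct, and it reaches the conclusion by a genuinely different — and more computational — route than the paper does. The paper's proof (in its ``alternative'' form) is two lines: it invokes the Mili{\v{c}}i{\'c}--Soergel identity $\Ann_{U(\g_\oa)}M(\la,\zeta) = \Ann_{U(\g_\oa)}M(\la)$, observes that the Kac functor transports this equality to $\Ann_{U(\g)}\wdM(\la,\zeta) = \Ann_{U(\g)}\wdM(\la)$, and reads off the central character from $\wdM(\la)$; it also remarks that the statement is a consequence of Theorem~B once one replaces $\la$ by a $W_\zeta$-anti-dominant representative (this only directly covers integral $\la$). What you do instead is construct an explicit Harish--Chandra-type projection $p:U(\g)\to U(\g_\oa)$ adapted to the type-I $\Z$-grading, verify $p(Z(\g))\subseteq Z(\g_\oa)$, compute $z(1\otimes v)=1\otimes p(z)v$ on the Kac-induced generating subspace, and then pin down the scalar by running the identical computation on the Verma module $\wdM(\la)$. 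Both routes use the same essential input — that $M(\la,\zeta)$ carries the central character $\chi_\la^{\oa}$ of $\g_\oa$ (\cite[Proposition 2.1]{MS}) — but yours avoids any appeal to the annihilator-transfer property of $K(-)$ at the price of the PBW bookkeeping you flag, and is thus more self-contained. A minor remark: the lemma is applied in the paper only for $\gl(m|n)$, $\mf{osp}(2|2n)$ and $\pn$, all of type I-0, so your parallel treatment of the basic-classical/Levi case via the parabolic Harish--Chandra map $p_\zeta$, while correct, supplies more than the paper actually requires here.
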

\begin{proof}
	This is a direct consequence of Theorem B. We give an alternative proof as follows. It is shown in \cite[Section 2]{MS} that $\Ann_{U(\g_\oa)} M(\la,\zeta) =\Ann_{U(\g_\oa)} M(\la).$ Applying the Kac functor, we obtain that $\Ann_{U(\g)} \widetilde{M}(\la,\zeta) = \Ann_{U(\g)} \widetilde{M}(\la)$. The conclusion follows.
\end{proof}

For any $\la \in \h^\ast$, we define $\widetilde{\mc N}_{\chi_\la}$ to be the full subcategory of $\widetilde{\mc N}$ consisting of all modules $M$ such that $[M: \widetilde{L}(\mu ,\zeta)] =0$, for any $\mu \in \h^\ast$ with $\chi_\mu \neq \chi_\la$.  By Lemma \ref{Cor::chofWT}, we may observe that any short exact sequence in $\widetilde{\mc N}$
\begin{align}
&0\rightarrow \widetilde{L}(\la, \zeta) \rightarrow E \rightarrow \widetilde{L}(\mu,\zeta) \rightarrow 0
\end{align} splits unless $\chi_\la =\chi_\mu$. Since every module in $\widetilde{\mc N}$ has finite length,  we have the following decomposition by a standard argument (see, e.g., \cite[Lemma in Section 7.1]{Ja03})  
\begin{align}
&\widetilde{\mc N} =\bigoplus \widetilde{\mc N}_{\chi_\la},
\end{align} where the sum runs over all central characters  $\chi_\la$ of $\g$.

Following \cite[Chapter 7.1]{Ja03}, we consider the smallest equivalence on the set of simple modules in $\widetilde{\mc N}$ such that two simple modules $S,S' \in \widetilde{\mc N}$
are equivalent whenever there is a non-split short exact sequence 
\[0\rightarrow S \rightarrow E\rightarrow S' \rightarrow 0,\]
in $\widetilde{\mc N}$. The equivalence classes are called {\em blocks}; see also \cite[Section 1.13]{Hu08}. Therefore,  if two simple modules of $\widetilde{\mc N}$ lie in the same block then they admit the same central character.  The following lemma will be useful. 
 \begin{lem}\label{lem::16}
 	Let $\la, \mu \in \h^\ast$. If $[\wdM(\la,\zeta):\wdL(\mu, \zeta)]>0$ then $\wdL(\la,\zeta)$ and $\wdL(\mu,\zeta)$ are in the same block.
 \end{lem}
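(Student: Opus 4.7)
The plan is to exploit the fact that $\widetilde{M}(\la,\zeta)$ is indecomposable, together with the block decomposition of $\widetilde{\mc N}$, to conclude immediately that every composition factor of $\widetilde{M}(\la,\zeta)$ lies in the same block as its simple top $\widetilde{L}(\la,\zeta)$.

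First I would verify that $\widetilde{M}(\la,\zeta)\in \widetilde{\mc N}$. This is essentially contained in the setup of Section \ref{Sect::41}: the module is cyclic (hence finitely generated), it is locally finite over $\mf n^+$ since the Whittaker action on $Y_\zeta(\la,\zeta)$ is by the character $\zeta$ on $\mf n_\zeta^+$ (with the remaining part of $\mf n^+_\oa$ acting nilpotently after parabolic induction) and the odd part of $\mf n^+$ acts nilpotently by PBW, and it is locally finite over $Z(\mf g_\oa)$ because $Z(\mf g_\oa)$ acts by a generalized central character via $\chi_\la^\oa$ on $Y_\zeta(\la,\zeta)$ and this property is preserved by the (coherently finite) parabolic induction / Kac functor used in \eqref{eq::52}--\eqref{eq::53}.

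Next I would recall from Theorem \ref{mainthm1typeI} that $\widetilde{M}(\la,\zeta)$ has a unique simple top $\widetilde{L}(\la,\zeta)$, so in particular $\widetilde{M}(\la,\zeta)$ is indecomposable. Combined with the fact from \cite{Ch21} that every object of $\widetilde{\mc N}$ has finite length, and the block decomposition just introduced, this gives a direct sum decomposition
\begin{align*}
\widetilde{\mc N} = \bigoplus_{\mc B} \mc B,
\end{align*}
where $\mc B$ runs over the blocks. An indecomposable finite-length object must lie entirely in one summand, so $\widetilde{M}(\la,\zeta)\in \mc B$ for some block $\mc B$, and every composition factor of $\widetilde{M}(\la,\zeta)$ belongs to $\mc B$ as well.

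In particular, both $\widetilde{L}(\la,\zeta)$ (the top) and $\widetilde{L}(\mu,\zeta)$ (given to appear as a composition factor) lie in the same block $\mc B$, which is the desired conclusion. The only potentially subtle point is justifying that $\widetilde{\mc N}$ really decomposes as a direct sum of blocks rather than merely being partitioned as a set of simples; but since every object has finite length and since the relation defining blocks is generated by non-split extensions between simples, the standard argument (cf.\ \cite[Section 1.13]{Hu08}) gives this decomposition without difficulty. Thus no nontrivial new input is needed beyond indecomposability of the standard Whittaker module.
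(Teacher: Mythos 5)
Your proposal is correct and follows essentially the same route as the paper: the paper's proof is the one-line observation that $\wdM(\la,\zeta)$ has simple top $\wdL(\la,\zeta)$ (by Theorem \ref{mainthm1typeI}), hence is indecomposable, hence lies in a single block together with all of its composition factors. You have simply spelled out the standard details (membership in $\widetilde{\mc N}$, finite length, and the block decomposition of $\widetilde{\mc N}$) that the paper leaves implicit, having just established them in the preceding paragraphs of Section \ref{Sect::Tetoolblocks}.
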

 \begin{proof}
 	By \cite[Theorem 9]{Ch21}, the standard Whittaker module $\wdM(\la,\zeta)$ has simple top $\wdL(\la,\zeta)$. The conclusion follows.
 \end{proof}
 
 
 The following useful lemma from  \cite{B} and \cite{Ch21} gives linkages for blocks by  computing composition factors of standard Whittaker modules. 
 
 \begin{lem} \label{thm::chthmC} \emph{(}\cite[Theorem C]{Ch21}, \cite[Theorem 6.2]{B}\emph{)} Let $\g$ be either a reductive Lie algebra or one of $\gl(m|n)$, $\mf{osp}(2|2n)$, $\pn$. For $\la,\mu \in \h^\ast$ and $\zeta\in \ch \mf n_\oa^+$,  we have 
 	\begin{align}
 	&[\widetilde{M}(\la, \zeta): \widetilde{L}(\mu, \zeta)] = \sum_{\gamma}[\widetilde{M}(\la): \widetilde{L}(\gamma)], \label{lem::28}
 	\end{align} where the summation runs over all $W_\zeta$-anti-dominant weights $\gamma$ such that $\mu\in W_\zeta\cdot \gamma$.
 \end{lem}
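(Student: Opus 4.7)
The reductive Lie algebra case $\g=\g_\oa$ is exactly Backelin's theorem \cite[Theorem 6.2]{B}, so I would focus on the three type-I superalgebras $\gl(m|n)$, $\mf{osp}(2|2n)$, $\pn$. Their unifying feature is that all satisfy the type I-0 assumptions \eqref{ass::A1}--\eqref{pre::tridec}, so both $\wdM(\la)\cong K(M(\la))$ and $\wdM(\la,\zeta)\cong K(M(\la,\zeta))$ are obtained from the exact Kac functor $K(-)=U(\g)\otimes_{\g_{\geq 0}}(-)$. My strategy is to lift Backelin's functor $\Gamma_\zeta:\mc O_\oa\to\mc N(\zeta)$ to an exact functor $\widetilde\Gamma_\zeta:\mc O\to\widetilde{\mc N}(\zeta)$ and derive the super formula from the Lie algebra formula by a purely functorial argument.

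Concretely, I would construct $\widetilde\Gamma_\zeta$ by the same completion-with-respect-to-$\zeta$ recipe as in \cite[Section 4]{B}, adapted to the full Lie superalgebra: starting from $M\in\mc O$, form the appropriate completion along the filtration by powers of $(\mf n_\oa^+-\zeta)$ and take the $\hat S^W$-finite part. The exactness would follow along the lines of Backelin's argument, since the relevant completions are exact on finitely generated modules and $\g_1$ is finite-dimensional abelian. The first key identity to establish is the intertwining
\[\widetilde\Gamma_\zeta\circ K\;\cong\;K\circ\Gamma_\zeta\qquad\text{as functors }\mc O_\oa\to\widetilde{\mc N}(\zeta),\]
which is essentially formal since $\g_1$ commutes with the completion variables in $\mf n_\oa^+$. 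Granting this, Backelin's equality $\Gamma_\zeta M(\la)\cong M(\la,\zeta)$ yields $\widetilde\Gamma_\zeta\wdM(\la)\cong\wdM(\la,\zeta)$.

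The heart of the proof is to show
\[\widetilde\Gamma_\zeta\wdL(\gamma)\;\cong\;\begin{cases}\wdL(\gamma,\zeta)&\text{if $\gamma$ is $W_\zeta$-anti-dominant,}\\ 0 & \text{otherwise.}\end{cases}\]
For the vanishing, suppose $\gamma$ is not $W_\zeta$-anti-dominant. Then by Lemma \ref{lem::3} there exists $\alpha\in\Pi_\zeta$ such that $\wdL(\gamma)$ is $\alpha$-finite. Any non-zero element of $\widetilde\Gamma_\zeta\wdL(\gamma)$ in $\widetilde{\mc N}(\zeta)$ would carry the simultaneous actions of $e_\alpha$ being locally nilpotent (since $\wdL(\gamma)$ is $\alpha$-finite, transferred through the completion) and of $e_\alpha-\zeta(e_\alpha)$ being locally nilpotent; but $\zeta(e_\alpha)\neq 0$, so these force the space to be zero. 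For $W_\zeta$-anti-dominant $\gamma$, compatibility with Backelin's Lie algebra identification $\Gamma_\zeta L(\gamma)\cong L(\gamma,\zeta)$, the intertwining with $K$, and the uniqueness of the simple top of $\wdM(\gamma,\zeta)$ (Theorem \ref{mainthm1typeI}) together yield $\widetilde\Gamma_\zeta\wdL(\gamma)\cong\wdL(\gamma,\zeta)$.

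With these properties in hand, the lemma falls out by applying the exact functor $\widetilde\Gamma_\zeta$ to a composition series of $\wdM(\la)$: each factor $\wdL(\nu)$ maps either to zero or to some $\wdL(\mu,\zeta)$ with $\mu\in W_\zeta\cdot\nu$ and $\nu$ the unique $W_\zeta$-anti-dominant representative, and collecting terms yields \eqref{lem::28}. The main obstacle in the whole plan is the correct construction of $\widetilde\Gamma_\zeta$ together with its exactness and the intertwining with $K$; once this technical backbone is set, the identification of the simple images and the multiplicity formula are formal consequences of Backelin's theorem and Theorem \ref{mainthm1typeI}.
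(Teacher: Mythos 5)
The statement you are proving is not proved in this paper at all: it is quoted verbatim as Lemma~\ref{thm::chthmC} with explicit attribution to \cite[Theorem~C]{Ch21} and \cite[Theorem~6.2]{B}, so there is no internal proof to compare against. That said, your strategy — lift Backelin's exact functor $\Gamma_\zeta:\mc O_\oa\to\mc N(\zeta)$ to an exact $\widetilde\Gamma_\zeta:\mc O\to\widetilde{\mc N}(\zeta)$, establish $\widetilde\Gamma_\zeta\circ K\cong K\circ\Gamma_\zeta$, identify the images of standard and simple objects, and apply the exact functor to a composition series — does mirror the route of the cited source: this paper itself refers to the functor $\widetilde\Gamma_\zeta$ from \cite[Section~5.2]{Ch21} in the proof of Lemma~\ref{lem::dualchthmC}, and treats its exactness and its behaviour on Kac inductions as already established there. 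So your blueprint is the right one.

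However, your sketch of the key vanishing $\widetilde\Gamma_\zeta\wdL(\gamma)=0$ for non-$W_\zeta$-anti-dominant $\gamma$ is not correct as stated, on two counts. First, in the conventions of this paper $\alpha$-finiteness of a module means that the \emph{negative} root vector $f_\alpha\in\g_\oa^{-\alpha}$ acts locally finitely (Section~\ref{sect::24f}); it says nothing about $e_\alpha$, so the claimed clash with local nilpotency of $e_\alpha-\zeta(e_\alpha)$ is not the right dichotomy. Second, even after fixing the sign, the assertion that local finiteness of a root vector is ``transferred through the completion'' is not formal: the completion is precisely the operation that destroys $\mf n_\oa^+$-local-finiteness and replaces it with the Whittaker condition. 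The fact that $\Gamma_\zeta L(\gamma)$ vanishes whenever $L(\gamma)$ is $\alpha$-finite for some $\alpha\in\Pi_\zeta$ is a genuine theorem (the technical heart of \cite[Section~6]{B}), proved via an analysis of the completion functor on Verma flags and $\n_\oa^+$-cohomology, and then $\widetilde\Gamma_\zeta\wdL(\gamma)=0$ needs to be deduced from it through the intertwining with $K$ and exactness — not from a pointwise contradiction. Your argument leaves this as the main gap.

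There is also a smaller point worth tightening: the intertwining $\widetilde\Gamma_\zeta\circ K\cong K\circ\Gamma_\zeta$ is plausible because $\g_1\subset\g_{\geq0}$ and the completion is carried out inside $U(\g_\oa)$, but you should justify why the $\hat S^W$-finite-part operation commutes with inducing up through the finite-dimensional, abelian $\g_1$; it is not just a tensor identity because the completion step is topological. Once these two points are supplied, the remainder of your argument (applying the exact functor to a composition series of $\wdM(\la)$ and collecting the surviving factors according to $W_\zeta$-orbits, using Theorem~\ref{mainthm1typeI} to recognize $\wdL(\gamma,\zeta)$) is indeed a formal consequence, exactly as you say.
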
 


\subsection{Block decomposition of ${\mc N}^\Z$}
Consider $\g=\g_\oa$ a reductive Lie algebra with a character  $\zeta \in \ch \n_\oa^+$. 
For any $\la, \mu \in \h^\ast$, it is known that 
\begin{align} & \chi_\la =\chi_\mu \Leftrightarrow \la \in W\cdot \mu, \label{eq::g0linkages}\end{align} see, e.g., \cite[Theorem 1.9]{Hu08}. 
Set $\mc O^\Z$ to be the full subcategory of $\mc O$ consisting of modules of integral weights. We note that $\mc O^\Z$ is a   subcategory of $ {\mc N}^\Z$. The set of  highest weights (of simple modules) in a block in $\mc O^\Z$ is the exact same as an equivalence class  linkage described in \eqref{eq::g0linkages}; see  \cite[Proposition 1.13]{Hu08}. We are now going to generalize this description to ${\mc N}^\Z$. Let $\Z\Phi_\oa^+$ denote the $\Z$-span of $\Phi_\oa^+$ (i.e. the root lattice of $\g_\oa$). Similarly, we define $\Z\Phi$.  The following lemma is a direct consequence of Lemma \ref{Cor::chofWT}. 
\begin{lem} \label{lem::18} Consider $\g =\g_\oa$ a reductive Lie algebra. 
	Let $\la, \mu\in \h^\ast$. If $L(\la,\zeta)$ and $L(\mu,\zeta)$ are in the same block, then $\mu\in \la +\Z\Phi_\oa^+$. In particular, every module $M\in  {\mc N}$ decomposes into a direct sum $M=M_{1}\oplus M_{2}$, where $M_1\in \mc N^\Z$ and $M_2\in \mc N$ such that $[M_2:L(\gamma,\zeta)]=0$ for any $\gamma\in \mc X.$
\end{lem}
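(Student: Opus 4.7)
The plan is to deduce both assertions from the standard central-character decomposition of $\mc N$ combined with Lemma \ref{Cor::chofWT} and the classical Harish-Chandra linkage \eqref{eq::g0linkages}. First I would observe that every module in $\mc N$ has finite length and admits a locally finite action of $Z(\mf g_\oa)$, so by a standard block-decomposition argument one has $\mc N = \bigoplus_\chi \mc N_\chi$, where $\mc N_\chi$ is the full subcategory of modules on which $Z(\mf g_\oa)$ acts by generalized central character $\chi$. Any non-split extension between $L(\la,\zeta)$ and $L(\mu,\zeta)$ forces $\chi_\la = \chi_\mu$ by Lemma \ref{Cor::chofWT}, so two simples in the same block must necessarily share a central character.

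For the first assertion, I would then invoke \eqref{eq::g0linkages} to pass from $\chi_\la = \chi_\mu$ to $\mu \in W\cdot\la$, whence $\mu - \la = w(\la+\rho_\oa) - (\la+\rho_\oa)$ for some $w \in W$. Writing $w = s_{\alpha_1}\cdots s_{\alpha_k}$ as a product of simple reflections with $\alpha_i \in \Pi_0$ and iterating $s_\alpha(\nu) - \nu = -\langle \nu,\alpha^\vee\rangle\alpha$, this difference becomes a $\Z$-linear combination of simple roots provided each $\langle \la+\rho_\oa,\alpha^\vee\rangle$ is an integer. Since $\rho_\oa \in \mc X$ for a reductive Lie algebra, this integrality follows whenever $\la \in \mc X$, which gives $\mu - \la \in \Z\Phi_\oa^+$.

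For the "in particular" decomposition, I would define $M_1$ as the direct summand of $M$ collecting all generalized $\chi_\gamma$-eigenspaces with $\gamma \in \mc X$, and $M_2$ as its complement. Two checks remain: that $M_1 \in \mc N^\Z$ and that $M_2$ has no composition factor $L(\gamma,\zeta)$ with $\gamma \in \mc X$. Both follow from the first assertion together with $W$-invariance of $\mc X$: if a composition factor $L(\mu,\zeta)$ of $M_1$ sits in $\mc N_{\chi_\gamma}$ with $\gamma \in \mc X$, then $\mu \in W\cdot \gamma \subseteq \mc X$; conversely, a composition factor of $M_2$ must have non-integral central character and hence non-integral highest weight.

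The main subtlety I expect is the step asserting $\mu - \la \in \Z\Phi_\oa^+$ rather than merely in the $\R$-span of roots. The key input is that $\langle \rho_\oa,\alpha^\vee\rangle = 1$ for every simple $\alpha \in \Pi_0$, so that integrality of $\la$ propagates through the reflections comprising $w$; for non-integral $\la$ only the weaker Weyl-orbit statement is used, which still forces blocks to respect the partition of central characters by integrality of their representatives, and this is exactly what is needed to split $\mc N^\Z$ off as a direct summand.
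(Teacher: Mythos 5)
Your proposal is correct and takes the same route as the paper, which dispatches this lemma as a direct consequence of Lemma~\ref{Cor::chofWT}: you have simply spelled out the central-character block decomposition of $\mc N$ and the Harish--Chandra linkage \eqref{eq::g0linkages} that are implicit there. You also correctly isolate the one delicate point the paper's terse phrasing glosses over, namely that passing from $\mu\in W\cdot\la$ to $\mu-\la\in\Z\Phi_\oa^+$ uses integrality of $\la$, and correctly observe that the weaker Weyl-orbit conclusion already suffices for the direct-sum decomposition in the second assertion, which is what is actually needed downstream.
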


We show in the following theorem that the blocks of $\mc N^\Z$ are still given by the linkage principal given in  \eqref{eq::g0linkages}. 
\begin{thm} \label{thm::Lielink}
Let $\la,\mu \in \mc X$. Then the simple modules ${L}(\la, \zeta)$ and ${L}(\mu, \zeta)$ are in the same  block of $\mc N$ if and only if $\la \in W\cdot \mu.$
\end{thm}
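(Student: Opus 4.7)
The ``only if'' direction is immediate: if $L(\la,\zeta)$ and $L(\mu,\zeta)$ lie in the same block of $\mc N$ they must share a central character, and Lemma~\ref{Cor::chofWT} identifies this common character as $\chi_\la = \chi_\mu$. The classical Harish-Chandra linkage \eqref{eq::g0linkages} for reductive Lie algebras then forces $\la \in W\cdot\mu$.

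For the ``if'' direction, I would first use Theorem~\ref{mainthm1typeI} (applied to $\g=\g_\oa$) to replace $\la$ and $\mu$ by their unique $W_\zeta$-anti-dominant representatives within their respective $W_\zeta$-orbits, without changing $L(\la,\zeta)$ or $L(\mu,\zeta)$. Since $\la \in W\cdot\mu$ by hypothesis, pick the unique $W$-anti-dominant integral weight $\la_0$ in $W\cdot\la = W\cdot\mu$. Because $W$-anti-dominance implies $W_\zeta$-anti-dominance, $\la_0$ is itself $W_\zeta$-anti-dominant. The strategy is to show that both $L(\la,\zeta)$ and $L(\mu,\zeta)$ are in the same block as $L(\la_0,\zeta)$, and then conclude by transitivity.

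The central computation is an application of Lemma~\ref{thm::chthmC} with source $\la$ and target $L(\la_0,\zeta)$. Since $\la_0$ is already $W_\zeta$-anti-dominant, the sum on the right-hand side collapses to the single term indexed by $\gamma = \la_0$, giving
\[
[M(\la,\zeta):L(\la_0,\zeta)] \;=\; [M(\la):L(\la_0)].
\]
For the reductive Lie algebra $\g_\oa$ with integral $W$-anti-dominant $\la_0$, the Verma module $M(\la_0)=L(\la_0)$ is simple and embeds into every $M(\la)$ with $\la \in W\cdot\la_0$ by the BGG embedding theorem; hence the right-hand side is $\geq 1$. Lemma~\ref{lem::16} then places $L(\la,\zeta)$ and $L(\la_0,\zeta)$ in the same block. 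Running the same argument with $\mu$ in place of $\la$ and invoking transitivity finishes the proof.

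The main subtlety I would be careful about is the anti-dominance convention built into Lemma~\ref{thm::chthmC}: the indexing set of the sum consists of $W_\zeta$-anti-dominant weights in the $W_\zeta$-orbit of the target, so to make the sum collapse cleanly I must route the argument through the auxiliary weight $\la_0$ rather than attempt to compute $[M(\la,\zeta):L(\mu,\zeta)]$ directly (where $\mu$ need not lie in $W_\zeta\cdot\la$ at all, making the relevant multiplicity zero in general). Once this routing is in place, the argument reduces to the classical Verma-embedding fact, and no further new ingredient is needed.
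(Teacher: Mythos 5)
Your proof is correct and follows essentially the same route as the paper's: same reduction to $W_\zeta$-anti-dominant representatives via Theorem~\ref{mainthm1typeI}, same use of Lemma~\ref{thm::chthmC} to produce a composition-factor linkage, same block-conclusion. The only cosmetic difference is the choice of "hub" weight in the $W$-orbit $W\cdot\la=W\cdot\mu$: you route both $L(\la,\zeta)$ and $L(\mu,\zeta)$ through the \emph{anti-dominant} weight $\la_0$, where the multiplicity $[M(\la):L(\la_0)]>0$ follows from simplicity of the anti-dominant Verma module plus the BGG embedding theorem, whereas the paper routes both through a \emph{dominant} weight $\eta$ and uses that every simple in the orbit occurs in the dominant Verma $M(\eta)$. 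Either standard fact from $\mc O_\oa$ works, and the argument is otherwise identical.
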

\begin{proof}
	We first note that if   ${L}(\la, \zeta)$ and ${L}(\mu, \zeta)$ are in the same   block of $\mc N$ then $\la \in W\cdot \mu$ by Lemma \ref{Cor::chofWT}.
	
	Conversely,   suppose that $\la \in W\cdot \mu.$ Let  $\la_0 \in W_\zeta\cdot \la$ and $\mu_0\in  W_\zeta\cdot \mu$  be   $W_\zeta$-anti-dominant. By Theorem  \ref{mainthm1typeI} we have $L(\la,\zeta) = L(\la_0,\zeta)$ and $L(\mu,\zeta) = L(\mu_0,\zeta)$. 
	Let $\eta\in W\cdot \la =W\cdot \mu$ be a dominant weight. 	 
	Then it follows from Lemma \ref{thm::chthmC} that
	\begin{align*}
	&[M(\eta,\zeta): L(\la_0,\zeta)] \geq [M(\eta): L(\la_0)] >0,
	\end{align*} which implies that $L(\eta, \zeta)$ and $L(\la_0, \zeta)$ are in the same block. Similarly, $L(\eta, \zeta)$ and $L(\mu_0,\zeta)$ are in the same block. This completes the proof. 
\end{proof}

\subsection{Block decomposition  of $\widetilde{\mc N}^\Z$}

\subsubsection{A separation lemma}
We will prove in this subsection that  $\widetilde{\mc N}^\Z$ is a summand of $\widetilde{\mc N}$. The following lemma will be useful. 

\begin{lem} \label{lem::21}
 Suppose that $\la, \mu \in \mc X$ such that $\la \in W\cdot \mu$.  Then  $\widetilde{L}(\la,\zeta)$ and $\widetilde{L}(\mu,\zeta)$ are in the same block. 
\end{lem}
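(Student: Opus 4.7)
The plan is to exhibit a single standard Whittaker module that has both $\wdL(\la,\zeta)$ and $\wdL(\mu,\zeta)$ as composition factors, and then invoke Lemma~\ref{lem::16}. By Theorem~\ref{mainthm1typeI} the simple Whittaker module $\wdL(\la,\zeta)$ depends only on the $W_\zeta$-orbit of $\la$, so I may replace $\la$ and $\mu$ by $W_\zeta$-anti-dominant representatives of their respective $W_\zeta$-orbits, keeping both integral and in a common $W$-orbit under the dot-action. Choose a dominant weight $\eta\in W\cdot\la=W\cdot\mu$ lying in $\mc X$; the task is then reduced to showing that both $\wdL(\la,\zeta)$ and $\wdL(\mu,\zeta)$ occur as composition factors of $\wdM(\eta,\zeta)$.

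Apply Lemma~\ref{thm::chthmC} with its $\la$ taken to be $\eta$ and its $\mu$ taken to be our $\la$. Since $\la$ is itself $W_\zeta$-anti-dominant, the right-hand sum collapses to the single term $\gamma=\la$, giving
\begin{align*}
[\wdM(\eta,\zeta):\wdL(\la,\zeta)]\;=\;[\wdM(\eta):\wdL(\la)].
\end{align*}
Positivity of the right-hand side is obtained from the type-I structure: one has $\wdM(\eta)\cong K(M(\eta))$, the Kac functor $K$ is exact, and by the classical BGG theorem for the reductive Lie algebra $\g_\oa$ we have $[M(\eta):L(\la)]>0$ for every $\la\in W\cdot\eta$ whenever $\eta$ is dominant integral. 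Applying $K$ to a composition series of $M(\eta)$ produces a filtration of $\wdM(\eta)$ with subquotients $K(L(\la'))$, and each such $K(L(\la'))$ admits $\wdL(\la')$ as simple top by \cite[Theorem~A]{CC} (see Section~\ref{Sect::2231}); hence $[\wdM(\eta):\wdL(\la)]\geq [M(\eta):L(\la)]>0$. The same reasoning applies to $\mu$.

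Both $\wdL(\la,\zeta)$ and $\wdL(\mu,\zeta)$ are therefore composition factors of $\wdM(\eta,\zeta)$, so Lemma~\ref{lem::16} places each in the block of $\wdL(\eta,\zeta)$, and the statement follows. The principal obstacle is the positivity $[\wdM(\eta):\wdL(\la)]>0$: it is not accessible from a central-character argument alone, since in general simple objects of $\widetilde{\mc O}$ with the same central character can lie in different blocks, and it is precisely the exactness of $K$ together with \cite[Theorem~A]{CC} that supplies this positivity in the three type-I cases $\gl(m|n)$, $\mf{osp}(2|2n)$, $\pn$.
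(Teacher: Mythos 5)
Your proof is correct, but it takes a genuinely different route from the paper's. The paper first establishes linkage at the level of the even Lie algebra (Theorem~\ref{thm::Lielink}, via the same multiplicity formula of Lemma~\ref{thm::chthmC} you invoke) and then \emph{lifts} that linkage to $\widetilde{\mc N}$ through the Kac functor: given a non-split short exact sequence $0\to L(\alpha,\zeta)\to E\to L(\beta,\zeta)\to 0$ in $\mc N$, one applies $K$ and uses \cite[Theorem 51]{CCM} (simplicity of $\mathrm{soc}\,K(E)$) to conclude that $K(E)$ is indecomposable, so all its composition factors — including $\wdL(\alpha,\zeta)$ and $\wdL(\beta,\zeta)$ as simple tops of its sub- and quotient-terms — lie in the same block. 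Your argument instead bypasses the even-algebra block result entirely and constructs a single standard Whittaker module $\wdM(\eta,\zeta)$, with $\eta$ dominant integral in the common $W$-orbit, that has both $\wdL(\la,\zeta)$ and $\wdL(\mu,\zeta)$ as composition factors; this follows from the collapse of the Backelin-type sum in Lemma~\ref{thm::chthmC} at $W_\zeta$-anti-dominant representatives, the classical BGG positivity $[M(\eta):L(\la)]>0$ for $\la\in W\cdot\eta$, and exactness of $K$. Lemma~\ref{lem::16} then finishes. The paper's route is more modular (it re-uses Theorem~\ref{thm::Lielink} as a black box and a general socle-preservation principle for $K$); yours is more self-contained and works by exhibiting an explicit common parent module, at the cost of re-deriving information that Theorem~\ref{thm::Lielink} already encodes. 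Both rest on the same underlying inputs (the multiplicity formula and the type-I Kac-induction machinery), and both are valid for the scope stated at the start of Section~\ref{Sect::5Link}.
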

\begin{proof} To see this, let
   \[0\rightarrow L(\alpha,\zeta) \rightarrow E \rightarrow  L(\beta,\zeta)\rightarrow 0\]
   be a non-split short exact sequence in $\mc N$, for some $\alpha,\beta \in \h^\ast$. Applying the Kac functor $K(-)$, by \cite[Theorem 51]{CCM} the socle of  $K(E)$ is simple, and hence we have   a non-split short exact sequence in $\widetilde{\mc N}$ 
    \[0\rightarrow K(L(\alpha,\zeta)) \rightarrow K(E) \rightarrow K(L(\beta,\zeta))\rightarrow 0.\] Since  $\widetilde{L}(\alpha, \zeta)$ and $\widetilde{L}(\beta, \zeta)$ are respectively simple quotients of  $ K(L(\alpha,\zeta))$ and $ K(L(\beta,\zeta))$, we may conclude that $\widetilde{L}(\alpha, \zeta)$ and $\widetilde{L}(\beta, \zeta)$ are in the same block. 
  
    Since every module of $\widetilde{\mc N}$ has finite length by \cite[Corollary 4]{Ch21}, the conclusion follows by Theorem \ref{thm::Lielink}.
\end{proof}

For a given subset $\Lambda\subseteq \mf h^\ast$, a module $M\in \mc N(\zeta)$ is said to be of type $\Lambda$ if every composition factor of  $M$ has the form $L(\gamma,\zeta)$  with $\gamma \in \Lambda$. By Lemma \ref{thm::chthmC}  the standard Whittaker module $M(\la, \zeta)$ is of type $\la +\Z\Phi_\oa^+$, for any $\la \in \h^\ast$. Also, for any $E\in \mc F$, if $\{\gamma_1, \gamma_2,\ldots,\gamma_\ell\}\subset \h^\ast$ is the set of   weights of $E$, then  by \cite[Lemma 5.12]{MS} the module $E\otimes M(\la, \zeta)$ is of type $\{\la+\gamma_1, \la+\gamma_2, \ldots, \la+\gamma_\ell\}+\Z\Phi_\oa^+$ and so is $E\otimes L(\la, \zeta)$. The following is an analogue of  Lemma \ref{lem::18}.

\begin{lem} \label{lem::23}
	Let $\la\in \mc X$ and $\mu \in \h^\ast$. If $\widetilde{L}(\la, \zeta)$ and $\widetilde{L}(\mu, \zeta)$ are in the same   block of $\widetilde{\mc N}$, then $\mu\in \la+\Z\Phi.$  Consequently, every module $M\in \widetilde{\mc N}$ decomposes into a direct sum $M=M_{1}\oplus M_{2}$, where $M_1\in \widetilde{\mc N}^\Z$ and $M_2\in \widetilde{\mc N}$ such that $[M_2:\wdL(\gamma,\zeta)]=0$ for any  $\gamma\in \mc X.$
\end{lem}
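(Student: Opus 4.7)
The plan is to establish the integrality claim via central characters, and then extract the direct-sum decomposition as a formal consequence of the block decomposition by central character already recorded in Section \ref{Sect::Tetoolblocks}.

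For the first claim, the reductive case $\g = \g_\oa$ reduces directly to Lemma \ref{lem::18}, so I would focus on $\g = \gl(m|n)$, $\mf{osp}(2|2n)$ and $\pn$. By Lemma \ref{Cor::chofWT}, being in the same block forces $\chi_\la = \chi_\mu$, so the task reduces to the purely algebraic statement: for $\la \in \mc X$ and $\mu \in \h^\ast$ with $\chi_\la = \chi_\mu$, we have $\mu - \la \in \Z\Phi$. For $\g = \gl(m|n)$ and $\mf{osp}(2|2n)$, I would invoke Sergeev's description of the center: $\chi_\la = \chi_\mu$ if and only if there exist $w \in W$, integers $c_1, \ldots, c_k$ and mutually orthogonal isotropic roots $\alpha_1, \ldots, \alpha_k$ with $\langle \la + \rho, \alpha_i\rangle = 0$ such that $\mu = w \cdot (\la - \sum_i c_i \alpha_i)$. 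Since $w \cdot \la - \la \in \Z\Phi_\oa \subseteq \Z\Phi$ and $w \alpha_i \in \Phi$, we conclude $\mu - \la \in \Z\Phi$. For $\g = \pn$, the same conclusion follows from the corresponding description of the center of $\pn$ in the literature.

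For the second assertion, I would use the central-character block decomposition $\widetilde{\mc N} = \bigoplus_{\chi}\widetilde{\mc N}_{\chi}$ from Section \ref{Sect::Tetoolblocks}. Writing any $M \in \widetilde{\mc N}$ as $M = \bigoplus_{\chi} M_\chi$, I would set $M_1$ to be the sum of those $M_\chi$ for which $\widetilde{\mc N}_\chi$ contains some $\wdL(\gamma,\zeta)$ with $\gamma \in \mc X$, and $M_2$ the sum of the remaining components. The first part of the lemma then guarantees that every simple in such a $\widetilde{\mc N}_\chi$ has highest weight in $\gamma + \Z\Phi \subseteq \mc X$, so $M_1 \in \widetilde{\mc N}^\Z$; and by construction $[M_2 : \wdL(\gamma, \zeta)] = 0$ for $\gamma \in \mc X$.

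The main obstacle will be pinning down the $\pn$ case in the first step, since the center of $U(\pn)$ and its Harish-Chandra projection are more subtle than for basic classical Lie superalgebras. Fortunately, we need only the comparatively weak consequence that $\chi_\la = \chi_\mu$ forces $\mu - \la \in \Z\Phi$, rather than a full parametrization of central characters; this should be obtainable either from the existing description of $Z(\pn)$ or, alternatively, from a direct analysis of composition factors of $\wdM(\la,\zeta)$ via Lemma \ref{thm::chthmC} together with Lemma \ref{lem::16}.
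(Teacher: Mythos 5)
The central-character strategy breaks down for $\g=\pn$, which is one of the cases this lemma must cover: the center $Z(\pn)$ of $U(\pn)$ is trivial (just $\mathbb C$), so $\chi_\la=\chi_\mu$ holds for \emph{all} $\la,\mu\in\h^\ast$ and the inference ``$\chi_\la=\chi_\mu\Rightarrow\mu-\la\in\Z\Phi$'' is simply false for $\pn$. This is not a ``comparatively weak consequence'' to be extracted from a delicate description of $Z(\pn)$ — there is no nontrivial description available, and the paper itself never invokes $Z(\g)$ in the periplectic case (note how Theorem \ref{thm::LLLinkpn} works with the ad hoc relation $\sim^{\pn}$ rather than central characters, and leaves the converse as an open question). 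Likewise, the secondary decomposition $\widetilde{\mc N}=\bigoplus_\chi\widetilde{\mc N}_\chi$ you invoke is the trivial decomposition for $\pn$, so it provides no leverage. Your proposed fallback via Lemma \ref{thm::chthmC} and Lemma \ref{lem::16} is also inadequate: those tools produce \emph{linkages} (two simples lie in the same block) from nonzero multiplicities, but the statement in question is a \emph{non-linkage} claim, which requires proving that certain extensions split.

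The paper's proof takes an entirely different route that sidesteps $Z(\g)$: it reduces to showing that any short exact sequence
\begin{align*}
0\to\widetilde L(\beta,\zeta)\to E\to\widetilde L(\alpha,\zeta)\to 0,\qquad \alpha\in\mc X,\ \beta\notin\alpha+\Z\Phi,
\end{align*}
splits. Restricting to $\g_\oa$, the composition factors of $\Res\widetilde L(\alpha,\zeta)$ lie in $\alpha+\Z\Phi$ and those of $\Res\widetilde L(\beta,\zeta)$ lie in $\beta+\Z\Phi$, so by the block decomposition for the reductive even part (Theorem \ref{thm::Lielink} together with Lemma \ref{lem::18}) one gets $\Res E=\Res\widetilde L(\alpha,\zeta)\oplus B$ as $\g_\oa$-modules, and then checks that $U(\g_\ob)B$ is a $\g$-submodule supplying a $\g$-module complement. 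This works uniformly for all three families because it only uses central characters of $\g_\oa$ (where the Harish-Chandra description is available), not of $\g$. Your argument as written would go through for $\gl(m|n)$ and $\mf{osp}(2|2n)$, where Sergeev's theorem on $Z(\g)$ gives what you need, but to complete the $\pn$ case you would have to replace it with some version of the paper's restriction-to-$\g_\oa$ argument.
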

\begin{proof} To see this, it suffices to show that any short exact sequences in $\widetilde{\mc N}$ 
	\begin{align}
	&0\rightarrow \widetilde{L}(\beta, \zeta) \rightarrow E \rightarrow \widetilde{L}(\alpha, \zeta) \rightarrow 0,\label{eq1::lem22} \\
	&0\rightarrow \widetilde{L}(\alpha, \zeta) \rightarrow F \rightarrow \widetilde{L}(\beta, \zeta) \rightarrow 0,\label{eq2::lem22}
	\end{align}
	split, provided that $\alpha \in \mc X$ and $\beta \notin \alpha+\Z\Phi.$

	We first show that \eqref{eq1::lem22} is a split short exact sequence. Since $\Res\widetilde{L}(\alpha, \zeta)$ is a quotient of $\Res{K}(L(\alpha, \zeta))\cong U(\mf g_{-1})\otimes {L}(\alpha, \zeta)$, it is of type $\alpha+\Z\Phi$. For the same reason, the module $\Res\widetilde{L}(\beta, \zeta)$ is of type $\beta+\Z\Phi$.  Since $\beta\notin \alpha +\Z\Phi$, it follows from  Theorem \ref{thm::Lielink} that   the $\mf g_\oa$-module $\Res E$ decomposes $\Res E =\Res \widetilde{L}(\alpha, \zeta) \oplus B$, for some $B\in \mc N$ isomorphic to $\Res\widetilde{L}(\beta, \zeta)$. We note that  $U(\g_\ob)B$ is an epimorphic image of $\Ind B$, and so $U(\g_\ob)B$ is of type $\beta+\Z\Phi$ as a $\g_\oa$-module. By Theorem \ref{thm::Lielink}, the submodules $\widetilde{L}(\alpha, \zeta)$, $U(\g_\ob)B$ of $E$ have trivial intersection since $\alpha+\Z\Phi$ and $\beta+\Z\Phi$ have empty intersection. Consequently, $E$ is a direct sum of two non-trivial submodules in $\widetilde{\mc N}$, we may conclude that \eqref{eq1::lem22} splits. 
	
	Similarly, the short exact sequence \eqref{eq2::lem22} splits by the same argument.
	
	
\end{proof}

\subsubsection{Blocks of $\widetilde{\mc N}^\Z$ for $\gl(m|n)$ and $\mf{osp}(2|2n)$} \label{Sect::532}
In this subsection, we consider $\g=\gl(m|n),~\mf{osp}(2|2n).$
Recall that the equivalence $\sim$ on $\mc X$ from Section \ref{sec1} is defined by declaring $\la\sim \mu, \text{ if } \mu = w \cdot (\la-\sum_{i=1}^kc_i \alpha_i),$ for some $w\in W$, mutually orthogonal isotropic (odd) roots $\alpha_1, \alpha_2,\ldots, \alpha_k\in\Phi$ and integers $c_1, c_2,\ldots, c_k\in \Z$ such that $\langle\la+\rho, \alpha_i\rangle=0,$ for all $1\leq i\leq k.$ We are now in a position
to state our first main result of this section.

\begin{thm} \label{thm::blocksglosp}
	Let $\la \in \mc X$ and $\mu\in \h^\ast$. Then the simple modules $\widetilde{L}(\la, \zeta)$ and $\widetilde{L}(\mu, \zeta)$ are in the same   block of $\widetilde{\mc N}$ if and only if $\la \sim \mu.$
\end{thm}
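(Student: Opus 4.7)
The proof plan divides naturally along the two implications. For the ``only if'' direction, by Lemma~\ref{Cor::chofWT} the two simples share the central character $\chi_\la=\chi_\mu$ of $\mathfrak g$, and for $\mathfrak g=\gl(m|n)$ or $\mf{osp}(2|2n)$ the super Harish-Chandra isomorphism (Sergeev--Veselov for $\gl$, Gorelik for $\mf{osp}$) identifies central characters on integral weights precisely with the $\sim$-equivalence classes, forcing $\la\sim\mu$. Note that this already yields $\mu\in\mc X$, in agreement with Lemma~\ref{lem::23}.

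For the converse direction my plan is as follows. First, using Theorem~\ref{mainthm1typeI} I would replace both $\la$ and $\mu$ by their $W_\zeta$-anti-dominant representatives, so that they lie in $\mc X(\nu)=\mc X_0(\nu)$ for a chosen dominant integral $\nu$ with $W_\nu=W_\zeta$; the relation $\la\sim\mu$ is preserved because $W_\zeta\subseteq W$ is contained in the Weyl-group generator of $\sim$. Second, a witnessing chain for $\la\sim\mu$ would be split into atomic moves of the two types appearing in the definition of $\sim$. A Weyl-group move $\la\mapsto w\cdot\la$ is handled directly by Lemma~\ref{lem::21}. An isotropic shift $\la\mapsto\la-c\alpha$ with $\langle\la+\rho,\alpha\rangle=0$ I would reduce to the elementary case $c=1$ by iteration, noting that $\langle\alpha,\alpha\rangle=0$ keeps the orthogonality condition $\langle(\la-j\alpha)+\rho,\alpha\rangle=0$ alive at every intermediate $j$. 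Third, for the elementary odd-reflection step $\eta\mapsto\eta-\alpha$ I would invoke the classical fact (following from the odd-reflection embedding $\widetilde{M}(\eta-\alpha)\hookrightarrow\widetilde{M}(\eta)$ of Kac and Serganova) that $[\widetilde{M}(\eta):\widetilde{L}(\eta-\alpha)]>0$ in $\mc O$. Combining this with Lemma~\ref{thm::chthmC} gives $[\widetilde{M}(\eta,\zeta):\widetilde{L}(\eta-\alpha,\zeta)]>0$, and then Lemma~\ref{lem::16} places $\widetilde{L}(\eta,\zeta)$ and $\widetilde{L}(\eta-\alpha,\zeta)$ in a common block of $\widetilde{\mc N}$.

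The main obstacle I anticipate is that the intermediate weights $\eta-j\alpha$ appearing in the iteration of the isotropic shift need not be $W_\zeta$-anti-dominant, so the multiplicity formula of Part~(1) of Theorem~C (i.e.\ Theorem~\ref{thm::24}) does not directly apply to them in the form stated. To handle this I plan to intercalate $W_\zeta$-conjugations at each step: replace the intermediate weight by its $W_\zeta$-anti-dominant representative, using Theorem~\ref{mainthm1typeI} to see that this does not change the isomorphism class of the Whittaker simple, and using Lemma~\ref{lem::21} to absorb the resulting $W_\zeta$-moves into the chain of block linkages. Lemma~\ref{thm::chthmC} is tailored to this situation, since its summation runs exactly over $W_\zeta$-anti-dominant representatives, so the nonzero multiplicity produced by the odd reflection survives the passage to $\widetilde{\mc N}(\zeta)$ regardless of whether $\eta-\alpha$ itself is $W_\zeta$-anti-dominant. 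In parallel, via the equivalence $\mc O^{\vpre}\cong\widetilde{\mc N}(\mc X,\zeta)^1$ of Theorem~\ref{thm::15}, this chain can equivalently be interpreted as showing that each $\mc O^\Z$-block intersects $\mc X(\nu)$ in a single block of $\mc O^{\vpre}$, which would give a conceptually cleaner packaging of the same argument.
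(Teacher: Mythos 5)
Your overall strategy matches the paper's: for the ``only if'' direction use the central character obstruction plus a description of central characters, and for the ``if'' direction reduce via Lemma \ref{lem::21} to a single Weyl move or a single isotropic shift and then produce a nonzero standard-to-simple multiplicity in $\widetilde{\mc N}(\zeta)$ via Lemma \ref{thm::chthmC}. Two points in your proposal need attention, one minor and one substantive.

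Minor: in the ``only if'' direction you write that the Harish-Chandra description ``already yields $\mu\in\mc X$.'' That is backwards. The Harish-Chandra fibers over $\la\in\mc X$ contain weights $w\cdot(\la-\sum c_i\alpha_i)$ with arbitrary \emph{complex} $c_i$, so $\mu\in\mc X$ is not free. You must first invoke Lemma \ref{lem::23} to get $\mu\in\la+\Z\Phi\subset\mc X$, and only then does the equality of central characters force the $c_i$ to be integers (using mutual orthogonality of the $\alpha_i$); this is exactly the order of steps in the paper.

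Substantive gap: in the ``if'' direction, your claim that ``the nonzero multiplicity produced by the odd reflection survives the passage to $\widetilde{\mc N}(\zeta)$ regardless of whether $\eta-\alpha$ itself is $W_\zeta$-anti-dominant'' is not justified by Lemma \ref{thm::chthmC}. That formula reads
\begin{align*}
&[\widetilde{M}(\eta,\zeta):\widetilde{L}(\eta-\alpha,\zeta)] \;=\; [\widetilde{M}(\eta):\widetilde{L}(\gamma_0)],
\end{align*}
where $\gamma_0$ is the unique $W_\zeta$-anti-dominant representative of $W_\zeta\cdot(\eta-\alpha)$ (the sum in the lemma has a single term). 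Knowing $[\widetilde{M}(\eta):\widetilde{L}(\eta-\alpha)]>0$ gives you nothing a priori about $[\widetilde{M}(\eta):\widetilde{L}(\gamma_0)]$ when $\gamma_0\neq\eta-\alpha$: for Lie algebras one could deduce this from strong linkage, but for basic classical Lie superalgebras this implication is not available off the shelf. The paper sidesteps the issue entirely by conjugating \emph{before} applying the multiplicity input: choose $\sigma\in W_\zeta$ so that $\sigma\cdot(\eta-\alpha)=\sigma\cdot\eta-\sigma\alpha$ is $W_\zeta$-anti-dominant; then $\sigma\alpha$ is again an odd isotropic root (in $\g_1$, for these algebras), and $\langle\sigma\cdot\eta+\rho,\sigma\alpha\rangle=\langle\sigma(\eta+\rho),\sigma\alpha\rangle=0$ because $W$ fixes $\rho_\ob$ for $\gl(m|n)$ and $\mf{osp}(2|2n)$. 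The super Jantzen sum formula then gives $[\widetilde M(\sigma\cdot\eta):\widetilde L(\sigma\cdot\eta-\sigma\alpha)]>0$, which \emph{is} a multiplicity at an anti-dominant weight, and Lemma \ref{thm::chthmC} converts it directly to a nonzero multiplicity in $\widetilde{\mc N}(\zeta)$; Lemma \ref{lem::21} then absorbs the remaining $W_\zeta$-moves. So the conjugation must be applied to the pair $(\eta,\alpha)$ simultaneously and \emph{in advance}, not intercalated after the fact. Your ``cleaner packaging'' via $\mc O^{\vpre}\cong\widetilde{\mc N}(\mc X,\zeta)^1$ runs into the same issue: the Cartan matrix identification of Theorem \ref{thm::24} controls linkage inside $\mc X(\nu)$, but an $\mc O^\Z$-linkage chain may leave $\mc X(\nu)$, and bringing it back requires precisely the pre-conjugation argument above.
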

\begin{proof}
First, we suppose that $\widetilde{L}(\la, \zeta)$ and $\widetilde{L}(\mu, \zeta)$ are in the same   block of $\widetilde{\mc N}$. It follows from Lemma \ref{lem::23} that  $\mu \in \mc X$. Also, we have $\chi_\la =\chi_\mu$. Therefore,  by  \cite[Theorem 2.30]{ChWa12} we have   $\mu = w \cdot (\la -\sum_{i=1}^kc_i \alpha_i)$ for some $w\in W$ and mutually orthogonal isotropic (odd) roots $\alpha_1, \alpha_2,\ldots, \alpha_k\in \Phi$ and complex numbers $c_1, c_2,\ldots, c_k\in \C$ such that $\langle\la+\rho, \alpha_i\rangle=0,$ for all $1\leq i\leq k$. We note that $\sum_{i=1}^kc_i \alpha_i \in \Z\Phi$ since $\la \in \mc X$ and $\mu \in \la+\Z\Phi$. Since $\alpha_1, \alpha_2,\ldots, \alpha_k$ are  mutually orthogonal, it follows that the numbers $c_1,c_2,\ldots, c_k$ are integers. Consequently, we have $\la \sim \mu.$

Next, we shall show the converse implication. We first consider the cases that $\g=\gl(m|n)$. By Lemma \ref{lem::21} it suffices to show that $\widetilde{L}(\la,\zeta)$ and $\widetilde{L}(\la-\alpha,\zeta)$ lie in the same block if $\langle\la+\rho, \alpha\rangle =0$, for an odd   root $\alpha=\vare_i-\vare_{m+j}$ with some $1\leq i\leq m,~1\leq j\leq n$. To see this, we let $\sigma \in W_\zeta$ such that $\sigma\cdot (\la-\alpha)$ is $W_\zeta$-anti-dominant. Note that  $\widetilde{L}(\la,\zeta)$ and  $\widetilde{L}(\sigma\cdot \la,\zeta)$ are in the same block by Lemma \ref{lem::21}.

Also, we note that $\sigma\cdot (\la -\alpha) =\sigma\cdot\la -\sigma \alpha$ and $\langle\sigma \cdot \la +\rho ,\sigma\alpha\rangle =\langle\sigma(\la+\rho),\sigma\alpha\rangle=0.$ Since $\sigma \alpha$ is still  root in $\g_1$, it follows that 
\begin{align}
&[\widetilde{M}(\sigma\cdot \la,\zeta): \widetilde{L}(\sigma\cdot \la -\sigma\alpha, \zeta)] \geq [\widetilde{M}(\sigma\cdot \la):\widetilde{L}(\sigma\cdot \la-\sigma\alpha)]>0,
\end{align} by the consequence of the super  Jantzen sum formula in \cite[Proposition 2.2]{CW18G3}; see also \cite{Gor4} and \cite[(10.3)]{Mu12}. Therefore, $\widetilde{L}(\sigma\cdot \la, \zeta)$
and $\widetilde{L}(\sigma\cdot (\la-\alpha), \zeta)$ are in the same block. Finally, by Lemma \ref{lem::21} again, we may conclude that  $\widetilde{L}(\sigma\cdot(\la-\alpha), \zeta)$ and $\widetilde{L}(\la-\alpha, \zeta)$ are in the same block. The cases that $\g=\mf{osp}(2|2n)$ can be proved in similar fashion. The conclusion follows.
\end{proof}

\subsubsection{Blocks of $\widetilde{\mc N}^\Z$ for $\pn$} \label{Sect::533}
We define the equivalence relation $\sim^{\pn}$ on $\mc X$  transitively generated by $\la\sim^{\pn} w\cdot \la$ and $\la\sim^{\pn} \la \pm2\vare_k$, for~$w\in W$ and $1\le k\le n$. With slight abuse of notation, we again use $\sim$ to denote $\sim^{\pn}$ in this subsection.
 
 We define the a dual version of Kac functor $$K'(-):=\Ind_{\g_{\leq 0}}^\g (-\otimes \Lambda^{\text{max}} \g_{1}^\ast) \cong \Coind_{\g_{\leq 0}}^\g (-):\mf g_\oa\mbox{-Mod} \rightarrow \mf g\mbox{-Mod}.$$

 \begin{lem}\label{lem::dualchthmC}
   Let $\la,\mu \in \h^\ast$ and $\zeta\in \ch \mf n_\oa^+$.
 Then any two composition factors of $K'({M}(\la, \zeta))$ are in the same block.
   
    Furthermore, we have 
 		\begin{align}
 		&[K'({M}(\la, \zeta)): \widetilde{L}(\mu, \zeta)] = \sum_{\gamma}[K'(M(\la)): \widetilde{L}(\gamma)],
 		\end{align} where the summation runs over all $W_\zeta$-anti-dominant weights $\gamma$ such that $\mu\in W_\zeta\cdot \gamma$.
 \end{lem}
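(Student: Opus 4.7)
The plan is to mirror the argument establishing Lemma \ref{thm::chthmC}, with the dual Kac functor $K'$ in place of the Kac functor $K$. The key preliminary observation is that $K'$ is exact on $\g_\oa\text{-Mod}$: by PBW the algebra $U(\g)$ is free of finite rank over $U(\g_{\leq 0})$ (since $\g_1$ is abelian for $\pn$), so induction from $\g_{\leq 0}$ is exact, and tensoring with the one-dimensional $\g_\oa$-module $\Lambda^{\max}\g_1^\ast$ is exact. The same PBW computation gives $\Res K'(N)\cong \Lambda\g_1\otimes N\otimes \Lambda^{\max}\g_1^\ast$ as a $\g_\oa$-module, from which one sees that $K'(M(\la,\zeta))$ actually lies in $\widetilde{\mc N}(\zeta)$: finite generation and local finiteness of $Z(\g_\oa)$ follow as in Section \ref{Sect::41}, while $\g_1$ acts locally nilpotently on $K'(M(\la,\zeta))$ because it raises PBW degree, which is bounded by $\dim \g_1$.

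For the multiplicity formula, I would apply $K'$ to a composition series of $M(\la,\zeta)$ as a $\g_\oa$-module. Using exactness, this produces a $\g$-module filtration of $K'(M(\la,\zeta))$ with subquotients $K'(L(\beta,\zeta))$, and yields
\[
[K'(M(\la,\zeta)):\wdL(\mu,\zeta)]=\sum_\beta [M(\la,\zeta):L(\beta,\zeta)]\cdot [K'(L(\beta,\zeta)):\wdL(\mu,\zeta)].
\]
The Backelin--Mili{\v{c}}i{\'c}--Soergel formula $[M(\la,\zeta):L(\beta,\zeta)]=\sum_{\gamma}[M(\la):L(\gamma)]$ (sum over $W_\zeta$-anti-dominant $\gamma$ with $\beta\in W_\zeta\cdot \gamma$, which is the $\g_\oa$-component of Lemma \ref{thm::chthmC}) converts this into the analogue of the expansion $[K'(M(\la)):\wdL(\gamma)]=\sum_\beta [M(\la):L(\beta)]\cdot [K'(L(\beta)):\wdL(\gamma)]$ obtained by running the same argument without the character $\zeta$. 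Term-by-term matching then produces the desired identity, once one establishes the $\zeta$-equivariant compatibility $[K'(L(\gamma,\zeta)):\wdL(\mu,\zeta)]=\sum_{\gamma'}[K'(L(\gamma)):\wdL(\gamma')]$ with $\gamma'$ ranging over the relevant $W_\zeta$-orbit representatives --- the same combinatorial bookkeeping that underlies the proof of \cite[Theorem C]{Ch21}, transported across $K'$.

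For the same-block assertion, I would show that $K'(M(\la,\zeta))$ is indecomposable in $\widetilde{\mc N}$, so that Lemma \ref{Cor::chofWT} forces all its composition factors into a single block. Indecomposability I plan to obtain by exhibiting a simple socle for $K'(M(\la,\zeta))$: this is dual to the simple-top property $\mathrm{top}\,\wdM(\la,\zeta)=\wdL(\la,\zeta)$ from \cite[Theorem 9]{Ch21} and \cite[Theorem 51]{CCM}, and can be extracted by applying the duality $D$ of Section \ref{sect::242}, which intertwines the $\mf b$- and $\mf b^r$-highest weight structures on $\mc O$ and therefore converts the Kac functor $K$ into $K'$ up to an isomorphism and a $\g_\oa$-weight shift. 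The main obstacle I anticipate is the term-by-term matching in Step~2, which requires careful bookkeeping of $W_\zeta$-orbit representatives; once exactness of $K'$ and the Backelin formula are in hand, however, the argument is essentially parallel to that of Lemma \ref{thm::chthmC}.
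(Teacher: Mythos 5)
Your strategy departs from the paper's in both halves, and each departure has a genuine gap.

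For the same-block claim, the paper does not pass through a simple socle at all: it shows directly that $K'(M(\la,\zeta))$ has a \emph{simple top}, "by using the same argument as in \cite[Theorem 9-(i)]{Ch21}" (the Whittaker-vector computation, exploiting $K'(-)=\Ind_{\g_{\leq 0}}^{\g}(-\otimes\Lambda^{\max}\g_1^\ast)$ and adjunction), and then invokes an analogue of Lemma~\ref{lem::16}. Your route via the duality $D$ of Section~\ref{sect::242} does not go through: $D$ is the restricted-dual construction on $\mc O$, where modules have finite-dimensional $\h$-weight spaces, and it is not defined on $\widetilde{\mc N}$ when $\zeta\neq 0$, since standard and simple Whittaker modules do not admit a weight-space decomposition. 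Observing that $D$ intertwines $K$ and $K'$ on $\mc O$ does not let you transport the simple-top property of $\wdM(\la,\zeta)$ --- which lives outside $\mc O$ --- into a simple-socle property of $K'(M(\la,\zeta))$.

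For the multiplicity formula, the gap you flagged yourself is real and not a matter of bookkeeping. Your reduction terminates in the unproved claim $[K'(L(\gamma,\zeta)):\wdL(\mu,\zeta)]=\sum_{\gamma'}[K'(L(\gamma)):\wdL(\gamma')]$, which is a statement of the same shape as the lemma itself, now for simple rather than standard inputs; so the reduction is essentially circular. The paper instead proves the identity in one stroke via the Backelin-type functor $\widetilde{\Gamma}_\zeta$ of \cite[Section 5.2]{Ch21}: this functor is exact from $\mc O$ to $\widetilde{\mc N}$, sends $\wdL(\gamma)$ to $\wdL(\gamma,\zeta)$ when $\gamma$ is $W_\zeta$-anti-dominant and to $0$ otherwise (this is \cite[Theorem 20]{Ch21}), and one checks that it sends $K'(M(\la))$ to $K'(M(\la,\zeta))$; exactness then yields the stated formula immediately. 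Your exactness observations about $K'$ are correct and harmless, but the missing ingredient is the functor $\widetilde{\Gamma}_\zeta$, not a refinement of the $W_\zeta$-orbit combinatorics.
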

\begin{proof} By using the same argument as in \cite[Theorem 9-(i)]{Ch21}, the $K'({M}(\la, \zeta))$ has a simple top. Then the first claim follows by an analogue of Lemma \ref{lem::16}. 
 	Next, we recall the functor $\widetilde{\Gamma}_\zeta$  from \cite[Section 5.2]{Ch21}, which is an exact functor  from  $\mc O$ to  $\widetilde{\mc N}$.  Using an argument similar to that used in the proof of \cite[Theorem 20]{Ch21}, one can show that $\widetilde{\Gamma}_\zeta$ sends $K'({M}(\la))$ to $K'({M}(\la, \zeta))$.  The conclusion follows from \cite[Theorem 20]{Ch21}.
\end{proof}

\begin{thm} \label{thm::LLLinkpn}
	Let $\la,\mu \in \mc X$. Then the simple modules $\widetilde{L}(\la, \zeta)$ and $\widetilde{L}(\mu, \zeta)$ are in the same   block of $\widetilde{\mc N}$  provided that $\la \sim \mu.$ In particular, there are at most $n+1$ blocks of $\widetilde{\mc N}^\Z$, up to equivalence.
\end{thm}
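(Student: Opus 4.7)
My plan is to address the two assertions of the theorem separately: the counting bound of $n+1$ blocks, and the linkage implication ``$\la\sim \mu \Rightarrow \wdL(\la,\zeta), \wdL(\mu,\zeta)$ lie in a common block.'' The bound is handled by exhibiting a $\sim$-invariant on $\mc X=\mZ^n$, while the linkage implication follows the template of Theorem \ref{thm::blocksglosp} with the super Jantzen sum formula replaced by the dual Kac functor $K'$ and Lemma \ref{lem::dualchthmC}.

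For the upper bound of $n+1$ blocks, I consider the map
\[
\la \longmapsto \bigl\{ (\la_i+n-i) \bmod 2 : 1\le i\le n\bigr\},
\]
valued in multisets of $\{0,1\}$ of cardinality $n$. The $W$-dot action, which permutes the entries of $\la+\rho_\oa$, preserves this multiset mod $2$; and the translation $\la \mapsto \la \pm 2\vare_k$ changes $\la_k$ by an even integer and fixes every parity. Hence the displayed multiset is a $\sim$-invariant, and such multisets are classified by the number of $0$'s, giving exactly $n+1$ possibilities. Thus $\sim$ has at most $n+1$ equivalence classes, and via Lemma \ref{lem::23} at most $n+1$ blocks of $\widetilde{\mc N}^\Z$ can occur.

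For the linkage implication, by transitivity of $\sim$ and of ``being in the same block'' it suffices to verify it for the two generating moves. The case $\mu = w\cdot\la$ is immediate from Lemma \ref{lem::21} combined with Theorem \ref{thm::Lielink} for $\g_\oa=\gl(n)$, which gives linkage in $\mc N$ that then lifts via the Kac functor. The substantive case is $\mu=\la\pm 2\vare_k$; by symmetry I focus on $\mu=\la-2\vare_k$. Choose $\sigma\in W_\zeta\subseteq \mf S_n$ so that $\sigma\cdot\la$ is $W_\zeta$-anti-dominant; since $\sigma$ acts on $\h^\ast$ by a permutation of the $\vare_i$'s, $\sigma\cdot(\la-2\vare_k)=\sigma\cdot\la-2\vare_{\sigma(k)}$. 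By Lemma \ref{lem::21} again, it is enough to link $\wdL(\sigma\cdot\la,\zeta)$ with $\wdL(\sigma\cdot\la-2\vare_{\sigma(k)},\zeta)$.

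The plan for this last step is to invoke the dual Kac functor $K'$ and Lemma \ref{lem::dualchthmC} with the integral anti-dominant weight $\eta:=\sigma\cdot\la$. Using the vector-space identification $K'(M(\eta))\cong \Lambda(\g_1)^\ast\otimes M(\eta)$ in $\mc O$ and the fact that the odd root $2\vare_{\sigma(k)}\in \Phi(\g_1)$ contributes the weight $\eta-2\vare_{\sigma(k)}$ to $K'(M(\eta))$, the goal is to show that $\wdL(\eta-2\vare_{\sigma(k)})$ arises as a genuine composition factor of $K'(M(\eta))$ (and not merely as a weight absorbed by the top $\wdL(\eta)$). Granting this, the first assertion of Lemma \ref{lem::dualchthmC} places $\wdL(\eta)$ and $\wdL(\eta-2\vare_{\sigma(k)})$ in a common block of $\widetilde{\mc N}$ via $K'(M(\eta,\zeta))$, and combining with the $W_\zeta$-conjugation handled by Lemma \ref{lem::21} yields the desired linkage. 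The main obstacle is precisely the composition-factor claim for $K'(M(\eta))$ in the integral category $\mc O$ for $\pn$: since $\pn$ admits no non-degenerate invariant form, standard Jantzen-sum techniques do not apply verbatim, and one must either adapt a Jantzen-style filtration argument specific to $\pn$, iterate through a chain of intermediate weights $\eta_0=\eta,\eta_1,\ldots,\eta_r=\eta-2\vare_{\sigma(k)}$ with consecutive pairs linked through suitable $K'(M(\eta_i'))$'s, or appeal to a known analog of this linkage in $\mc O^\Z$ for $\pn$ and transfer it through the second assertion of Lemma \ref{lem::dualchthmC}.
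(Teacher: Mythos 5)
Your plan for the linkage implication follows the paper's approach (via $K'$ and Lemma \ref{lem::dualchthmC}), but both halves of the proposal have genuine gaps.

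For the bound on the number of blocks, the argument does not work as stated. Showing that the parity multiset is a $\sim$-invariant taking $n+1$ values proves that there are \emph{at least} $n+1$ equivalence classes of $\sim$; to get an upper bound you would also need to prove completeness, i.e.\ that two weights with the same parity multiset are $\sim$-equivalent (which does hold on $\Z^n$, but you do not verify it). More seriously, the entire parity construction presumes $\la\in\Z^n$, whereas for $\pn$ the integral weight lattice $\mc X$ is the union of the cosets $a\omega+\Z^n$ for $a\in\C/\Z$, and $\sim$ preserves each coset; consequently there are infinitely many $\sim$-classes, hence infinitely many blocks of $\widetilde{\mc N}^\Z$. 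This is exactly why the theorem asserts the bound only ``up to equivalence'': the paper identifies the blocks across cosets using the auto-equivalence $\C_{k\omega}\otimes -$, a step which is entirely absent from your outline (your citation of Lemma~\ref{lem::23} in this context is also off-target, since that lemma is a separation result giving linkage obstructions, not an upper bound).

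For the linkage implication, you state the right reduction (it suffices to treat the two generating moves) and the right key tool ($K'$ together with Lemma~\ref{lem::dualchthmC}), but you explicitly leave the decisive step open: you never establish that $\wdL(\eta-2\vare_{\sigma(k)})$ is a composition factor of $K'(M(\eta))$, and you merely list three possible strategies. The paper does establish this by invoking \cite[Lemma 5.2]{CC} and, importantly, splits into two cases according to whether $\la-2\vare_k$ is $W_\zeta$-anti-dominant after conjugation. Your reduction only arranges for $\eta=\sigma\cdot\la$ itself to be $W_\zeta$-anti-dominant; it does not handle the case where $\eta-2\vare_{\sigma(k)}$ fails to be so, which requires the additional argument via an intermediate weight $s_\alpha\cdot(\la-2\vare_k)$ appearing in the paper's proof. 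As written, your proposal identifies the obstruction but does not resolve it.
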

\begin{proof}
	By Lemma \ref{lem::21}, it remains to show that $\widetilde{L}(\la,\zeta)$ and $\widetilde{L}(\la-2\vare_k,\zeta)$ are in the same block for any $1\leq k\leq n$. To see this, we first assume that $\la$ is $W_\zeta$-anti-dominant.
	Suppose that $\la -2\vare_k$ is  $W_\zeta$-anti-dominant as well. 
	 By  Lemma \ref{lem::dualchthmC} we find that
	\begin{align}
	&[K'(M(\la,\zeta)): \widetilde{L}(\la -2\vare_k,\zeta)] \geq [K'(M(\la)): \widetilde{L}(\la -2\vare_k)]. 
	\end{align} Both $[K'(M(\la)): {L}(\la)]$ and $[K'(M(\la)): \widetilde{L}(\la -2\vare_k)]$ are positive by \cite[Lemma 5.2]{CC}. The conclusion follows.

	Next, we assume that $\la$ is $W_\zeta$-anti-dominant, but $\la -2\vare_k$ is not   $W_\zeta$-anti-dominant. Use Lemma \ref{lem::21} and replace $\wdL(\la -2\vare_k)$ by $\wdL(\la -\tau(2\vare_k))$ for some $\tau \in W_\zeta$ if necessary, we may either reduce to the previous case, or assume that there exists a  positive even root $\alpha$ such that $s_\alpha\cdot(\la-2\vare_k)$ is $W_\zeta$-anti-dominant and less than $\la-2\vare_k$ in   Bruhat order. We know that $\widetilde{L}(\la -2\vare_k, \zeta)$ and $\widetilde{L}(s_\alpha\cdot (\la -2\vare_k), \zeta)$ are in the same block by Lemma  \ref{lem::21}.

	 By Lemma \ref{lem::dualchthmC} and \cite[Lemma 5.2]{CC} we have 
	 \begin{align*}
	 &[K'(M(\la,\zeta)): \widetilde{L}(\la,\zeta)]\geq [K'(M(\la)): \widetilde{L}(\la)]>0,\\
	 &[K'(M(\la)): \widetilde{L}(\gamma)]\geq [\widetilde{M}(\la-2\vare_k)): \widetilde{L}(\gamma)],
	 \end{align*}   for any $\gamma \in \h^\ast.$ Since $[{M}(\la-2\vare_k): {L}(s_\alpha\cdot(\la -2\vare_k))]>0$, applying the Kac functor $K(-)$ we have  that  $[\widetilde{M}(\la-2\vare_k): \widetilde{L}(s_\alpha\cdot(\la -2\vare_k))]>0$. Consequently, it follows from Lemma \ref{lem::dualchthmC} that  
	\begin{align*}
	&[K'(M(\la,\zeta)): \widetilde{L}(s_\alpha\cdot(\la -2\vare_k),\zeta)] \\
	&\geq[K'(M(\la)): \widetilde{L}(s_\alpha\cdot(\la -2\vare_k))]\\
	&\geq [\widetilde{M}(\la-2\vare_k): \widetilde{L}(s_\alpha\cdot(\la -2\vare_k))]>0,
	\end{align*}
	which implies that $\widetilde{L}(s_\alpha\cdot(\la -2\vare_k), \zeta)$ and  $\widetilde{L}(\la, \zeta)$ are in the same block, and so $\widetilde{L}(\la, \zeta)$ and $\widetilde{L}(\la-2\vare_k, \zeta)$ are in the same block. 
	
  Finally, let $\la \in \mc X$ be arbitrary. Pick  $\sigma\in W$ such that $\sigma\cdot \la$ is $W_\zeta$-anti-dominant.   Then  $\widetilde{L}(\la, \zeta)$ and $\widetilde{L}(\sigma\cdot\la, \zeta)$ are in the same block by Lemma \ref{lem::21}. Let $1\leq k'\leq n$ such that $\sigma \vare_k =\vare_{k'}$. Since $\sigma\cdot \la -2\vare_{k'} =\sigma\cdot(\la -2\vare_k)$, the simple modules $\widetilde{L}(\sigma\cdot\la-2\vare_{k'},\zeta)$ and $\widetilde{L}(\la-2\vare_k,\zeta)$ are in the same block. Now, we put all results together to infer that   all simple modules  $\widetilde{L}(\la,\zeta)$,     $\widetilde{L}(\sigma\cdot\la,\zeta)$, $\widetilde{L}(\sigma\cdot\la-2\vare_{k'},\zeta)$ and  $\widetilde{L}(\la-2\vare_k,\zeta)$ are in the same block. This proves the first conclusion.
  
  Recall that $\omega:=\vare_1+\vare_2+\cdots+\vare_n$.   We note that, for any $k\in \C$, the functor $\C_{k\omega}\otimes -$ is an auto-equivalence on $\widetilde{\mc N}$ sending $\wdL(\la,\zeta)$ to $\wdL(\la+k\omega,\zeta)$, for any $\la\in \h^\ast$. The conclusion follows from \cite[Section 5.2]{CC}.
\end{proof}

{\bf Question}: Is it true that $\la \sim \mu$ whenever  $\widetilde{L}(\la, \zeta)$ and $\widetilde{L}(\mu, \zeta)$ are in the same  block of $\widetilde{\mc N}$?

 \subsubsection{An example of non-type I Lie superalgebra} \label{cor::644}
 In this section, we illustrate the utility of Theorem C with an example of a block for $\g=\mf{osp}(3|2)$ in which neither of the constructions for simple Whittaker modules in Cases 1 and 2 of Section \ref{sect::12} are applicable. 
 
 We use notations in \cite[Section 1.2]{ChWa12}. The dual space of the Cartan subalgebra is given by $\h^\ast =\C\delta \oplus \C\vare.$ The simple system, positive system, the sets of roots and positive  even roots are given by
 \begin{align}
 &\Pi = \{ \delta -\vare, \vare\},~
 \Phi^+=\{\delta\pm \vare,2\delta,\delta,\vare\},~\Phi=\Phi^+\cup(-\Phi^+),~ 
 \Phi^+_\oa = \{2\delta,\vare\}.
 \end{align} The corresponding Weyl group $W= \langle s_{2\delta}, s_\vare \rangle$ is isomorphic to $\Z_2\times \Z_2$.  
 Let $\rho = \frac{-1}{2}\delta+\frac{1}{2}\vare$ denote the corresponding Weyl vector. The usual bilinear form $\langle,\rangle$ on $\h^\ast$ is determined by   $\langle\delta,\vare\rangle=0$, $\langle\delta,\delta\rangle=-1$ and $\langle\vare,\vare\rangle=1.$
  
 Let us consider $\zeta \in \ch\mf n_\oa^+$ a non-singular character. Set $\nu\in \mc X$ such that $W_\nu=W_\zeta=W$.   By \cite[Section 4.3.4]{CCC}, we have 
 \[\mc X(\nu)=\{\la|~\la+\rho = a\delta+b\vare,~a,b\in \frac{-1}{2}-\Z_{\geq 0}\}.\]
 
 We consider the block of $\widetilde{\mc N}^\Z$ containing $S(\la,\zeta)$ for $\la\in \mc X(\nu)$. For any $\la, \mu\in \mc X(\nu)$, we note that $\wdL(\la)$ and $\wdL(\mu)$ are in the same block of $\mc O$ if and only if $\chi_\la=\chi_\mu$ by  \cite[Proposition 2.2]{CW18G3}.   A weight $\la$ is said to be typical if $\la+\rho=a\delta+b\vare$ such that $a\pm b\neq 0$, which is equivalent to the condition that $\langle \la+\rho, \alpha\rangle \neq 0$, for $\alpha = \delta\pm\vare$; see also \cite[Section 2]{ChWa12} for the definition of typicality of weights for general $\mf{osp}(2m|2n+1)$.
\begin{prop}
	Let $\la,\mu \in \mc X(\nu)$. Then $S(\la,\zeta)$ and $S(\mu,\zeta)$ are in the same block of $\widetilde{\mc N}$ if and only if $\chi_\la =\chi_\mu$, which is equivalent to the condition that $\la =w(\mu+k\alpha+\rho)-\rho$, for some $w\in W$, $k\in \Z$ and $\alpha \in \{\delta \pm \vare\}$ with $\langle \mu+\rho,\alpha\rangle=0.$
\end{prop}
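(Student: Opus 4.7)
The plan is to split the proposition into two equivalences: first, the arithmetic equivalence between $\chi_\lambda=\chi_\mu$ and the explicit formula, which I handle by standard Harish-Chandra considerations; second, the categorical equivalence between $\chi_\lambda=\chi_\mu$ and membership in a common block of $\widetilde{\mathcal{N}}$. For the arithmetic part, I would apply \cite[Theorem~2.30]{ChWa12} to get $\mu+\rho=w(\lambda+\rho)-\sum_i c_i\alpha_i$ for some $w\in W$, scalars $c_i$ and mutually orthogonal isotropic roots $\alpha_i$ with $\langle \lambda+\rho,\alpha_i\rangle=0$, then use the fact that for $\mathfrak{osp}(3|2)$ the only isotropic roots are $\pm(\delta\pm\vare)$ and that $\langle\delta-\vare,\delta+\vare\rangle=-2\neq 0$, so at most one such $\alpha$ can occur in the sum. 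Since $\lambda,\mu\in\mathcal{X}$, the resulting scalar coefficient must be an integer, yielding exactly the form in the statement.

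The forward direction (common block implies $\chi_\lambda=\chi_\mu$) follows immediately from Corollary~\ref{cor::306}: the annihilator of $S(\lambda,\zeta)$ agrees with $\mathrm{Ann}_{U(\mathfrak{g})}\widetilde{L}(\lambda)$, hence $Z(\mathfrak{g})$ acts on $S(\lambda,\zeta)$ through $\chi_\lambda$, and the standard block decomposition argument (via the finite-length property \cite[Corollary~4]{Ch21} and the discussion following Lemma~\ref{Cor::chofWT}) shows simples in a common block share a central character. For the converse, the cited \cite[Proposition~2.2]{CW18G3} gives that $\chi_\lambda=\chi_\mu$ implies $\widetilde{L}(\lambda)$ and $\widetilde{L}(\mu)$ lie in the same block of $\mathcal{O}$. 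The typical case is then essentially trivial: if $\lambda$ is typical, the explicit formula forces $\lambda+\rho\in W(\mu+\rho)$, and since $\mathcal{X}(\nu)$ selects the unique $W$-orbit representative with both $\delta$- and $\vare$-coordinates in $-\tfrac12-\Z_{\geq 0}$, one concludes $\lambda=\mu$.

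The atypical case is where the real work lies, and where I anticipate the main obstacle. Here all relevant weights take the form $\lambda_k+\rho=-(1/2+k)(\delta+\vare)$ for $k\geq 0$ and share a single atypical central character. The strategy is to invoke Theorem~\ref{thm::24}(3), which identifies $[\widetilde{P}(\lambda_k,\zeta):S(\lambda_{k'},\zeta)]=[\widetilde{P}(\lambda_k):\widetilde{L}(\lambda_{k'})]$, and then combine it with Lemma~\ref{lem::16}-style reasoning applied to projective covers to transfer any chain of linkages inside the atypical block of $\mathcal{O}$ into a chain inside $\widetilde{\mathcal{N}}(\mathcal{X},\zeta)^1$. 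The subtle point is that a general linkage chain in the atypical block of $\mathcal{O}$ may pass through weights outside $\mathcal{X}(\nu)$, which are killed by the functor $F$ of Theorem~\ref{thm::24} and hence cannot directly carry the linkage across. I expect the required chain lying entirely inside $\mathcal{X}(\nu)$ to be produced either by explicit computation of the composition factors of $\widetilde{P}(\lambda_k)$ in the atypical block using the super Jantzen sum formula (cf.~\cite[Proposition~2.2]{CW18G3}) or by translating through a suitable finite-dimensional $\mathfrak{g}$-module chosen to preserve $\upsilon_\nu$-freeness; executing this verification is the main technical step required to finish the proof.
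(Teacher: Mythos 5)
Your arithmetic reduction, forward direction, and typical case are all fine and match the paper. The issue is the atypical case, which you yourself flag as the crux: you correctly observe that a naive linkage chain in the atypical block of $\mc O$ might pass through weights outside $\mc X(\nu)$, where the functor $F$ vanishes, and you name two candidate strategies (explicit composition factors of $\widetilde P(\la_k)$ via the super Jantzen sum formula, or translation) without carrying either one out. That is a genuine gap: the decisive step is left unexecuted.

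The paper closes exactly that gap with a one-step chain that never leaves $\mc X(\nu)$, so the danger you worried about does not materialize. Concretely, for atypical $\la$ with $\la+\rho=-(\tfrac12+k)(\delta+\vare)$, $k\geq 0$, the super Jantzen sum formula (\cite[Proposition~2.2]{CW18G3}) shows the tilting module of $\rho$-shifted highest weight $-\la$ has a subquotient $\wdM(-\la-\delta-\vare)$ (up to $\rho$-shift); Arkhipov--Soergel duality then converts this into the statement that $\widetilde P(\la)$ has a subquotient isomorphic to $\wdM(\la+\delta+\vare)$. Since $\la+\delta+\vare+\rho=-(\tfrac12+(k-1))(\delta+\vare)$, the weight $\la+\delta+\vare$ is again in $\mc X(\nu)$ (for $k\geq 1$), so Part~(1) of Theorem~C (i.e.\ Theorem~\ref{thm::24}(3)) gives $[\widetilde P(\la,\zeta):S(\la+\delta+\vare,\zeta)]>0$ directly, linking $S(\la,\zeta)$ to $S(\la+\delta+\vare,\zeta)$, and iterating reaches $S(-\tfrac12(\delta+\vare)-\rho,\zeta)$. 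Your first proposed route is thus the right one; what is missing from your write-up is this explicit Jantzen-formula-plus-duality computation and the check that each step of the resulting chain stays inside $\mc X(\nu)$.
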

\begin{proof} It remains to show that $S(\la,\zeta)$ and $S(\mu,\zeta)$ lie in the same block provided that $\chi_\la=\chi_\mu$. We first assume that $\la$ is typical. Since $\chi_\la=\chi_\mu$, it follows that $\la=w(\mu+\rho)-\rho$ for some $w\in W$, which implies that $w=e$. The conclusion follows.
	
	We now suppose that $\la$ is atypical, namely, $\la+\rho =k\delta+k\vare$ with $k\in \frac{-1}{2}-\Z_{\geq 0}$. By \cite[Proposition 2.2]{CW18G3}, which is a consequence of the super Jantzen sum formula established in \cite{Gor4} and \cite[(10.3)]{Mu12}, we may conclude that the tilting module of ($\rho$-shifted) highest weight $-\la$ contains a subquotient of Verma module of the ($\rho$-shifted) highest weight $-\la-\delta-\vare$.  It then follows from the Arkhipov-Soergel duality \cite{Br04,So98} for Lie superalgebras that the projective cover $\widetilde P(\la)$ has a subquotient isomorphic to the Verma module $\wdM(\la +\delta+\vare)$. By Part (1) of Theorem A, we may conclude that $[\widetilde P(\la,\zeta): S(\la + \delta +\vare,\zeta)]$ is positive. It follows that  $S(\la,\zeta)$ and $S(\la +\delta+\vare, \zeta)$ are in the same block. Consequently, $S(\la,\zeta)$ and $S(\frac{-1}{2}(\delta+\vare),\zeta)$ lie in the same block. This completes the proof. 
\end{proof}

\vspace{2mm}

\noindent
Chih-Whi Chen:~Department of Mathematics, National Central University, Zhongli District, Taoyuan City, Taiwan;
E-mail: {\tt cwchen@math.ncu.edu.tw}
\hspace{2cm}



\end{document}